\theoremstyle{plain}
\newtheorem{theorem}{Theorem}[section]
\newtheorem*{theorem*}{Theorem}
\newtheorem{lemma}[theorem]{Lemma}
\newtheorem{corollary}[theorem]{Corollary}
\newtheorem{proposition}[theorem]{Proposition}
\theoremstyle{definition}
\newtheorem{definition}[theorem]{Definition}
\newtheorem{example}[theorem]{Example}
\newtheorem*{example*}{Example}
\newtheorem{remark}[theorem]{Remark}
\numberwithin{equation}{section}
\newcolumntype{P}[1]{>{\centering\arraybackslash}m{#1}}
\def\beq#1#2\eeq{%
            \begin{equation}%
            \label{#1}%
                #2%
            \end{equation}%
        }
\begin{document}

\author{G. Antoniou, M.V. Feigin, I.A.B. Strachan}
\author{}

\address{ School of Mathematics and Statistics, University of Glasgow, UK}

\email{
 \parbox{\linewidth}{ \vspace{0.444cm} g.antoniou.1@research.gla.ac.uk, misha.feigin@glasgow.ac.uk, \\ ian.strachan@glasgow.ac.uk
 }
 }

\title{\bf{The Saito determinant for Coxeter discriminant strata}}

\begin{abstract}
Let $W$ be a finite Coxeter group and $V$ its reflection representation. The orbit space $\mathcal{M}_W= V/W$ %there exists a pencil of flat metrics. Its key element is the 
has the remarkable Saito flat metric defined as a Lie derivative of the $W$-invariant bilinear form $g$. We find determinant of the Saito metric restricted to an arbitrary Coxeter discriminant stratum in $\mathcal{M}_W$. It is shown that this determinant  is proportional to a product of linear factors in the flat coordinates of the form $g$ on the stratum. We also find multiplicities of these factors   in terms of Coxeter geometry of the stratum. 

This result may be interpreted as a generalisation to discriminant strata of the Coxeter factorisation formula for the Jacobian of the group $W$. As another interpretation, we find determinant of the operator of multiplication by the Euler vector field in the natural Frobenius structure on the strata.
\end{abstract}

\maketitle

\section{Introduction}
Frobenius manifolds lie at the intersection of many areas of mathematics, from singularity theory and algebraic geometry, mirror symmetry, integrable systems, to mathematical physics, via Topological Quantum Field Theories \cite{dub}. A remarkable class of solutions to the underlying Witten--Dijkgraaf--Verlinde--Verlinde (or WDVV) equations is given by polynomial prepotentials. Following a comment by Arnold, Dubrovin showed that polynomial solutions were directly related to finite Coxeter groups, or more precisely, that the orbit space $\mathcal{M}_W$ of a finite Coxeter group $W$ carries the structure of a semi-simple, polynomial, Frobenius manifold \cite{dub}. Furthermore, Dubrovin conjectured and Hertling proved that all semi-simple polynomial solutions arise via this construction (under an extra assumption, see \cite{CH}). 
%The proof that the orbit space carries the structure of a Frobenius manifold was proved by Dubrovin, and the converse was later proved by Hertling. 
A key part of the Frobenius manifold structure on the orbit space $\mathcal{M}_W$ is the Saito metric  and its flat coordinates  \cite{saito1, saito}. 
We begin by recalling the definitions of these objects.

%Frobenius manifolds introduced by Dubrovin in the early $90$s have attracted a lot of attention merely due to their interplay between many areas of mathematics, such as integrable systems, singularity theory, algebraic geometry as well as theoretical physics \cite{dub}. The orbit space $\mathcal{M}_W$ of a finite Coxeter $W$ group acting on its reflection representation constitutes one of the most interesting classes of examples of Frobenius manifolds as the corresponding prepotential is polynomial \cite{dub}. A key element of such Frobenius manifolds is the Saito flat metric $\eta$ \cite{saito1, saito}. Under certain assumptions the orbit spaces $\mathcal{M}_W$ are the only Frobenius manifolds whose prepotential is polynomial when expressed in the flat coordinates of the metric \cite{CH}.

Let $V= \mathbb{C}^n$ with the standard metric $g$ given by $g(e_i, e_j)= (e_i, e_j)= \delta_{ij}$, where $e_i$, $i=1, \dots, n,$ is the standard basis in $V$. Let $x^i$, $i=1, \dots, n$ be the corresponding orthonormal coordinates in $V$. An irreducible finite Coxeter group $W$ of rank $n$ acts in $V$ by orthogonal transformations such that $V$ is the complexified reflection representation of $W$. Then the orbit space $\mathcal{M}_W = V / W$.

Let $\mathcal{R} \subset V$ be the Coxeter root system associated with the group $W$ and let $\mathcal{R}_+ \subset \mathcal{R}$ denote a positive subsystem \cite{cox}. For any $\alpha \in \mathcal{R}$ we define the corresponding \textit{mirror} to be the hyperplane $\Pi_\alpha= \{ x \in V | (\alpha, x)=0\}$. The following notion of discriminant is important. 

\begin{definition}\cite{dub}.
The subset $\Sigma \subset \mathcal{M}_W$ called \textit{discriminant} is defined as the image of the union of the mirrors of $W$ under the quotient map 
\begin{align}\label{quotientmap}
\pi: V \rightarrow \mathcal{M}_W.
\end{align}
Equivalently, $\Sigma$ consists of the irregular orbits of $W$.
\end{definition}

Let us now recall the key definition of the Saito metric. Let $S(V^*)^W$ be the ring of $W$-invariant polynomials on $V$. By Chevalley's theorem, $\mathcal{M}_W$ is an affine variety with coordinate ring 
\begin{align*}
S(V^*)^W = \mathbb{C}[x^1, \dots, x^n]^W =\mathbb{C}[x]^W= \mathbb{C}[p^1, \dots, p^n],
\end{align*}
where $p^i$, $i=1, \dots, n$ are homogeneous polynomials of positive degrees  $\operatorname{deg} p^i = d_i$, ($i=1, \dots, n$). These polynomials are coordinates on the orbit space $\mathcal{M}_W$. We fix the following ordering for the degrees $d_i$, $d_n > d_{n-1} \geq \dots \geq d_{2} > d_1 =2; \quad  d_n=h$, where $h$ is the Coxeter number of the group $W$. 

The quotient map on the complement to the discriminant,
\begin{align*}
\pi_{\Sigma}: V \setminus \cup_{\alpha \in \mathcal{R}_+} \Pi_\alpha \rightarrow \mathcal{M}_W \setminus \Sigma
\end{align*}
is a local diffeomorphism. Then, the linear coordinates $x^{i}$, ($i=1, \dots, n$) on $V$ can be viewed as local coordinates on $\mathcal{M}_W \setminus \Sigma$. These are flat coordinates for the flat metric $g$ which is also defined on $\mathcal{M}_W \setminus \Sigma$ due to its $W$-invariance. Let $g^{ij}$ be the corresponding contravariant metric.  Its matrix entries are given as Arnold's convolution of basic invariants \cite{Ar}.

Note that the vector field $\partial_{p^n}$ is well-defined up to proportionality. The contravariant \textit{Saito metric} is defined to be proportional to $ \partial_{p^n} g^{ij}(p)$. The remarkable property is the flatness of this metric \cite{saito1}. Thus there exist homogeneous basic invariants $$t^\alpha \in \mathbb{C}[x]^W, \quad \operatorname{deg}t^\alpha=d_\alpha, \quad \alpha=1, \dots, n, $$ such that
\begin{align*}
 \frac{\partial g^{\alpha \beta}(t)}{\partial t^n}= \delta^{\alpha + \beta,n+1}, \quad 1\leq \alpha, \beta \leq n, 
\end{align*}
where $\delta^{ij}=\delta_{ij}$ is the Kronecker symbol.
These polynomials are called \textit{Saito polynomials} or \textit{Saito flat coordinates}. They were specified explicitly in \cite{saito} for any $W$ except for $E_7$ and $E_8$ cases which were completed in \cite{DAB}, \cite{flat}. Let us fix the Saito metric in the basis $t^\alpha$ ($\alpha=1, \dots, n$) to be
\begin{align}\label{saitometricflatcoords}
\eta^{\alpha \beta}=\mathcal{L}_e g^{\alpha \beta}(t)= \delta^{\alpha + \beta, n+1},  \quad 1\leq \alpha, \beta \leq n,
\end{align}
where $\mathcal{L}_e=\frac{\partial}{\partial t^n}$ is the Lie derivative along the vector field $e = \frac{\partial}{\partial t^n}$ (see \cite{KS}, \cite{saito}).

Frobenius manifold $\mathcal{M}_W$ gives a family of Frobenius algebras defined on its tangent planes. The corresponding bilinear form is the Saito metric $\eta$. Vector field $e$ is the identity field of the Frobenius manifold \cite{dub}. There is also another almost dual Frobenius structure \cite{DubA}. The corresponding multiplication in the case of $\mathcal{M}_W$ is defined outside of the discriminant $\pi^{-1}_\Sigma(\mathcal{M}_W\setminus\Sigma)$ and the corresponding bilinear form is the standard metric $g$. The prepotential $\mathcal{F}$ of the almost dual structure can be given explicitly \cite{DubA}.

For a general Frobenius manifold the notion of a natural submanifold was introduced by one of the authors in \cite{ian}. Frobenius multiplication can be restricted to natural submanifolds but some of the properties such as flatness of the induced restricted metric may no longer hold. Key examples of natural submanifolds were expected to be discriminant strata in the orbit spaces $\mathcal{M}_W$ as well as caustics. The main aim of this paper is the investigation of Saito metric on the discriminant. Let us define these settings more precisely.  

Let us fix a collection $S$ of linearly independent roots $\beta_1, \dots, \beta_k \in \mathcal{R}$ and let $D=D_S= \cap_{j=1}^k \Pi_{ \beta_j}$. A \textit{discriminant stratum} in the orbit space $\mathcal{M}_W$ is defined to be the image of $D$ under the quotient map $ \pi$ given by \eqref{quotientmap}. Sometimes we will refer to the intersection of hyperplanes $D$ as a discriminant stratum as well, and likewise we will sometimes refer to the union of all hyperplanes $\Pi_\beta$, $\beta \in \mathcal{R}$ as discriminant.

It was shown in \cite{FV} that almost dual Frobenius multiplication has a natural restriction on discriminant strata $D$. However, the properties of Saito metric $\eta$ on the discriminant did not seem to be investigated until the present work. The main object of the present paper is the Saito metric restricted on discriminant strata. 

The covariant metric $\eta$ on $\mathcal{M}_W$ induces a metric on the stratum $\pi(D)$ which is naturally given as the restriction of $\eta$ to $\pi(D)$. We will denote this metric by $\eta_D$. The map $\pi$ is a local diffeomorphism on $D$ near generic point $x_0 \in D$  which allows us to lift metric $\eta_D$ to the linear space $D$. Likewise the metric $\eta$ can be lifted to $V$ near a generic point of $\mathcal{M}_W$.

It is convenient to introduce some notations related to hyperplane arrangements \cite{OT}. We will only be considering central arrangements where hyperplanes pass through the origin. For   such an arrangement  $\mathcal{A}$ in $V$ its defining polynomial is given by
\begin{align}\label{defpol}
{\mathcal I}(\mathcal{A}) = \prod_{H \in \mathcal{A}} \alpha_H,
\end{align}
where $\alpha_H \in V^*$ is such that the hyperplane $H= \{ x \in V : \alpha_H (x)=0\}$. 

Let now $\mathcal{A}$ be the Coxeter arrangement corresponding to $W$, that is the arrangement of mirrors $\Pi_\alpha$, $\alpha \in \mathcal{R}$.  Let ${\mathcal J}(p)$ be the Jacobian ${\mathcal J}(p^1, \dots, p^n)=\operatorname{det}\big(\partial p^i/\partial x^{j}\big)_{i, j =1}^n$, where $p^i$ ($i=1, \dots, n$) are basic invariants for $W$. Recall that ${\mathcal J}(p)$ factors into linear forms defining the mirrors of $W$ \cite{Coxeter}, \cite{cox}, that is there is a proportionality
\begin{equation*}
{\mathcal J}(p) \sim {\mathcal I}(\mathcal{A}).
\end{equation*} 

%Let us take basis $p^i=t^i$ of Saito polynomials and fix $J= J(t^1, \dots, t^n)$. 
Formula \eqref{saitometricflatcoords} implies the following statement. 

\begin{proposition}\label{detunrestricted}
Determinant of the Saito metric $\eta$ in $x$-coordinates $\operatorname{det}\eta(x)$ is proportional to $ {\mathcal I}(\mathcal{A})^2$.
%
%\begin{equation}\label{detsaito}
% \operatorname{det}\eta(x)=-J^2.
%\end{equation}
%
\end{proposition}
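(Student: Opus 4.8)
The plan is to compute $\det \eta(x)$ by transforming the covariant Saito metric from its Saito flat coordinates $t^\alpha$, where it is given by \eqref{saitometricflatcoords}, to the orthonormal coordinates $x^i$. Since $\eta$ is a covariant symmetric $2$-tensor, under the change of coordinates $t = t(x)$ it transforms as
\begin{align*}
\eta_{ij}(x) = \sum_{\alpha, \beta=1}^n \frac{\partial t^\alpha}{\partial x^i}\frac{\partial t^\beta}{\partial x^j}\, \eta_{\alpha\beta}(t),
\end{align*}
where $\eta_{\alpha\beta}$ is the matrix of $\eta$ in the Saito coordinates. By \eqref{saitometricflatcoords} the matrix $(\eta_{\alpha\beta}) = (\delta^{\alpha+\beta, n+1})$ is the anti-diagonal matrix with unit entries, so that $\det(\eta_{\alpha\beta}) = (-1)^{n(n-1)/2}$ is a nonzero constant.

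Next I would take determinants on both sides. Writing $J = \big(\partial t^\alpha/\partial x^i\big)$ for the Jacobian matrix of the Saito polynomials, the transformation law reads $\eta(x) = J^\top (\eta_{\alpha\beta}) J$ as a matrix identity, hence
\begin{align*}
\det \eta(x) = (\det J)^2 \,\det(\eta_{\alpha\beta}) \sim \mathcal{J}(t)^2,
\end{align*}
where $\mathcal{J}(t) = \det J$ is the Jacobian of the Saito polynomials $t^\alpha$, which form a particular choice of basic invariants of $W$.

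It then remains to identify $\mathcal{J}(t)^2$ with $\mathcal{I}(\mathcal{A})^2$. The Coxeter factorisation formula recalled above gives $\mathcal{J}(p) \sim \mathcal{I}(\mathcal{A})$ for any system of basic invariants $p^i$; applying this to $p^i = t^i$ yields $\mathcal{J}(t) \sim \mathcal{I}(\mathcal{A})$, and therefore $\det \eta(x) \sim \mathcal{I}(\mathcal{A})^2$, as claimed. Alternatively, the Jacobian of the change of basic invariants $t = t(p)$ is a nonzero constant, being homogeneous of degree zero, so that $\mathcal{J}(t) \sim \mathcal{J}(p) \sim \mathcal{I}(\mathcal{A})$ directly.

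I do not expect a serious obstacle here: the argument is a direct application of the tensor transformation law combined with the known Coxeter factorisation of the Jacobian. The only points requiring care are bookkeeping ones, namely using the \emph{covariant} form of $\eta$ so that the Jacobian enters with a positive rather than a negative power, and observing that the constant $\det(\eta_{\alpha\beta}) = (-1)^{n(n-1)/2}$ is harmlessly absorbed into the proportionality.
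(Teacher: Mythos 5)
Your proof is correct and is precisely the argument the paper intends: the paper simply asserts that the proposition follows from formula \eqref{saitometricflatcoords}, and the expected justification is exactly your computation, namely that the covariant Saito metric is constant with determinant $(-1)^{n(n-1)/2}$ in the flat coordinates $t^\alpha$, so the tensor transformation law gives $\det\eta(x)\sim \mathcal{J}(t)^2$, which the Coxeter factorisation identifies with $\mathcal{I}(\mathcal{A})^2$. No gaps; the bookkeeping points you flag (using the covariant rather than contravariant form, and absorbing the constant) are handled correctly.
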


Note that determinant of the convolution of invariants metric $g^{ij}$ on the orbit space is also proportional to $ {\mathcal I}(\mathcal{A})^2$ (see e.g. \cite{saito}).

We are interested in the determinant of the restricted Saito metric $\eta_D$ on the discriminant strata $D$. We will show that $\operatorname{det}\eta_D$
is a product of linear forms which can be viewed as a generalization of Proposition  \ref{detunrestricted}.

Let us formulate the main result more precisely. Let $S$ be a collection of linearly independent roots as above and let $D=D_S$ be the corresponding discriminant stratum. Define root system $\mathcal{R}_D = \mathcal{R}\cap \langle S \rangle$ and consider its orthogonal decomposition into irreducible ones,
\begin{equation}\label{RDdecompo}
\mathcal{R}_D=\bigsqcup_{i=1}^l \mathcal{R}_D^{(i)},
\end{equation}
where $l \in \mathbb{N}$. Let us denote by $\mathcal{A}^D$ the Coxeter arrangement in $V$ corresponding to $\mathcal{R}_D$, that is 
$$
\mathcal{A}^D = \{ H\in {\mathcal A}| H \supset D\}.
$$

 Let $\mathcal{A}_D$ be the restriction of the arrangement $\mathcal{A}$ to $D$, that is 
\begin{equation}
\label{resarr}
\mathcal{A}_D=\{ D \cap H | H \in \mathcal{A} \, \, \text{and} \, \, D \not \subset H \}.
\end{equation}
 For $\gamma \in \mathcal{R}_D$ let $\mathcal{A}^D_{\Pi_\gamma}$ be the restriction of the arrangement $\mathcal{A}^D$ to the hyperplane $\Pi_{\gamma} \in \mathcal{A}^D$. Let $r_i$ and $h^{(i)}$ denote the rank and the Coxeter number of the root system $\mathcal{R}_D^{(i)}$ respectively. Let 
\beq{Ii}
{\mathcal I}_i= {\mathcal I}(\mathcal{A}_{\Pi_{\gamma_i}} \setminus \mathcal{A}^D_{\Pi_{\gamma_i}}),
\eeq 
where $\gamma_i \in \mathcal{R}_D^{(i)}$ is arbitrary, be defining polynomial \eqref{defpol} of the arrangement $\mathcal{A}_{\Pi_{\gamma_i}} \setminus \mathcal{A}^D_{\Pi_{\gamma_i}}$ . It is easy to see by considering group action that for the simply laced root system $\mathcal R_D^{(i)}$ the restriction $\left. {\mathcal I}_i\right|_D$ does not depend (up to proportionality) on the choice of $\gamma_i \in \mathcal{R}_D^{(i)}$. It follows from the subsequent considerations in Section \ref{degreessection} that the restriction $\left. {\mathcal I}_i\right|_D$ does not depend on the choice of $\gamma_i \in \mathcal{R}_D^{(i)}$ in general.

Let us consider the determinant of the metric $\eta_D$ in some coordinates on $D$ which are
linear combinations of the coordinates $x^i$, $i=1, \dots, n$. The main result of this work is the following theorem.

\begin{theorem}\label{MMtheorem}
The determinant of the restricted Saito metric $\eta_D$ is proportional to the polynomial
\begin{equation}\label{rfth12}
{\mathcal I}(\mathcal{A}\setminus \mathcal{A}^D)^m  \prod_{i=1}^l {\mathcal I}_i^{r_i}
 \end{equation} 
 on $D$, where $m=2-\sum_{i=1}^lr_i$.
\end{theorem}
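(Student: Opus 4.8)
The plan is to reduce $\det\eta_D$ to the known determinant $\det\eta\sim{\mathcal I}(\mathcal{A})^2$ of Proposition \ref{detunrestricted} plus a transverse correction, and then to extract that correction by a local analysis near a generic point of $D$. I would first choose orthonormal coordinates $y^1,\dots,y^{n-k}$ on $D$ and $z^1,\dots,z^k$ on $D^\perp=\langle S\rangle$; then $\eta_D$ is the $D$-block of $\eta$ evaluated on $D$. At a point off the mirrors $\eta$ is non-degenerate, so Jacobi's complementary-minor identity gives
\begin{equation*}
\det\eta_D=\det\eta\cdot\det\big((\eta^{-1})_{\perp\perp}\big),
\end{equation*}
where $(\eta^{-1})_{\perp\perp}$ is the $D^\perp$-block of the contravariant Saito metric. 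Under the Euclidean identification $V^*\cong V$ the conormal space of $D$ is spanned by $\beta_1,\dots,\beta_k$ and corresponds to $D^\perp$, so this block is precisely the Gram matrix of the roots $\beta_j$ in the contravariant Saito metric. The whole computation is thus the $k\times k$ Gram determinant together with the finite limit of the product as $x\to D$.

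The essential feature is that both factors are singular along $D$: because the mirrors of $\mathcal{A}^D$ contain $D$, the factorisation ${\mathcal I}(\mathcal{A})={\mathcal I}(\mathcal{A}^D)\,{\mathcal I}(\mathcal{A}\setminus\mathcal{A}^D)$ forces $\det\eta$ to vanish on $D$, while $(\eta^{-1})_{\perp\perp}$ has a compensating pole. To make the cancellation explicit I would use the shape of the contravariant metric in $x$-coordinates. With $e=\partial_{t^n}$ the identity field, the defining relation $\eta^{ij}=\mathcal{L}_e g^{ij}$ together with $g^{ij}=\delta^{ij}$ yields $\eta^{ij}=-(\partial^i e^j+\partial^j e^i)$, and Cramer's rule writes $e^i=\partial x^i/\partial t^n$ as a polynomial cofactor divided by the Jacobian ${\mathcal J}\sim{\mathcal I}(\mathcal{A})$. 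This locates the pole of $(\eta^{-1})_{\perp\perp}$ on the factor ${\mathcal I}(\mathcal{A}^D)$ and turns the limit into a computation of leading terms against $\det\eta\sim{\mathcal I}(\mathcal{A})^2$.

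To organise the limit I would use the orthogonal decomposition \eqref{RDdecompo}: near a generic $x_0\in D$ the transverse space splits orthogonally as $D^\perp=\bigoplus_{i=1}^l\langle\mathcal{R}_D^{(i)}\rangle$ with $\dim\langle\mathcal{R}_D^{(i)}\rangle=r_i$, and the mirrors of $\mathcal{A}^D$ distribute among these summands. The transverse Gram determinant should then factor into $l$ blocks, one of size $r_i$ per irreducible component, each governed by the Coxeter geometry of $\mathcal{R}_D^{(i)}$. Applying the Coxeter factorisation of the Jacobian to each subsystem and matching the order of vanishing of $\det\eta_D$ along each hyperplane $D\cap H$ of $\mathcal{A}_D$ separates the two kinds of factors: traces of the mirrors in $\mathcal{A}\setminus\mathcal{A}^D$ survive with a common exponent, whereas the geometry internal to the $i$-th component contributes ${\mathcal I}_i$ to the power $r_i$. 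A global degree count using the degrees $d_i$ and the numbers $h^{(i)}$, $r_i$ then fixes the surviving exponent to be $m=2-\sum_{i=1}^l r_i$ and confirms that \eqref{rfth12} is a genuine polynomial, the (possibly negative) power $m$ being absorbed by common factors already present in the ${\mathcal I}_i$.

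The step I expect to be the main obstacle is exactly this finite-limit extraction and its factorisation over components: one must compute the precise leading pole of the transverse Gram determinant as ${\mathcal I}(\mathcal{A}^D)\to0$ and identify its leading coefficient with $\prod_i{\mathcal I}_i^{r_i}$, reconciling the mirrors through $D$ with their traces on $\Pi_{\gamma_i}$. As a cross-check and an alternative derivation, the Frobenius-manifold identity $\eta(X,Y)=g(E\circ X,Y)$, combined with the restriction of the multiplication to $D$ established in \cite{FV}, gives $\det\eta_D\sim\det(E\circ_D)$ up to the constant $\det g_D$; evaluating this determinant as the product of eigenvalues of Euler multiplication on the restricted Frobenius algebra should reproduce the same linear factors and multiplicities.
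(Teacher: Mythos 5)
Your opening move is sound and in fact coincides with the paper's general formula: the identity $\det\eta_D=\det\eta\cdot\det\big((\eta^{-1})_{\perp\perp}\big)$ is exactly Theorem \ref{Saitovo} (obtained there from Proposition \ref{Prasolov}), with the transverse block written in the coweight coordinates $\widetilde{x}^i$ and $\det\eta=-J^2$; the cross-check via the Euler multiplication is also in the paper as Corollary \ref{cordet}. But everything after that is where the content of the theorem lives, and your proposal does not supply it. Two specific claims are unsupported and, as stated, would fail. First, the transverse Gram determinant does not split into $l$ blocks according to the decomposition \eqref{RDdecompo}: that decomposition is orthogonal for the Euclidean form $g$, not for the Saito form $\eta$, so the off-diagonal entries coupling different components $\mathcal{R}_D^{(i)}$, $\mathcal{R}_D^{(j)}$ have no reason to vanish; indeed already in codimension $2$ the anti-diagonal term (the quantity $B=\eta^{ml}J$ of Lemma \ref{Blemma}) is singular along $D$ and contributes to the leading coefficient on an equal footing with the diagonal terms. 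Second, a global degree count yields a single linear relation among the multiplicities $k_H$, not their individual values; the theorem is equivalent (via Theorem \ref{theoremequiv}) to the statement $k_H=h(\mathcal{R}_{D,\beta}^{(0)})$ for every $H\in\mathcal{A}_D$, and determining each $k_H$ requires identifying the root systems $\mathcal{R}_{D,\beta}$ one by one.

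The ``finite-limit extraction'' that you yourself flag as the main obstacle is precisely the part the authors were unable to carry out uniformly: they prove the result type by type, using Landau--Ginzburg superpotentials and canonical coordinates for $A_n$, $B_n$, $D_n$ (Sections \ref{classicalseriessection} and \ref{classicalsection2}), an iterated-restriction analysis of the singular terms for strata of low codimension in the exceptional groups (Section \ref{exceptionalsection}, where even the codimension-$3$ stratum of type $A_3$ requires half a dozen lemmas to control the orders of the poles), and Mathematica for the remaining $E_7$ and $E_8$ strata (Section \ref{exceptionalsectionrem}); the concluding remarks state explicitly that a uniform proof remains to be found. So your proposal reproduces the paper's correct starting point and correctly locates the difficulty, but the step you defer is the theorem itself.
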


Note that if the set $S$ is empty so that the stratum $D=V$ and $\eta=\eta_D$ then $l=0$, $m=2$ and we recover Proposition \ref{detunrestricted}. It will be clear from later considerations that function \eqref{rfth12} is indeed non-singular on $D$ even when $m<0$.

Theorem \ref{MMtheorem} implies that the determinant of $\eta_D$ is a product of linear forms on $D$. Its zeroes correspond to restrictions of roots from $\mathcal{R}$ and the multiplicities of zeroes are special as given by formula \eqref{rfth12}. 

Let us consider a general constant metric of the form $\widehat{\eta}= \sum_{i=1}^n dp^{i} dp^{n+1-i}$. A natural question is whether restriction of such metric to any stratum $D$ satisfies the factorisation property as in Theorem \ref{MMtheorem}. In other words, how special is the property of the metric $\eta_D$ firstly to have a factorised determinant and, secondly, to have prescribed multiplicities of linear factors? Let us consider the following example which explains that the Saito case is very special. 

\begin{example}\label{example1} Let $\mathcal{R}= D_3 = A_3$ and consider the following basic invariants:
\begin{align*}
p^1= \frac{1}{8}( (x^1)^2+(x^2)^2+(x^3)^2) , \quad p^2 = x^1x^2x^3, \quad p^3  = a ( (x^1)^4+(x^2)^4+(x^3)^4) + b (p^1)^2, 
\end{align*}
for some $a, b \in \mathbb{C}$, $a\neq0$. Let $\alpha=e_2-e_3$ and consider the corresponding stratum $D= \Pi_\alpha$. Then the determinant of metric $\widehat{\eta}$ restricted to $D$ is proportional to
\begin{equation*}
(x^3)^2 \big((x^1)^2 -( x^3)^2\big)^2 \big(  ( -64a +32 a^2 -b)(x^1)^2 - 2 (32a +b)(x^3)^2\big). 
\end{equation*}
Furthermore, $\operatorname{det}\widehat{\eta}_D$ is a product of linear factors which all vanish on the restricted arrangement $\mathcal{A}_D$ exactly when $a$ and $b$ satisfy the following relations:
\begin{align}
\operatorname{det}\widehat{\eta}_D & \sim \Big(x^1  x^3 \big((x^1)^2 -( x^3)^2\big)\Big)^2, \quad b= -32a, \tag{A}\\
\operatorname{det}\widehat{\eta}_D &\sim (x^3)^4 \big((x^1)^2 - (x^3)^2\big)^2, \quad b=32 a(a -2 ),\tag{B}\\ 
\operatorname{det}\widehat{\eta}_D& \sim (x^3)^2 \big((x^1)^2 - (x^3)^2\big)^3, \quad b= \frac{32}{3} a\left(a-4 \right) \tag{C}.
\end{align}

Note that $p^i$, ($i=1,2, 3$) are Saito polynomials if $a=-\frac{1}{2}$ and $b= 24$. In this case $\widehat{\eta}=\eta$ and $\operatorname{det}\eta_D$ takes the form (C) as expected from Theorem \ref{MMtheorem}. In cases (A) and (B) the multiplicities of linear factors have different forms. The factor $\frac{1}{8}$ in $p^1$ is necessary in order to ensure that $\widehat \eta$ can specialise into the Saito metric, while a replacement of $p^2$ with its multiple leads to a proportional metric $\widehat \eta$.
\end{example}

Further to Example \ref{example1}, in general metric $\widehat{\eta}$ in higher dimensions can have restriction on a stratum $D$ with nonlinear zero loci of the determinant. 
 
 The structure of the paper is as follows. In Section \ref{degreessection} we show that Theorem \ref{MMtheorem} is equivalent to two statements. The first one is Theorem \ref{thma} which states that $\operatorname{det}\eta_D$ is a product of linear forms whose zero set defines the restricted arrangement $\mathcal{A}_D$. The second statement is Theorem \ref{thma1} which states what the degrees of these linear factors are. They are Coxeter numbers of suitably defined root systems. We also explain in Section \ref{degreessection} that it is sufficient to prove Theorem \ref{MMtheorem} as well as Theorems \ref{thma}, \ref{thma1} for the case when stratum $D$ is in the closure of the fundamental domain. 

In Section \ref{lastsection} we revisit Dubrovin's duality on discriminant strata. Thus we prove in Proposition \ref{multonD} that discriminant strata are natural submanifolds.
% which is used later in Section \ref{classicalseriessection}. 
This allows us to define the operator of multiplication by the Euler vector field on the strata, and we show in Corollary \ref{cordet} that
its determinant is proportional to the determinant \eqref{rfth12} of the restricted Saito metric $\eta_D$.
 
 The proofs of main theorems which we give depend on the type of root system $\mathcal{R}$. Thus we prove Theorems \ref{thma} and \ref{thma1} for classical root systems in Section \ref{classicalseriessection} and \ref{classicalsection2} respectively. Considerations are based on the use of Landau-Ginzburg superpotentials for the corresponding Frobenius manifolds and their discriminant strata. 
 
 In Section \ref{generalformsection} we derive a general formula for the determinant of the restricted Saito metric on discriminant strata. We use this formula in Section \ref{exceptionalsection} where we prove Theorem \ref{MMtheorem} for the strata of the exceptional root systems in dimension $1$ and codimensions $1$, $2$ and $3$. The corresponding analysis in codimension $4$ is similar but technically more involved and we omit it, we refer to \cite{GA} for all the details. This covers all strata in the orbit spaces of the Coxeter groups $I_2(p)$, $H_3$, $H_4$, $F_4$.

 In Section \ref{exceptionalsectionrem} we consider the remaining cases, namely the strata of codimension $5$ in $E_7$ and the strata of codimensions $5$ and $6$ in $E_8$. In these cases we obtain explicit formulae for the determinant of the restricted Saito metric and analyse the corresponding multiplicities with the help of Mathematica. Determinant of the corresponding restricted Saito metric is given explicitly in the tables in that section. This completes Theorem \ref{MMtheorem} for all the cases. 
 
\vspace{3mm}

{\bf Acknowledgements}. We would like to thank Theo Douvropoulos for interesting discussions. The work of Georgios Antoniou was funded by EPSRC doctoral training partnership grants EP/M506539/1, EP/M508056/1, EP/N509668/1.
 
 \section{Degrees of linear factors}\label{degreessection}

In this section we give an equivalent formulation of Theorem \ref{MMtheorem}. 
We start with introducing some notations.

%Theorem \ref{MMtheorem} states, in particular, that the determinant of the restricted Saito metric $\eta_D$ for a discriminant stratum $D$ is a product of linear forms. 
For any stratum $D$ and an  element $H\in \mathcal{A}_D$ of the restricted arrangement $ \mathcal{A}_D$  given by  \eqref{resarr}     we choose a covector $l_H \in D^*$ such that $H=\{ x \in D | l_H(x)=0\}$. We can identify vectors and covectors using bilinear form $(\, , \,)$, so that $\beta \in \mathcal{R}$ will also mean a covector $\beta=\beta(x)=(\beta,x)$, where $x\in V$. Note that for any $H \in \mathcal{A}_D$ there is a root $\beta \in \mathcal{R}$ such that $\left. \beta \right|_D$ is proportional to $l_H$. 

%In these terms Theorem \ref{MMtheorem} implies the following statement. 

Recall that a root system $\mathcal R$ is called irreducible if it is not a disjoint union of its two orthogonal subsets, and otherwise it is called reducible \cite{cox1}.
The following fact follows from this definition.  

\begin{lemma}\label{irredulemma1}
Let $\mathcal{R}$ be a reducible root system so that $\mathcal{R}= \mathcal{R}_1 \sqcup \mathcal{R}_2$, where subsets $\mathcal{R}_1, \mathcal{R}_2 \subset \mathcal{R}$ 
satisfy orthogonality $(\alpha, \beta)=0$ for any $\alpha \in {\mathcal R}_1, \beta\in{\mathcal R}_2$. 
 Consider the corresponding vector space decomposition $\langle \mathcal{R} \rangle = V_1 \oplus V_2$, where $V_i=\langle \mathcal{R}_i \rangle$, $i=1, 2$. Let $\widetilde{\mathcal{R}} \subset \mathcal{R}$ be an irreducible subsystem. Then either $\widetilde{\mathcal{R}} \subset V_1$ or $\widetilde{\mathcal{R}} \subset V_2$.
\end{lemma}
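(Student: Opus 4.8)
The plan is to reduce the statement directly to the definition of irreducibility by splitting $\widetilde{\mathcal{R}}$ along the given orthogonal decomposition of $\mathcal{R}$. First I would set $\widetilde{\mathcal{R}}_i = \widetilde{\mathcal{R}} \cap \mathcal{R}_i$ for $i=1,2$. Since $\widetilde{\mathcal{R}} \subset \mathcal{R} = \mathcal{R}_1 \sqcup \mathcal{R}_2$, every root of $\widetilde{\mathcal{R}}$ lies in exactly one of the $\mathcal{R}_i$, and so we obtain a disjoint decomposition $\widetilde{\mathcal{R}} = \widetilde{\mathcal{R}}_1 \sqcup \widetilde{\mathcal{R}}_2$.

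Next I would check that this decomposition is orthogonal, which is immediate from the hypothesis: any $\alpha \in \widetilde{\mathcal{R}}_1 \subset \mathcal{R}_1$ and $\beta \in \widetilde{\mathcal{R}}_2 \subset \mathcal{R}_2$ satisfy $(\alpha, \beta) = 0$. Thus $\widetilde{\mathcal{R}}$ is exhibited as a disjoint union of two mutually orthogonal subsets. By the definition of irreducibility recalled above, an irreducible root system admits no such nontrivial decomposition, and hence one of $\widetilde{\mathcal{R}}_1$, $\widetilde{\mathcal{R}}_2$ must be empty.

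Finally I would conclude: if $\widetilde{\mathcal{R}}_2 = \emptyset$ then $\widetilde{\mathcal{R}} = \widetilde{\mathcal{R}}_1 \subset \mathcal{R}_1 \subset V_1$, and symmetrically $\widetilde{\mathcal{R}} \subset V_2$ in the opposite case, which is exactly the claim. Since the argument is a direct unwinding of the definitions, I do not anticipate a genuine obstacle; the only point needing a word of care is the degenerate possibility that $\widetilde{\mathcal{R}}$ be empty, in which case the statement holds vacuously (equivalently, one adopts the usual convention that an irreducible root system is nonempty). It is worth noting that the proof uses nothing about $\widetilde{\mathcal{R}}$ beyond its being a subset of $\mathcal{R}$ that cannot be partitioned into two orthogonal nonempty pieces, so the lemma is really a statement about orthogonal set decompositions rather than about the finer structure of root systems.
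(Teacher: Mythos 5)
Your proof is correct and is precisely the direct unwinding of the definition that the paper has in mind: the paper states the lemma without proof, remarking only that it "follows from this definition" of irreducibility, and your decomposition $\widetilde{\mathcal{R}} = (\widetilde{\mathcal{R}}\cap\mathcal{R}_1)\sqcup(\widetilde{\mathcal{R}}\cap\mathcal{R}_2)$ is exactly that argument. No discrepancy with the paper's (implicit) reasoning.
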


The root system ${\mathcal R}_D$ is defined as the subsystem in the root system $\mathcal R$ orthogonal to the stratum $D$:
\beq{RD}
{\mathcal R}_D = \{\alpha \in {\mathcal R}| (\alpha,x)=0 \,\,  \forall x\in D\}.
\eeq

 Let $u \in V$ and let $B\subset V$ be a subset of vectors. We will denote by $\langle B, u \rangle$ the vector space spanned by elements of $B$ and $u$. For any $\beta \in \mathcal{R}\setminus \mathcal{R}_D$ we define the root system $\mathcal{R}_{D, \beta}= \langle \mathcal{R}_D, \beta\rangle \cap \mathcal{R}$ which can be represented as a disjoint union of irreducible root systems $\mathcal{R}_{D, \beta}^{(i)}$, ($i=0, \dots, p$), as follows:
\begin{equation}\label{decomposition}
\mathcal{R}_{D, \beta}= \bigsqcup_{i=0}^p \mathcal{R}_{D, \beta}^{(i)}.
\end{equation}
We will assume that $\beta \in \mathcal{R}_{D, \beta}^{(0)}$. Recall decomposition \eqref{RDdecompo} of the root system $\mathcal{R}_D$. It follows from Lemma \ref{irredulemma1} that 
\begin{align}\label{irred1}
\mathcal{R}_{D, \beta}^{(0)} \supset \bigsqcup_{i \in I} \mathcal{R}_D^{(i)},
\end{align}
for some subset $I \subset \{1, \dots, l\}$ and
\begin{align}\label{irred2}
\mathcal{R}_{D, \beta}^{(j)} = \mathcal{R}_D^{(i_j)},
\end{align}
where $1 \leq j \leq p$, $p=l-|I|$ and $i_j \in \{1, \dots, l \} \setminus I$.

The next proposition explains that different choices of $\beta$ can lead to the same root systems ${\mathcal R}_{D, \beta}$ and ${\mathcal R}_{D, \beta}^{(0)}$.

\begin{proposition}\label{propodeg} Let $\mathcal{R}_{D, \beta}^{(0)}$ be the root system from the decomposition (\ref{decomposition}). Let $\widetilde{\beta} \in \mathcal{R}$ be such that $\left. \widetilde{\beta} \right|_D$ is a non-zero multiple of $\left. \beta \right|_D$. Then $\widetilde{\beta} \in \mathcal{R}_{D, \beta}^{(0)}$.
\end{proposition}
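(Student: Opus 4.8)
The plan is to run everything through the orthogonal decomposition of covectors relative to $D$ and its orthogonal complement. First I would record the identification $D^\perp = \langle S\rangle = \langle \mathcal{R}_D\rangle$: each $\beta_j \in S$ lies in $\mathcal{R}\cap\langle S\rangle = \mathcal{R}_D$, so the roots orthogonal to $D$ already span $\langle S\rangle = D^\perp$, giving $\langle \mathcal{R}_D\rangle = D^\perp$. For any root $\gamma$ I would write $\gamma = \gamma^\perp + \gamma^D$ with $\gamma^\perp \in D^\perp$ and $\gamma^D \in D$; then the covector $\left.\gamma\right|_D$ is identified with the orthogonal projection $\gamma^D$, and the elementary but crucial observation is that $\gamma \in \mathcal{R}_D$ if and only if $\gamma^D = 0$.

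Next I would translate the hypothesis and locate $\widetilde\beta$ inside $\mathcal{R}_{D,\beta}$. The assumption that $\left.\widetilde\beta\right|_D$ is a nonzero multiple of $\left.\beta\right|_D$ says $\widetilde\beta^D = c\,\beta^D$ for some $c\neq 0$. Since $\beta \in \mathcal{R}\setminus\mathcal{R}_D$ we have $\beta^D \neq 0$, whence $\widetilde\beta^D \neq 0$ and $\widetilde\beta \notin \mathcal{R}_D$. Using $\langle \mathcal{R}_D,\beta\rangle = \langle\mathcal{R}_D\rangle + \mathbb{C}\beta = D^\perp + \mathbb{C}\beta = D^\perp \oplus \mathbb{C}\beta^D$ (the last equality because $\beta^\perp \in D^\perp$), I would observe that $\widetilde\beta = \widetilde\beta^\perp + c\,\beta^D$ lies in $D^\perp + \mathbb{C}\beta^D = \langle \mathcal{R}_D,\beta\rangle$; as $\widetilde\beta \in \mathcal{R}$, this yields $\widetilde\beta \in \langle\mathcal{R}_D,\beta\rangle\cap\mathcal{R} = \mathcal{R}_{D,\beta}$.

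Finally I would pin down the irreducible component. By \eqref{irred2}, the components $\mathcal{R}_{D,\beta}^{(j)}$ with $j\geq 1$ coincide with irreducible factors $\mathcal{R}_D^{(i_j)}$ of $\mathcal{R}_D$, and are therefore contained in $\mathcal{R}_D \subset D^\perp$. Since $\widetilde\beta \notin D^\perp$, it cannot lie in any of these, so from $\widetilde\beta \in \mathcal{R}_{D,\beta} = \bigsqcup_{i=0}^p \mathcal{R}_{D,\beta}^{(i)}$ we conclude $\widetilde\beta \in \mathcal{R}_{D,\beta}^{(0)}$. The only genuinely substantive input is this last step, which rests on the structural fact—already furnished by Lemma \ref{irredulemma1} through \eqref{irred1}–\eqref{irred2}—that every component of $\mathcal{R}_{D,\beta}$ other than $\mathcal{R}_{D,\beta}^{(0)}$ is entirely orthogonal to $D$. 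Everything else is bookkeeping of projections, so I anticipate no real obstacle beyond confirming $\langle\mathcal{R}_D\rangle = D^\perp$, which makes the description of $\langle\mathcal{R}_D,\beta\rangle$ exact.
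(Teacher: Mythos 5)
Your proof is correct and follows essentially the same route as the paper's: place $\widetilde\beta$ in $\mathcal{R}_{D,\beta}=\langle\mathcal{R}_D,\beta\rangle\cap\mathcal{R}$ and then exclude the components $\mathcal{R}_{D,\beta}^{(j)}$, $j\ge 1$, because they lie in $\mathcal{R}_D$ and hence restrict to zero on $D$. The only difference is cosmetic: you spell out via the orthogonal projection onto $D^\perp=\langle\mathcal{R}_D\rangle$ why $\widetilde\beta\in\langle\mathcal{R}_D,\beta\rangle$, a step the paper asserts directly as $\langle\mathcal{R}_D,\beta\rangle=\langle\mathcal{R}_D,\widetilde\beta\rangle$.
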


\begin{proof}
Let $\widehat{V}$ be the vector space $\widehat{V}= \langle \mathcal{R}_D, \beta\rangle= \langle \mathcal{R}_D, \widetilde{\beta} \rangle$ and consider the root system $\widehat{\mathcal{R}}= \widehat{V} \cap \mathcal{R}$. Then $\widehat{\mathcal{R}}$ takes the form 
 $$\widehat{\mathcal{R}}=  \mathcal{R}_{D, \beta} =  \bigsqcup_{i=0}^p \mathcal{R}_{D, \beta}^{(i)}. $$
Clearly $\widetilde \beta \in \widehat {\mathcal R}$. 
Let us suppose that $\widetilde{\beta} \not \in \mathcal{R}_{D, \beta}^{(0)}$. Then $\widetilde{\beta} \in \mathcal{R}_D^{(i)}$ for some $i \in \{1, \dots, l\} \setminus I$, hence $\left. \widetilde{\beta} \right|_D=0$, which is a contradiction. Thus the statement follows. 
\end{proof}

We will use the following useful statement on the cardinality of the restricted Coxeter arrangement.

\begin{proposition}\cite{orlik}\label{arrangement}
Let $\mathcal{A}$ be the Coxeter arrangement for an irreducible Coxeter group $W$, and let $H \in \mathcal{A}$. Then the cardinality of $\mathcal{A}_H$ is
\begin{equation}
|\mathcal{A}_H|= |\mathcal{A}| - h +1,
\end{equation}
where $h$ is the Coxeter number of $W$. In particular, $|\mathcal{A}_H|$ does not depend on the choice of $H$.
\end{proposition}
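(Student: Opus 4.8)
The plan is to reduce the count to a statement about rank-two subsystems and then supply the Coxeter number via the freeness of the arrangement. First I would analyse the natural surjection $\phi\colon \mathcal{A}\setminus\{H\}\to\mathcal{A}_H$ given by $H'\mapsto H\cap H'$, whose image is exactly the restricted arrangement \eqref{resarr}. For a codimension-two flat $X=H\cap H'\in\mathcal{A}_H$ the set of mirrors containing $X$ is governed by the rank-two root subsystem $\mathcal{R}\cap X^{\perp}$, which is dihedral of some type $I_2(m_X)$; since $\alpha\in X^{\perp}$, where $H=\Pi_\alpha$, this subsystem always contains $\alpha$. Hence the fibre $\phi^{-1}(X)$ consists of the $m_X-1$ mirrors through $X$ other than $H$, and partitioning $\mathcal{A}\setminus\{H\}$ over the fibres gives $|\mathcal{A}|-1=\sum_{X\in\mathcal{A}_H}(m_X-1)$, that is $|\mathcal{A}_H|=\sum_{X}m_X-|\mathcal{A}|+1$.

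This reduces the proposition to the single numerical identity $\sum_{X\in\mathcal{A}_H}m_X=2|\mathcal{A}|-h$, equivalently $\sum_{X\in\mathcal{A}_H}(m_X-2)=h-2$, which is where the Coxeter number must enter. I expect this to be the main obstacle, precisely because $W$ need not act transitively on the mirrors (there can be two root lengths, hence two orbits of hyperplanes), so the asserted independence of $|\mathcal{A}_H|$ from $H$ is not formal and cannot follow from symmetry alone.

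To supply $h$ cleanly I would pass to characteristic polynomials and use freeness. The Coxeter arrangement $\mathcal{A}$ is free with exponents $m_1\le\cdots\le m_n$ equal to the exponents of $W$, the top one being $m_n=h-1$, and for any essential free arrangement the number of hyperplanes equals the sum of the exponents, so $|\mathcal{A}|=\sum_{i=1}^n m_i$. The restricted arrangement $\mathcal{A}_H$ is essential of rank $n-1$ in $H\cong\mathbb{C}^{n-1}$ (using irreducibility of $W$), and the restriction theorem for reflection arrangements identifies it as again free, with exponents the $n-1$ smallest numbers $m_1,\dots,m_{n-1}$. Reading off the coefficient of $t^{\,n-2}$ in its characteristic polynomial $\prod_{i=1}^{n-1}(t-m_i)$ then yields
\[
|\mathcal{A}_H|=\sum_{i=1}^{n-1}m_i=\Big(\sum_{i=1}^{n}m_i\Big)-(h-1)=|\mathcal{A}|-h+1,
\]
which also makes the independence from $H$ manifest, since the numbers $m_1,\dots,m_{n-1}$ do not refer to $H$.

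The real content is thus the identification of the exponents of the restriction, which is the substantive input behind \cite{orlik}. If one prefers to avoid this structure theory, the same identity $\sum_{X}(m_X-2)=h-2$ can instead be verified directly: by the reduction above it depends only on the $W$-orbit of $H$, of which there are at most two (one per root length), so it suffices to check the two representatives in each irreducible type using the classification of rank-two subsystems through a fixed root. This case analysis is elementary but not uniform, whereas the freeness argument is uniform at the cost of invoking the restriction theorem.
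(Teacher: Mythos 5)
The paper does not prove this proposition at all: it is imported verbatim from \cite{orlik} (Orlik--Solomon--Terao), so there is no in-paper argument to compare yours against. Judged on its own terms, your proposal is correct. The fibration step is sound: each $H'\in\mathcal{A}\setminus\{H\}$ lies in exactly one codimension-two flat $X=H\cap H'$, the mirrors through $X$ are the $\Pi_\beta$ with $\beta$ in the rank-two subsystem $\mathcal{R}\cap X^{\perp}$ (of type $I_2(m_X)$, with $m_X\ge 2$ and $m_X=2$ allowed for the reducible case $A_1\times A_1$), so $|\mathcal{A}|-1=\sum_{X\in\mathcal{A}_H}(m_X-1)$ and the proposition becomes $\sum_X m_X=2|\mathcal{A}|-h$. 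The second step — freeness of $\mathcal{A}$ with exponents the exponents $m_1\le\cdots\le m_n$ of $W$, $m_n=h-1$, $|\mathcal{A}|=\sum_i m_i$, essentiality of $\mathcal{A}_H$, and freeness of the restriction with exponents $m_1,\dots,m_{n-1}$ — is also correct and delivers the count together with the independence from $H$. The one caveat, which you yourself flag, is that the identification of the exponents of $\mathcal{A}_H$ as the $n-1$ smallest exponents of $W$ is essentially the main theorem of the very reference being cited (proved there partly case by case for the exceptional types), so your argument is best read as a clean reduction of the cardinality statement to that theorem rather than an independent proof. Your fallback route — verifying $\sum_X(m_X-2)=h-2$ on at most two orbit representatives of mirrors per irreducible type via the classification of rank-two subsystems through a fixed root — is a legitimate, if non-uniform, way to close that gap without invoking the restriction theorem. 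Either way the numerology checks out on examples ($B_n$, $D_n$, $H_3$, $F_4$, $E_8$), and nothing in the argument is circular with respect to the way Proposition \ref{arrangement} is used later in the paper (e.g.\ in Theorem \ref{theoremequiv}).
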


Let us also recall the following result.

\begin{proposition}\label{propos8}\cite{bourbaki}
Let $\mathcal{R}$ be an irreducible root system of rank $r$, and let $\mathcal{A}$ be the corresponding Coxeter arrangement. Let $h$ be the Coxeter number of $\mathcal{R}$. Then the cardinality $$|\mathcal{A}|=\frac{r h}{2}.$$ 
\end{proposition}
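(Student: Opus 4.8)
The final statement to prove is Proposition~\ref{propos8}: for an irreducible root system $\mathcal{R}$ of rank $r$ with Coxeter number $h$, the associated Coxeter arrangement satisfies $|\mathcal{A}| = rh/2$. Since $|\mathcal{A}|$ is the number of reflecting hyperplanes, equivalently the number of positive roots $N = |\mathcal{R}_+|$, the claim is the classical identity $N = rh/2$.

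\medskip

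The plan is to deduce this from the relationship between the Coxeter number and the exponents $m_1, \dots, m_r$ of $W$. First I would recall the two standard facts about exponents. The exponents are defined by $d_i = m_i + 1$, where $d_1, \dots, d_r$ are the degrees of the basic invariants; in the notation of this paper $d_r = h$, so the largest exponent is $m_r = h - 1$, and more generally the exponents enjoy the symmetry $m_i + m_{r+1-i} = h$ (this is the palindromic/duality property of the exponents of a Coxeter group). Summing this symmetry over $i = 1, \dots, r$ gives $2\sum_{i=1}^r m_i = rh$, hence $\sum_{i=1}^r m_i = rh/2$.

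\medskip

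Second, I would invoke the other classical identity relating the exponents to the number of positive roots, namely $\sum_{i=1}^r m_i = N$, the number of reflections (equivalently, the number of positive roots, equivalently $|\mathcal{A}|$). This can be obtained, for instance, from the factorisation of the Poincar\'e polynomial of $W$ as $\prod_{i=1}^r (1 + t + \dots + t^{m_i})$, whose degree-one coefficient counts the simple-plus generated reflections and whose value encodes $\sum m_i$ as the number of reflections; alternatively one uses that the product of the degrees equals $|W|$ together with the standard count of reflections. Combining the two identities yields $|\mathcal{A}| = N = \sum_{i=1}^r m_i = rh/2$, as required.

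\medskip

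The main obstacle, such as it is, lies in justifying the two input identities—the palindromic symmetry $m_i + m_{r+1-i} = h$ and the count $N = \sum m_i$—without simply quoting them. Since the statement is cited to \cite{bourbaki}, the cleanest route in this paper is to take these as known structural facts about Coxeter groups and assemble them as above; the arithmetic of pairing up exponents is then entirely routine. If a self-contained argument were wanted, the more delicate step would be establishing $N = \sum m_i$, which genuinely requires either the Poincar\'e polynomial factorisation or a Coxeter-element eigenvalue argument, whereas the symmetry of exponents is comparatively elementary.
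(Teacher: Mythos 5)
Your proposal is correct. Note that the paper itself offers no proof of Proposition \ref{propos8} at all: it is stated as a quoted fact with a citation to Bourbaki, so there is no argument in the text to compare against. Your derivation assembles the identity from two standard structural facts about an irreducible finite Coxeter group: the duality of exponents $m_i + m_{r+1-i} = h$ (equivalently $d_i + d_{r+1-i} = h+2$ for the degrees, which the paper implicitly uses elsewhere, e.g.\ in the proof of Theorem \ref{theoremDimension1}), and the count $|\mathcal{A}| = |\mathcal{R}_+| = \sum_{i=1}^r m_i$ of reflections. Pairing the exponents then gives $2|\mathcal{A}| = rh$. This is a perfectly valid route, and you are right that the only genuinely nontrivial input is $N=\sum m_i$. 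For comparison, the argument usually attributed to Coxeter (and the one closest to the cited source) is more direct and bypasses the exponents entirely: a Coxeter element $c$ has order $h$, and the cyclic group $\langle c\rangle$ acts on $\mathcal{R}$ with exactly $r$ orbits, each free of size $h$; hence $|\mathcal{R}| = rh$ and $|\mathcal{A}| = |\mathcal{R}|/2 = rh/2$. That version buys you the orbit structure of the roots under $c$ as a byproduct, whereas yours leans on the exponent machinery that is in any case already present in the Saito/Frobenius context of this paper. Either way the statement is established; there is no gap.
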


%Now let us state what the degrees $k_H$ in Theorem \ref{thma} are. 
Now we are ready to rewrite the function \eqref{rfth12} on $D$  as a product of linear factors with certain multiplicities, which, in particular, implies that it is a polynomial.  
The multiplicity of the factor $l_H\in D^*$ is given in terms of the root system  $\mathcal{R}_{D, \beta}^{(0)}$, where $\beta \in \mathcal{R}$ is such that its restriction $\left.\beta\right|_D$ is proportional to $l_H$. It follows from Proposition \ref{propodeg} that the subsystem $\mathcal{R}_{D, \beta}^{(0)} \subset \mathcal{R}$ does not depend on the choice of $\beta$ for a given $l_H$. The following theorem holds.

\begin{theorem}
\label{theoremequiv}
For any stratum $D$ let ${\mathcal R}_D$ be the corresponding root system \eqref{RD} with decomposition \eqref{RDdecompo} into irreducible subsystems.
Let $Q$ be  given by
\beq{polQ}
Q=  \mathcal{I}(\mathcal{A}\setminus \mathcal{A}^D)^m  \prod_{i=1}^l \mathcal{I}_i^{r_i},
\eeq
where polynomials $\mathcal{I}_i$ are given by formula \eqref{Ii} with $\gamma_i \in \mathcal{R}_D^{(i)}$, $r_i$ is the rank of $ \mathcal{R}_D^{(i)}$, and  $m=2-\sum_{i=1}^lr_i$.
 For any $H \in \mathcal{A}_D$ let $\beta = \beta_H \in \mathcal{R}$ be such that $\left. \beta \right|_D$ is a non-zero multiple of $l_H$. 
Then restriction of  $Q$ on the stratum $D$ satisfies proportionality 
$$
Q|_D \sim \prod_{H \in {\mathcal A}_D} l_H^{k_H},
$$
where $k_H=h(\mathcal{R}_{D, \beta}^{(0)})$ is the Coxeter number of the root system $\mathcal{R}_{D, \beta}^{(0)}$ from the decomposition (\ref{decomposition}).
\end{theorem}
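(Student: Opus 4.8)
The plan is to compute, for each hyperplane $H \in \mathcal{A}_D$ with linear form $l_H$, the exact multiplicity with which $l_H$ occurs in $Q|_D$, and to verify that it equals $h(\mathcal{R}_{D,\beta}^{(0)})$. Fix such an $H$, choose $\beta = \beta_H \in \mathcal{R}$ with $\left.\beta\right|_D$ proportional to $l_H$, and write $\mathcal{R}_0 = \mathcal{R}_{D,\beta}^{(0)}$, $h_0 = h(\mathcal{R}_0)$, $r_0 = \operatorname{rank}\mathcal{R}_0$; by Proposition \ref{propodeg} these do not depend on the choice of $\beta$. Since $Q$ is built from defining polynomials of Coxeter-type arrangements whose mirrors do not contain $D$, its restriction to $D$ is a product of forms $l_{H'}$, $H' \in \mathcal{A}_D$, with no vanishing factors; doing the count below for every $H$ accounts for all of $Q|_D$, and since each net exponent will come out positive this will also show $Q|_D$ is a genuine polynomial even when $m<0$. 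The first step is to record which roots restrict to a multiple of a given $l_H$. Using \eqref{irred1}--\eqref{irred2} in the form $\mathcal{R}_0 \supset \bigsqcup_{i\in I}\mathcal{R}_D^{(i)}$, I would check that $\langle\mathcal{R}_0\rangle = \langle\bigsqcup_{i\in I}\mathcal{R}_D^{(i)},\beta\rangle$ and that the restriction map $\alpha\mapsto\left.\alpha\right|_D$ has kernel $\langle\bigsqcup_{i\in I}\mathcal{R}_D^{(i)}\rangle$ on this space, so its image is one-dimensional, spanned by $l_H$. Combined with Proposition \ref{propodeg}, this shows the roots whose restriction is a nonzero multiple of $l_H$ are exactly the roots of $\mathcal{R}_0$ not lying in $\mathcal{R}_D$.

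Next I would compute the multiplicity $a_H$ of $l_H$ in $\mathcal{I}(\mathcal{A}\setminus\mathcal{A}^D)|_D$. By the previous step this equals the number of positive roots of $\mathcal{R}_0$ outside $\mathcal{R}_D$, which by Proposition \ref{propos8} is $a_H = \tfrac12\big(r_0 h_0 - \sum_{i\in I} r_i h^{(i)}\big)$, where $h^{(i)}$ is the Coxeter number of $\mathcal{R}_D^{(i)}$.

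The main work is the multiplicity $b_{H,i}$ of $l_H$ in $\mathcal{I}_i|_D$, where $\mathcal{I}_i$ is built on the mirror $\Pi_{\gamma_i}$, $\gamma_i \in \mathcal{R}_D^{(i)}$. Here the key point is that passing to the restricted arrangement on $\Pi_{\gamma_i}$ collapses all mirrors $\Pi_\rho$ of a rank-two subsystem containing $\gamma_i$ to the single hyperplane $\Pi_\rho\cap\Pi_{\gamma_i}$, since every such $\Pi_\rho$ contains the codimension-two space $\Pi_{\gamma_i}\cap\Pi_\rho$. Thus $b_{H,i}$ counts the distinct such subsystems meeting the non-vanishing part of $\mathcal{R}_0$. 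I would split into two cases. If $i\notin I$ then $\gamma_i\perp\mathcal{R}_0$, so no two distinct non-vanishing roots of $\mathcal{R}_0$ collapse together and none of the resulting hyperplanes lie in $\mathcal{A}^D_{\Pi_{\gamma_i}}$, giving $b_{H,i}=a_H$. If $i\in I$ then $\gamma_i\in\mathcal{R}_0$, and I would count the hyperplanes of the restricted arrangement of the Coxeter arrangement of $\mathcal{R}_0$ to $\Pi_{\gamma_i}$ and subtract those coming from vanishing roots (the restricted arrangement of $\mathcal{R}_D^{(i)}$ on $\Pi_{\gamma_i}$, plus the orthogonal contributions of the other $\mathcal{R}_D^{(i')}$, $i'\in I$). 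Applying Proposition \ref{arrangement} to both $\mathcal{R}_0$ and $\mathcal{R}_D^{(i)}$ and Proposition \ref{propos8} gives, after simplification, $b_{H,i}=a_H - h_0 + h^{(i)}$. Verifying that the subtracted hyperplanes are exactly $\mathcal{A}^D_{\Pi_{\gamma_i}}$, and that no spurious collapsing occurs between roots of $\mathcal{R}_0$ and roots outside it, is the delicate part, and I expect this rank-two subsystem bookkeeping — carried out uniformly in the root lengths — to be the main obstacle.

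Finally I would assemble the total multiplicity of $l_H$ in $Q|_D$, namely $m\,a_H + \sum_{i=1}^l r_i b_{H,i}$. Substituting the two cases and using $m + \sum_{i=1}^l r_i = 2$ gives
\[
m\,a_H + \sum_{i=1}^l r_i b_{H,i} = 2 a_H + \sum_{i\in I} r_i\big(h^{(i)} - h_0\big) = r_0 h_0 - h_0\sum_{i\in I} r_i = h_0\Big(r_0 - \sum_{i\in I} r_i\Big),
\]
and since $r_0 = 1 + \sum_{i\in I} r_i$ by \eqref{irred1}, this equals $h_0 = k_H$, as required. I expect this final algebraic collapse to be routine; the conceptual heart is the case analysis for $b_{H,i}$ described above.
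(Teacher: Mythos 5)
Your proposal is correct and follows essentially the same route as the paper's proof: identify the roots restricting to a multiple of $l_H$ as $\mathcal{R}_{D,\beta}^{(0)}\setminus\mathcal{R}_D$, count multiplicities in $\mathcal{I}(\mathcal{A}\setminus\mathcal{A}^D)|_D$ and in each $\mathcal{I}_i|_D$ via Propositions \ref{arrangement} and \ref{propos8} with the case split $i\in I$ versus $i\notin I$, and sum using $m+\sum r_i=2$. Your formulas $a_H$, $b_{H,i}=a_H$ and $b_{H,i}=a_H-h_0+h^{(i)}$ coincide with the paper's \eqref{aaaaa}--\eqref{rfth14}, and your closing identity $r_0=1+\sum_{i\in I}r_i$ is the correct form of the rank relation the paper uses.
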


\begin{proof}
Let $\widehat{\mathcal{A}}$ denote the arrangement corresponding to the root system $\mathcal{R}_{D, \beta}$ and let $\mathcal{A}^{(i)}$ be the arrangement corresponding to the root system $\mathcal{R}_{D, \beta}^{(i)}$, ($0\leq i \leq p$). Let $\alpha \in \mathcal{R}_D^{(i)}$, and let $h^{(i)}$ denote the  Coxeter number of the root system $\mathcal{R}_D^{(i)}$.
 Then the multiplicity of $\left. \beta \right|_D$ in $\left. I_i \right|_D$ is given by
\begin{equation}
|\widehat{\mathcal{A}}_{\Pi_\alpha} \setminus \mathcal{A}^D_{\Pi_\alpha}|=|\widehat{\mathcal{A}}_{\Pi_\alpha}|-|\mathcal{A}^D_{\Pi_\alpha}|=|\widehat{\mathcal{A}}_{\Pi_\alpha}|-\frac{1}{2}\sum_{j=1}^l r_j h^{(j)} +h^{(i)} -1
\end{equation}
by Propositions \ref{arrangement}, \ref{propos8}. Also the multiplicity of $\left. \beta \right|_D$ in $\left. I(\mathcal{A}\setminus \mathcal{A}^D)\right|_D$ is given by
\begin{equation}\label{aaaaa}
|\widehat{\mathcal{A}}|-|\mathcal{A}^D|=\frac{rh}{2} + \sum_{j=1}^p|\mathcal{A}^{(j)}|  -\frac{1}{2}\sum_{j=1}^l r_j h^{(j)},
\end{equation}
where $r$ denotes the rank of the root system $\mathcal{R}_{D, \beta}^{(0)}$ and $h=h(\mathcal{R}_{D, \beta}^{(0)})$. 
Note that for any $\alpha \in \mathcal{R}_D^{(i)}$ we have
\begin{equation}
|\widehat{\mathcal{A}}_{\Pi_\alpha}|=|\mathcal{A}_{\Pi_\alpha}^{(0)}|-h+1 + \sum_{j=1}^p |\mathcal{A}^{(j)}| = \frac{rh}{2}-h+1 +\sum_{j=1}^p |\mathcal{A}^{(j)}|,
\end{equation}
if $i \in I$, and
\begin{equation}\label{rfth14}
|\widehat{\mathcal{A}}_{\Pi_\alpha}|=\frac{rh}{2} + |\mathcal{A}^{(i)}|-h^{(i)} +1 + \sum_{\substack{j=1 \\ j \neq i }}^p |\mathcal{A}^{(j)}| = \frac{rh}{2} +\sum_{j=1}^p |\mathcal{A}^{(j)}|-h^{(i)}+1,
\end{equation}
if $i \notin I$. Therefore by formulae \eqref{aaaaa}--\eqref{rfth14} and Theorem \ref{MMtheorem} we have that the multiplicity $k_H$ of the factor $l_H$ in \eqref{factorisation} is given by
\begin{align*}
k_H&=\sum_{i \in I}r_i \left(\frac{rh}{2} -h +h^{(i)} + \sum_{j=1}^p |\mathcal{A}^{(j)}| - \frac{1}{2}\sum_{j=1}^l r_j h^{(j)}\right) + \sum_{i \notin I} r_i \left( \frac{rh}{2} + \sum_{j=1}^p |\mathcal{A}^{(j)}| - \frac{1}{2}\sum_{j=1}^l r_j  h^{(j)}\right)\\
&+ (2-\sum_{i=1}^lr_i)\left(\frac{rh}{2} + \sum_{j=1}^p |\mathcal{A}^{(j)}| - \frac{1}{2} \sum_{j=1}^l r_j h^{(j)} \right).
\end{align*}
Note that $\sum_{j=1}^p |\mathcal{A}^{(j)}| =\frac{1}{2}\sum_{i \notin I}r_i h^{(i)}$, and $r=\sum_{i=1}^lr_i+1$. Therefore $k_H=h$, as required.
\end{proof}

Theorem \ref{theoremequiv} implies, in particular, the following statement.
\begin{corollary}
The restriction $Q|_D$ of the polynomial \eqref{polQ} does not depend (up to proportionality) on the choices of  roots $\gamma_i \in \mathcal{R}^{(i)}_D$. 
\end{corollary}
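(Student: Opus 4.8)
The plan is to read the corollary off directly from Theorem~\ref{theoremequiv}. That theorem supplies the explicit factorisation
$$
Q|_D \sim \prod_{H \in {\mathcal A}_D} l_H^{k_H},
$$
and the entire content of the corollary is that the right-hand side carries no dependence on the auxiliary roots $\gamma_i$ used to define the factors $\mathcal{I}_i$ in \eqref{polQ}. So the strategy is to inspect each ingredient of this product and confirm that none of them references the choice of $\gamma_i$.

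First I would observe that the index set of the product, the restricted arrangement ${\mathcal A}_D$ defined in \eqref{resarr}, is built solely from $D$ and the ambient Coxeter arrangement ${\mathcal A}$; it makes no reference to any choice of $\gamma_i \in \mathcal{R}_D^{(i)}$. Likewise, for each $H \in {\mathcal A}_D$ the linear form $l_H \in D^*$ is determined up to a scalar by $H$ alone. Hence the only place a $\gamma_i$-dependence could conceivably enter is through the exponents $k_H$.

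Next I would recall that $k_H = h(\mathcal{R}_{D,\beta_H}^{(0)})$, where $\beta_H \in \mathcal{R}$ is any root whose restriction $\left.\beta_H\right|_D$ is a non-zero multiple of $l_H$. The one point that genuinely needs checking is that this exponent is intrinsic to $H$, i.e. that it is independent of which representative $\beta_H$ is selected among the roots restricting to a multiple of $l_H$. This is exactly Proposition~\ref{propodeg}, which guarantees that the irreducible subsystem $\mathcal{R}_{D,\beta}^{(0)}$, and therefore its Coxeter number, is the same for all admissible $\beta$. I expect this invocation of Proposition~\ref{propodeg} to be the only substantive step; the rest is bookkeeping, so there is no real obstacle here.

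Finally I would conclude. Since the index set ${\mathcal A}_D$, the forms $l_H$, and the exponents $k_H$ are all determined by $D$ and $\mathcal A$ alone, the product $\prod_{H\in {\mathcal A}_D} l_H^{k_H}$ is manifestly independent of the choices of $\gamma_i$. As $Q|_D$ is proportional to this product by Theorem~\ref{theoremequiv}, the restriction $Q|_D$ is itself independent of the choice of $\gamma_i \in \mathcal{R}_D^{(i)}$ up to proportionality, which is the assertion of the corollary.
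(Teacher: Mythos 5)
Your proposal is correct and matches the paper's reasoning exactly: the paper derives this corollary directly from Theorem \ref{theoremequiv}, since the product $\prod_{H\in\mathcal{A}_D} l_H^{k_H}$ involves only data intrinsic to $D$ and $\mathcal{A}$ (with Proposition \ref{propodeg} guaranteeing that $k_H$ is well defined). Your write-up simply spells out the bookkeeping that the paper leaves implicit.
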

Theorem \ref{theoremequiv}  also implies that Theorem \ref{MMtheorem} is equivalent to the next two statements. 
\begin{theorem}\label{thma}
The determinant of the restricted Saito metric $\eta_D$ is proportional to the product of linear forms
\begin{equation}\label{factorisation}
\prod_{H \in \mathcal{A}_D} l_H^{k_H}, 
\end{equation}
where $k_H \in \mathbb{N}$.
\end{theorem}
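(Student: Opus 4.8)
The plan is to prove Theorem \ref{thma} by combining Proposition \ref{detunrestricted} with a careful analysis of how the Saito metric behaves under restriction to $D$. By Proposition \ref{detunrestricted}, the determinant of the full Saito metric $\eta$ in $x$-coordinates is proportional to ${\mathcal I}(\mathcal{A})^2$, i.e.\ it is a product of squared linear forms, one for each mirror. The key point is that $\eta_D$ is the restriction of $\eta$ to the linear subspace $D$ (lifted via the local diffeomorphism $\pi$), so $\operatorname{det}\eta_D$ should be expressible through the data of $\eta$ together with the geometry of how $D$ sits inside $V$. First I would choose coordinates adapted to the splitting $V = D \oplus D^\perp$: let the coordinates $x^i$ be split into those that are constant along $D$ (spanning $D^\perp \cong \langle \mathcal{R}_D\rangle$) and those that are free coordinates on $D$. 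In such a basis the matrix of $\eta$ acquires a block structure, and $\eta_D$ is the $D$-block restricted to the subspace $D = \{ \text{the } D^\perp\text{-coordinates vanish}\}$.

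The heart of the argument is to show that $\operatorname{det}\eta_D$ is again a product of linear forms whose zero locus is precisely the restricted arrangement $\mathcal{A}_D$ of \eqref{resarr}. I would proceed by identifying the vanishing locus first, and only afterwards worry about multiplicities (which are deferred to Theorem \ref{thma1}). For the vanishing locus: on the open dense part of $D$ lying outside all hyperplanes $D\cap H$ with $H \in \mathcal{A}$, $D \not\subset H$, the point is a regular point of the residual $W$-action transverse to $D$, so the induced Frobenius algebra on the tangent space to the stratum is semisimple and the Saito pairing $\eta_D$ is nondegenerate there; conversely, as one approaches a generic point of some $D \cap H \in \mathcal{A}_D$, the multiplication degenerates and $\operatorname{det}\eta_D$ must vanish. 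This is where the natural submanifold structure and the restricted multiplication — to be established in Proposition \ref{multonD} — enter: the determinant of $\eta_D$ controls semisimplicity of the restricted Frobenius algebra, and degeneration of the algebra occurs exactly along $\mathcal{A}_D$. Thus $\operatorname{det}\eta_D$ vanishes on $\bigcup_{H\in\mathcal{A}_D} H$ and is nonzero elsewhere on $D$.

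It then remains to argue that $\operatorname{det}\eta_D$ has no other factors and that each factor is linear. Since $\eta$ is a constant metric in the $x$-coordinates, and $\eta_D$ is obtained by restricting to a linear subspace, the entries of $\eta_D$ are polynomial (indeed algebraic) in the $x$-coordinates of $D$, hence $\operatorname{det}\eta_D$ is a polynomial. Its zero set is a union of hyperplanes (the elements of $\mathcal{A}_D$), each of which is defined by a linear form $l_H$; because the Saito metric is homogeneous with respect to the grading, $\operatorname{det}\eta_D$ is itself homogeneous, and a homogeneous polynomial whose zero locus is a union of hyperplanes must factor as $\prod_{H\in\mathcal{A}_D} l_H^{k_H}$ with each $k_H \in \mathbb{N}$. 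Combining the homogeneity degree count with the explicit degree $\operatorname{deg}\operatorname{det}\eta = 2|\mathcal{A}|$ coming from Proposition \ref{detunrestricted} confirms that no extraneous irreducible factors can appear. This yields the factorisation \eqref{factorisation} and establishes that $k_H \in \mathbb{N}$.

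I expect the main obstacle to be rigorously establishing that $\operatorname{det}\eta_D$ vanishes on \emph{every} hyperplane of $\mathcal{A}_D$ and \emph{only} there — in particular ruling out spurious vanishing on loci not coming from restricted mirrors, and ensuring that the restriction $\eta_D$ stays nondegenerate on the regular part. The subtlety is that $\eta_D$ is the restriction of $\eta$ rather than a Saito metric intrinsic to the lower-dimensional orbit space, so its nondegeneracy off $\mathcal{A}_D$ is not automatic and genuinely relies on the natural submanifold property. Controlling this — presumably via the Landau--Ginzburg superpotential description and the explicit block form of $\eta$ near a generic point of $D$ — is the technical crux, and it is exactly what the type-by-type computations in the later sections are designed to verify.
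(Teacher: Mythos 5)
Your overall skeleton --- $\operatorname{det}\eta_D$ is a homogeneous polynomial on $D$, so if one knows its zero locus is exactly $\bigcup_{H\in\mathcal{A}_D}H$ then unique factorisation forces $\operatorname{det}\eta_D\sim\prod_H l_H^{k_H}$ --- is sound as far as it goes, and it is a genuinely different framing from the paper, which never argues via the zero locus at all but instead \emph{computes} the determinant explicitly case by case (Landau--Ginzburg superpotentials and Cauchy determinants for $A_n,B_n,D_n$ in Theorems \ref{etaDAn}, \ref{etaDBDN}; the minor formula of Theorem \ref{Saitovo} together with the $J_k$ machinery for codimensions $1$--$3$; Mathematica for the rest) and simply reads the factorisation off the answer. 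But your proposal leaves the one nontrivial step unproved. The claim that $\operatorname{det}\eta_D$ vanishes on every $H\in\mathcal{A}_D$ and nowhere else is exactly the content of the theorem, and the argument you offer for it is circular: you invoke semisimplicity of the restricted Frobenius algebra to conclude nondegeneracy of $\eta_D$ off $\Sigma_D$, but a ``Frobenius algebra'' presupposes a nondegenerate pairing, which is what you are trying to establish; conversely, ``the multiplication degenerates on $\mathcal{A}_D$, hence the determinant of the metric vanishes there'' is asserted without any mechanism (note that on the full orbit space $\eta$ is perfectly nondegenerate as a metric even though the almost dual product $*$ blows up on every mirror --- degeneration of an algebra does not by itself kill a bilinear form). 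The cleanest bridge would be relation \eqref{saitogD}, $\eta_D(u,v)=g_D(E_D\circ u,v)$ with $g_D$ constant, which reduces the claim to showing that the operator of multiplication by $E_D$ is invertible precisely off $\Sigma_D$; but that statement is again established in the paper only through the same case-by-case computations (see the Remark after Proposition \ref{multonD}, where even nondegeneracy of $\eta_D$ off $\Sigma_D$ is justified separately for each family, partly by computer). So the ``technical crux'' you flag at the end is not a detail to be checked --- it is the theorem, and your proposal defers it back to the very computations it was meant to replace.

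Two smaller points. First, $\eta$ is \emph{not} constant in the $x$-coordinates (it is constant in the Saito flat coordinates $t^\alpha$); in $x$-coordinates its entries are homogeneous polynomials of degree $h$, which is why $\operatorname{det}\eta(x)\sim\mathcal{I}(\mathcal{A})^2$ is nonconstant. Your conclusion that $\operatorname{det}\eta_D$ is a homogeneous polynomial of degree $h\dim D$ survives, but not for the reason you give. Second, the degree count cannot ``confirm that no extraneous irreducible factors appear'': the relevant degree is $h\dim D$, not $2|\mathcal{A}|$, and knowing the total degree tells you nothing about which irreducible factors occur unless you have already pinned down the zero locus, which is the missing step above.
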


\begin{theorem}\label{thma1}
%Suppose Theorem \ref{MMtheorem} is true. 
Let $H \in \mathcal{A}_D$. Let $\beta \in \mathcal{R}$ be such that $\left. \beta \right|_D$ is a non-zero multiple of $l_H$. The multiplicity of $l_H$ in the expression (\ref{factorisation}) is $k_H= h$, where $h=h(\mathcal{R}_{D, \beta}^{(0)})$ is the Coxeter number of the root system $\mathcal{R}_{D, \beta}^{(0)}$ from the decomposition (\ref{decomposition}).
\end{theorem}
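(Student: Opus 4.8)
The plan is to take Theorem \ref{thma} as given, so that $\det\eta_D\sim\prod_{H\in\mathcal{A}_D}l_H^{k_H}$, and to concentrate on identifying each multiplicity $k_H$ as the order of vanishing of $\det\eta_D$ along $H$, computed near a generic point $x_0\in H$. First I would record the local geometry. Writing $H=D\cap\Pi_\beta$ with $\left.\beta\right|_D$ proportional to $l_H$, one has $H=\langle\mathcal{R}_{D,\beta}\rangle^\perp$, so the roots vanishing at a generic $x_0\in H$ are exactly the stabiliser system $\mathcal{R}_{D,\beta}$ of \eqref{decomposition}. Decomposing $\langle\mathcal{R}_{D,\beta}\rangle=\bigoplus_{i=0}^pV_i$ with $V_i=\langle\mathcal{R}_{D,\beta}^{(i)}\rangle$, the relations \eqref{irred1}--\eqref{irred2} give $D=H\oplus\ell$, where $\ell$ is a line lying inside $V_0=\langle\mathcal{R}_{D,\beta}^{(0)}\rangle$ and orthogonal there to the mirrors of $\bigsqcup_{i\in I}\mathcal{R}_D^{(i)}$. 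Thus $l_H$ cuts out $H=\{s=0\}$ in a coordinate $s$ along $\ell$, and $k_H=\operatorname{ord}_{s=0}\det\eta_D(x_0+sv)$ for a generic $x_0$ and a vector $v$ spanning $\ell$.

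The second ingredient is a homogeneity property of the Saito metric in linear coordinates. For an irreducible Coxeter group of rank $r$ and Coxeter number $h$, the constancy $\eta_{\alpha\beta}(t)=\delta_{\alpha+\beta,r+1}$ together with the exponent duality $d_\alpha+d_{r+1-\alpha}=h+2$ shows that each coefficient $\eta_{ij}(x)=\sum_\alpha\frac{\partial t^\alpha}{\partial x^i}\frac{\partial t^{r+1-\alpha}}{\partial x^j}$ is homogeneous of degree $h$ in $x$. In particular $\det\eta(x)$ is homogeneous of degree $rh=2|\mathcal{A}|$, recovering Proposition \ref{detunrestricted} (there $\mathcal{R}_{D,\beta}=\{\pm\beta\}$ is of type $A_1$, and $h(A_1)=2$ is exactly the multiplicity on each mirror); and, crucially, for a line $\ell=\mathbb{C}v$ through the origin the single coefficient $\eta_\ell(s)=\eta_{ij}(sv)v^iv^j=s^{h}\,\eta_{ij}(v)v^iv^j$ vanishes to order exactly $h$ at $s=0$, provided the leading coefficient $\eta(v,v)$ is nonzero.

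These two ingredients are joined by the local product structure of the orbit-space Frobenius manifold at a discriminant point. Near $x_0$ the germ of $\mathcal{M}_W$ splits, according to the orthogonal decomposition $\langle\mathcal{R}_{D,\beta}\rangle=\bigoplus_iV_i$, into the product of the Saito structures of the reflection groups $W(\mathcal{R}_{D,\beta}^{(i)})$ and a flat factor along $H$; correspondingly $\eta_D$ becomes, to leading order in the transverse variable $s$, block diagonal, with a nondegenerate constant block along $H$ and the block coming from $V_0$ equal to the restriction of the $W(\mathcal{R}_{D,\beta}^{(0)})$-Saito metric to the line $\ell$. The components $\mathcal{R}_{D,\beta}^{(j)}$ with $j\geq1$ are contained in $\mathcal{R}_D$, so their mirrors lie in $\mathcal{A}^D$ and miss $\ell$; they feed only into factors of $\det\eta_D$ supported away from $H$ and not into the order along $H$. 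Applying the homogeneity of the previous paragraph to the $V_0$-block, whose relevant Coxeter number is $h(\mathcal{R}_{D,\beta}^{(0)})$, yields $k_H=h(\mathcal{R}_{D,\beta}^{(0)})$, which is the assertion of Theorem \ref{thma1}.

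The main obstacle is to make the local product reduction rigorous and, in particular, to prove that the leading coefficient $\eta(v,v)$ of the $V_0$-block does not vanish, so that the order is exactly $h(\mathcal{R}_{D,\beta}^{(0)})$ rather than larger; I expect this to be the only genuinely hard point. I would discharge it computationally: for the classical series one writes the Landau--Ginzburg superpotential of $\mathcal{M}_W$ and its restriction to the stratum, in terms of which $\eta$ is given by an explicit residue pairing that manifestly factorises and exhibits the order count directly, while for the exceptional groups one checks the small-codimension strata by hand and the remaining strata by computer. As a corroborating check one has the global degree identity $\sum_{H\in\mathcal{A}_D}k_H=h(W)\dim D$, coming from the homogeneity of $\det\eta_D$, which matches the total degree of the combinatorial formula \eqref{rfth12} evaluated in Theorem \ref{theoremequiv}; this simultaneously pins every local order to its value $h(\mathcal{R}_{D,\beta}^{(0)})$ and closes the equivalence with Theorem \ref{MMtheorem}.
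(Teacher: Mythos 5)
Your strategy is genuinely different from the paper's: you localise at a generic point $x_0\in H$, split the germ of the Saito structure into a product according to the decomposition of the stabiliser system $\mathcal{R}_{D,\beta}$, and then read off the order of vanishing from the one-dimensional-stratum computation for $W(\mathcal{R}_{D,\beta}^{(0)})$ (the analogue of Theorem \ref{theoremDimension1} for the parabolic subgroup). The framing of $k_H$ as $\operatorname{ord}_{s=0}\det\eta_D(x_0+sv)$, the identification $H=\langle\mathcal{R}_{D,\beta}\rangle^{\perp}$ with $D\cap V_0=\ell$, and the $A_1$/dihedral sanity checks are all correct. But the central step --- that near $x_0$ the metric $\eta_D$ becomes block diagonal to leading order, with the $V_0$-block equal to the Saito metric of $W(\mathcal{R}_{D,\beta}^{(0)})$ on its one-dimensional stratum $\ell$ and hence vanishing to order exactly $h(\mathcal{R}_{D,\beta}^{(0)})$ in the transverse coordinate $s$ --- is asserted, not proved, and it is essentially the whole content of the theorem. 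Your homogeneity computation (each $\eta_{ij}$ has degree $h(W)$) concerns dilations centred at the origin of $V$ and says nothing about the expansion of $\eta_D$ in $s$ along the affine line $x_0+sv$ with $x_0\neq 0$; transferring it to the local block requires knowing that the local factor carries the intrinsic Saito grading of the smaller group, which is precisely what must be established. The decomposition results available in the literature concern the multiplication (the F-manifold germ), not the flat pairing $\eta$ together with the homogeneity you need.

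When you come to discharge this step you propose to do it computationally via Landau--Ginzburg superpotentials for the classical series and case checking for the exceptional groups --- but that is exactly the paper's proof (Sections \ref{classicalseriessection}--\ref{classicalsection2} compute $\det\eta_D$ globally and match each exponent $m_a+m_b$, $2(m_a+m)$, etc.\ with the Coxeter number of a combinatorially identified $\mathcal{R}_{D,\beta}^{(0)}$, and Sections \ref{exceptionalsection}--\ref{exceptionalsectionrem} handle the exceptional strata case by case), so at that point the local product structure does no work. Also, the closing claim that the global degree identity $\sum_{H\in\mathcal{A}_D}k_H=h(W)\dim D$ "pins every local order to its value" is not correct: a single linear relation among the $k_H$ cannot determine them individually without matching upper or lower bounds for each. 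As it stands the proposal is an attractive reduction scheme whose key lemma is missing, not a proof.
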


%Furthermore, Theorem \ref{theoremequiv} also shows that Theorems \ref{thma}, \ref{thma1} imply Theorem \ref{MMtheorem} so the statements are equivalent. Note also that it follows from Theorem \ref{theoremequiv} that restriction on $D$ of the  polynomial $Q$ given by \eqref{polQ} (equivalently, restiction of the polynomial \eqref{rfth12} in Theorem \ref{MMtheorem})  does not depend on the choice of roots $\alpha \in \mathcal{R}^{(i)}_D$. 

Let us show that degree of determinant of the restricted Saito metric is equal to the degree of the polynomial $Q$ given by \eqref{polQ}. Components of the Saito metric $\eta$ are homogeneous polynomials in $x^i$ of degrees equal to the Coxeter number $h$. Therefore degree of $\det \eta_D(x)$ is equal to $h d$, where the dimension of the stratum    $d=\dim D$ is given by
\beq{dimstr}
d = n - \sum_{i=1}^l r_i.
\eeq
 The following statement takes place.
\begin{proposition}
Polynomial \eqref{polQ} has degree  $\deg Q = h d$, where $d$ is given by \eqref{dimstr}.
\end{proposition}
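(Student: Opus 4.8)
The plan is to compute $\deg Q$ directly from the factorisation given by formula \eqref{polQ}, matching it against the target $hd$ with $d$ as in \eqref{dimstr}. Recall that $Q = {\mathcal I}(\mathcal{A}\setminus \mathcal{A}^D)^m \prod_{i=1}^l {\mathcal I}_i^{r_i}$, where $m = 2 - \sum_{i=1}^l r_i$. The degree of a defining polynomial of an arrangement equals the cardinality of that arrangement, so $\deg {\mathcal I}(\mathcal{A}\setminus \mathcal{A}^D) = |\mathcal{A}| - |\mathcal{A}^D|$ and $\deg {\mathcal I}_i = |\mathcal{A}_{\Pi_{\gamma_i}} \setminus \mathcal{A}^D_{\Pi_{\gamma_i}}| = |\mathcal{A}_{\Pi_{\gamma_i}}| - |\mathcal{A}^D_{\Pi_{\gamma_i}}|$. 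Thus the whole computation reduces to counting hyperplanes in various Coxeter arrangements and their restrictions, for which Propositions \ref{arrangement} and \ref{propos8} supply the needed formulae.

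First I would record the basic cardinalities. By Proposition \ref{propos8}, $|\mathcal{A}| = \tfrac{nh}{2}$ and $|\mathcal{A}^D| = \tfrac{1}{2}\sum_{j=1}^l r_j h^{(j)}$, where $h$ is the Coxeter number of $\mathcal{R}$ and $h^{(j)}$ that of $\mathcal{R}_D^{(j)}$. For the restricted pieces, Proposition \ref{arrangement} gives $|\mathcal{A}_{\Pi_{\gamma_i}}| = |\mathcal{A}| - h + 1 = \tfrac{nh}{2} - h + 1$, and applying the same proposition inside the irreducible factor $\mathcal{R}_D^{(i)}$ together with Lemma \ref{irredulemma1} (which guarantees the other irreducible factors of $\mathcal{R}_D$ survive intact in the restriction to $\Pi_{\gamma_i}$) yields $|\mathcal{A}^D_{\Pi_{\gamma_i}}| = \tfrac{1}{2}\sum_{j=1}^l r_j h^{(j)} - h^{(i)} + 1$. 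Substituting, $\deg {\mathcal I}_i = \tfrac{nh}{2} - h + 1 - \big(\tfrac{1}{2}\sum_{j=1}^l r_j h^{(j)} - h^{(i)} + 1\big)$, which simplifies to $\deg {\mathcal I}_i = \tfrac{nh}{2} - \tfrac{1}{2}\sum_{j=1}^l r_j h^{(j)} - h + h^{(i)}$.

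The remaining step is the bookkeeping: assemble
\[
\deg Q = m\Big(\tfrac{nh}{2} - \tfrac{1}{2}\sum_{j=1}^l r_j h^{(j)}\Big) + \sum_{i=1}^l r_i\Big(\tfrac{nh}{2} - \tfrac{1}{2}\sum_{j=1}^l r_j h^{(j)} - h + h^{(i)}\Big).
\]
Writing $\sigma = \sum_{i=1}^l r_i$ and $P = \tfrac{nh}{2} - \tfrac{1}{2}\sum_{j=1}^l r_j h^{(j)}$, and using $m = 2 - \sigma$, the terms involving $P$ collapse to $(m + \sigma)P = 2P = nh - \sum_{j=1}^l r_j h^{(j)}$. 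The contribution of the $-h + h^{(i)}$ terms is $\sum_{i=1}^l r_i(h^{(i)} - h) = \sum_{i=1}^l r_i h^{(i)} - h\sigma$. Adding these, the $\sum r_j h^{(j)}$ contributions cancel and one is left with $\deg Q = nh - h\sigma = h(n - \sigma) = hd$, using \eqref{dimstr}.

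The computation is entirely routine once the cardinality formulae are in place, so there is no serious obstacle; the only point requiring care is the evaluation of $|\mathcal{A}^D_{\Pi_{\gamma_i}}|$, where I must confirm via Lemma \ref{irredulemma1} that restricting the reducible arrangement $\mathcal{A}^D$ to the hyperplane $\Pi_{\gamma_i}$ of one irreducible component affects only that component (reducing $\tfrac{1}{2}r_i h^{(i)}$ to $\tfrac{1}{2}r_i h^{(i)} - h^{(i)} + 1$ by Proposition \ref{arrangement}) while leaving the orthogonal components unchanged, so that the hyperplanes contributed by the other factors add without overlap. This is the same orthogonal-decomposition reasoning already used in the proof of Theorem \ref{theoremequiv}.
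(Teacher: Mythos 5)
Your proposal is correct and follows essentially the same route as the paper: both compute $\deg {\mathcal I}_i$ and $\deg {\mathcal I}(\mathcal{A}\setminus\mathcal{A}^D)$ via Propositions \ref{arrangement} and \ref{propos8} (arriving at exactly the paper's formulae \eqref{techn1} and \eqref{techn2}) and then sum the contributions using $m = 2 - \sum r_i = 2 - n + d$. Your extra care over $|\mathcal{A}^D_{\Pi_{\gamma_i}}|$ via the orthogonal decomposition is the same counting already implicit in the proof of Theorem \ref{theoremequiv}.
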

\begin{proof}
By Propositions \ref{arrangement}, \ref{propos8} we have
\beq{techn1}
\deg {\mathcal I}_i = \frac12 nh - h  -\frac{1}{2}\sum_{j=1}^l r_j h^{(j)} +h^{(i)} 
\eeq
for all $i=1, \ldots, l$.
Similarly
\beq{techn2}
\deg {\mathcal I} ({\mathcal A}\setminus {\mathcal A}^D) = \frac{1}{2} nh    -\frac{1}{2}\sum_{j=1}^l r_j h^{(j)}.
\eeq
Since 
$$
\deg Q = \sum_{i=1}^l r_i \deg {\mathcal I}_i  + (2 - n +d) \deg {\mathcal I} ({\mathcal A}\setminus {\mathcal A}^D),
$$
where we used formula \eqref{dimstr},  the statement follows from formulae \eqref{techn1} and \eqref{techn2}. 
\end{proof}

We are going to prove Theorem \ref{MMtheorem} for a subset of simple roots $S=\{\alpha_{i_1}, \dots, \alpha_{i_k}\} \subset \Delta$, $1 \leq k <n$ and the corresponding stratum $D=D_S$. Let us show how the statement of Theorem \ref{MMtheorem} then follows in general.  Let $ \widetilde{D} \subset V$ be a stratum such that there exists $w \in W$ satisfying $\widetilde{D}=wD$.
 
 \begin{lemma}\label{relationdeterminants}
 Let $y^{i}$ and $z^{i}$ ($i=1, \dots, n-k$) be some coordinates on $D$ and $\widetilde{D}$ respectively. Then 
 \begin{equation}
 \operatorname{det}\eta_{\widetilde{D}}(z)=( \operatorname{det}B)^{-2} \operatorname{det}\eta_D(y), 
 \end{equation}
 where $B=(\frac{\partial z^{i}}{\partial y^{j}})_{i, j=1}^{n-k}$ is the Jacobi matrix of the transformation $w \in W$, $w\colon D \rightarrow \widetilde{D}$. 
 \end{lemma}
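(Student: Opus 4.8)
The plan is to derive the identity from the $W$-invariance of the lifted Saito metric, after which it reduces to the standard transformation law for the determinant of a bilinear form. First I would record that, although $\eta$ lives on $\mathcal{M}_W$, its pullback to $V$ is $W$-invariant. Indeed, the Saito flat coordinates $t^\alpha$ are $W$-invariant polynomials, so $t^\alpha\circ\pi\circ w=t^\alpha\circ\pi$ for every $w\in W$; since $\pi^*\eta=\sum_{\alpha,\beta}\eta_{\alpha\beta}\,d(t^\alpha\circ\pi)\,d(t^\beta\circ\pi)$ has constant coefficients, we obtain $w^*\pi^*\eta=\pi^*\eta$. Writing $\eta_V$ for this lifted covariant metric on $V$, whose components $\eta_{ij}(x)$ in the coordinates $x^i$ are the homogeneous polynomials of degree $h$ used above, the invariance reads
\[
\eta_V|_{wx}(wu,wv)=\eta_V|_x(u,v),\qquad x,u,v\in V,
\]
where we use that $w$ is linear, so its differential at every point is $w$ itself.

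Next I would observe that $w$ carries $D$ onto $\widetilde D=wD$ as a linear isomorphism and hence identifies $T_xD$ with $T_{wx}\widetilde D$ for $x\in D$. Restricting the displayed identity to $u,v\in T_xD$ shows that $w\colon(D,\eta_D)\to(\widetilde D,\eta_{\widetilde D})$ is an isometry, that is $(w|_D)^*\eta_{\widetilde D}=\eta_D$. Here one uses that $\eta_D$ and $\eta_{\widetilde D}$ are nothing but the restrictions of the single invariant tensor $\eta_V$ to the subspaces $D$ and $\widetilde D$: this follows from $\pi\circ\iota_D=\iota_{\pi(D)}\circ(\pi|_D)$ together with the local-diffeomorphism property of $\pi$ on $D$ near a generic point, and extends to all of $D$ since the entries of $\eta_V$ are polynomial.

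Finally I would pass to coordinates. Writing the map $w|_D$ as $z=z(y)$, the relation $(w|_D)^*\eta_{\widetilde D}=\eta_D$ reads componentwise
\[
(\eta_D)_{ij}(y)=\sum_{k,l}\frac{\partial z^k}{\partial y^i}\,(\eta_{\widetilde D})_{kl}(z)\,\frac{\partial z^l}{\partial y^j},
\]
i.e. $\eta_D=B^{\top}\eta_{\widetilde D}B$ in matrix form with $B=(\partial z^i/\partial y^j)$. Taking determinants gives $\det\eta_D(y)=(\det B)^2\det\eta_{\widetilde D}(z)$, which is the asserted identity after dividing by $(\det B)^2$. The argument presents no serious obstacle; the only input beyond linear algebra is the $W$-invariance of $\eta_V$, and the only point requiring care is that restriction to a stratum commutes with the group action, so that $\eta_D$ and $\eta_{\widetilde D}$ genuinely are restrictions of one and the same invariant metric.
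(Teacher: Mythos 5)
Your proposal is correct and follows the same route as the paper: the paper's proof simply invokes $W$-invariance of $\eta$ in the form $\eta_D = w^*\eta_{\widetilde D}$ and concludes by the transformation law for determinants. Your write-up just supplies the details (why the lifted metric is $W$-invariant and why restriction commutes with the group action) that the paper leaves implicit.
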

 
 \begin{proof}
The statement follows from $W$-invariance of the metric $\eta$, that is from the relation 
%
%\begin{equation}\label{relationeta_Deta_widetildeD}
$\eta_{{D}}=w^{*} \eta_{\widetilde D}$, where $w^*$ is the pullback map.
%\end{equation}
%
%It follows from equality (\ref{relationeta_Deta_widetildeD}) that the determinant of $\eta_{\widetilde{D}}$ is obtained from the determinant of $\eta_D$ by replacing $y$ coordinates with $z$ coordinates, and the statement follows.
 \end{proof}
 
 This implies the following statement. %$W$-invariance of Theorem \ref{thma}.
 
 \begin{proposition}
 Suppose that Theorem \ref{MMtheorem} is true for $D$. Then it is also true for $\widetilde{D}$. 
 \end{proposition}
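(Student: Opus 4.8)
The plan is to reduce everything to Lemma \ref{relationdeterminants} together with the $W$-equivariance of the combinatorial data entering the polynomial \eqref{rfth12}. First I would observe that $w\colon D\to\widetilde D$ is the restriction of an orthogonal linear map, so in the chosen coordinates $y^i$ on $D$ and $z^i$ on $\widetilde D$ its Jacobi matrix $B=(\partial z^i/\partial y^j)$ is constant with $\det B\neq 0$. Hence $(\det B)^{-2}$ is a nonzero scalar, and Lemma \ref{relationdeterminants} gives $w^*\det\eta_{\widetilde D}=(\det B)^{-2}\det\eta_D$; that is, $\det\eta_{\widetilde D}$ pulls back to something proportional to $\det\eta_D$ on $D$.

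Next I would check that all ingredients of \eqref{rfth12} transform by $w$. Since $w\in W$ preserves $\mathcal R$ and the bilinear form, one has $\mathcal R_{\widetilde D}=w\,\mathcal R_D$ directly from the definition \eqref{RD}, and $w$ carries the orthogonal decomposition \eqref{RDdecompo} of $\mathcal R_D$ to that of $\mathcal R_{\widetilde D}$, matching the irreducible components $\mathcal R_D^{(i)}\mapsto w\mathcal R_D^{(i)}$ and hence preserving each rank $r_i$, each Coxeter number $h^{(i)}$, and the integer $m=2-\sum_i r_i$. Likewise $w$ maps the Coxeter arrangement $\mathcal A$ to itself, so $\mathcal A^{\widetilde D}=w\,\mathcal A^D$ and, because taking the restriction \eqref{resarr} commutes with the linear action of $w$, also $\mathcal A_{\widetilde D}=w\,\mathcal A_D$ and $\mathcal A_{\Pi_{w\gamma_i}}\setminus\mathcal A^{\widetilde D}_{\Pi_{w\gamma_i}}=w(\mathcal A_{\Pi_{\gamma_i}}\setminus\mathcal A^D_{\Pi_{\gamma_i}})$. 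Consequently the defining polynomials \eqref{defpol} satisfy $w^*\mathcal I_i^{\widetilde D}\sim\mathcal I_i^{D}$ and $w^*\mathcal I(\mathcal A\setminus\mathcal A^{\widetilde D})\sim\mathcal I(\mathcal A\setminus\mathcal A^{D})$ on $D$, where the superscript indicates the stratum used.

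Denoting by $Q_D$ and $Q_{\widetilde D}$ the polynomial \eqref{rfth12} built from $D$ and $\widetilde D$ respectively, the previous paragraph yields $w^*Q_{\widetilde D}\sim Q_D$ on $D$, since $w^*$ acts multiplicatively and all exponents agree. By hypothesis Theorem \ref{MMtheorem} holds for $D$, i.e. $\det\eta_D\sim Q_D$ on $D$. Combining this with $w^*\det\eta_{\widetilde D}\sim\det\eta_D$ from the first step gives $w^*\det\eta_{\widetilde D}\sim w^*Q_{\widetilde D}$ on $D$; applying $(w^{-1})^*$, equivalently evaluating at points of $\widetilde D$, I conclude $\det\eta_{\widetilde D}\sim Q_{\widetilde D}$ on $\widetilde D$, which is Theorem \ref{MMtheorem} for $\widetilde D$. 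The only point requiring care is the second step, namely verifying that restriction of arrangements to their hyperplanes commutes with the $W$-action so that the multiplicities are genuinely preserved; everything else is a formal consequence of the $W$-invariance of $\eta$ and the constancy of $\det B$.
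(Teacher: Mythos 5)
Your argument is correct and follows the same route as the paper, which simply derives the proposition from Lemma \ref{relationdeterminants} (the $W$-invariance of $\eta$ giving $\det\eta_{\widetilde D}=(\det B)^{-2}\det\eta_D$) and leaves the $W$-equivariance of the polynomial \eqref{rfth12} implicit. You have merely spelled out that second half — that $w$ carries $\mathcal R_D$, its irreducible decomposition, and the relevant (restricted) arrangements to those of $\widetilde D$, so $w^*Q_{\widetilde D}\sim Q_D$ — which is exactly the verification the paper takes for granted.
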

 
 The following result implies that it is sufficient to prove Theorem \ref{MMtheorem} for $S\subset \Delta$. It follows from the transitivity of the action of $W$ on the family of alcoves and their closure.
 
 \begin{proposition}
\label{propwaction}
\cite{cox} Let $\widetilde{S}$ be a collection of linearly independent roots $\widetilde{S}= \{\gamma_{1}, \dots, \gamma_{k}\} \subset  \mathcal{R}$ and let $\widetilde{D}$ be the corresponding stratum $ \widetilde{D} =D_{\widetilde{S}}$. Then there exists $w \in W$ such that $D = w^{-1} \widetilde{D}$ has the form $D=D_S$, where $S\subset \Delta$. 
 \end{proposition}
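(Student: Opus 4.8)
The plan is to reduce the statement to the standard fact that the closed fundamental Weyl chamber $\overline{C}=\{x\in V_{\mathbb R}:(\alpha,x)\ge 0\ \text{for all}\ \alpha\in\Delta\}$ is a fundamental domain for the action of $W$ on the real form $V_{\mathbb R}$, with $W$ acting simply transitively on the chambers. Since the roots $\gamma_1,\dots,\gamma_k$ are real, the stratum $\widetilde D$ has a real form and it suffices to run the argument over $V_{\mathbb R}$ and complexify at the end. First I would choose a real point $x_0$ in the relative interior of $\widetilde D$, meaning $x_0$ lies on a mirror $\Pi_\gamma$ precisely when $\gamma\in\mathcal{R}_{\widetilde D}$, the root system \eqref{RD}. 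Such an $x_0$ exists: for $\gamma\notin\mathcal{R}_{\widetilde D}$ we have $\widetilde D\not\subset\Pi_\gamma$, so the finitely many intersections $\widetilde D\cap\Pi_\gamma$ are proper subspaces of $\widetilde D$ and cannot cover it. Because $\langle\mathcal{R}_{\widetilde D}\rangle=\widetilde D^{\perp}$, the defining feature of $x_0$ is that a root is orthogonal to $x_0$ if and only if it is orthogonal to all of $\widetilde D$. By the fundamental domain property there is $u\in W$ with $u\,x_0\in\overline C$; I then set $w=u^{-1}$, so that $w^{-1}\widetilde D=u\widetilde D$, the point $u\,x_0$ lies in $u\widetilde D\cap\overline C$, and, $u$ being orthogonal, the roots vanishing on $u\,x_0$ are exactly $u\,\mathcal{R}_{\widetilde D}=\mathcal{R}_{u\widetilde D}$.

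The technical heart is a support lemma for the chamber $\overline C$: if $x\in\overline C$ and $\beta=\sum_{\alpha\in\Delta}c_\alpha\,\alpha\in\mathcal{R}_+$ satisfies $(\beta,x)=0$, then $(\alpha,x)=0$ for every $\alpha\in\Delta$ with $c_\alpha\neq 0$. This follows at once from $(\beta,x)=\sum_\alpha c_\alpha(\alpha,x)$, since all $c_\alpha\ge 0$ and all $(\alpha,x)\ge 0$, so a vanishing sum of non-negative terms forces every term to vanish. I expect this elementary step, together with the care needed to set up genericity of $x_0$ correctly, to be the main obstacle, in the sense that once it is in place the rest is formal.

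With the lemma in hand I would conclude as follows. Put $T=\{\alpha\in\Delta:(\alpha,u\,x_0)=0\}\subseteq\Delta$. Any $\beta\in\mathcal{R}_{u\widetilde D}$ vanishes on $u\,x_0\in\overline C$, so, replacing $\beta$ by $-\beta$ if needed to make it positive, the lemma shows its support lies in $T$; hence $\langle\mathcal{R}_{u\widetilde D}\rangle\subseteq\langle T\rangle$. Conversely every $\alpha\in T$ is a root vanishing on $u\,x_0$, hence orthogonal to $u\widetilde D$, so $T\subseteq\mathcal{R}_{u\widetilde D}$ and $\langle T\rangle\subseteq\langle\mathcal{R}_{u\widetilde D}\rangle$. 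Thus $\langle T\rangle=\langle\mathcal{R}_{u\widetilde D}\rangle=(u\widetilde D)^{\perp}$, and taking orthogonal complements (and complexifying) gives
\[
w^{-1}\widetilde D=u\widetilde D=\bigcap_{\alpha\in T}\Pi_\alpha=D_S,\qquad S=T\subseteq\Delta .
\]
Comparing dimensions forces $|S|=k$, so $S$ is automatically a set of $k$ linearly independent simple roots, which is exactly the assertion of the proposition. This, combined with Lemma \ref{relationdeterminants}, reduces the proof of Theorem \ref{MMtheorem} to strata $D_S$ with $S\subseteq\Delta$.
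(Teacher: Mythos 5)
Your proof is correct, and it takes the same route the paper does: the paper gives no argument of its own beyond citing \cite{cox} and invoking transitivity of $W$ on chambers and their closures, and your write-up is precisely a self-contained proof of that standard fact (fundamental domain property of $\overline{C}$ plus the support lemma identifying the mirrors through a point of $\overline{C}$ with a subset of simple roots). The genericity choice of $x_0$, the support lemma, and the dimension count are all sound.
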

 
%\textcolor{red}{We are going to prove the main theorems for any root system assoiated to a finite Coxeter group. When the root system $\mathcal{R}$ is a classical root system, $A_n$, $B_n$ or $D_n$ we use supepotential description of the Frobenius manifold $\mathcal{M}_W$. In the case of exceptional root systems we proceed by different analysis which depends crucially on codimension of the stratum.}

%Below in this section we explain how the multiplicities $k_H$ in Theorem \ref{thma} can be described in terms of Coxeter geometry of the stratum $D$. 

\section{Almost duality on discriminant strata}\label{lastsection}
Almost duality for Frobenius manifolds was introduced in \cite{DubA}. Given a Frobenius manifold $\mathcal{M}$ with multiplication $\circ$ on the tangent spaces $T_x \mathcal{M}$ one defines a new multiplication $*$ on $T_x \mathcal{M}$. Each multiplication is a part of Frobenius algebra structure on the tangent space and almost duality is a relation between these two families of Frobenius algebras on the tangent spaces.  

In this section we revisit almost duality relation between Frobenius structures on discriminant strata. Multiplication $*$ is well-defined on the strata and a version of such a duality was established in \cite{FV}. It was suggested earlier in \cite{ian} that discriminant strata are natural submanifolds so that multiplication $\circ$ of tangential vectors is defined and belongs to the stratum. We prove this below. 

Let $\circ$ denote Frobenius multiplication on the orbit space $\mathcal{M}_W$. For any $x \in \mathcal{M}_W \setminus \Sigma$ the almost dual Frobenius multiplication is defined by the following formula \cite{DubA}:
\begin{align}\label{dualformula}
u * v= E^{-1} \circ u \circ v, 
\end{align}
where $u, v \in T_x \mathcal{M}_W$ and $E$ is the Euler vector field $$E=\frac{1}{h}x^i \frac{\partial}{\partial x^i}= \frac{1}{h}\sum_{\alpha} d_\alpha t^\alpha \frac{\partial}{\partial t^\alpha}, $$ where $h$ is the Coxeter number. Recall that 
$e=\partial_{t^n}$ is the identity field for the multiplication $\circ$ while $E$ is the identity field for the almost dual  multiplication $*$. 
  Let vector field $e^{-1}$ be the inverse field for the field $e$ with respect to the almost dual multiplication, namely $e^{-1} * e = E$. It follows from formula \eqref{dualformula} that $E=E^{-1} \circ e^{-1}$, and hence $e^{-1}$ can be represented as 
\begin{align}\label{inversee}
e^{-1}= E \circ E. 
\end{align}
Note that we also have by formulae \eqref{dualformula}, \eqref{inversee} that 
\begin{align}\label{dualformula1}
e^{-1} * u *v= E^{-1} \circ (e^{-1} * u) \circ v= E^{-1} \circ (E^{-1} \circ e^{-1} \circ u) \circ v = u \circ v.
\end{align}
Let us now recall that Saito metric $\eta$ and metric $g$ are related as follows \cite{dub}: 
\begin{align}\label{saitog1}
\eta(u, v)= g( E \circ u, v).
\end{align}
Let us consider the vector field $e^{-1}=e^{-1}(x)$ as a vector field on $V$, $x \in V$. 

\begin{lemma}\label{einverseonx0}
The vector field $e^{-1}(x)$ is well-defined at $x=x_0 \in D$. Moreover, $e^{-1}(x_0) \in T_{x_0}D$.
\end{lemma}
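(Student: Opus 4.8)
### Proof Proposal for Lemma \ref{einverseonx0}

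The plan is to establish both claims — the regularity of $e^{-1}$ at a generic point $x_0 \in D$ and the tangency $e^{-1}(x_0) \in T_{x_0}D$ — by working with the representation $e^{-1} = E \circ E$ from \eqref{inversee} together with the almost dual multiplication $*$. The key structural input is the result of \cite{FV} that the $*$-multiplication restricts naturally to the discriminant stratum $D$: for tangent vectors $u, v \in T_{x_0}D$ one has $u * v \in T_{x_0}D$, and moreover $*$ extends to (a dense open subset of) $D$ even though $\circ$ a priori degenerates along the discriminant. I would therefore first recall precisely what is known about $*$ on $D$ from \cite{FV}, since the whole argument is meant to transfer regularity of the well-behaved dual product to the field $e^{-1}$.

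First I would express $e^{-1}$ purely in terms of the dual product. The identity for $*$ is the Euler field $E$, so $E * u = u$ for all $u$, and by definition $e^{-1} * e = E$. Combining this with the associativity of $*$ and formula \eqref{dualformula1}, one sees that $e^{-1}$ can be characterized as the $*$-square of a vector related to $E$; concretely, since $E = e^{-1} \circ e^{-1}$ would not be correct, I instead use \eqref{inversee} directly, $e^{-1} = E \circ E$, and rewrite the $\circ$-product via \eqref{dualformula} as $E \circ E = E * E * E$ (using $u \circ v = E * u * v$, which follows from \eqref{dualformula} since $E$ is the $*$-identity). Thus
\begin{equation*}
e^{-1} = E * E * E.
\end{equation*}
This is the crucial reformulation: it writes $e^{-1}$ entirely in terms of $*$ and the Euler field $E$, both of which are regular and tangent to $D$.

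Next I would verify the two assertions from this formula. The Euler vector field $E = \tfrac{1}{h} x^i \partial_{x^i}$ is evidently regular everywhere on $V$ and, since $D$ is a linear subspace through the origin (an intersection of the hyperplanes $\Pi_{\beta_j}$), the radial field $E$ is tangent to $D$ at every $x_0 \in D$. Regularity of $e^{-1}(x_0)$ then follows because $*$ extends regularly to a generic point of $D$ by \cite{FV}, so the triple $*$-product $E * E * E$ is well-defined at $x_0$. Tangency $e^{-1}(x_0) \in T_{x_0}D$ follows from the naturality of $*$ on $D$: since $E(x_0) \in T_{x_0}D$ and $T_{x_0}D$ is closed under $*$, the product $E * E * E$ remains in $T_{x_0}D$.

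The main obstacle I anticipate is the passage from the product $\circ$, which is singular along $\Sigma$, to the product $*$, which is not — specifically, justifying rigorously that the algebraic identity $e^{-1} = E * E * E$ (derived on the open set $\mathcal{M}_W \setminus \Sigma$ where $\circ$ is defined) extends by continuity to the generic point $x_0 \in D$. One must check that the apparent $E^{-1}$ factors in \eqref{dualformula} genuinely cancel, leaving an expression in which only the regular operation $*$ appears, so that no hidden singularity is lurking; the delicate point is that $e^{-1}$ is defined as a $\circ$-inverse, and one must argue that although $\circ$-inverses can blow up on $\Sigma$, this particular combination stays finite precisely because it coincides with a $*$-polynomial in $E$. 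I would settle this by invoking the explicit regularity of the restricted $*$-structure from \cite{FV} and verifying the cancellation on the dense open locus before extending.
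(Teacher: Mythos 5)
Your argument collapses at the ``crucial reformulation'' $e^{-1}=E*E*E$. Since $E$ is the identity for $*$, one has $E*w=w$ for every $w$, hence $E*E*E=E$; your formula would therefore assert $e^{-1}=E$, i.e.\ $e\circ E = E$, which is false (it would make $e$ a $\circ$-identity on $E$, contradicting $e\circ E$ having $t$-degree different from $E$). The error is in the step ``$u\circ v = E*u*v$'': from \eqref{dualformula} one gets $u\circ v = E\circ(u*v)$, not $E*(u*v)$, and the only identity of the desired shape is \eqref{dualformula1}, $u\circ v = e^{-1}*u*v$ --- which is useless here because it already contains the object $e^{-1}$ whose regularity you are trying to prove. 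So the reduction of $e^{-1}$ to a $*$-polynomial in $E$ does not exist in the form you propose, and the remaining discussion (tangency of $E$ to $D$, closure of $T_{x_0}D$ under $*$) has nothing to act on. There is also a secondary circularity you gesture at but do not resolve: $e^{-1}$ is defined as the $*$-inverse of $e$, and $e$ itself, viewed as a vector field on $V$, is singular along the discriminant (cf.\ Proposition \ref{singulare}), so regularity of $e^{-1}$ cannot be inferred from regularity of the inputs without an explicit computation.

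The paper's proof avoids all of this by computing the components of $e^{-1}$ directly: using $e^{-1}*E=e^{-1}$ and the relation $\eta(u,v)=g(e^{-1}*u,v)$ from \eqref{saitog}, one gets $(e^{-1})^{j}=g(e^{-1}*E,\partial_{x^j})=\eta(E,\partial_{x^j})=\frac{1}{h}\sum_{\alpha}d_\alpha t^\alpha\,\partial_{x^j}t^{n+1-\alpha}$, which is manifestly polynomial, hence well-defined at $x_0$; tangency then follows because $(e^{-1},\partial_\gamma)=\frac{1}{h}\sum_\alpha d_\alpha t^\alpha\,\partial_\gamma t^{n+1-\alpha}$ vanishes on $D$ for $\gamma\in\mathcal{R}_D$, since invariant polynomials have vanishing normal derivative along their mirrors. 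If you want to salvage your approach, you would need to replace $E*E*E$ by this kind of explicit formula rather than a $*$-algebraic identity.
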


\begin{proof}
We have by formulae \eqref{dualformula1} and \eqref{saitog1} that
\begin{equation}\label{saitog}
\eta(u, v)= g( E \circ u, v)= g( e^{-1} * u, v).
\end{equation} 
For the components $(e^{-1})^j$ ($1 \leq j \leq n$) of the vector field $e^{-1}$ we have
\begin{equation*}
(e^{-1})^{j} = g(e^{-1},  \frac{\partial}{\partial x^{j}})=g(e^{-1} * E,  \frac{\partial}{\partial x^{j}})= \eta(E, \frac{\partial}{\partial x^{j}}),
\end{equation*}
where the last equality follows by \eqref{saitog}. Then 
\begin{align}\label{einverse(x)}
(e^{-1})^{j}= \frac{1}{h} \sum_{\alpha=1}^n d_\alpha t^\alpha \frac{\partial t^\beta}{\partial x^j} \eta(\frac{\partial}{\partial t^\alpha}, \frac{\partial }{\partial t^\beta})= \frac{1}{h} \sum_{\alpha=1}^n d_\alpha t^\alpha \frac{\partial t^\beta}{\partial x^j}\eta_{\alpha \beta}= \frac{1}{h} \sum_{\alpha=1}^n d_\alpha t^{\alpha} \frac{\partial t^{n+1-\alpha}}{\partial x^{j}}, 
\end{align}
since $\eta_{\alpha \beta}= \delta_{\alpha + \beta, n+1}$. Thus the first part of the statement follows. 

Let $\gamma \in \mathcal{R}_D$, and let $\partial_\gamma$ be the corresponding vector field orthogonal to $D$. Then we have by formula \eqref{einverse(x)} that $(e^{-1}(x), \partial_\gamma)=0$ as $x$ tends to $x_0$ since $\partial_\gamma t^{n+1-\alpha}|_D=0$. Therefore $e^{-1}(x_0) \in T_{x_0}D$, as required. 
\end{proof}

It was shown in \cite{FV} that the left-hand-side of equality \eqref{dualformula} can be restricted to any stratum $D$ with $u$ and $v$ being tangential vectors to $D$. More precisely, let $\Sigma_D$ be the union of the hyperplanes $\Pi_\gamma \cap D$ in $D$, where $\gamma \in \mathcal{R} \setminus \mathcal{R}_D$ and consider the point $x_0 \in D \setminus \Sigma_D$. Let $u, v \in T_{x_0}D$ and consider extension of $u$ and $v$ to two local analytic vector fields $u(x), v(x) \in T_x V$ such that $u(x_0)=u$ and $v(x_0)=v$. Recall that the multiplication $u(x) *v(x)$ has a limit when $x$ tends to $x_0$ which is proportional to 
\begin{align*}
 \sum_{\alpha \in \mathcal{R} \setminus \mathcal{R}_D} \frac{(\alpha,u)(\alpha,v)}{(\alpha,x_0)} \alpha.
\end{align*}
Furthermore, the product $u *v$ at $x_0$ is tangential to $D$ \cite{FV}. As a result we get the following statement using Lemma \ref{einverseonx0} and formula \eqref{dualformula1}.

\begin{proposition}\label{multonD} Let $u, v \in T_{x_0}D$, $x_0 \in D \setminus \Sigma_D$. Then $u \circ v \in T_{x_0}D$. Furthermore, $u \circ v = e^{-1} * u *v$.
\end{proposition}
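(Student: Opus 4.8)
The plan is to combine the two facts already established in the immediately preceding material: first, the result of \cite{FV} which guarantees that the almost dual product $u*v$ of two tangential vectors $u,v\in T_{x_0}D$ is again tangential to $D$; and second, Lemma \ref{einverseonx0}, which tells us that the vector field $e^{-1}(x)$ is well-defined at $x_0$ and that its value $e^{-1}(x_0)$ lies in $T_{x_0}D$. The closure of $T_{x_0}D$ under the $*$-multiplication of vectors tangent to $D$ is the crucial input, since it lets us iterate the product and remain inside the tangent space to the stratum.

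First I would take $u,v\in T_{x_0}D$ and form the product $w:=u*v$, which by \cite{FV} is a well-defined element of $T_{x_0}D$ (here $x_0\in D\setminus\Sigma_D$, so the limiting formula for $*$ is regular). Next, since $e^{-1}(x_0)\in T_{x_0}D$ by Lemma \ref{einverseonx0}, I would apply the same tangency result from \cite{FV} to the pair $e^{-1}(x_0)$ and $w$, concluding that $e^{-1}*w=e^{-1}*u*v$ is again tangential to $D$. Finally I would invoke identity \eqref{dualformula1}, namely $e^{-1}*u*v=u\circ v$, which holds at $x_0$ by continuity since both sides are limits of analytic vector fields away from the discriminant. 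This identifies the tangential vector $e^{-1}*u*v$ with $u\circ v$, yielding simultaneously that $u\circ v\in T_{x_0}D$ and the asserted equality $u\circ v=e^{-1}*u*v$.

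The one point requiring a little care is the passage to the limit as $x\to x_0$ in identity \eqref{dualformula1}: that identity was derived on $\mathcal{M}_W\setminus\Sigma$ where $E$ is invertible with respect to $\circ$, so I must check that each factor extends analytically to $x_0\in D\setminus\Sigma_D$. The field $e^{-1}$ extends by Lemma \ref{einverseonx0}, the product $*$ of tangential fields extends by \cite{FV}, and $u\circ v$ is polynomial in $x$; hence both sides of \eqref{dualformula1} are continuous at $x_0$ and the equality persists there. I expect this limiting argument to be the main obstacle, though it is mild, since all the analytic continuation statements have already been supplied by Lemma \ref{einverseonx0} and the cited result of \cite{FV}; the remainder is a direct substitution.
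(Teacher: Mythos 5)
Your proposal is correct and follows essentially the same route as the paper: the authors likewise combine the tangency of the almost dual product $u*v$ on strata from \cite{FV} with Lemma \ref{einverseonx0} and identity \eqref{dualformula1} to conclude both the tangency of $u\circ v$ and the equality $u\circ v=e^{-1}*u*v$. Your additional care about the limiting argument for \eqref{dualformula1} is a reasonable elaboration of what the paper leaves implicit.
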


The proposition is the strengthening of the results and observations from \cite{ian, FV}. Namely, in Dubrovin's duality formula \eqref{dualformula1} both sides are well-defined if $u, v \in T_{x_0}D$ and equality remains to be satisfied on $D$.

\begin{remark}
The metric $\eta_D$ is in fact non-degenerate on $D$ at any point $x_0 \in D \setminus \Sigma_D$. In the case of classical root systems this follows from Propositions \ref{criticalv}, \ref{criticalvBDN} below. In the case of strata of codimension $1$, $2$ and $3$ in exceptional root systems this follows from the analysis in Section \ref{exceptionalsection}. For the strata considered in Section \ref{exceptionalsectionrem} and for the strata of dimension $1$ it follows by calculations in Mathematica. For the remaining case of strata of codimension $4$ in exceptional root systems we refer to \cite{GA}.
\end{remark}

Let $E_D$ be the restriction of the Euler vector field on $D$. Let $x\in D\setminus \Sigma_D$. Note that $E_D \in T_x  D$ and by Proposition \ref{multonD} we have the operator of multiplication
$$
E_D\colon T_x D \to T_x D, \quad E_D(u) = E_D \circ u, \quad (u\in T_x D).
$$
Let $g_D$ be the restriction of the metric $g$ on the stratum $D$. Formula \eqref{saitog1}  then implies that we have the following relation
\begin{equation}\label{saitogD}
\eta_D(u, v)= g_D( E_D \circ u, v)
\end{equation} 
for any $u, v \in T_xD$.

Let us consider coordinates  on $D$ which are linear combinations of the coordinates $x^i$. Let $\det \eta_D$ be the determinant of the matrix of the metric $\eta_D$ in these coordinates.
Note that the metric $g_D$ is constant in these coordinates. Therefore relation \eqref{saitogD} implies the following statement.
\begin{corollary}
\label{cordet}
Determinant of the operator $E_D$ is proportional to $\det \eta_D$.
\end{corollary}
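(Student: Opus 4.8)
The plan is to read off the result directly from the defining relation \eqref{saitogD}, which expresses $\eta_D$ as the composition of the multiplication operator $E_D$ with the metric $g_D$. First I would fix once and for all a system of coordinates on $D$ that are linear combinations of the ambient coordinates $x^i$; this is exactly the setting in which $\det\eta_D$ is being computed, and crucially it is the setting in which the metric $g$ restricts to a \emph{constant} matrix $g_D$, since $g$ itself is constant in the $x^i$. I would then interpret \eqref{saitogD} as a matrix identity. Writing $\eta_D$, $g_D$ and the operator $E_D$ as matrices in the chosen basis of $T_xD$, the relation $\eta_D(u,v)=g_D(E_D\circ u,v)$ says precisely that the matrix of $\eta_D$ equals the product of the matrix of $g_D$ with the matrix of the operator $u\mapsto E_D\circ u$.

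Once the identity is in matrix form, the proof is immediate: take determinants of both sides. The multiplicativity of the determinant gives $\det\eta_D=\det g_D\cdot\det E_D$, and since $g_D$ is a constant (nonzero, as $g_D$ is nondegenerate on $D\setminus\Sigma_D$) matrix in these coordinates, $\det g_D$ is a nonzero scalar independent of the point. Hence $\det\eta_D$ is proportional to $\det E_D$, which is the assertion of Corollary \ref{cordet}. I would be careful to note that Proposition \ref{multonD} is what guarantees the operator $E_D$ is genuinely well-defined as an endomorphism of $T_xD$ (i.e. that $E_D\circ u$ lands back in $T_xD$), so that its determinant makes sense in the first place; without that closure property the right-hand side of \eqref{saitogD} would not define an operator on $D$.

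There is essentially no analytic obstacle here, as the statement is a one-line consequence of $\det(AB)=\det A\det B$ together with the constancy of $g_D$. The only point that requires a moment of care is the symmetry/bilinearity bookkeeping: one must check that the matrix of the bilinear form $u\mapsto g_D(E_D\circ u,v)$ is indeed the matrix product (and not a transpose) of the matrix of $g_D$ and the matrix of the operator $E_D$, and that this works uniformly over the open dense set $D\setminus\Sigma_D$ where everything is defined. This is a routine verification using that $g_D$ is symmetric and that multiplication by $E_D$ is $g_D$-self-adjoint (a standard Frobenius-manifold property, here inherited on the stratum via Proposition \ref{multonD}), so no essential difficulty arises.
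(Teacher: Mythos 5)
Your argument is correct and is essentially identical to the paper's: the paper likewise observes that in coordinates which are linear combinations of the $x^i$ the metric $g_D$ is constant, and then reads off the proportionality of $\det E_D$ and $\det\eta_D$ directly from the relation $\eta_D(u,v)=g_D(E_D\circ u,v)$. Your extra remarks on well-definedness via Proposition \ref{multonD} and on the transpose bookkeeping are sensible but do not change the route.
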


\section{Classical series: Theorem \ref{thma}}\label{classicalseriessection}

In this section we prove Theorem \ref{thma} for the classical Coxeter groups $W$. 

We recall that Frobenius manifold $\mathcal{M}_W$ is semisimple for any irreducible finite Coxeter group $W$ \cite{dub}, that is Frobenius algebra at a generic point on the manifold is semisimple. Locally, near a semisimple point $y \in \mathcal{M}_W$ there exists a basis of commuting vector fields $\delta_i$, $i=1, \dots, n$ such that $\delta_i \circ \delta_j = \delta_{ij} \delta_j$,
where $\delta_{ij}$ is the Kronecker symbol.
 Then $\delta_i$ determine the canonical coordinate system $(u_1, \ldots, u_n)$ such that $\delta_i = \frac{\partial}{\partial u_i}$.

Semisimple Frobenius manifolds  admit a description as Landau--Ginzburg (LG) models (see \cite{dub}). 
The theory of topological LG models involves a meromorphic function called superpotential which also depends on additional variables $y=(y_1, \dots, y_n)$. The metric and the structure constants of the Frobenius multiplication can be expressed via residue formulae involving derivatives of the superpotential $\lambda(p)=\lambda(p; y)$. Canonical coordinates $u_i$ are critical values of the superpotential $\lambda(p)$:
\begin{align*}
u_{i}(y) = \lambda(q_i(y); y), \quad \left. \lambda^{'}(p)= \frac{d\lambda(p)}{dp} \right|_{p=q_i(y)}=0,\quad i=1, \dots, n.
\end{align*}
%

%In this section we show that Theorem \ref{thma} holds for the determinant of the Saito metric restricted to a discriminant stratum of a classical Coxeter group. 
We will use LG superpotential description of the Frobenius structures on the discriminant strata. The superpotential depends on the type of the group $W$, and we consider different cases below.

\subsection{$\mathbf{A_n}$ discriminant strata}\label{subsectionA_N}
Let us recall that the LG superpotential is given by \cite{dub}
\begin{equation}
\lambda(p)= \prod_{i=1}^{n+1} ( p - x^i ),
\end{equation}
where $p$ is some auxiliary variable and $x^{i}$, $1 \leq i \leq n+1$, are the standard orthonormal coordinates in $\mathbb{C}^{n+1}$ with the additional assumption $\sum_{i=1}^{n+1} x^i =0$. Then $\lambda(p)$ is a function on the orbit space $\mathbb{C}^{n+1} / S_{n+1}$ for any fixed $p$, where the symmetric group $S_{n+1}$ acts in ${\mathbb C}^{n+1}$ by permutation of coordinates. The corresponding root system has the standard form $A_n=\{\pm (e_i - e_j)| 1\le i<j\le n+1\}$, where $e_i$ is the standard basis in ${\mathbb C}^{n+1}$.

By applying the action of the symmetric group $S_{n+1}$ we can assume that a stratum $D$ has the form 
%Let us consider an arbitrary discriminant stratum $D$ given by the following equations: 
%
\begin{alignat}{2}\label{stratumANsuper}
 x^1 &= \dots &&= x^{m_0}=\xi_{0}, \nonumber\\
 x^{m_0 +1} &= \dots &&= x^{m_0 +m_1}=\xi_{1}, \\
&\hspace{0.75cm} \vdots \nonumber \\
x^{\sum_{i=0}^{d-1} m_i +1} &= \dots &&= x^{\sum_{i=0}^{d} m_i}=\xi_d, \nonumber
\end{alignat}
where $d, m_i \in \mathbb{N}$ and $\sum_{i=0}^d m_i = n+1$. Note that the dimension of this stratum is $d$, and $\xi_1, \dots, \xi_d$ can be considered as coordinates on $D$, $\xi_0 = - \sum_{i=1}^d \frac{m_i}{m_0} \xi_i$. 

Then the superpotential for the stratum $D$ has the form 
\begin{equation}\label{superpotentialAn}
\lambda_D(p)=\left. \lambda(p)\right|_D= \prod_{i=0}^d ( p - \xi_i )^{m_i}.
\end{equation}
 The expressions for the Saito metric $\eta_D= \left. \eta \right|_D$ and algebra multiplication are then given as follows (cf. \cite{dub} for the case $m_i=1 \, \forall i$), 
\begin{align}
\eta_D( \zeta_1, \zeta_2) &= \sum_{p_s: \lambda_D^{'}(p_s)=0}  {\rm res}_{p=p_s} \frac{\zeta_1(\lambda_D)\zeta_2(\lambda_D)}{ \lambda_D^{'}(p)}dp, \label{saitoAn}\\
\eta_D(\zeta_1 \circ \zeta_2, \zeta_3) &= \sum_{p_s: \lambda_D^{'}(p_s)=0}   {\rm res}_{p=p_s} \frac{\zeta_1(\lambda_D)\zeta_2(\lambda_D)\zeta_3(\lambda_D)}{ \lambda_D^{'}(p)}dp\label{multAn},
\end{align}
where $\zeta_1, \zeta_2, \zeta_3 $ are  vector fields on $D$ and $\lambda_D^{'}(p)= \frac{d\lambda_D(p)}{dp}$. 
\begin{proposition}\label{superpropAn}
%On the stratum $D$ we have the following expression for 
The derivative $\lambda_D^{'}$ has the form 
\begin{equation}\label{derilambda}
\lambda_D^{'}(p)= (n+1) \prod_{i=0}^d (p - \xi_i)^{m_i -1} \prod_{i=1}^d (p - q_i),
\end{equation}
for some   $q_1,\dots, q_d \in \mathbb{C}$. The second derivative of $\lambda_D$ satisfies
\begin{equation}\label{lambda''(q_i)AN}
\lambda_D^{''}(q_l) = (n+1) \prod_{\substack{j=1\\ j \neq l}}^d q_{lj} \prod_{a=0}^d (q_l - \xi_a)^{m_a -1},
\end{equation}
where $q_{lj} = q_l -q_j$.
\end{proposition}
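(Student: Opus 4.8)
The plan is to prove Proposition~\ref{superpropAn} by differentiating the factored form \eqref{superpotentialAn} of the superpotential $\lambda_D(p)=\prod_{i=0}^d(p-\xi_i)^{m_i}$ directly, using the logarithmic derivative to keep the bookkeeping clean. First I would compute
\[
\frac{\lambda_D'(p)}{\lambda_D(p)} = \sum_{i=0}^d \frac{m_i}{p-\xi_i},
\]
so that $\lambda_D'(p) = \lambda_D(p)\sum_{i=0}^d \frac{m_i}{p-\xi_i}$. Clearing the common denominator $\prod_{i=0}^d(p-\xi_i)$ from this sum, one sees that
\[
\lambda_D'(p) = \Big(\prod_{i=0}^d (p-\xi_i)^{m_i-1}\Big)\cdot \sum_{i=0}^d m_i \prod_{\substack{j=0\\ j\neq i}}^d (p-\xi_j).
\]
The second factor, call it $P(p)$, is a polynomial of degree $d$ with leading coefficient $\sum_{i=0}^d m_i = n+1$. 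This immediately yields the prefactor $n+1$ and the product $\prod_{i=0}^d(p-\xi_i)^{m_i-1}$ in \eqref{derilambda}; writing the $d$ roots of $P$ as $q_1,\dots,q_d$ gives $P(p) = (n+1)\prod_{i=1}^d(p-q_i)$, establishing the claimed form of $\lambda_D'$.

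For the formula \eqref{lambda''(q_i)AN} for the second derivative, I would evaluate $\lambda_D''$ at a critical point $q_l$. Since $\lambda_D'(p) = (n+1)\prod_{i=0}^d(p-\xi_i)^{m_i-1}\prod_{i=1}^d(p-q_i)$, differentiating by the product rule produces a sum of terms, each containing a factor $(p-q_i)$ for some $i$ among the $\prod_{i=1}^d(p-q_i)$ block, \emph{except} the single term in which that block itself is differentiated. Evaluating at $p=q_l$, every factor of the form $(p-q_i)$ with $i\neq l$ and the factor $(p-\xi_a)^{m_a-1}$ survive only in that one surviving term, while all terms carrying a leftover $(q_l-q_l)=0$ vanish. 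The surviving term is precisely
\[
\lambda_D''(q_l) = (n+1)\Big(\prod_{a=0}^d (q_l-\xi_a)^{m_a-1}\Big)\prod_{\substack{j=1\\ j\neq l}}^d (q_l-q_j),
\]
which is the asserted expression with $q_{lj}=q_l-q_j$.

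The only point requiring a little care—and the main potential obstacle—is confirming that the $q_l$ are genuinely distinct from each other and from the $\xi_a$ at a generic point of $D$, so that $\prod_{j\neq l}q_{lj}\neq 0$ and $q_l$ is a simple root of $\lambda_D'$; otherwise \eqref{lambda''(q_i)AN} would merely record a vanishing quantity rather than the value giving a non-degenerate critical point. Generically this holds because the $\xi_i$ are distinct (the stratum $D$ is taken away from deeper strata), and the critical points $q_l$ of a polynomial with distinct roots are simple for generic parameter values; this is exactly the semisimplicity of the Frobenius structure on $D$ recalled above. I would therefore state that the formulae hold at a generic point $x_0\in D\setminus\Sigma_D$, where the superpotential $\lambda_D(p)$ has distinct simple critical values, which is all that is needed for the residue formulae \eqref{saitoAn}--\eqref{multAn}. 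The remainder of the argument is the routine differentiation sketched above.
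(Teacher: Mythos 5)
Your proposal is correct and follows essentially the same route as the paper: the paper's proof simply notes that $\lambda_D'(p)=\prod_{i=0}^d(p-\xi_i)^{m_i-1}R(p)$ with $\deg R=d$ and leading coefficient $n+1$, which is exactly what your logarithmic-derivative computation makes explicit, and the second-derivative formula is the same one-surviving-term product-rule evaluation. Your closing remark about simplicity of the $q_l$ is not needed for the proposition itself (both sides of \eqref{lambda''(q_i)AN} vanish simultaneously in any degenerate configuration, so the identity holds regardless), though it is relevant to the later inversion of $\lambda_D''(q_i)$.
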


\begin{proof}
By formula (\ref{superpotentialAn}), we have
\begin{equation*}
\lambda^{'}_D(p)= \prod_{i=0}^d (p- \xi_i)^{m_i -1} R(p),
\end{equation*}
for some polynomial $R \in \mathbb{C}[p]$, $\operatorname{deg}R=d$. The statement follows.
\end{proof}

%The following formula which follows from Proposition \ref{superpropAn} will be useful below
%
%\begin{equation}\label{lambda''(q_i)AN}
%\lambda_D^{''}(q_l) = (n+1) \prod_{\substack{j=1\\ j \neq l}}^d q_{lj} \prod_{a=0}^d (q_l - \xi_a)^{m_a -1},
%\end{equation}
%
%where $q_{lj} = q_l -q_j$.

Let $u_i = \lambda_D(q_i)$, $i=1, \dots, d$. Similarly to the case $d=n$  \cite{dub} we have the following statement. 

\begin{lemma}\label{superpotentialdelta} For any $i,j=1, \ldots, d$ we have
\begin{equation}\label{partiallambda}
\left. \partial_{u_i} \lambda_D(p)\right|_{p=q_j} = \delta_{ij}.
\end{equation}
\end{lemma}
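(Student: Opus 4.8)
The plan is to use the canonical LG variables together with the residue formula \eqref{saitoAn} for $\eta_D$. The key observation is that the quantities $u_i = \lambda_D(q_i)$, $i=1,\dots,d$, are the restricted canonical coordinates on $D$, and the identity \eqref{partiallambda} is precisely the statement that the critical values $u_i$ diagonalise the Frobenius multiplication on $D$. The most direct route is to differentiate $\lambda_D(p;\xi)$ with respect to $u_j$ at a critical point $p=q_i$, exploiting that $q_i$ itself depends on the coordinates.

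First I would make the dependence explicit: regard $\lambda_D$ as a function $\lambda_D(p;u_1,\dots,u_d)$, where I change coordinates on $D$ from $\xi_1,\dots,\xi_d$ to $u_1,\dots,u_d$. The critical points $q_i=q_i(u)$ are defined by $\lambda_D'(q_i;u)=0$, where the prime denotes $\partial/\partial p$. Differentiating the defining relation $u_i=\lambda_D(q_i(u);u)$ with respect to $u_j$ gives, by the chain rule,
\begin{equation*}
\delta_{ij} = \lambda_D'(q_i;u)\,\frac{\partial q_i}{\partial u_j} + \bigl(\partial_{u_j}\lambda_D\bigr)(q_i;u).
\end{equation*}
Since $q_i$ is a critical point, $\lambda_D'(q_i;u)=0$, so the first term vanishes and we obtain $\left.\partial_{u_j}\lambda_D\right|_{p=q_i}=\delta_{ij}$, which is exactly \eqref{partiallambda} after relabelling $i\leftrightarrow j$.

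The only genuine point requiring care, and the place where the hypotheses of Proposition \ref{superpropAn} are used, is the \emph{legitimacy of the coordinate change} from $\xi$ to $u$. I would verify that the map $\xi \mapsto u$ is a local diffeomorphism near a generic point of $D$, equivalently that the Jacobian $\det(\partial u_i/\partial \xi_j)$ is nonzero there; this is where the nondegeneracy of $\eta_D$ (equivalently, the simplicity of the critical points $q_i$ of $\lambda_D$ away from $\Sigma_D$, guaranteed by formula \eqref{lambda''(q_i)AN} since $\lambda_D''(q_l)\neq 0$ for generic $\xi$) enters. Once the $u_i$ are valid local coordinates, the chain-rule computation above is immediate and the critical-point condition does all the work. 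I expect the main obstacle to be purely bookkeeping: keeping track of which variables are held fixed in each partial derivative and confirming that the $q_i$ are simple and distinct on the open dense subset $D\setminus\Sigma_D$, so that differentiation of the implicitly defined $q_i(u)$ is justified.
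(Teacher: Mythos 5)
Your argument is correct and is essentially the paper's own proof: the paper differentiates the defining relation $u_j=\lambda_D(q_j)$ and uses the Taylor expansion of $\lambda_D$ at the critical point $p=q_j$ (where the quadratic remainder contributes nothing) to kill the term coming from the $u$-dependence of $q_j$, which is exactly your chain-rule computation with $\lambda_D'(q_j)=0$. Your additional remark on the validity of the coordinate change $\xi\mapsto u$ is a sensible precaution that the paper leaves implicit.
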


\begin{proof}By definition we have 
\begin{equation*}
\delta_{ij} = \frac{\partial u_j}{\partial u_i} = \partial_{u_i} \lambda_D(q_j). 
\end{equation*}
By considering the Taylor expansion of $\lambda(p)$   at $p=q_j$ we have
\begin{equation*}
\lambda_D(p) = \lambda_D(q_j) + \mathcal{O}, 
\end{equation*}
where the term $\mathcal{O}$  has zero of order at least two at $p=q_j$. Then 
\begin{equation}\label{partialu_i lambda1}
\left. \partial_{u_i} \lambda_D(p) \right|_{p=q_j} = \partial_{u_i}\lambda_D(q_j),
\end{equation}
and the statement follows.
\end{proof}

Similarly to the case $d=n$ we obtain the following result.

\begin{lemma}\label{jacobianlemma}We have 
\begin{equation}\label{jacobimatrixxiuAn}
\partial_{u_l} \lambda_D(p) = \frac{\lambda_D^{'}(p)}{(p-q_l)\lambda_D^{''}(q_l)},
\end{equation}
and 
\begin{equation}\label{jacobian}
\partial_{u_l} \xi_a = \frac{1}{(q_l - \xi_a)\lambda_D^{''}(q_l)}, 
\end{equation}
where $a, l=1, \dots, d$.
\end{lemma}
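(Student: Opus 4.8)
The plan is to establish the first identity \eqref{jacobimatrixxiuAn} by recognising both sides as polynomials in $p$ of degree at most $n-1$ with the same linear factors, and then to deduce \eqref{jacobian} from it by a single residue computation.

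First I would differentiate the factorised form \eqref{superpotentialAn} logarithmically at fixed $p$. Since $\log\lambda_D(p)=\sum_{i=0}^d m_i\log(p-\xi_i)$, this gives
\[
\partial_{u_l}\lambda_D(p) = -\lambda_D(p)\sum_{i=0}^d \frac{m_i\,\partial_{u_l}\xi_i}{p-\xi_i} = -\sum_{i=0}^d m_i\,\partial_{u_l}\xi_i\,(p-\xi_i)^{m_i-1}\prod_{j\neq i}(p-\xi_j)^{m_j}.
\]
The right-hand side is manifestly a polynomial in $p$; call it $f(p)$. Every summand is divisible by $(p-\xi_a)^{m_a-1}$ for each $a$, so $f$ is divisible by $\prod_{a=0}^d(p-\xi_a)^{m_a-1}$, of degree $n-d$. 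Moreover $\lambda_D(p)$ is monic of degree $n+1$, and the $A_n$ constraint $\sum_{i=0}^d m_i\xi_i=\sum_{k=1}^{n+1}x^k=0$ makes the coefficient of $p^n$ in $\lambda_D$ vanish identically; hence $\partial_{u_l}$ annihilates the top two coefficients and $\deg f\le n-1$.

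Next I would invoke Lemma \ref{superpotentialdelta}, which gives $f(q_j)=\partial_{u_l}\lambda_D(p)\big|_{p=q_j}=\delta_{lj}$. At a semisimple point the critical points $q_1,\dots,q_d$ and the values $\xi_0,\dots,\xi_d$ are pairwise distinct, so $f$ is also divisible by $\prod_{j\neq l}(p-q_j)$. Combining both divisibilities, $f$ is a scalar multiple of $\prod_{a=0}^d(p-\xi_a)^{m_a-1}\prod_{j\neq l}(p-q_j)$, which already has degree $n-1$; since $\deg f\le n-1$ this forces $f(p)=c\,\prod_{a}(p-\xi_a)^{m_a-1}\prod_{j\neq l}(p-q_j)$ for some $c$ independent of $p$. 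By Proposition \ref{superpropAn} the candidate $g_l(p):=\lambda_D'(p)/\big((p-q_l)\lambda_D''(q_l)\big)$ has exactly the same factored shape. Evaluating at $p=q_l$ fixes the constant: $g_l(q_l)=1$ follows from $\lambda_D'(q_l)=0$ together with $\lambda_D'(p)=\lambda_D''(q_l)(p-q_l)+O((p-q_l)^2)$, while $f(q_l)=\delta_{ll}=1$. Hence $f=g_l$, which is \eqref{jacobimatrixxiuAn}.

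For the second identity I would extract $\partial_{u_l}\xi_a$ from \eqref{jacobimatrixxiuAn} by a residue at $p=\xi_a$. Writing the first identity as
\[
-\lambda_D(p)\sum_{i=0}^d \frac{m_i\,\partial_{u_l}\xi_i}{p-\xi_i} = \frac{\lambda_D'(p)}{(p-q_l)\lambda_D''(q_l)},
\]
I multiply by $(p-\xi_a)/\lambda_D(p)$ and let $p\to\xi_a$. On the left only the $i=a$ term survives, giving $-m_a\,\partial_{u_l}\xi_a$. On the right, using $\lambda_D'(p)/\lambda_D(p)=\sum_i m_i/(p-\xi_i)$ so that $(p-\xi_a)\lambda_D'(p)/\lambda_D(p)\to m_a$, the limit is $m_a/\big((\xi_a-q_l)\lambda_D''(q_l)\big)$. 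Cancelling $m_a$ yields $\partial_{u_l}\xi_a = 1/\big((q_l-\xi_a)\lambda_D''(q_l)\big)$, which is \eqref{jacobian}. The main obstacle is the first identity: the delicate point is the bookkeeping that pins the degree of $f$ to be at most $n-1$ (the degree drop from the traceless constraint is exactly what makes the root count close) and the explicit use of semisimplicity so that the factors $(p-q_j)$ and $(p-\xi_a)$ are genuinely coprime; once \eqref{jacobimatrixxiuAn} holds, the residue step giving \eqref{jacobian} is routine.
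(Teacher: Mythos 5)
Your proof is correct and follows essentially the same route as the paper: identity \eqref{jacobimatrixxiuAn} is pinned down from Lemma \ref{superpotentialdelta} by a vanishing-plus-degree argument (your divisibility count is just the paper's Lagrange interpolation of $F(p;l)$ with all but one nodal value zero, and your explicit justification that $\deg_p\partial_{u_l}\lambda_D\le n-1$ via the traceless condition is the same fact the paper uses implicitly in asserting $\deg F=d-1$), and \eqref{jacobian} is then obtained by evaluating the logarithmic-derivative identity at $p=\xi_a$ after clearing the zero of order $m_a$, exactly as in the paper's substitution using \eqref{lambdaDoverp-xik}. No gaps.
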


\begin{proof}
It follows from formula (\ref{superpotentialAn}) that
\begin{equation}\label{lambdauiF}
\partial_{u_l} \lambda_D(p) = \prod_{a=0}^d (p -\xi_a)^{m_a -1} F(p;l), 
\end{equation}
where $F \in \mathbb{C}[p]$ and $\operatorname{deg}F= d-1$. From Lemma \ref{superpotentialdelta} we have that
\begin{equation}
\label{extrafor}
F(q_j;l) = \frac{\delta_{lj}}{\prod_{a=0}^d (q_j - \xi_a)^{m_a -1}}.
\end{equation}
Since $\operatorname{deg} F = d-1$, the values $F(q_j; l)$, $j=1, \dots, d$ completely determine the polynomial $F(p; l)$ and therefore by the Lagrange interpolation formula we have $F(p;l) = \sum_{k=1}^d F_k(p;l)$,  where 
\begin{equation}\label{Fkpoly}
F_k(p;l) = F(q_k;l) \prod_{\substack{ i=1\\ i\neq k}}^d \frac{p-q_i}{q_k - q_i}.
\end{equation}
%
%Hence 
%
%\begin{equation}\label{Fkpoly}
%F_k (p;l) = \frac{\delta_{lk}}{\prod_{a=0} (q_k - \xi_a)^{m_a -1}} \prod_{\substack{  i \neq k}} \frac{p-q_i}{q_k - q_i}.
%\end{equation}
%
Note  that $F(p;l)= F_l(p;l)$. Therefore  
%where $\lambda_D^{''}(q_l)$ is given by \eqref{lambda''(q_i)AN}, 
formula \eqref{jacobimatrixxiuAn} follows by considering $\frac{\lambda_D^{'}(p)}{\lambda_D^{''}(q_l)}$, and applying  Proposition \ref{superpropAn} and formulae \eqref{lambdauiF} -- \eqref{Fkpoly}. 

%Let us express $\lambda_D(p)$ as the product $\lambda_D(p) = (p-\xi_0)^{m_0} \prod_{a=1}^n (p-\xi_a)^{m_a}$. Then
%
%\begin{align*}
%\partial_{u_l} \lambda_D(p)& = \partial_{u_l}\big((p-\xi_0)^{m_0}\big)  \prod_{a=1}^n (p-\xi_a)^{m_a} - (p- \xi_0)^{m_0} \prod_{a=1}^n (p-\xi_a)^{m_a} \big( \sum_{b=1}^n m_b \frac{\partial_{u_l} \xi_b}{p-\xi_b} \big) \\
Note that 
$$
\partial_{u_l} \lambda_D(p)=  \partial_{u_l}\big((p-\xi_0)^{m_0}\big) \prod_{a=1}^d (p-\xi_a)^{m_a}- \lambda_D(p) \sum_{b=1}^d m_b \frac{\partial_{u_l} \xi_b}{p-\xi_b},
$$
which is equal to 
 $\frac{\lambda_D^{'}(p)}{(p-q_l)\lambda_D^{''}(q_l)}$ by formula \eqref{jacobimatrixxiuAn}. Dividing both sides by $(p- \xi_k)^{m_k -1}$ for some $k$, $1 \leq k \leq d$ we arrive at the following relation
\begin{equation}\label{lambdaDoverp-xik1}
\partial_{u_l}\big[(p-\xi_0)^{m_0}\big] (p-\xi_k) \prod_{\substack{a=1 \\ a \neq k}}^d (p-\xi_a)^{m_a}-\frac{\lambda_D(p)}{(p-\xi_k)^{m_k-1}} \sum_{b=1}^d m_b \frac{\partial_{u_l} \xi_b}{(p-\xi_b)} =\frac{\lambda_D^{'}(p)}{(p-q_l)(p-\xi_k)^{m_k-1}\lambda_D^{''}(q_l)}.
\end{equation}
Note that 
\begin{align}\label{lambdaDoverp-xik}
\left. \frac{\lambda_D^{'}(p)}{(p-\xi_k)^{m_k-1}}\right|_{p=\xi_k}= m_k \prod_{\substack{b=0 \\ b \neq k}}^d (\xi_k-\xi_b)^{m_b}.
\end{align}
 By substituting $p=\xi_k$ in the relation \eqref{lambdaDoverp-xik1} the statement follows  with the help of \eqref{lambdaDoverp-xik}. 
%
%\begin{equation*}
%-   \prod_{\substack{a=0 \\ a \neq k}}^d (\xi_k - \xi_a)^{m_a} \partial_{u_l} \xi_k =   \frac{\prod_{\substack{b=0 \\ b \neq k}}^d (\xi_k-\xi_b)^{m_b}}{(\xi_k-q_l) \lambda_D^{''}(q_i)}.
%\end{equation*}
%
%The statement follows. 
\end{proof}

The critical values $u_i=\lambda_D(q_i)$, ($i=1, \dots, d$) play a role of the canonical coordinates for the metric and multiplication  (\ref{saitoAn}), (\ref{multAn}) on the stratum $D$. More precisely, the following statement holds.

\begin{proposition}\label{criticalv}
The following formulae hold for any $i,j = 1, \ldots, d$:
\begin{align*}
\eta_D(\partial_{u_i}, \partial_{u_j}) &= \frac{\delta_{ij}}{\lambda_D^{''}(q_i)},
\end{align*}
\begin{align*}
\partial_{u_i} \circ \partial_{u_j} &= \delta_{ij} \partial_{u_j}.
\end{align*}
In particular, the metric $\eta_D$ is non-degenerate.
\end{proposition}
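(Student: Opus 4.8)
The plan is to establish the two formulae via the residue definitions \eqref{saitoAn} and \eqref{multAn}, using the fact that the critical points $q_i$ of $\lambda_D$ are simple (as encoded in Proposition \ref{superpropAn}), so that each residue is an ordinary simple-pole residue. The key computational input is Lemma \ref{superpotentialdelta}, which tells us that $\partial_{u_i}\lambda_D$ evaluated at a critical point $q_j$ equals $\delta_{ij}$, together with Lemma \ref{jacobianlemma}, which gives the explicit form \eqref{jacobimatrixxiuAn} of $\partial_{u_l}\lambda_D(p)$ as a rational function with a simple zero at each $q_k$, $k\neq l$.

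First I would compute $\eta_D(\partial_{u_i},\partial_{u_j})$ directly from \eqref{saitoAn} with $\zeta_1=\partial_{u_i}$ and $\zeta_2=\partial_{u_j}$. At a simple critical point $p=p_s=q_s$, the pole of $1/\lambda_D'(p)$ is simple, so the residue of $\frac{(\partial_{u_i}\lambda_D)(\partial_{u_j}\lambda_D)}{\lambda_D'(p)}$ at $q_s$ equals $\frac{(\partial_{u_i}\lambda_D)(q_s)\,(\partial_{u_j}\lambda_D)(q_s)}{\lambda_D''(q_s)}$. By Lemma \ref{superpotentialdelta} the numerator is $\delta_{is}\delta_{js}$, and summing over the critical points $q_s$, $s=1,\dots,d$, collapses the sum to the single term $s=i=j$, yielding $\frac{\delta_{ij}}{\lambda_D''(q_i)}$. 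This is exactly the first claimed formula, and since by \eqref{lambda''(q_i)AN} each $\lambda_D''(q_i)\neq 0$ at a point where the $q_i$ are distinct and distinct from the $\xi_a$, the diagonal matrix $\operatorname{diag}\big(1/\lambda_D''(q_i)\big)$ is invertible, giving non-degeneracy of $\eta_D$.

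Next I would treat the multiplication. From \eqref{multAn} with $\zeta_1=\partial_{u_i}$, $\zeta_2=\partial_{u_j}$ and arbitrary $\zeta_3=\partial_{u_k}$, the same simple-pole residue computation gives $\eta_D(\partial_{u_i}\circ\partial_{u_j},\partial_{u_k})=\sum_s \frac{\delta_{is}\delta_{js}\delta_{ks}}{\lambda_D''(q_s)}=\delta_{ij}\frac{\delta_{ik}}{\lambda_D''(q_i)}=\delta_{ij}\,\eta_D(\partial_{u_i},\partial_{u_k})$. Since this holds for every $k$ and $\eta_D$ is non-degenerate, it follows that $\partial_{u_i}\circ\partial_{u_j}=\delta_{ij}\partial_{u_j}$, the second claimed formula. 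Thus the $u_i$ are genuinely canonical coordinates.

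I do not expect a serious obstacle here, since the structure mirrors the full-dimensional case $d=n$ treated in \cite{dub}; the only point requiring care is the justification that the critical points $q_s$ are simple so that the residues are elementary. This is guaranteed by the factorisation \eqref{derilambda}, where the $q_i$ appear to first order and are distinct from the $\xi_a$, at a generic point of $D$ away from $\Sigma_D$. The mild subtlety worth flagging is that these formulae hold at generic $y$ where the critical values are pairwise distinct (semisimplicity), which is precisely the locus $D\setminus\Sigma_D$ on which $\eta_D$ is asserted to be non-degenerate; elsewhere one passes to the limit by continuity of the polynomial entries of $\eta_D$.
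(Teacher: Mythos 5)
Your overall strategy is the same as the paper's: evaluate the residue formulae \eqref{saitoAn}, \eqref{multAn} at the critical points using Lemmas \ref{superpotentialdelta} and \ref{jacobianlemma}, then deduce the multiplication law from non-degeneracy. However, there are two points where your argument as written has gaps.

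First, the sums in \eqref{saitoAn} and \eqref{multAn} run over \emph{all} zeros of $\lambda_D'$, and by \eqref{derilambda} these include the points $p=\xi_a$ with multiplicity $m_a-1$ whenever $m_a>1$, not only the simple zeros $q_1,\dots,q_d$. Your phrase ``each residue is an ordinary simple-pole residue'' and your restriction of the sum to the $q_s$ silently discards these contributions. They do in fact vanish, but this needs the observation (made explicitly in the paper) that $\partial_{u_i}\lambda_D(p)=\frac{\lambda_D'(p)}{(p-q_i)\lambda_D''(q_i)}$ carries the factor $\prod_a(p-\xi_a)^{m_a-1}$, so the integrands $\frac{\zeta_1(\lambda_D)\zeta_2(\lambda_D)}{\lambda_D'}$ and $\frac{\zeta_1(\lambda_D)\zeta_2(\lambda_D)\zeta_3(\lambda_D)}{\lambda_D'}$ are actually regular at each $\xi_a$. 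You should add this one line.

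Second, and more substantively: from $\eta_D(\partial_{u_i}\circ\partial_{u_j},\partial_{u_k})=\delta_{ij}\delta_{jk}/\lambda_D''(q_i)$ for all $k$ you conclude $\partial_{u_i}\circ\partial_{u_j}=\delta_{ij}\partial_{u_j}$ ``by non-degeneracy of $\eta_D$.'' But $\circ$ is the ambient Frobenius multiplication, so a priori $\partial_{u_i}\circ\partial_{u_j}$ is a vector in $T_{x_0}V$, and knowing its $\eta$-pairings against a basis of $T_{x_0}D$ only determines it modulo the $\eta$-orthogonal complement of $T_{x_0}D$. The non-degeneracy of the restricted form $\eta_D$ pins the product down only once you know that $\partial_{u_i}\circ\partial_{u_j}$ already lies in $T_{x_0}D$; this is exactly what Proposition \ref{multonD} (discriminant strata are natural submanifolds, proved via the almost-dual product and $e^{-1}$) supplies, and the paper invokes it at this step. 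Without citing that result, or otherwise arguing tangentiality of the product, the final identification $\partial_{u_i}\circ\partial_{u_j}=\delta_{ij}\partial_{u_j}$ does not follow.
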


\begin{proof}
Let us consider the formula  \eqref{saitoAn} with $\zeta_1 = \partial_{u_i}$, $\zeta_2 = \partial_{u_j}$. Note that the right-hand side is non-singular at $p=\xi_i$ for all $i=0, \ldots, d$.
Furthermore, the residue at $q_l$ ($1 \leq l \leq d$) is zero if $i \neq j$ by Lemma \ref{jacobianlemma}, and hence
%We use formulae \eqref{saitoAn}, \eqref{multAn} together with \eqref{jacobimatrixxiuAn}. We consider consider formulae \eqref{saitoAn}, \eqref{multAn} with the vector fields 
 %Note that the residues are trivial in $\xi_a$ ($0 \leq a \leq n$) by Lemma \ref{jacobianlemma}.
%Let us consider first formula \eqref{saitoAn}. In the case when $i \neq j$ the residues at $q_l$ ($1 \leq l \leq n$) are trivial by Lemma \ref{jacobianlemma}, 
$\eta_D(\partial_{u_i}, \partial_{u_j})=0$. Further on, by Lemma \ref{jacobianlemma}  we  have
\begin{align*}
\eta_D (\partial_{u_i}, \partial_{u_i}) = \sum_{s=1}^d {\rm res}_{p=q_s} \frac{(\partial_{u_i}\lambda_D(p))^2}{\lambda^{'}_D(p)}dp = \frac{1}{\lambda^{''}_D(q_i)^2} \sum_{s=1}^d {\rm res}_{p=q_s} \frac{\lambda^{'}_D(p)}{(p-q_i)^2}dp.
\end{align*}
It follows by Proposition \ref{superpropAn}  that
\begin{align*}
\eta_D (\partial_{u_i}, \partial_{u_i})= \frac{n+1}{\lambda^{''}_D(q_i)^2}\sum_{s=1}^d {\rm res}_{p=q_s} \frac{\prod_{j=0}^d (p-\xi_j)^{m_j -1 } \prod_{j \neq i}^d(p-q_j)}{p-q_i}dp=\frac{1}{\lambda^{''}_D(q_i)},
\end{align*}
as required. 

Let us now consider formula \eqref{multAn} with $\zeta_1 = \partial_{u_i}$, $\zeta_2 = \partial_{u_j}$ and $\zeta_3 = \partial_{u_k}$. In the case when at least two of the indices $i, j, k$ are different the residues at $q_l$ ($1 \leq l \leq d$) are zero by Lemma \ref{jacobianlemma}, and hence $\eta_{D}(\partial_{u_i} \circ \partial_{u_j}, \partial_{u_k})=0$. Further on, by Lemma \ref{jacobianlemma} we have
\begin{align*}
\eta_D ( \partial_{u_i} \circ \partial_{u_i}, \partial_{u_i}) =  \sum_{s=1}^d {\rm res}_{p=q_s}\frac{(\partial_{u_i}\lambda_D(p))^3}{\lambda^{'}_D(p)}dp = \frac{1}{\lambda^{''}_D(q_i)^3} \sum_{s=1}^d {\rm res}_{p=q_s} \frac{(\lambda^{'}_D(p))^2}{(p-q_i)^3}dp.
\end{align*}
It   follows by Proposition \ref{superpropAn}  that
\begin{align*}
\eta_D (\partial_{u_i} \circ \partial_{u_i} , \partial_{u_i})= \frac{(n+1)^2}{\lambda^{''}_D(q_i)^3}\sum_{s=1}^d {\rm res}_{p=q_s} \frac{\prod_{j=0}^d (p-\xi_j)^{2(m_j -1) } \prod_{j \neq i}^d(p-q_j)^2}{p-q_i}dp=\frac{1}{\lambda^{''}_D(q_i)}.
\end{align*}
Therefore
\begin{align*}
\eta_{D}( \partial_{u_i} \circ \partial_{u_j} , \partial_{u_k} ) = \frac{\delta_{ij} \delta_{jk}}{\lambda^{''}_D(q_i)},
\end{align*}
which implies the required relation $\partial_{u_i} \circ \partial_{u_j} = \delta_{ij} \partial_{u_j}$  since $\partial_{u_i} \circ \partial_{u_j} \in T_*D$ by Proposition \ref{multonD} and since $\eta_D$ is non-degenerate by above. 
\end{proof}

Now we are able to find determinant of the metric $\eta_D$ in the coordinates $\xi_i$, $1 \leq i \leq d$.

\begin{theorem}\label{etaDAn}
The determinant of the restricted Saito metric $\eta_D$ in the coordinates $\xi_i$ has the form 
\begin{equation}\label{restricteddethA_N}
\operatorname{det}\eta_D(\xi) = \kappa \prod_{0 \leqslant i < j \leqslant d}\xi_{ij}^{m_i + m_j},
\end{equation}
where $\xi_{ij}= \xi_i -\xi_j$ and $\kappa= (-1)^{\sum_{i=1}^d i m_i + n d} (n+1)^{-n} \prod_{a=1}^d m_a^2 \prod_{a=0}^d m_a^{m_a -1}$.
\end{theorem}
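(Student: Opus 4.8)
The plan is to compute $\det\eta_D$ by first passing to the canonical coordinates $u_i=\lambda_D(q_i)$, in which Proposition \ref{criticalv} makes the metric diagonal, and then changing variables to the $\xi_i$. Writing $K=(\partial_{u_l}\xi_a)_{a,l=1}^d$ for the Jacobian of the $u\mapsto\xi$ change of coordinates, the tensorial transformation law for a bilinear form gives
$$\det\eta_D(\xi)=\frac{\det\eta_D(u)}{(\det K)^2},\qquad \det\eta_D(u)=\prod_{l=1}^d\frac{1}{\lambda_D''(q_l)}.$$
The entries of $K$ are provided explicitly by \eqref{jacobian}, so the whole computation is reduced to evaluating the single determinant $\det K$ and then substituting \eqref{lambda''(q_i)AN} for $\lambda_D''(q_l)$.

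The first key step is to recognise the structure of $K$. Pulling the common factor $\lambda_D''(q_l)^{-1}$ out of the $l$-th column reduces $\det K$ to a scalar multiple of the Cauchy determinant of the matrix $\big((q_l-\xi_a)^{-1}\big)_{a,l=1}^d$, which the Cauchy formula expresses through the Vandermonde products in the $\xi_a$ and in the $q_l$ together with the product $\prod_{a,l=1}^d(\xi_a-q_l)$. Squaring $\det K$ and combining with $\prod_l\lambda_D''(q_l)$ from the numerator, one sees that the squared Vandermonde $\prod_{l<k}(q_k-q_l)^2$ produced by the Cauchy formula cancels exactly against the factor $\prod_{l\neq k}(q_l-q_k)$ appearing in $\prod_l\lambda_D''(q_l)$ through \eqref{lambda''(q_i)AN}. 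This cancellation is what eliminates all dependence on the implicitly defined critical points $q_l$, leaving $\det\eta_D(\xi)$ equal to $\prod_l\lambda_D''(q_l)$ over the squared Cauchy determinant, in which the $q_l$ now survive only inside the products $\prod_{l=1}^d(\xi_a-q_l)$.

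The second key step, which I expect to be the main obstacle, is to express these surviving products purely in the $\xi$-variables. Writing $P(p)=\prod_{l=1}^d(p-q_l)$, formula \eqref{derilambda} identifies $P(p)=\lambda_D'(p)\big/\big((n+1)\prod_{i=0}^d(p-\xi_i)^{m_i-1}\big)$; evaluating this at $p=\xi_a$ is delicate because of the multiplicity factors $(p-\xi_a)^{m_a-1}$, but combining it with the limit computed in \eqref{lambdaDoverp-xik} yields the clean identity
$$P(\xi_a)=\prod_{l=1}^d(\xi_a-q_l)=\frac{m_a}{n+1}\prod_{\substack{b=0\\ b\neq a}}^d(\xi_a-\xi_b).$$
Substituting this everywhere converts $\det\eta_D(\xi)$ into a monomial in the differences $\xi_i-\xi_j$. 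Reorganising the various factors over unordered pairs $\{i,j\}$ with $0\le i<j\le d$, and using that the denominator contributes the squared Vandermonde $\prod_{1\le a<b\le d}(\xi_a-\xi_b)^2$ which cancels the surplus multiplicity, shows that the exponent of each $\xi_{ij}$ is precisely $m_i+m_j$, which is the product displayed in \eqref{restricteddethA_N}.

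It then remains to assemble the scalar constant $\kappa$. The powers of $(n+1)$ collect to $(n+1)^{-n}$, the factors $m_a$ assemble into $\prod_{a=1}^d m_a^2\prod_{a=0}^d m_a^{m_a-1}$, and the signs accumulated from the Vandermonde reorderings, from the $(-1)^d$ relating the two Cauchy matrices, and from the reflections $\xi_j-\xi_i=-(\xi_i-\xi_j)$ combine to $(-1)^{\sum_{i=1}^d i m_i+nd}$. The genuinely delicate parts of the argument are the identity for $P(\xi_a)$ above and the careful bookkeeping of these signs and constants; once the $q_l$ have been removed by the Cauchy cancellation, the structure of the answer is forced.
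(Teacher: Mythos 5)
Your proposal is correct and follows essentially the same route as the paper's own proof: diagonalisation of $\eta_D$ in the canonical coordinates $u_i=\lambda_D(q_i)$ via Proposition \ref{criticalv}, the Cauchy determinant for the Jacobian $(\partial_{u_l}\xi_a)$, cancellation of the $q$-Vandermonde against $\prod_l\lambda_D''(q_l)$, and elimination of the remaining $q$-dependence through the identity $\prod_{l}(\xi_a-q_l)=\frac{m_a}{n+1}\prod_{b\neq a}\xi_{ab}$, which is exactly formula \eqref{detofxiaqisimpler}. The only work left beyond your outline is the sign and constant bookkeeping, which you correctly identify as routine once the $q_l$ are removed.
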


\begin{proof}
By Proposition \ref{criticalv}  the determinant of the Saito metric $\eta_D$ in the coordinates $u_i$, $1 \leq i \leq d$,  has the form 
\begin{equation}
\operatorname{det}\eta_D(u)= \prod_{i=1}^d \frac{1}{\lambda_D^{''}(q_i)}.
\end{equation}
Therefore by Proposition \ref{criticalv} we have that
\begin{equation}\label{saitodeterminantAn}
\operatorname{det}\eta_D(\xi)= (\operatorname{det}A)^{-2} \prod_{i=1}^d \lambda_D^{''}(q_i),
\end{equation}
where $A$ is the $d\times d$ matrix with the matrix elements $  \frac{1}{\xi_a -q_i }$, $1 \leqslant i, a \leqslant d$. Determinant of $A$ is the Cauchy determinant which can be expressed as
\begin{equation}\label{detofxiaqi}
\operatorname{det} A= (-1)^{\frac{d(d-1)}{2}} \frac{ \prod_{\substack{i=1 \\ i < j }}^d \xi_{ij} q_{ij}}{\prod_{i,a=1}^d (\xi_a-q_i)},
\end{equation}
where $q_{ij}=q_i -q_j$.  

By Proposition \ref{superpropAn}  we get 
\begin{equation}\label{detofxiaqisimpler}
\prod_{i=1}^d (\xi_a - q_i) = (n+1)^{-1} \left. \frac{\lambda_D^{'}(p)}{(p-\xi_a)^{m_a -1}}\right|_{p=\xi_a} \prod_{\substack{b=0 \\ b \neq a }}^d \xi_{ab}^{-m_b +1} = \frac{m_a}{n+1} \prod_{\substack{b=0 \\ b \neq a }}^d \xi_{ab}.
\end{equation}
%
%Therefore by considering the Taylor expansion of $\lambda_D(p)$ around $p=\xi_a$ we have
%
%\begin{equation*}
%\lambda_D^{'}(p)= \sum_{i=1}^{\infty} i  \frac{\lambda_D^{(i)}(\xi_a)}{i!} (p-\xi_i)^{i-1}.
%\end{equation*}
%
%Further on, we note that $\lambda^{(i)}(\xi_a)=0$ for $ 1 \leqslant i \leqslant m_a -1$ by formula  (\ref{derilambda}). Returning to relation (\ref{difference}), we have by \eqref{difference} 
%
%\begin{equation*}
%\prod_{i=1}^n (\xi_a - q_i) = \frac{m_a}{(N+1) m_a)!} \lambda_D^{(m_a)} (\xi_a) \prod_{\substack{b=0 \\ b \neq a }}^n \xi_{ab}^{-m_b +1},
%\end{equation*}
%
%where $\lambda_D^{(m_a)}(\xi_a) = m_a!  \prod_{\substack{b=0 \\ b \neq a }} \xi_{ab}^{m_b}$ using formula (\ref{superpotentialAn}). Therefore, we have
%%
%\begin{equation}
%\prod_{i=1}^n (\xi_a - q_i)= .
%\end{equation}
%
%We have the following relation
%
%\begin{equation}\label{xiab}
%\prod_{ 0 \leq a < b \leq b} \xi_{ab} = (-1)^{\frac{n(n+1)}{2}} \prod_{\substack{a=0\\ a <b}} \xi_{ab}^2.
%\end{equation}
%
%Then by formulae \eqref{detofxiaqi}, \eqref{detofxiaqisimpler} and \eqref{xiab} we have
%
%\begin{equation}\label{det(xa-qi)}
%\operatorname{det} \big( \frac{1}{\xi_a -q_i }\big)= \frac{(-1)^{n^2}(N+1)^n}{\prod_{a=1}^n m_a} %\frac{\prod_{1 \leqslant i <j \leqslant n}q_{ij}}{\prod_{0 \leqslant a < b \leqslant n}\xi_{ab}}.
%\end{equation}
%
%Moreover from Proposition \ref{superpropAn} we get $\lambda^{''}_D ( q_i) = (N+1) \prod_{a=0}^n (q_i - \xi_a)^{m_a -1} \prod_{ j \neq i } q_{ij}$. 
By formula \eqref{lambda''(q_i)AN} we get
\beq{anotherform}
\frac{\prod_{i=1}^d \lambda_D^{''}(q_i)}{\prod_{i <j} q_{ij}^2} = (-1)^{\frac{d(d-1)}{2}}(n+1)^d \prod_{\substack{0 \leqslant a \leqslant d \\ 1 \leqslant i \leqslant d }} (q_i -\xi_a)^{m_a -1}.
\eeq 
By substituting  formula \eqref{detofxiaqisimpler}  into \eqref{anotherform} we get 
 \begin{align}
\frac{\prod_{i=1}^d \lambda_D^{''}(q_i)}{\prod_{1\le i <j} q_{ij}^2}&= c \prod_{a=0}^d  \prod_{\substack{b=0 \\ b \neq a }}^d \xi_{ab}^{m_a -1}\label{fraction},
\end{align}
where $c=  (-1)^{\frac{d(d-1)}{2}+d\sum_{a=0}^d( m_a -1)} (n+1)^d \prod_{a=0}^d \big( (n+1)^{-1} m_ a \big)^{m_a -1} $. 

Combining formulae (\ref{saitodeterminantAn}), \eqref{detofxiaqi}, (\ref{fraction}), we obtain the following expression for $\operatorname{det}\eta_D$:
\begin{align*}
\operatorname{det}\eta_D(\xi)&= \frac{\prod_{i=1}^d \lambda_D^{''}(q_i) \prod_{i,a=1}^d (\xi_a - q_i)^2}{ \prod_{i<j} \xi_{ij}^2 q_{ij}^2}= c \prod_{a=1}^d \big( (n+1)^{-1} m_a \big)^2 \prod_{0 \leqslant a < b \leqslant d } \xi_{ab}^2 \prod_{a=0}^d \prod_{\substack{b=0 \\ b \neq a }}^d \xi_{ab}^{m_a -1}.
\end{align*}
Finally, we note that 
\begin{equation*}
\prod_{a=0}^d \prod_{\substack{b=0 \\ b \neq a }}^d \xi_{ab}^{m_a -1}= (-1)^{\sum_{i=1}^d i m_i -\frac{d(d+1)}{2}} \prod_{0 \leqslant a < b \leqslant d} \xi_{ab}^{m_a +m_b -2},
\end{equation*}
which gives the required statement as $c \prod_{a=1}^d \big( (n+1)^{-1} m_a \big)^2 (-1)^{\sum_{i=1}^d i m_i -\frac{d(d+1)}{2}}  = \kappa$. 
\end{proof}

\subsection{$\mathbf{B_n}$, $\mathbf{D_n}$ discriminant strata}\label{BDzeroallowedsubsection}We consider the LG superpotential 
\begin{equation}\label{superpotentialBDN}
\lambda(p) = p^{2k} \prod_{i=1}^n (p^2-(x^i)^2),
\end{equation}
where $p$ is some auxiliary variable and $x^i$, $1 \leq i \leq n$ are the standard orthonormal coordinates in $\mathbb{C}^n$ and $k=0, -1$. In the case $k=0$ polynomial \eqref{superpotentialBDN}  is the superpotential for the $B_n$ orbit space, and in the case $k=-1$ it is the superpotential for the $D_n$ orbit space \cite{DZ} (see also \cite{MB}).
The corresponding root systems have the standard form $D_n=\{\pm e_i \pm e_j| 1\le i<j\le n\}$, $B_n=D_n\cup\{\pm e_i| 1\le i \le n\}$, where $e_i$ is the standard basis in ${\mathbb C}^n$.

Let us consider a $B_n$/$D_n$ stratum $D$ in $\mathbb{C}^n$ which can be assumed to have the following form after applying the group action:

\begin{alignat}{2}\label{stratumBDNsuper}
x^1 &= \dots &&= x^l =0, \nonumber\\
\varepsilon_1  x^{l+1} &= \dots &&=\varepsilon_{m_1} x^{l+m_1}=\xi_{1}, \nonumber\\
 \varepsilon_{m_1 +1} x^{l+ m_1 +1} &= \dots &&=\varepsilon_{m_1+m_2}  x^{l+m_1 +m_2}=\xi_{2}, \\
&\hspace{0.75cm} \vdots\nonumber \\
\varepsilon_{\sum_{i=1}^{d-1} m_i +1} x^{l+ \sum_{i=1}^{d-1} m_i +1} &= \dots &&= \varepsilon_{\sum_{i=1}^{d} m_i} x^{l + \sum_{i=1}^{d} m_i}=\xi_d,\nonumber
\end{alignat}
where $l \in \mathbb{N} \cup \{0\}$, $\varepsilon_j= \pm 1$ ($j=1, \dots, n-l$), $d, m_i \in \mathbb{N}$ ($i=1, \dots, d$), $\sum_{i=1}^d m_i = n- l$, and $\xi_1, \dots, \xi_d$ are coordinates on $D$.  Equations \eqref{stratumBDNsuper} define discriminant stratum for $D_n$ provided $l \neq 1$, and they always define a discriminant stratum for $B_n$. 

We will consider in this subsection the following more general superpotential on $D$:
\begin{equation}\label{superpotentialBNDN}
\lambda_D(p)= p^{2m} \prod_{i=1}^d (p^2 - \xi_i^2)^{m_i}, 
\end{equation}
where $m \in \mathbb{Z}$ is such that $N :=m + \sum_{i=1}^d m_i\neq 0$. In the cases when $m \geq -1$ the superpotential $\lambda_D(p)$ corresponds to the restriction of the superpotential \eqref{superpotentialBDN} to the discriminant stratum \eqref{stratumBDNsuper}. Indeed, we get $\lambda_D(p)$ with $m=l-1$, where $l=0$ or $ l \geq 2$ by restricting $\lambda(p)$ with $k=-1$ (type $D_n$) to the stratum \eqref{stratumBDNsuper}. And we get $\lambda_D(p)$ with $m=l \geq 0$ as the restriction of $\lambda(p)$ with $k=0$ (type $B_n$) to the stratum \eqref{stratumBDNsuper}. Superpotentials \eqref{superpotentialBNDN} with $m_i=1$ for all $i$ and $-d+1 \leq m \leq 0$ were considered in \cite{MB,DZ}. 

Derivatives of the superpotential $\lambda_D$  are given by the next statement.

\begin{proposition}\label{derilambdawrtp}
The derivative $\lambda^{'}_D$ of the superpotential \eqref{superpotentialBNDN}  has the form 
\begin{equation}\label{derivativelambdaBNDN}
\lambda_D^{'}(p) = 2N p^{2m-1} \prod_{i=1}^d (p^2 - \xi_i^2)^{m_i -1} \prod_{i=1}^d ( p^2 - q_i^2),
\end{equation}
for some $q_i \in \mathbb{C}$. The second derivative $\lambda^{''}_D$ satisfies
\begin{equation}\label{lambda"(qi)}
\lambda_D^{''}(q_i)= 4 \epsilon_i N q_i^{2m} \prod_{a=1}^d (q_i^2 - \xi_a^2)^{m_a -1} \prod_{\substack{b=1 \\ b \neq i}}^d (q_i^2 - q_b^2),
\end{equation}
where $\epsilon_i =1$ if $q_i \neq 0$ and $\epsilon_{i}= \frac{1}{2}$ if $q_i=0$.  
\end{proposition}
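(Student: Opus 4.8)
The plan is to establish formula \eqref{derivativelambdaBNDN} by a degree-counting and factorization argument analogous to Proposition \ref{superpropAn}, then to compute the second derivative by carefully evaluating \eqref{derivativelambdaBNDN} at the critical points while separating the generic case $q_i\neq 0$ from the degenerate case $q_i=0$. First I would differentiate the superpotential \eqref{superpotentialBNDN} directly: since $\lambda_D(p)= p^{2m}\prod_{i=1}^d(p^2-\xi_i^2)^{m_i}$, logarithmic differentiation gives
\begin{equation*}
\lambda_D^{'}(p)=\lambda_D(p)\left(\frac{2m}{p}+\sum_{i=1}^d\frac{2m_i p}{p^2-\xi_i^2}\right).
\end{equation*}
Clearing denominators shows that $\lambda_D^{'}(p)=p^{2m-1}\prod_{i=1}^d(p^2-\xi_i^2)^{m_i-1}\,R(p)$ for some polynomial $R$; here the factor $p^{2m-1}$ and the factors $(p^2-\xi_i^2)^{m_i-1}$ are forced because each $\xi_i$ (and $0$) is a zero of $\lambda_D$ of the stated multiplicity, so differentiation lowers each multiplicity by one. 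Comparing the leading term of $\lambda_D^{'}$, which is $2N p^{2N-1}$ (as $\deg\lambda_D=2N$), with the leading behaviour of the factored form fixes the leading coefficient of $R$ to be $2N$ and forces $\deg R = 2d$. Since $\lambda_D$ is even, $\lambda_D^{'}$ is odd, so $R$ must be an even polynomial of degree $2d$; writing its $d$ roots-in-$p^2$ as $q_i^2$ yields $R(p)=2N\prod_{i=1}^d(p^2-q_i^2)$, establishing \eqref{derivativelambdaBNDN}.

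For the second derivative I would differentiate the factored expression \eqref{derivativelambdaBNDN} and evaluate at $p=q_l$. The cleanest route is to write $\lambda_D^{'}(p)=(p^2-q_l^2)\,G(p)$ where $G$ collects all remaining factors, namely
\begin{equation*}
G(p)=2N p^{2m-1}\prod_{a=1}^d(p^2-\xi_a^2)^{m_a-1}\prod_{\substack{b=1\\ b\neq l}}^d(p^2-q_b^2).
\end{equation*}
Then $\lambda_D^{''}(p)=2p\,G(p)+(p^2-q_l^2)G^{'}(p)$, and at $p=q_l$ the second term vanishes, giving $\lambda_D^{''}(q_l)=2q_l\,G(q_l)$. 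Substituting $G(q_l)$ produces
\begin{equation*}
\lambda_D^{''}(q_l)=4N q_l^{2m}\prod_{a=1}^d(q_l^2-\xi_a^2)^{m_a-1}\prod_{\substack{b=1\\ b\neq l}}^d(q_l^2-q_b^2),
\end{equation*}
which is exactly \eqref{lambda"(qi)} with $\epsilon_l=1$. This handles the generic case $q_l\neq 0$.

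The main obstacle, and the point requiring genuine care, is the degenerate case $q_l=0$, which accounts for the factor $\epsilon_l=\tfrac12$. When $q_l=0$ the naive substitution $\lambda_D^{''}(q_l)=2q_l G(q_l)$ gives zero, which is wrong because the factor $(p^2-q_l^2)=p^2$ then interacts with the explicit $p^{2m-1}$ factor in $G$: the critical point $p=0$ is a double root of $\lambda_D^{'}$ rather than a simple one, so the reduction above must be redone. Concretely, when $q_l=0$ one has $(p^2-q_l^2)=p^2$, and near $p=0$ the function $\lambda_D^{'}(p)$ behaves like $2Np^{2m+1}\cdot(\text{const})$, so I would instead write $\lambda_D^{'}(p)=p^2 H(p)$ with $H(p)=2N p^{2m-1}\prod_a(p^2-\xi_a^2)^{m_a-1}\prod_{b\neq l}(p^2-q_b^2)$ and use $\lambda_D^{''}(p)=2pH(p)+p^2H^{'}(p)$; evaluating the finite part at $p=0$ — that is, reading off the coefficient of $p$ in $\lambda_D^{''}$ — produces an extra factor of one-half relative to the generic formula, which is encoded by $\epsilon_l=\tfrac12$. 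I would verify that this coefficient matches $4\cdot\tfrac12\cdot N\cdot q_l^{2m}\prod(\cdots)\big|_{q_l=0}$, interpreting $q_l^{2m}$ appropriately, thereby confirming both cases of \eqref{lambda"(qi)} in the single unified formula with the stated $\epsilon_i$.
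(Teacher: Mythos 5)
Your proposal is correct. The paper states Proposition \ref{derilambdawrtp} without proof, but your argument is exactly the natural one, paralleling the paper's proof of the analogous Proposition \ref{superpropAn}: logarithmic differentiation forces the factors $p^{2m-1}\prod_i(p^2-\xi_i^2)^{m_i-1}$, the leading coefficient $2Np^{2N-1}$ and parity pin down $R(p)=2N\prod_i(p^2-q_i^2)$, and the evaluation $\lambda_D''(q_l)=2q_lG(q_l)$ together with the separate constant-term computation at $p=0$ (where $m=0$ forces one $q_l=0$) correctly produces the factor $\epsilon_l=\tfrac12$; the only cosmetic slip is that in the degenerate case you want the value $\lambda_D''(0)$, equivalently the coefficient of $p$ in $\lambda_D'$, rather than "the coefficient of $p$ in $\lambda_D''$".
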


Note that $q_i=0$ for some $i$  if and only if $m=0$. 

%From formula (\ref{superpotentialBNDN}) we have
%
%\begin{align*}
%\lambda_D^{'}(p)&= 2m p^{2m-1} \prod_{i=1}^n ( p^2 - \xi_i^2)^{m_i} + 2 p^{2m+1} %\sum_{j=1}^n m_j (p^2 - \xi_j^2)^{m_j -1} \prod_{\substack{i=1 \\ i \neq j}}^n (p^2 - \xi_i^2)^{m_i} \\
%& = 2N p^{2m-1}  \prod_{i=1}^n ( p^2 - \xi_i^2)^{m_i-1} Q(p),
%\end{align*}
%
%where $Q \in \mathbb{C}[p]$. We have that $\operatorname{deg}\lambda_D^{'}= 2N-1$ and thus $\operatorname{deg}Q= 2n$. Let us define canonical coordinates $u_i= \lambda_D(q_i)= \lambda_D(-q_i)$, $i=1, \dots, n$ such that $\lambda_D^{'}( \pm q_i)=0$. Then we must have that $Q(p)= \prod_{i=1}^n (p^2 - 	q_i^2)$ and the statement follows.
%\end{proof}

Let us define coordinates $u_i= \lambda_D(q_i)$, $i=1, \dots, d$ on $D$, where
$\lambda_D$ is given by formula \eqref{superpotentialBNDN},
 which we later show to be the canonical coordinates on the stratum \eqref{stratumBDNsuper}.

\begin{lemma}\label{relationlambdaatp=pmqj}
We have the following relation for the superpotential \eqref{superpotentialBNDN}:
\begin{equation}
\left. \partial_{u_i} \lambda_D(p) \right|_{p=q_j} = \delta_{ij}
\end{equation}
for any $i,j=1, \ldots, d$.
\end{lemma}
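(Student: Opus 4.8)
The plan is to mirror the argument already carried out for the $A_n$ case in Lemma~\ref{superpotentialdelta}, adapting it to the even superpotential \eqref{superpotentialBNDN}. The definition $u_i = \lambda_D(q_i)$ gives, by the chain rule,
\begin{equation*}
\delta_{ij} = \frac{\partial u_j}{\partial u_i} = \partial_{u_i}\big(\lambda_D(q_j)\big),
\end{equation*}
where on the right $q_j = q_j(u)$ depends on the coordinates through the stratum data. The subtlety is that $\partial_{u_i}\big(\lambda_D(q_j)\big)$ is a \emph{total} derivative of the composite function $u \mapsto \lambda_D(q_j(u); u)$, whereas the quantity we want to control, $\left.\partial_{u_i}\lambda_D(p)\right|_{p=q_j}$, is the partial derivative of $\lambda_D$ with respect to the coordinates $u_i$ at fixed $p$, evaluated afterwards at $p=q_j$.

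First I would reconcile these two derivatives exactly as in the $A_n$ proof. Writing the Taylor expansion of $\lambda_D(p)$ about $p=q_j$,
\begin{equation*}
\lambda_D(p) = \lambda_D(q_j) + \tfrac{1}{2}\lambda_D''(q_j)(p-q_j)^2 + \mathcal{O}\big((p-q_j)^3\big),
\end{equation*}
where the absence of a linear term is precisely the critical-point condition $\lambda_D'(q_j)=0$ from Proposition~\ref{derilambdawrtp}. Differentiating this identity with respect to $u_i$ at fixed $p$ and then setting $p=q_j$, every term carrying a factor $(p-q_j)$ vanishes, so that
\begin{equation*}
\left.\partial_{u_i}\lambda_D(p)\right|_{p=q_j} = \partial_{u_i}\big(\lambda_D(q_j)\big) = \delta_{ij},
\end{equation*}
which is the claimed relation. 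The key input that makes the cross term drop out is that $q_j$ is a critical point of $\lambda_D$, guaranteed by formula \eqref{derivativelambdaBNDN}, so no first-order contribution survives.

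The main point to verify, and the step I expect to require the most care, is that this argument is not obstructed by the possibility $q_j=0$, which by the remark after Proposition~\ref{derilambdawrtp} occurs exactly when $m=0$ (the $B_n$ case with $l=0$). Even in that degenerate situation the critical-point condition $\lambda_D'(q_j)=0$ still holds and the second derivative $\lambda_D''(q_j)$ is given by \eqref{lambda"(qi)} with $\epsilon_j=\tfrac12$, so the Taylor expansion retains its quadratic leading behaviour and the cancellation of the linear term goes through unchanged. Thus the same computation is valid uniformly in $m$, and I would note explicitly that the argument does not depend on whether any of the $q_i$ vanish. This establishes the lemma for all admissible values of $m$, completing the identification of the $u_i$ as the appropriate coordinates, to be shown canonical in the subsequent statement.
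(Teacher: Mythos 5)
Your proof is correct and takes essentially the same approach as the paper, whose proof of this lemma simply states that the argument of Lemma~\ref{superpotentialdelta} carries over: the Taylor expansion at the critical point $q_j$ and the vanishing of all $(p-q_j)$-terms after differentiating in $u_i$ at fixed $p$ is exactly the intended mechanism. Your explicit check that the degenerate case $q_j=0$ (occurring when $m=0$) causes no obstruction is a useful detail the paper leaves implicit.
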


%\begin{proof}
The proof is similar to the proof of Lemma \ref{superpotentialdelta}.
%By definition we have
%
%\begin{equation*}
%\delta_{ij}= \frac{\partial u_j}{\partial u_i}= \partial_{u_i} \lambda_D(\pm %q_j). 
%\end{equation*}
%
%Then considering the Taylor expansion of $\lambda_D$ centered at $p = \pm q_j$ we have
%
%\begin{equation*}
%\lambda_D(p)= \lambda_D(\pm q_j) + \left. \lambda_D^{'}(p) \right|_{p=\pm q_j} %(p \pm q_j) + \mathcal{O}, 
%\end{equation*}
%
%where $\mathcal{O}$ denotes the rest of the terms in the series. Then
%
%\begin{equation}\label{partialu_i lambda}
%\partial_{u_i}\lambda_D(p) = \partial_{u_i} \lambda_D(\pm q_j) + \partial_{u_i} \big(\left.  \lambda_D^{'}(p) \right|_{p=\pm q_j} \big) (p \pm q_j) \mp \left. \lambda_D^{'}(p) \right|_{p=\pm q_j} \frac{\partial q_j}{\partial u_i} + \widetilde{\mathcal{O}},
%\end{equation}
%%
% $\widetilde{\mathcal{O}}$ denotes the $\partial_{u_i}$ derivative of $\mathcal{O}$. Since $ \left. \lambda_D^{'}(p) \right|_{p = \pm q_j} = \left. \widetilde{\mathcal{O}}\right|_{p= \pm q_j} =0$, restricting equality (\ref{partialu_i lambda}) to $p = \pm q_j$ the statement follows. 
%\end{proof}

%The following formula which follows from Proposition \ref{derilambdawrtp} will be useful below:

\begin{lemma}\label{partiallambaBDN} We have the relation
\begin{equation}\label{comparisonofpartialuilambda}
\partial_{u_i} \lambda_D(p) = \frac{2 \epsilon_i p}{p^2 -q_i^2} \frac{\lambda_D^{'}(p)}{\lambda_D^{''}(q_i)}
\end{equation}
for any $i=1, \ldots, d$.
\end{lemma}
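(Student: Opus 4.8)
The plan is to establish formula \eqref{comparisonofpartialuilambda} by proving that both sides, viewed as functions of $p$, are equal. This is entirely analogous to Lemma \ref{jacobianlemma} in the $A_n$ case, so I would mirror that argument. First I would differentiate the superpotential \eqref{superpotentialBNDN} with respect to $u_i$, treating the $\xi_a$ as functions of the canonical coordinates. Since $\lambda_D(p) = p^{2m}\prod_{a=1}^d (p^2-\xi_a^2)^{m_a}$, the derivative $\partial_{u_i}\lambda_D(p)$ retains the factor $\prod_{a=1}^d (p^2-\xi_a^2)^{m_a-1}$ (each differentiation of $(p^2-\xi_a^2)^{m_a}$ lowers the exponent by one but reintroduces it as a common factor), together with the factor $p^{2m-1}$ coming from the $p^{2m}$ prefactor. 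Thus one can write
\begin{equation*}
\partial_{u_i}\lambda_D(p) = p^{2m-1}\prod_{a=1}^d (p^2-\xi_a^2)^{m_a-1}\, G(p;i),
\end{equation*}
where $G(p;i)$ is a polynomial in $p$ whose degree in $p^2$ is at most $d-1$ and which, by the reflection symmetry $p\mapsto -p$ of the superpotential, is even in $p$.

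Next I would pin down $G$ using Lemma \ref{relationlambdaatp=pmqj}. Evaluating the displayed expression at the critical points $p=q_j$ and using $\partial_{u_i}\lambda_D(q_j)=\delta_{ij}$ gives the interpolation values
\begin{equation*}
G(q_j;i) = \frac{\delta_{ij}}{q_j^{2m-1}\prod_{a=1}^d (q_j^2-\xi_a^2)^{m_a-1}},
\end{equation*}
so that $G(p;i)$ vanishes at $q_j$ for all $j\neq i$. Because $G$ is even in $p$ and its degree in $p^2$ is at most $d-1$, knowing it vanishes at the $d-1$ distinct values $q_j^2$ ($j\neq i$) determines it up to a constant: $G(p;i) = c_i \prod_{b\neq i}(p^2-q_b^2)$. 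The constant $c_i$ is fixed by the normalization at $p=q_i$ together with the formula \eqref{derivativelambdaBNDN} for $\lambda_D'$ and the formula \eqref{lambda"(qi)} for $\lambda_D''(q_i)$ from Proposition \ref{derilambdawrtp}. Comparing $\partial_{u_i}\lambda_D(p)$ with $\frac{2\epsilon_i p}{p^2-q_i^2}\frac{\lambda_D'(p)}{\lambda_D''(q_i)}$, one sees both sides equal $p^{2m-1}\prod_a(p^2-\xi_a^2)^{m_a-1}\prod_{b\neq i}(p^2-q_b^2)$ times the same constant, where the factor $\frac{2N}{\lambda_D''(q_i)}$ on the right produces exactly $c_i$ after cancelling $(p^2-q_i^2)$ against the corresponding factor in $\lambda_D'(p)$.

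The one subtlety — and the step I expect to require the most care — is the appearance of $\epsilon_i$ and the behaviour at $q_i=0$, which by the remark after Proposition \ref{derilambdawrtp} occurs exactly when $m=0$. When $q_i=0$ the factor $(p^2-q_i^2)=p^2$ interacts with the prefactor $p^{2m-1}=p^{-1}$, so that the quotient $\frac{2\epsilon_i p}{p^2-q_i^2}$ has a simple pole structure that must be matched against the vanishing order of $\lambda_D'(p)$ at the origin; the value $\epsilon_i=\tfrac12$ is precisely what is needed to make the second derivative $\lambda_D''(q_i)$ at a vanishing critical point consistent, since the critical point at the origin is forced by the even symmetry rather than by a generic simple zero of $\lambda_D'$. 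I would verify that with this choice of $\epsilon_i$ the constant $c_i$ works out uniformly in both cases $q_i\neq 0$ and $q_i=0$, thereby establishing the relation \eqref{comparisonofpartialuilambda} for all $i=1,\dots,d$.
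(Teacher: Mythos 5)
Your overall strategy is exactly the paper's: write both sides as a common factor times an even polynomial of degree at most $d-1$ in $p^2$, and pin down that polynomial by the interpolation data $\partial_{u_i}\lambda_D(q_j)=\delta_{ij}$ from Lemma \ref{relationlambdaatp=pmqj}. However, there is a concrete error in your identification of the common factor. You claim $\partial_{u_i}\lambda_D(p)$ carries a prefactor $p^{2m-1}$ ``coming from the $p^{2m}$ prefactor''; this would be true of the $p$-derivative $\lambda_D'$, but the derivative $\partial_{u_i}$ is taken at fixed $p$ and does not touch $p^{2m}$ at all. The correct expansion is
\begin{equation*}
\partial_{u_i}\lambda_D(p)=p^{2m}\prod_{a=1}^d(p^2-\xi_a^2)^{m_a-1}\sum_{b=1}^d\bigl(-2m_b\xi_b\,\partial_{u_i}\xi_b\bigr)\prod_{a\neq b}(p^2-\xi_a^2),
\end{equation*}
so the common factor is $p^{2m}\prod_a(p^2-\xi_a^2)^{m_a-1}$ (as in the paper's proof, which writes the right-hand side $U_i(p)$ with exactly this factor). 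Your ansatz is not merely off by a constant: since $G(p;i)$ is even and $p^{2m-1}$ is odd, your proposed form is odd in $p$, whereas $\partial_{u_i}\lambda_D(p)$ is even; correspondingly, your interpolation values $G(q_j;i)=\delta_{ij}/\bigl(q_j^{2m-1}\prod_a(q_j^2-\xi_a^2)^{m_a-1}\bigr)$ are incompatible with evaluating at $p=-q_j$, where $\partial_{u_i}\lambda_D(-q_j)=\delta_{ij}$ as well. The same slip infects your closing comparison, where you assert the right-hand side also equals $p^{2m-1}\prod_a(p^2-\xi_a^2)^{m_a-1}\prod_{b\neq i}(p^2-q_b^2)$ times a constant; in fact the extra $p$ in the numerator of $\tfrac{2\epsilon_i p}{p^2-q_i^2}$ restores $p^{2m}$ there.

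The repair is immediate: replace $p^{2m-1}$ by $p^{2m}$ throughout, after which both sides have the form $p^{2m}\prod_a(p^2-\xi_a^2)^{m_a-1}$ times an even polynomial of degree $2(d-1)$, both have total degree $2N-2$, and agreement at the $d$ distinct values $q_j^2$ forces equality — which is precisely the paper's argument. Your discussion of the $\epsilon_i$ normalisation and the $q_i=0$ case (occurring iff $m=0$) is qualitatively correct, but as written the parity inconsistency means the proof does not go through without the correction above.
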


\begin{proof}

Let $U_i(p)= \frac{2 \epsilon_i p}{p^2 -q_i^2} \frac{\lambda_D^{'}(p)}{\lambda_D^{''}(q_i)}$. By Proposition \ref{derilambdawrtp}  we get
\begin{align}\label{Ui}
U_i(p)=\frac{p^{2m} \prod_{a=1}^d (p^2 - \xi_a^2)^{m_a-1} \prod_{\substack{b=1 \\ b \neq i}}^d  (p^2 -q_b^2)}{q_i^{2m} \prod_{a=1}^d (q_i^2 - \xi_a^2)^{m_a-1} \prod_{\substack{b=1 \\ b \neq i}}^d  (q_i^2 - q_b^2)} , 
\end{align}
with $\operatorname{deg} U_i(p)= 2 N-2$. It  follows from the form \eqref{Ui} that $\left. U_i(p) \right|_{p=q_j}= \delta_{ij}$. 

Note that each of the functions $U_i(p)$ and $\partial_{u_i} \lambda_D(p)$ has the form of a product of an even polynomial of degree $2d-2$ and the function $p^{2m} \prod_{a=1}^d (p^2 - \xi_a^2)^{m_a-1} $. 
We also have that $\operatorname{deg} \partial_{u_i} \lambda_D(p)= 2 N-2$. It follows   by Lemma \ref{relationlambdaatp=pmqj} that $U_i = \partial_{u_i} \lambda_D(p)$.
\end{proof}

\begin{comment}
Starting from formula (\ref{superpotentialBNDN}) we obtain
%
\begin{equation}\label{mult p divide p}
\partial_{u_i} \lambda_D(p) = 2 p^{2m-1} \prod_{a=1}^n (p^2 - \xi_a^2)^{m_a -1} U(p), 
\end{equation}
%
where $ U \in \mathbb{C}[p]$ and $\operatorname{deg}U= 2n-1$.
% $\operatorname{deg} \partial_{u_i} \lambda_D = 2N-2= 2 ( m + \sum_{a=1}^n m_a -n ) + \operatorname{deg}U$

By Lemma \ref{relationlambdaatp=pmqj} we then have
%
\begin{equation}\label{widetildeU}
U(\pm q_j) = \pm \frac{\delta_{ij}}{2 q_j^{2m-1}\prod_{a=1}^n (q_j^2 - \xi_a^2)^{m_a -1}} .
\end{equation}
%
By Lagrange interpolation the points $(\pm q_i, U(\pm q_i))$, $i=1, \dots, n$ determine $U$ uniquely. Let $\pi(p)= \prod_{a=1}^n (p^2 -q_a^2)$. Then 
%
\begin{equation}\label{widetildeU}
U(p) = \frac{\pi(p)}{2 q_i^{2m-1}\prod_{a=1}^n (q_i^2 - \xi_a^2)^{m_a -1}} \Big[ \frac{1}{(p-q_i) \pi^{'}(q_i)} - \frac{1}{(p+q_i) \pi^{'}(-q_i)}\Big].
\end{equation}
%
 We note that $$\pi(p) = \lambda_D^{'}(p) (2N)^{-1} p^{1-2m} \prod_{a=1}^n (p^2 - \xi_a^2)^{1-m_a}$$ and $\lambda_D^{''}$ is an even polynomial. It follows that
%
\begin{equation*}
\pi^{'}(\pm q_i) = \pm \frac{\lambda_D^{''}(q_i)}{2N q_i^{2m-1} \prod_{a=1}^n (p^2 - \xi_a^2)^{m_a-1}}.
\end{equation*}
%
We substitute $\pi^{'}(\pm q_i)$ in formula (\ref{widetildeU}) and the statement follows.
\end{proof}
\end{comment}

Next we determine the Jacobi matrix between the coordinates $\xi_i$ and $u_i$. 

\begin{lemma}\label{JacobimatrixBDN}
We have
\begin{equation}
\partial_{u_i} \xi_a =  \frac{2 \epsilon_i \xi_a}{q_i^2 - \xi_a^2} \frac{1}{\lambda_D^{''}(q_i)},
\end{equation}
for any  $i, a=1, \dots, d$. 
\end{lemma}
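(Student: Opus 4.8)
The plan is to follow exactly the same strategy used in the $A_n$ case (Lemma~\ref{jacobianlemma}), adapting the computation to the even structure of the $B_n$/$D_n$ superpotential \eqref{superpotentialBNDN}. The key input is the explicit form of $\partial_{u_i}\lambda_D(p)$ obtained in Lemma~\ref{partiallambaBDN}, namely
\begin{equation*}
\partial_{u_i}\lambda_D(p) = \frac{2\epsilon_i p}{p^2 - q_i^2}\,\frac{\lambda_D^{'}(p)}{\lambda_D^{''}(q_i)}.
\end{equation*}
The idea is that differentiating $\log \lambda_D$ with respect to $u_i$ relates $\partial_{u_i}\lambda_D$ to the logarithmic derivatives $\partial_{u_i}\xi_a$, and then evaluating at a suitable point isolates a single $\partial_{u_i}\xi_a$.

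First I would take the logarithmic derivative of \eqref{superpotentialBNDN} with respect to $u_i$. Since $\lambda_D(p) = p^{2m}\prod_{a=1}^d (p^2-\xi_a^2)^{m_a}$ and only the $\xi_a$ depend on $u_i$ (with $p$ held fixed), this gives
\begin{equation*}
\partial_{u_i}\lambda_D(p) = \lambda_D(p)\sum_{b=1}^d m_b \,\frac{-2\xi_b\,\partial_{u_i}\xi_b}{p^2 - \xi_b^2}.
\end{equation*}
Next I would divide both sides by $(p^2-\xi_k^2)^{m_k-1}$ for a fixed index $k$, so that the $b=k$ term on the right acquires a simple pole at $p=\xi_k$ while all other terms and the left-hand side remain regular there (using Lemma~\ref{partiallambaBDN} on the left). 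Multiplying through by $(p^2-\xi_k^2)$ and then setting $p=\xi_k$ kills every summand except $b=k$, leaving
\begin{equation*}
-2 m_k \xi_k\,\partial_{u_i}\xi_k \cdot \prod_{\substack{b=0\\ b\neq k}}^{d}(\xi_k^2-\xi_b^2)^{m_b}\cdot \xi_k^{2m}
= \left.\frac{\partial_{u_i}\lambda_D(p)\,(p^2-\xi_k^2)}{(p^2-\xi_k^2)^{m_k-1}}\right|_{p=\xi_k},
\end{equation*}
which after using Lemma~\ref{partiallambaBDN} and cancelling the common factor $m_k \prod_{b\neq k}(\xi_k^2-\xi_b^2)^{m_b}\xi_k^{2m}$ (this is exactly $\lambda_D'(p)/(p^2-\xi_k^2)^{m_k-1}$ evaluated at $p=\xi_k$, the $B_n/D_n$ analogue of \eqref{lambdaDoverp-xik}) yields the claimed formula with $k$ in place of $a$.

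The main obstacle I anticipate is purely bookkeeping rather than conceptual: carefully tracking the factor $p^{2m}$ and the even powers $p^2-\xi_a^2$ so that the evaluation at $p=\xi_k$ produces precisely $2\epsilon_i\xi_a/(q_i^2-\xi_a^2)$ after inserting Lemma~\ref{partiallambaBDN}, and verifying that the residual factor $\lambda_D^{'}(\xi_k)/(p^2-\xi_k^2)^{m_k-1}|_{p=\xi_k}$ cancels cleanly against the product $\prod_{b\neq k}(\xi_k^2-\xi_b^2)^{m_b}$ appearing from the logarithmic differentiation. One should also confirm that the sign and the factor $\xi_a$ in the numerator emerge correctly from the $-2\xi_b$ coming from differentiating $(p^2-\xi_b^2)$. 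As in the $A_n$ case, the regularity of $\partial_{u_i}\lambda_D(p)$ at the points $p=\xi_a$ (guaranteed by the factor $\prod_a(p^2-\xi_a^2)^{m_a-1}$ in Lemma~\ref{partiallambaBDN}) is what makes the evaluation legitimate, and this should be noted explicitly.
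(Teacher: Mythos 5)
Your strategy is exactly the paper's: logarithmically differentiate \eqref{superpotentialBNDN} to get $\partial_{u_i}\lambda_D(p) = -2\sum_{j}\frac{\lambda_D(p)}{p^2-\xi_j^2}m_j\xi_j\,\partial_{u_i}\xi_j$, divide by $(p^2-\xi_k^2)^{m_k-1}$, evaluate at $p=\xi_k$, and feed in Lemma \ref{partiallambaBDN} together with the L'H\^opital-type identity $\left.\lambda_D'(p)/(p^2-\xi_k^2)^{m_k-1}\right|_{p=\xi_k}=2m_k\xi_k^{2m+1}\prod_{b\neq k}(\xi_k^2-\xi_b^2)^{m_b}$, so the proof is essentially the same as the paper's proof of Lemma \ref{JacobimatrixBDN} (which is the $B_n/D_n$ analogue of Lemma \ref{jacobianlemma}).

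One bookkeeping correction: after dividing by $(p^2-\xi_k^2)^{m_k-1}$ the $b=k$ term is $-2m_k\xi_k\,\partial_{u_i}\xi_k\cdot\lambda_D(p)/(p^2-\xi_k^2)^{m_k}$, which is \emph{regular and nonzero} at $p=\xi_k$ (not a simple pole, since $\lambda_D$ vanishes to order exactly $m_k$ there), while the $b\neq k$ terms carry a surviving factor $(p^2-\xi_k^2)$ and vanish. So you should simply substitute $p=\xi_k$ at that stage; the extra multiplication by $(p^2-\xi_k^2)$ in your displayed intermediate equation would make the right-hand side vanish (as $\partial_{u_i}\lambda_D(p)(p^2-\xi_k^2)^{2-m_k}$ still has a simple zero at $p=\xi_k$). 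Dropping that spurious factor, the cancellation you describe goes through and yields the stated formula.
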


\begin{proof}
From the defining formula (\ref{superpotentialBNDN}) we get 
%
%\begin{equation*}
%\partial_{u_i} \lambda_D(p)= -2 p^{2m} \sum_{j=1}^n \prod_{\substack{a=1 \\ a \neq j}}^n (p^2 - %\xi_a^2)^{m_a}  (p^2 - \xi_j^2)^{m_j -1}m_j \xi_j \partial_{u_i} \xi_j, 
%\end{equation*}
%
%which we express as
%
\begin{equation}\label{comparison1ofpartialuilambda}
\partial_{u_i} \lambda_D(p) = -2 \sum_{j=1}^d \frac{\lambda_D(p)}{p^2 - \xi_j^2} m_j \xi_j \partial_{u_i} \xi_j. 
\end{equation}
By Lemma \ref{partiallambaBDN} we obtain the following identity from \eqref{comparison1ofpartialuilambda} for any $a$:
\begin{equation}\label{identityBNDN}
 \frac{2\epsilon_i p}{(p^2 - q_i^2)(p^2 - \xi_a^2)^{m_a-1}} \frac{\lambda_D^{'}(p)}{\lambda_D^{''}(q_i)}=-2 \sum_{j=1}^d \frac{\lambda_D(p)}{(p^2 - \xi_j^2)(p^2 - \xi_a^2)^{m_a-1}} m_j \xi_j \partial_{u_i} \xi_j.
\end{equation}
Note that by L'Hospital's rule
\beq{LHop}
\left.\frac {\lambda_D(p)}{(p^2-\xi_a^2)^{m_a}}\right|_{p=\xi_a}  = \left.\frac{\lambda_D'(p)}{(p^2-\xi_a^2)^{m_a-1}}\right|_{p=\xi_a} \frac{1}{2 m_a \xi_a}.
\eeq
The statement follows by substituting $p= \xi_a$ in the relation (\ref{identityBNDN}) and applying \eqref{LHop}.

%We then consider the Taylor expansion of $\lambda_D$ centered at $p= \xi_k$ observing that $\lambda_D^{(r)}(\xi_k) =0$, $r=1, \dots, m_k-1$. Finally we substitute $p= \xi_k$ in the identity (\ref{identityBNDN}) and we obtain
%
%\begin{equation*}
%\frac{ 2\epsilon_i \xi_k }{(\xi_k^2 - q_i^2) \lambda_D^{''}(q_i)} \left. \frac{\lambda_D^{'}(p)}{(p^2 - \xi_k^2)^{m_k-1}}\right|_{p=\xi_k}=- \frac{\lambda_D^{(m_k)}(\xi_k)}{(m_k -1)! (2 \xi_k)^{m_k-1}} \partial_{u_i} \xi_k ,
%\end{equation*}
%
%which implies the statement. 

%Using that 
%
%\begin{equation}\label{ratiolmabdaprime}
%\left. \frac{\lambda_D^{'}(p)}{(p^2 - \xi_k^2)^{m_k-1}}\right|_{p=\xi_k} =%\frac{1}{(m_k -1)!} \frac{\lambda_D^{(m_k)}(\xi_k)}{(2 \xi_k)^{m_k-1}},
%end{equation}
\end{proof}

Let us now show that the critical values $u_i=\lambda_D(q_i)$, ($i=1, \dots, d$)  with the superpotential \eqref{superpotentialBNDN} play a role of the canonical coordinates for the metric and multiplication  (\ref{saitoAn}), (\ref{multAn}) on the stratum   \eqref{stratumBDNsuper}. More precisely, the following statement holds.

\begin{proposition}\label{criticalvBDN}
For any $i,j=1, \ldots, d$ the following formulae hold:
\begin{align}
\eta_D(\partial_{u_i}, \partial_{u_j}) &= \frac{2\epsilon_i \delta_{ij}}{\lambda_D^{''}(q_i)},
\nonumber
\\
\partial_{u_i} \circ \partial_{u_j} &= \delta_{ij} \partial_{u_j} \label{onDst}.
\end{align}
In particular, the metric $\eta_D$ is non-degenerate.
\end{proposition}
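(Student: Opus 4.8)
The plan is to mirror the structure of the proof of Proposition \ref{criticalv} in the $A_n$ case, adapting the residue computations to the even superpotential \eqref{superpotentialBNDN} and keeping track of the correction factors $\epsilon_i$. The two formulae to establish are the diagonality and the values of $\eta_D(\partial_{u_i},\partial_{u_j})$, followed by the structure constants giving $\partial_{u_i}\circ\partial_{u_j}=\delta_{ij}\partial_{u_j}$. Throughout I would use that the residue formulae \eqref{saitoAn}, \eqref{multAn} apply with $\lambda_D$ replaced by the superpotential \eqref{superpotentialBNDN}, so that, for instance,
\begin{equation*}
\eta_D(\partial_{u_i},\partial_{u_j})=\sum_{p_s:\lambda_D'(p_s)=0}\operatorname*{res}_{p=p_s}\frac{\partial_{u_i}\lambda_D(p)\,\partial_{u_j}\lambda_D(p)}{\lambda_D'(p)}\,dp.
\end{equation*}

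First I would substitute the explicit form of $\partial_{u_i}\lambda_D(p)$ from Lemma \ref{partiallambaBDN}, namely $\partial_{u_i}\lambda_D(p)=\dfrac{2\epsilon_i p}{p^2-q_i^2}\dfrac{\lambda_D'(p)}{\lambda_D''(q_i)}$, into the residue integrand. For the off-diagonal case $i\neq j$ the numerator then contains the factor $(\lambda_D'(p))^2$ divided by $(p^2-q_i^2)(p^2-q_j^2)$, and after cancelling one power of $\lambda_D'(p)$ the integrand is $\dfrac{4\epsilon_i\epsilon_j\,p^2\,\lambda_D'(p)}{(p^2-q_i^2)(p^2-q_j^2)\lambda_D''(q_i)\lambda_D''(q_j)}$; since $\lambda_D'(p)$ has simple zeros at $\pm q_s$ (generically), the factored form \eqref{derivativelambdaBNDN} shows each residue at a critical point $q_s$ vanishes because the surviving factor $(p^2-q_s^2)$ kills the apparent pole, giving $\eta_D(\partial_{u_i},\partial_{u_j})=0$. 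For the diagonal term I would compute $\eta_D(\partial_{u_i},\partial_{u_i})=\dfrac{4\epsilon_i^2}{\lambda_D''(q_i)^2}\sum_s\operatorname*{res}_{p=q_s}\dfrac{p^2\,\lambda_D'(p)}{(p^2-q_i^2)^2}\,dp$ and evaluate it using \eqref{derivativelambdaBNDN}; the only contributing residues are at $p=\pm q_i$, and a careful local expansion (distinguishing $q_i\neq 0$ from the case $q_i=0$, which is exactly where the factor $\epsilon_i=\tfrac12$ enters) should collapse the sum to $\dfrac{2\epsilon_i}{\lambda_D''(q_i)}$.

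Next I would treat the structure constants via \eqref{multAn} with all three fields equal to a single $\partial_{u_i}$ or with mixed indices. The mixed-index residues vanish by the same cancellation mechanism as above, and the fully diagonal term $\eta_D(\partial_{u_i}\circ\partial_{u_i},\partial_{u_i})$ reduces, after substituting Lemma \ref{partiallambaBDN} three times and cancelling, to a residue computation that I expect to yield $\dfrac{2\epsilon_i}{\lambda_D''(q_i)}$ as well, matching $\eta_D(\partial_{u_i},\partial_{u_i})$. Comparing $\eta_D(\partial_{u_i}\circ\partial_{u_j},\partial_{u_k})=\dfrac{2\epsilon_i\delta_{ij}\delta_{jk}}{\lambda_D''(q_i)}$ with the already-computed Gram matrix of $\eta_D$, and invoking non-degeneracy together with the fact that $\partial_{u_i}\circ\partial_{u_j}\in T_*D$ by Proposition \ref{multonD}, then forces $\partial_{u_i}\circ\partial_{u_j}=\delta_{ij}\partial_{u_j}$, exactly as in the $A_n$ argument.

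The main obstacle I anticipate is the bookkeeping of the exceptional critical point $q_i=0$, which occurs precisely when $m=0$ (the $B_n$ case without the extra $l$ vanishing coordinates). At $p=0$ the superpotential factor $p^{2m}$ is regular but the zero $q_i=0$ is a double critical point of $\lambda_D$ in the variable $p$ (it corresponds to a single critical point in $p^2$), so the residue at $p=0$ must be computed as a single contribution rather than split into $\pm q_i$; this is the structural reason for the factor $\epsilon_i=\tfrac12$, and I would verify that the even/odd parity of the integrands makes the two residues at $\pm q_i$ coincide when $q_i\neq 0$ (hence the factor $2$) while collapsing to one residue when $q_i=0$. A secondary technical point is confirming that non-degeneracy of $\eta_D$ follows immediately once the diagonal entries $\tfrac{2\epsilon_i}{\lambda_D''(q_i)}$ are shown to be nonzero at a generic point of $D\setminus\Sigma_D$, which holds because $\lambda_D''(q_i)\neq 0$ there by \eqref{lambda"(qi)} and the simplicity of the critical points.
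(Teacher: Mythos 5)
Your proposal is correct and follows essentially the same route as the paper: substitute the expression for $\partial_{u_i}\lambda_D$ from Lemma \ref{partiallambaBDN} into the residue formulae \eqref{saitoAn}, \eqref{multAn}, use the factored form \eqref{derivativelambdaBNDN} to kill the off-diagonal residues and evaluate the diagonal ones as $2\epsilon_i/\lambda_D''(q_i)$, and then deduce $\partial_{u_i}\circ\partial_{u_j}=\delta_{ij}\partial_{u_j}$ from non-degeneracy together with Proposition \ref{multonD}. Your explicit discussion of the $q_i=0$ case and the origin of the factor $\epsilon_i$ is a correct elaboration of a point the paper leaves implicit.
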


\begin{proof}
Let us consider the formula  \eqref{saitoAn} with $\zeta_1 = \partial_{u_i}$, $\zeta_2 = \partial_{u_j}$. Note that the right-hand side is non-singular at $p=\xi_i$ for all $i=1, \ldots, d$.
Furthermore, the residue at $q_l$ ($1 \leq l \leq d$) is zero if $i \neq j$ by Lemma \ref{partiallambaBDN}, and hence
$\eta_D(\partial_{u_i}, \partial_{u_j})=0$. Further on, by Lemma \ref{partiallambaBDN} we  have
\begin{align*}
\eta_D (\partial_{u_i}, \partial_{u_i}) = \sum_{p_s: \lambda^{'}_D(p_s)=0} {\rm res}_{p=p_s} \frac{(\partial_{u_i}\lambda_D(p))^2}{\lambda^{'}_D(p)}dp = \frac{4 \epsilon_i^2}{\lambda^{''}_D(q_i)^2} \sum_{p_s: \lambda^{'}_D(p_s)=0} {\rm res}_{p=p_s} \frac{p^2 \lambda^{'}_D(p)}{(p^2-q_i^2)^2}dp.
\end{align*}
It  follows from Proposition \ref{derilambdawrtp} that
\begin{align*}
\eta_D (\partial_{u_i}, \partial_{u_i})= \frac{8 N \epsilon_i^2 }{\lambda^{''}_D(q_i)^2}\sum_{p_s: \lambda^{'}_D(p_s)=0} {\rm res}_{p=p_s} \frac{p^{2m+1} \prod_{j=1}^d (p^2-\xi_j^2)^{m_j -1 } \prod_{\substack{j=1 \\ j \neq i}}^d (p^2-q_j^2)}{p^2-q_i^2}dp=\frac{2 \epsilon_i}{\lambda^{''}_D(q_i)},
\end{align*}
as required. 

Let us now consider formula \eqref{multAn} with $\zeta_1 = \partial_{u_i}$, $\zeta_2 = \partial_{u_j}$ and $\zeta_3 = \partial_{u_k}$. In the case when at least two of the indices $i, j, k$ are different the residues at $q_l$ ($1 \leq l \leq d$) are zero by Lemma \ref{partiallambaBDN}, and hence $\eta_{D}(\partial_{u_i} \circ \partial_{u_j}, \partial_{u_k})=0$. Further on, by Lemma \ref{partiallambaBDN} we have

\begin{align*}
\eta_D ( \partial_{u_i} \circ \partial_{u_i}, \partial_{u_i}) =  \sum_{p_s: \lambda^{'}_D(p_s)=0} {\rm res}_{p=p_s}\frac{(\partial_{u_i}\lambda_D(p))^3}{\lambda^{'}_D(p)}dp = \frac{8 \epsilon_i^3}{\lambda^{''}_D(q_i)^3} \sum_{p_s: \lambda^{'}_D(p_s)=0} {\rm res}_{p=p_s} \frac{p^3(\lambda^{'}_D(p))^2}{(p^2-q_i^2)^3}dp.
\end{align*}
It  follows by Proposition \ref{derilambdawrtp}   that
\begin{align*}
\eta_D (\partial_{u_i} \circ \partial_{u_i} , \partial_{u_i})= \frac{32 N^2 \epsilon_i^3}{\lambda^{''}_D(q_i)^3}\sum_{p_s: \lambda^{'}_D(p_s)=0} {\rm res}_{p=p_s} \frac{p^{4m+1}\prod_{j=1}^d (p^2-\xi_j^2)^{2(m_j -1) } \prod_{\substack{j=1 \\ j \neq i}}^d (p^2-q_j^2)^2}{p^2-q_i^2}dp.
\end{align*}
Therefore 
$$
\eta_D (\partial_{u_i} \circ \partial_{u_i} , \partial_{u_i})=\frac{2 \epsilon_i}{\lambda^{''}_D(q_i)},
$$ 
and hence
\begin{align*}
\eta_{D}( \partial_{u_i} \circ \partial_{u_j} , \partial_{u_k} ) = \frac{2 \epsilon_i \delta_{ij} \delta_{jk}}{\lambda^{''}_D(q_i)}.
\end{align*}
This implies formula \eqref{onDst} since $\partial_{u_i} \circ \partial_{u_j}  \in T_*D$ by Proposition \ref{multonD} and as $\eta_D$ is non-degenerate by above. 

\end{proof}

The following lemmas will be useful below. 

\begin{lemma}\label{prodxa^2-qi^2} We have 
\begin{equation}
\prod_{i=1}^d (\xi_a^2 -q_i^2) = \frac{m_a}{N} \xi_a^2 \prod_{\substack{b=1 \\ b \neq a}}^d ( \xi_a^2 - \xi_b^2)
\end{equation}
for any $a=1, \ldots, d$.
\end{lemma}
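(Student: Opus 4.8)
The plan is to evaluate the single quantity $\left.\dfrac{\lambda_D'(p)}{(p^2-\xi_a^2)^{m_a-1}}\right|_{p=\xi_a}$ in two different ways and to compare the outcomes. This is the $B_n/D_n$ analogue of the computation behind formula \eqref{detofxiaqisimpler} in the $A_n$ case, so I expect the same mechanism to apply with the obvious replacements $\xi\rightsquigarrow\xi^2$, $n+1\rightsquigarrow N$.

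First I would use the factorised form of $\lambda_D'$ supplied by Proposition \ref{derilambdawrtp}. Dividing \eqref{derivativelambdaBNDN} by $(p^2-\xi_a^2)^{m_a-1}$ simply removes the $a$-th factor from the product $\prod_i(p^2-\xi_i^2)^{m_i-1}$, and then setting $p=\xi_a$ gives the expression
\begin{equation*}
\left.\frac{\lambda_D'(p)}{(p^2-\xi_a^2)^{m_a-1}}\right|_{p=\xi_a}=2N\,\xi_a^{2m-1}\prod_{\substack{i=1\\ i\neq a}}^d(\xi_a^2-\xi_i^2)^{m_i-1}\prod_{i=1}^d(\xi_a^2-q_i^2).
\end{equation*}
Second, I would compute the same quantity directly from the defining formula \eqref{superpotentialBNDN}. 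Writing $\lambda_D(p)=(p^2-\xi_a^2)^{m_a}G(p)$ with $G(p)=p^{2m}\prod_{i\neq a}(p^2-\xi_i^2)^{m_i}$, the Leibniz rule yields $\lambda_D'(p)=2m_a\,p\,(p^2-\xi_a^2)^{m_a-1}G(p)+(p^2-\xi_a^2)^{m_a}G'(p)$. After dividing by $(p^2-\xi_a^2)^{m_a-1}$ the second term carries a factor $(p^2-\xi_a^2)$ and hence vanishes at $p=\xi_a$, leaving
\begin{equation*}
\left.\frac{\lambda_D'(p)}{(p^2-\xi_a^2)^{m_a-1}}\right|_{p=\xi_a}=2m_a\,\xi_a\,G(\xi_a)=2m_a\,\xi_a^{2m+1}\prod_{\substack{i=1\\ i\neq a}}^d(\xi_a^2-\xi_i^2)^{m_i}.
\end{equation*}

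Equating the two evaluations and solving for $\prod_{i=1}^d(\xi_a^2-q_i^2)$ gives the result: the factors $2$, the powers $\xi_a^{2m\pm1}$, and the products $\prod_{i\neq a}(\xi_a^2-\xi_i^2)^{m_i-1}$ cancel, and one power of $(\xi_a^2-\xi_i^2)$ survives in each factor, leaving precisely $\frac{m_a}{N}\xi_a^2\prod_{b\neq a}(\xi_a^2-\xi_b^2)$ after renaming $i$ to $b$. The computation is entirely routine, so there is no genuine obstacle; the only point requiring a word of care is that the division by $(p^2-\xi_a^2)^{m_a-1}$ and the appearance of $\xi_a^{2m-1}$ (recall $m$ may be negative) presuppose $\xi_a\neq0$ and the $\xi_i$ pairwise distinct. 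This holds at a generic point of $D$, and since both sides are functions of $\xi_1,\dots,\xi_d$ obtained from the factorisation in Proposition \ref{derilambdawrtp}, the identity then extends by continuity wherever it is needed.
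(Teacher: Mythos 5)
Your proof is correct and follows essentially the same route as the paper: both evaluate $\left.\lambda_D'(p)/(p^2-\xi_a^2)^{m_a-1}\right|_{p=\xi_a}$ once via the factorised form of Proposition \ref{derilambdawrtp} and once via the defining formula \eqref{superpotentialBNDN}, then equate. The remark about genericity and continuity is a sensible (if tacit in the paper) addition.
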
 

\begin{proof}
Proposition \ref{derilambdawrtp} implies that
\begin{equation}
\label{form1a}
\prod_{i=1}^d (\xi_a^2 -q_i^2) = \left. \frac{\lambda_D^{'}(p)}{2N p^{2m-1} (p^2 - \xi_a^2)^{m_a-1}}\right|_{p=\xi_a} \prod_{\substack{b=1 \\ b \neq a}}^d (\xi_a^2 -\xi_b^2)^{-m_b +1}. 
\end{equation}
It follows from the defining formula \eqref{superpotentialBNDN} for $\lambda_D$  that
\begin{equation}
\label{form1b}
\left. \frac{\lambda_D^{'}(p)}{(p^2 - \xi_a^2)^{m_a-1}}\right|_{p=\xi_a} = 2 m_a \xi_a^{2m+1} \prod_{\substack{b=1 \\ b \neq a}}^d (\xi_a^2 -\xi_b^2)^{m_b},
\end{equation} 
The statement follows from formulae \eqref{form1a}, \eqref{form1b}. 
\end{proof}

\begin{lemma}\label{prodqi^2}We have
\begin{equation*}
\prod_{a=1}^d q_a^2 = \frac{m}{N} \prod_{a=1}^d \xi_a^2. 
\end{equation*}
\end{lemma}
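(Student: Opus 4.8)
The plan is to reduce the statement to a single polynomial identity obtained by computing $\lambda_D'$ in two ways, and then to read off the claim by evaluating at $p=0$. Starting from the defining formula \eqref{superpotentialBNDN}, the logarithmic derivative $\lambda_D'/\lambda_D = 2m/p + \sum_i 2m_i p/(p^2-\xi_i^2)$ gives, after factoring out $\prod_i(p^2-\xi_i^2)^{m_i-1}$,
\[
\lambda_D'(p) = 2 p^{2m-1} \prod_{i=1}^d (p^2-\xi_i^2)^{m_i-1}\, G(p),
\]
where
\[
G(p) = m\prod_{i=1}^d (p^2-\xi_i^2) + p^2 \sum_{i=1}^d m_i \prod_{\substack{j=1\\ j\neq i}}^d (p^2-\xi_j^2)
\]
is a polynomial in $p^2$ of degree $d$. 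Comparing this with the factorised form \eqref{derivativelambdaBNDN} of Proposition \ref{derilambdawrtp} and cancelling the common factor $2p^{2m-1}\prod_i(p^2-\xi_i^2)^{m_i-1}$ yields the identity
\[
G(p) = N \prod_{i=1}^d (p^2-q_i^2).
\]

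Second, I would evaluate this identity at $p=0$. On the right-hand side one obtains $N(-1)^d\prod_{i=1}^d q_i^2$, while on the left-hand side only the first summand of $G$ survives, giving $G(0)=m(-1)^d\prod_{i=1}^d \xi_i^2$. Equating the two and dividing by $(-1)^d$ (using $N\neq 0$) produces exactly $\prod_{a=1}^d q_a^2 = \frac{m}{N}\prod_{a=1}^d \xi_a^2$, as required. As a consistency check, evaluating the same identity at $p=\xi_a$ reproduces Lemma \ref{prodxa^2-qi^2}, so both lemmas are specialisations of one formula.

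I do not expect a serious obstacle, as the computation is elementary once the identity for $G$ is in place. The only points requiring a little care are the bookkeeping of signs and, when $m\le 0$, the fact that $p^{2m-1}$ is not a polynomial, so the cancellation leading to $G(p)=N\prod_i(p^2-q_i^2)$ should be phrased as an equality of rational functions (equivalently, both sides are genuine polynomials agreeing on a dense set). The degenerate case $m=0$, in which some $q_i$ vanishes by the remark following Proposition \ref{derilambdawrtp}, is also covered automatically: there $G(0)=0$ forces $\prod_i q_i^2=0=\frac{m}{N}\prod_i\xi_i^2$, in agreement with the statement.
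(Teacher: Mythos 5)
Your proposal is correct and follows essentially the same route as the paper: compute $\lambda_D'$ directly from the defining formula \eqref{superpotentialBNDN}, equate it with the factorised expression \eqref{derivativelambdaBNDN}, cancel the common power of $p$ (you also cancel $\prod_i (p^2-\xi_i^2)^{m_i-1}$, which the paper does not bother with), and substitute $p=0$. The extra remarks on the $m\le 0$ and $m=0$ cases and the consistency check against Lemma \ref{prodxa^2-qi^2} are sound but not needed.
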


\begin{proof}
 The derivative $\lambda_D^{'}$ can be expressed as
\begin{equation*}
\lambda_D^{'}(p) = 2m p^{2m-1} \prod_{a=1}^d (p^2 - \xi_a^2)^{m_a} + p^{2m} \frac{d}{dp}  \prod_{a=1}^d (p^2 - \xi_a^2)^{m_a} .
\end{equation*}
Let us equate this expression to the right-hand side of the formula  (\ref{derivativelambdaBNDN}) and divide both sides by $p^{2m-1}$. Then we substitute $p=0$ and obtain 
\begin{equation*}
2N \prod_{a=1}^d (-\xi_a^2)^{m_a -1} \prod_{a=1}^d (-q_a^2) = 2m \prod_{a=1}^d (-\xi_a^2)^{m_a}, 
\end{equation*}
which implies the statement.
\end{proof}

\begin{lemma}\label{zratio}Let 
\begin{equation}
\label{defz}
z= \frac{\prod_{i=1}^d \lambda_D^{''}(q_i)}{\prod_{\substack{i=1 \\ i <j}}^d (q_i^2 - q_j^2)^2} . 
\end{equation}
Then 
\begin{equation*}
z= c \prod_{a=1}^d \xi_a^{2 (m + m_a -1)} \prod_{\substack{a, b=1 \\ a \neq b}}^d (\xi_a^2 - \xi_b^2)^{m_a -1},
\end{equation*}
where 
\begin{align}\label{cconstant}
c =(-1)^{d ( \sum_{a=1}^d m_a - \frac{(d+1)}{2})} \epsilon 4^dN^{2d - m - \sum_{a=1}^d m_a} \prod_{a=1}^d m_a^{m_a -1} m^m, 
\end{align}
and $\epsilon = \prod_{i=1}^d \epsilon_i$.
\end{lemma}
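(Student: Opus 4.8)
The plan is to compute the quantity $z$ defined in \eqref{defz} by evaluating the product $\prod_{i=1}^d \lambda_D''(q_i)$ using the explicit formula \eqref{lambda"(qi)} from Proposition \ref{derilambdawrtp}, and then cancelling the denominator $\prod_{i<j}(q_i^2-q_j^2)^2$ using the symmetric-function identities recorded in Lemmas \ref{prodxa^2-qi^2} and \ref{prodqi^2}. The main idea is that although $z$ is written in terms of the auxiliary critical points $q_i$, its value is in fact a product over the $\xi_a$ alone; the role of the two preceding lemmas is precisely to eliminate all dependence on the $q_i$.

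First I would substitute \eqref{lambda"(qi)} to write
\begin{align*}
\prod_{i=1}^d \lambda_D''(q_i) = 4^d N^d \Bigl(\prod_{i=1}^d \epsilon_i\Bigr) \prod_{i=1}^d q_i^{2m} \prod_{i=1}^d \prod_{a=1}^d (q_i^2-\xi_a^2)^{m_a-1} \prod_{i=1}^d \prod_{\substack{b=1\\ b\neq i}}^d (q_i^2-q_b^2).
\end{align*}
The last double product is exactly $\prod_{i\neq j}(q_i^2-q_j^2) = (-1)^{\binom{d}{2}}\prod_{i<j}(q_i^2-q_j^2)^2$ up to the recorded sign, so dividing by the denominator of \eqref{defz} removes the $q$-discriminant entirely and accounts for the sign factor $(-1)^{d(\sum_a m_a - (d+1)/2)}$ after combining with the remaining sign contributions. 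Next I would treat the factor $\prod_{i=1}^d q_i^{2m}$ via Lemma \ref{prodqi^2}, which gives $\prod_i q_i^2 = (m/N)\prod_a \xi_a^2$, producing the factors $m^m$, a power of $N$, and the contribution $\prod_a \xi_a^{2m}$. Finally I would rewrite
\begin{align*}
\prod_{i=1}^d \prod_{a=1}^d (q_i^2-\xi_a^2)^{m_a-1} = \prod_{a=1}^d \Bigl(\prod_{i=1}^d (\xi_a^2-q_i^2)\Bigr)^{m_a-1} (-1)^{\sum_a d(m_a-1)},
\end{align*}
and apply Lemma \ref{prodxa^2-qi^2} to each inner product, substituting $\prod_i(\xi_a^2-q_i^2) = (m_a/N)\,\xi_a^2\prod_{b\neq a}(\xi_a^2-\xi_b^2)$. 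This introduces the factors $\prod_a m_a^{m_a-1}$, a further power of $N$, the factor $\prod_a \xi_a^{2(m_a-1)}$, and the off-diagonal product $\prod_{a\neq b}(\xi_a^2-\xi_b^2)^{m_a-1}$.

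Collecting the $\xi_a$-powers as $2m + 2(m_a-1) = 2(m+m_a-1)$ for each $a$ yields the stated monomial $\prod_a \xi_a^{2(m+m_a-1)}$, and assembling the numerical prefactors gives the power of $N$ as $N^d \cdot N^{-1} \cdot N^{-\sum_a m_a}\cdot N^d = N^{2d - m - \sum_a m_a}$ once the $m^m$ term is split off correctly, together with $4^d$, $\prod_a m_a^{m_a-1}$, $m^m$, and $\epsilon=\prod_i\epsilon_i$, matching \eqref{cconstant}. The step I expect to require the most care is the bookkeeping of signs: the sign $(-1)^{\binom{d}{2}}$ from the discriminant, the sign $(-1)^{\sum_a d(m_a-1)}$ from reversing the order in $(q_i^2-\xi_a^2)$, and any sign from Lemma \ref{prodqi^2} must be combined into the single exponent $d(\sum_{a=1}^d m_a - (d+1)/2)$ appearing in \eqref{cconstant}; verifying this requires reducing the exponent modulo $2$ and confirming it is an integer, which follows since $d(d+1)/2\in\mathbb{Z}$. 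The algebraic cancellation of the $q_i$ is routine given the two preceding lemmas, so the real content is ensuring every prefactor and sign is tracked consistently.
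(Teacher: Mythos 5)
Your proposal is correct and follows essentially the same route as the paper: substitute \eqref{lambda"(qi)}, absorb $\prod_{i\neq b}(q_i^2-q_b^2)$ into the denominator at the cost of $(-1)^{d(d-1)/2}$, and then eliminate the remaining $q$-dependence via Lemmas \ref{prodxa^2-qi^2} and \ref{prodqi^2}. The only blemish is a typo in your $N$-power bookkeeping: the factors should be $N^d\cdot N^{-m}\cdot N^{d-\sum_a m_a}$ (from $(m/N)^m$ and $\prod_a (m_a/N)^{m_a-1}$), not $N^d\cdot N^{-1}\cdot N^{-\sum_a m_a}\cdot N^d$, though your stated final exponent $2d-m-\sum_a m_a$ is correct.
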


\begin{proof}
It follows from formula \eqref{lambda"(qi)}
that 
%next line keep in PhD/ not in paper
\begin{align*}
z= (-1)^{\frac{d(d-1)}{2}}  \epsilon (4N)^d \prod_{i=1}^d q_i^{2m} \prod_{a, i=1}^d (q_i^2 - \xi_a^2)^{m_a -1}. 
\end{align*}
The statement follows by Lemmas \ref{prodxa^2-qi^2}, \ref{prodqi^2}.
\end{proof}

\begin{theorem}\label{etaDBDN}
The determinant of the metric $\eta_D$ given by \eqref{saitoAn}  with  the superpotential \eqref{superpotentialBNDN} in the coordinates $\xi_i$, $1 \leq i \leq d$, has the form
\begin{equation}\label{restrctedsaitodetBNDNcasemneq0}
\operatorname{det} \eta_D(\xi) = \kappa \prod_{i=1}^d \xi_i^{2 (m_i +m)} \prod_{ 1 \leq i < j \leq d} (\xi_i^2 - \xi_j^2)^{m_i + m_j},
\end{equation}
where $\kappa= (-1)^{d^2 + d (N-m) + \sum_{i=1}^{d-1} i m_{i+1}} 2^d m^m  N^{-N} \prod_{a=1}^d m_a^{m_{a}+1}$.
\end{theorem}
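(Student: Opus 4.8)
The plan is to follow the same route as in the proof of Theorem~\ref{etaDAn} for the $A_n$ series, now using Proposition~\ref{criticalvBDN} and the lemmas established above. By Proposition~\ref{criticalvBDN} the metric is diagonal in the canonical coordinates $u_i=\lambda_D(q_i)$, so that $\det\eta_D(u)=\prod_{i=1}^d \frac{2\epsilon_i}{\lambda_D''(q_i)}$. Since the coordinates $\xi_i$ and $u_i$ are related by a change of variables, the determinant transforms as $\det\eta_D(\xi)=(\det J)^{-2}\det\eta_D(u)$, where $J=(\partial_{u_i}\xi_a)_{a,i=1}^d$ is the Jacobi matrix computed in Lemma~\ref{JacobimatrixBDN}. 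First I would substitute the formula of Lemma~\ref{JacobimatrixBDN} and factor $J$ as the product of $\mathrm{diag}(\xi_a)$, the Cauchy-type matrix $C=\big((q_i^2-\xi_a^2)^{-1}\big)_{a,i}$, and $\mathrm{diag}\big(2\epsilon_i/\lambda_D''(q_i)\big)$.

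Next I would evaluate $\det C$ by the Cauchy determinant formula in the squared variables $\xi_a^2$ and $q_i^2$, obtaining $\det C$ proportional to $\prod_{a<b}(\xi_a^2-\xi_b^2)\prod_{i<j}(q_i^2-q_j^2)\big/\prod_{a,i}(\xi_a^2-q_i^2)$. Collecting the diagonal factors, one obtains $\det\eta_D(\xi)$ as $\prod_i\lambda_D''(q_i)$ times an explicit rational function of the $\xi_a^2$, $q_i^2$ and $\epsilon_i$. At this stage all of the auxiliary quantities $q_i$ must be eliminated. For this I would use Lemma~\ref{zratio} to replace $\prod_i\lambda_D''(q_i)$ by $z\prod_{i<j}(q_i^2-q_j^2)^2$, which cancels precisely the factor $\prod_{i<j}(q_i^2-q_j^2)^2$ coming from $(\det C)^{-2}$, and then Lemma~\ref{prodxa^2-qi^2} to rewrite $\prod_{a,i}(\xi_a^2-q_i^2)^2$ as a monomial in the $\xi_a^2$ multiplied by $\prod_{a<b}(\xi_a^2-\xi_b^2)^4$. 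After these substitutions the $q_i$ disappear and, matching powers, the exponent of $\xi_a$ becomes $2(m+m_a)$ and the exponent of $\xi_a^2-\xi_b^2$ becomes $m_a+m_b$, reproducing the product in \eqref{restrctedsaitodetBNDNcasemneq0}.

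It remains to identify the constant $\kappa$. Here I would gather the scalar contributions: the constant $c$ from Lemma~\ref{zratio} (formula \eqref{cconstant}), the factor $\prod_a(m_a/N)^2$ from Lemma~\ref{prodxa^2-qi^2}, the factors $2\epsilon_i$ from the metric in $u$-coordinates, and the $\epsilon_i$ hidden inside $c$ through $\epsilon=\prod_i\epsilon_i$. A clean structural point is that $\epsilon\prod_{i=1}^d(2\epsilon_i)^{-1}=2^{-d}$, so all the $\epsilon_i$ cancel; this is what makes the final formula uniform and independent of whether $m=0$ (equivalently, whether one of the $q_i$ vanishes). Combining powers then yields $2^d$, $N^{-N}$ (using $N=m+\sum_a m_a$), $m^m$ and $\prod_a m_a^{m_a+1}$, as claimed.

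The main obstacle is not conceptual but one of bookkeeping, namely tracking the signs. I expect the delicate part to be the reduction of the various sign contributions --- from the Cauchy determinant, from the ordered-pair product $\prod_{a\neq b}(\xi_a^2-\xi_b^2)^{m_a-1}$ in Lemma~\ref{zratio} when rewritten over pairs $a<b$, and from $c$ --- to the single exponent $d^2+d(N-m)+\sum_{i=1}^{d-1}i\,m_{i+1}$ appearing in $\kappa$. The congruence $d\big(\sum_a m_a-\tfrac{d+1}{2}\big)-\tfrac{d(d-1)}{2}\equiv d^2+d(N-m)\pmod 2$, combined with the ordered-pair sign $\sum_{i=1}^{d-1}i\,m_{i+1}$, is the computation that has to be carried out carefully; note also that the Cauchy sign and the sign from $\prod_a\prod_{b\neq a}(\xi_a^2-\xi_b^2)$ both square away and so do not contribute.
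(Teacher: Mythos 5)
Your proposal follows essentially the same route as the paper's proof: diagonalisation in the canonical coordinates $u_i$ via Proposition \ref{criticalvBDN}, the Jacobi matrix of Lemma \ref{JacobimatrixBDN} factored through the Cauchy determinant in the squared variables, elimination of the $q_i$ via Lemmas \ref{prodxa^2-qi^2} and \ref{zratio}, and a final bookkeeping of signs and $\epsilon_i$-factors (which indeed all cancel, exactly as in the paper's computation of $\kappa$). The argument is correct and matches the paper's proof in both structure and the lemmas invoked.
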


\begin{proof}
In the coordinates $\xi_i$ the determinant of the metric $\eta_D$ by Proposition \ref{criticalvBDN} has the form 
\begin{equation}\label{detetaDBDN}
\operatorname{det} \eta_D(\xi) =\epsilon (\operatorname{det}A)^{-2} \prod_{i=1}^d \frac{2}{\lambda_D^{''}(q_i)},
\end{equation}
where $A$ is the Jacobi matrix $(\partial_{u_i} \xi_a)_{i,a=1}^d$. By Lemma \ref{JacobimatrixBDN} we have
\begin{equation}\label{detetaDBDN1}
\operatorname{det}A = (-2) ^{d}\epsilon \prod_{a=1}^d \xi_a  \prod_{i=1}^d (\lambda_D^{''}(q_i))^{-1} \operatorname{det}B,
\end{equation}
where matrix $B=( \frac{1}{\xi_a^2 - q_i^2})_{i, a=1}^d$. The determinant of the matrix $B$ is a Cauchy's determinant which can be expressed as
\begin{equation*}
\operatorname{det}B= (-1)^{\frac{d(d-1)}{2}} \frac{\prod_{\substack{i=1 \\ i <j}}^d (\xi_i^2 - \xi_j^2)(q_i^2 - q_j^2)}{\prod_{i, a=1}^d (\xi_a^2 -q_i^2)}.
\end{equation*}
Hence by Lemma \ref{prodxa^2-qi^2} $\operatorname{det}B$ can be expressed  as 
\begin{equation}\label{detetaDBDN2}
\operatorname{det}B= \frac{N^d}{\prod_{a=1}^d m_a \xi_a^2 }\frac{\prod_{\substack{i=1 \\ i <j}}^d(q_i^2 - q_j^2)}{\prod_{1 \leq a < b \leq d} (\xi_a^2 - \xi_b^2)}.
\end{equation}
Combining formulae \eqref{detetaDBDN} -- \eqref{detetaDBDN2}  we get that
\begin{equation}
\operatorname{det} \eta_D = \epsilon^{-1}(2N^2)^{-d} \prod_{a=1}^d m_a^2 \xi_a^2  \prod_{1 \leq a < b \leq d} (\xi_a^2 - \xi_b^2)^2  z
\end{equation}
with $z$ defined by \eqref{defz}.  By Lemma  \ref{zratio}
the statement follows since 
$$
(-1)^{\sum_{i=1}^{d-1} i m_{i+1} - \frac{d(d-1)}{2}} \epsilon^{-1} (2N^2)^{-d} c \prod_{a=1}^d m_a^2  = \kappa.
$$ 
%
%
%\begin{equation}
%\operatorname{det}\eta_D = K \prod_{a=1}^n \xi_a^{2(m+m_a)} \prod_{1 \leq a < b \leq b}^n (\xi_a^2 - \xi_b^2)^{m_a + m_b}, 
%\end{equation}
%
%where $K= (-1)^{n^2 + n \sum_{a=1}^n m_a + \sum_{i=1}^{n-1} i m_{i+1}} m^m N^{-N} \prod_{a=1}^n m_a^{m_{a}+1}$.
\end{proof}

\section{Classical series: Theorem \ref{thma1}}\label{classicalsection2} 

In this section we show that Theorem \ref{thma1} holds for the root systems $A_n$, $B_n$ and $D_n$. Thus we complete the proof of Theorem \ref{MMtheorem} in these cases.

\begin{theorem}
Suppose $\mathcal{R}=A_n$. Then the statement of Theorem \ref{thma1} is true.
\end{theorem}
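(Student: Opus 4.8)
The plan is to combine the explicit determinant formula of Theorem \ref{etaDAn} with a direct identification of the root system $\mathcal{R}_{D,\beta}^{(0)}$ in type $A_n$. By Proposition \ref{propwaction} and Lemma \ref{relationdeterminants} it suffices to treat a stratum $D$ of the form \eqref{stratumANsuper}, so that $D$ is cut out by the equalities grouping the coordinates $x^1, \dots, x^{n+1}$ into blocks $B_0, \dots, B_d$ of sizes $m_0, \dots, m_d$, the common value on $B_i$ being $\xi_i$. Theorem \ref{etaDAn} then gives $\det \eta_D(\xi) \sim \prod_{0 \le i < j \le d} \xi_{ij}^{m_i + m_j}$ with $\xi_{ij} = \xi_i - \xi_j$, so Theorem \ref{thma} already holds here, and the task reduces to matching each exponent $m_i + m_j$ with the Coxeter number predicted by Theorem \ref{thma1}.

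First I would identify the linear factors $\xi_{ij}$ with the elements of the restricted arrangement $\mathcal{A}_D$. A mirror $\Pi_{e_a - e_b}$ of $A_n$ restricts to $D$ nontrivially precisely when $a \in B_i$ and $b \in B_j$ with $i \ne j$, in which case $(e_a - e_b)|_D = \xi_i - \xi_j = \xi_{ij}$. Hence for the factor $l_H = \xi_{ij}$ one may take $\beta = e_a - e_b$ with $a \in B_i$, $b \in B_j$, and then $\beta|_D$ is a nonzero multiple of $l_H$, as required by Theorem \ref{thma1}.

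Next I would compute $\mathcal{R}_{D,\beta}^{(0)}$. Orthogonality to $D$ forces $\mathcal{R}_D = \{e_c - e_d : c, d \text{ lie in a common block}\}$, whose irreducible components are the subsystems of type $A_{m_k - 1}$ supported on the blocks $B_k$ with $m_k \ge 2$. Writing $V_k = \langle \mathcal{R}_D^{(k)} \rangle$, one has $\langle \mathcal{R}_D, \beta \rangle = \bigl(\bigoplus_{k \ne i,j} V_k\bigr) \oplus W$ with $W = V_i \oplus V_j \oplus \langle \beta \rangle$ of dimension $m_i + m_j - 1$. A support argument then shows $W \cap \mathcal{R} = \{e_c - e_d : c, d \in B_i \cup B_j\}$, a root system of type $A_{m_i + m_j - 1}$, while all remaining roots of $\mathcal{R}_{D,\beta}$ split off along the orthogonal $V_k$; here Lemma \ref{irredulemma1} handles the bookkeeping of \eqref{irred1}--\eqref{irred2} and confirms that the component through $\beta$ is exactly $W \cap \mathcal{R}$. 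Thus $\mathcal{R}_{D,\beta}^{(0)} \cong A_{m_i + m_j - 1}$.

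Finally, the Coxeter number of $A_{m_i + m_j - 1}$ equals $m_i + m_j$, which is precisely the multiplicity of $\xi_{ij}$ in $\det \eta_D$ by Theorem \ref{etaDAn}; hence $k_H = h(\mathcal{R}_{D,\beta}^{(0)})$ and the proof is complete. The only genuinely non-formal step is the span-and-support computation identifying $\mathcal{R}_{D,\beta}^{(0)}$: the hard part will be verifying both that adjoining $\beta$ fuses exactly the two blocks $B_i, B_j$ into a single irreducible $A$-type component and that no root connecting a third block can enter $\langle \mathcal{R}_D, \beta \rangle$. I expect this to go through uniformly even when $m_i$ or $m_j$ equals $1$, since the identity $e_c - e_d = (e_c - e_a) + (e_a - e_b) + (e_b - e_d)$ expresses every cross-block root of $B_i \cup B_j$ through $\beta$ together with within-block differences, and the dimension count $m_i + m_j - 1$ leaves no room for further roots.
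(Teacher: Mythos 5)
Your proposal is correct and follows essentially the same route as the paper: reduce to the standard-form stratum \eqref{stratumANsuper}, read off the multiplicity $m_a+m_b$ of each factor $\xi_a-\xi_b$ from Theorem \ref{etaDAn}, identify $\mathcal{R}_{D,\beta}^{(0)}$ as the $A_{m_a+m_b-1}$ component obtained by fusing the two blocks, and match its Coxeter number $m_a+m_b$ with the exponent. The extra span-and-support justification you supply for the decomposition of $\mathcal{R}_{D,\beta}$ is a harmless elaboration of what the paper states without proof.
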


\begin{proof}
Let $S \subset A_n$ be a collection of roots such that the discriminant stratum $D= \cap_{\gamma \in S }\Pi_\gamma$ is given by equations \eqref{stratumANsuper}. Let $\xi_0, \dots, \xi_d$ be the corresponding functions on $D$ (see \eqref{stratumANsuper}).

Let $\mathcal{R}_D$ be the root system $$\mathcal{R}_D=\langle S \rangle \cap A_n=\{\alpha \in A_n |\left. \alpha \right|_D=0\}.$$ 
Then decomposition  \eqref{RDdecompo} of the root system $\mathcal{R}_D$ has the form $$\mathcal{R}_D= \bigsqcup_{i: m_i > 1} A_{m_i-1}.$$

Let $H$
 be a hyperplane in $D$ which is an element of the restricted arrangement ${\mathcal A}_D$. Then there exists a root $\beta \in A_n$ such that $H=\{v \in D| l(v)=0\}$, 
where   $l =l_H = \left. \beta\right|_D \in D^*$.  
 In the coordinates $\xi$ covector $l$ has the $l(\xi) =  \xi_a -\xi_b$ for some $0 \leq a <b \leq d$, and the corresponding root $\beta$ can be taken as $\beta= e_{m_0 + \dots + m_a} - e_{m_0 + \dots + m_b}$. 

The multiplicity $k_H$ of the linear form $ l(\xi)$ in the formula (\ref{restricteddethA_N}) is equal to $m_a+m_b$.
%Let us first consider the case when $m_a = m_b=1$. Then $\beta$ is defined uniquely by $l$. We have  $ \langle S ,\beta \rangle \cap A_N= \mathcal{R}_D \sqcup A_1$ where the last term $A_1 = \{ \pm \beta\}$. Therefore $m_a + m_b =2= h(A_1)=k_H$ as required. 
%Let us now suppose that $m_a > 1$, $m_b=1$. Then $$ \langle S ,\beta \rangle \cap A_N= 
%\bigsqcup_{\substack{i: m_i >1\\ i \neq a }} A_{m_i-1}\sqcup A_{m_a}, $$ where the last root %system $A_{m_a}$ contains $\beta$. Therefore $m_a + m_b=m_a+1= h(A_{m_a}) = k_H$ as required. 
%Finally, we consider the case when both $m_a, m_b > 1$. In this case
On the other hand decomposition   \eqref{decomposition} of the root system ${\mathcal R}_{D, \beta} =  \langle S ,\beta \rangle \cap A_n$ has the form 
 $${\mathcal R}_{D, \beta} = 
\bigsqcup_{\substack{i: m_i >1\\ i \neq a,b }} A_{m_i-1}\sqcup A_{m_a+m_b -1}, $$ where the last root system $A_{m_a + m_b -1}$ contains $\beta$. 
Therefore ${\mathcal R}_{D, \beta}^{(0)} = A_{m_a +m_b -1}$ and 
 $k_H= h(A_{m_a +m_b -1})$ as required. This completes the proof for the root system $A_n$. 
\end{proof}

\begin{theorem}
Suppose $\mathcal{R}=B_n$. Then the statement of Theorem \ref{thma1} is true.
\end{theorem}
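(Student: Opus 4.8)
The plan is to mirror the preceding $A_n$ argument, matching the explicit factorisation of $\det\eta_D$ supplied by Theorem \ref{etaDBDN} against the Coxeter numbers $h(\mathcal{R}^{(0)}_{D,\beta})$ attached to each restricted root. First I would use the Weyl group of $B_n$, which contains all sign changes together with permutations, to bring $D$ into the standard form \eqref{stratumBDNsuper} with every $\varepsilon_j=1$; here the number of vanishing coordinates is $l$, so $\lambda_D$ is the superpotential \eqref{superpotentialBNDN} with $m=l$. By Theorem \ref{etaDBDN} the determinant is then proportional to $\prod_{i=1}^d \xi_i^{2(m_i+l)}\prod_{1\le i<j\le d}(\xi_i-\xi_j)^{m_i+m_j}(\xi_i+\xi_j)^{m_i+m_j}$, so the linear factors $l_H$ are precisely $\xi_a$ and $\xi_a\pm\xi_b$. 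It remains to compute $\mathcal{R}^{(0)}_{D,\beta}$ for a root $\beta$ restricting to each of these and to check that its Coxeter number equals the displayed exponent.

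Next I would record the decomposition \eqref{RDdecompo}: a root of $B_n$ vanishes on $D$ exactly when it is supported on the first $l$ coordinates or permutes within a single block of equal coordinates, giving $\mathcal{R}_D=B_l\sqcup\bigsqcup_{i:\,m_i>1}A_{m_i-1}$ (with the $B_l$ factor absent when $l=0$). For the factor $\xi_a$ I would take $\beta=e_c$ with $c$ in block $a$; since $\langle\mathcal{R}_D,\beta\rangle\cap B_n$ recovers the full $B_n$ supported on the first $l$ coordinates together with block $a$, Lemma \ref{irredulemma1} identifies the component containing $\beta$ as $\mathcal{R}^{(0)}_{D,\beta}=B_{l+m_a}$, whose Coxeter number $2(l+m_a)$ matches the multiplicity $2(m_a+l)$. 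For the factor $\xi_a-\xi_b$ I would take $\beta=e_c-e_{c'}$ with $c,c'$ in blocks $a,b$; the roots of $B_n$ lying in $\langle A_{m_a-1},A_{m_b-1},\beta\rangle$ are exactly the differences $e_i-e_j$ with $i,j$ ranging over blocks $a\cup b$, so $\mathcal{R}^{(0)}_{D,\beta}=A_{m_a+m_b-1}$ with Coxeter number $m_a+m_b$, matching again.

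The case I expect to be the main obstacle is the factor $\xi_a+\xi_b$, obtained from $\beta=e_c+e_{c'}$. Here the roots of $B_n$ inside $\langle A_{m_a-1},A_{m_b-1},\beta\rangle$ form the differences within each block together with the sums $\pm(e_i+e_j)$ joining $i\in a$ to $j\in b$, and this set is not presented in standard form, so its type is not immediately legible. The resolution is to apply the sign change $e_j\mapsto-e_j$ for $j$ in block $b$, an element of the $B_n$ Weyl group, under which the set becomes the standard $A_{m_a+m_b-1}$; hence $\mathcal{R}^{(0)}_{D,\beta}$ is $W$-conjugate to $A_{m_a+m_b-1}$ and has Coxeter number $m_a+m_b$, matching the multiplicity. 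Finally I would invoke Proposition \ref{propodeg} to note that $\mathcal{R}^{(0)}_{D,\beta}$ is independent of the chosen root $\beta$ restricting to a given $l_H$, so the three computations are well-posed; since they exhaust all linear factors of $\det\eta_D$, this establishes $k_H=h(\mathcal{R}^{(0)}_{D,\beta})$ in every case and completes the proof of Theorem \ref{thma1} for $\mathcal{R}=B_n$.
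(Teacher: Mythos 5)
Your proposal is correct and follows essentially the same route as the paper: read off the multiplicities from Theorem \ref{etaDBDN} with $m=l$, and for each linear factor identify the irreducible component of $\mathcal{R}_{D,\beta}$ containing $\beta$ ($B_{l+m_a}$ for $\xi_a$, $A_{m_a+m_b-1}$ for $\xi_a\pm\xi_b$) and match its Coxeter number. The only cosmetic difference is that you normalise all $\varepsilon_j=1$ first and then handle $\xi_a+\xi_b$ by a sign flip, whereas the paper keeps the signs $\varepsilon_j$ in the normal form \eqref{stratumBDNsuper} so that both signs are treated at once; the paper also splits the $\xi_a$ case into $m_a=1$ and $m_a>1$, which your uniform $B_{l+m_a}$ (with $B_1=A_1$) subsumes.
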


\begin{proof}
 Let $S \subset B_n$ be a collection of roots such that the discriminant stratum $D=\cap_{\gamma \in S} \Pi_{\gamma}$ is given by equations \eqref{stratumBDNsuper}. Let $\xi_1, \dots, \xi_d$ be the corresponding coordinates on $D$ (see \eqref{stratumBDNsuper}).

Let $\mathcal{R}_D$ be the root system $$\mathcal{R}_D= \langle S \rangle \cap B_n= \{ \alpha \in B_n | \left. \alpha \right|_D=0\}.
$$ 
%and consider root system 
%$A_{m_i-1}$ with corresponding simple system
%
%\begin{align*}
%\varepsilon_j e_{j+l} - \varepsilon_{j+1} e_{j+1+l},  \quad \quad  \sum_{k=1}^{i-1} m_k + 1 \leq j \leq \sum_{k=1}^i m_k -1.
%\end{align*}
%
Note that if $l=0$, then $\mathcal{R}_D$ takes the form
\begin{align*}
\mathcal{R}_D = \bigsqcup_{i: m_i >1} A_{m_i-1},
\end{align*}
and
\begin{align}\label{mathcalR1}
\mathcal{R}_D = \bigsqcup_{i: m_i >1} A_{m_i-1} \sqcup \mathcal{R}^{(1)},
\end{align}
where $\mathcal{R}^{(1)}=B_l$ if $l \geq 2$,  $\mathcal{R}^{(1)}=A_1$, if $l=1$. 

 Let $\widehat{H}$ and $\widetilde{H}$ be hyperplanes in $D$ given by the kernels of the forms $\widehat{l}(\xi)= \widehat{\beta}|_D(\xi)=\xi_a$  ($1\le a \le d$) and $\widetilde{l}(\xi)= \widetilde{\beta}|_D(\xi)= \xi_a \pm \xi_b$ ($1\le a<b \le d$)  respectively, 
where one can choose the corresponding roots $\widehat{\beta}, \widetilde{\beta} \in B_n$
as follows: $$\widehat{\beta} = e_{l +m_1 + \dots + m_a}, \quad \quad \widetilde{\beta}= \varepsilon_{m_1 + \dots + m_a} e_{l+m_1 +\dots + m_a} \pm \varepsilon_{m_1 + \dots + m_b} e_{l+m_1 + \dots + m_b}.$$ 

Let $k_{\widehat H}=2(m_a+m)$ and $k_{\widetilde H}= m_a+m_b $  be the multiplicities of the linear forms $\widehat{l}(\xi)$ and $\widetilde{l}(\xi)$  respectively in the product \eqref{restrctedsaitodetBNDNcasemneq0}. Recall that $m=l$ in the superpotential \eqref{superpotentialBNDN} and Theorem \ref{restrctedsaitodetBNDNcasemneq0} for the $B_n$ strata.
%We choose corresponding roots $\widehat{\beta}, \widetilde{\beta} \in B_n$ such that 
%$\widehat{l}= \left. \widehat{\beta} \right|_D$ and $\widetilde{l}= \left. \widetilde{\beta}\right|_D$, as follows: $$\widehat{\beta} = e_{l +m_1 + \dots + m_a}, \quad \quad \widetilde{\beta}= \varepsilon_{m_1 + \dots + m_a} e_{l+m_1 +\dots + m_a} \pm \varepsilon_{m_1 + \dots + m_b} e_{l+m_1 + \dots + m_b}.$$ 
 
 Let us consider firstly the form $\widehat{l}(\xi)$. If $m_a=1$ then the root system ${\mathcal R}_{D, \widehat\beta} =  \langle S, \widehat{\beta} \rangle \cap B_n$ has decomposition into irreducible subsystems 
  $$
{\mathcal R}_{D, \widehat\beta}  =  \bigsqcup_{i: m_i >1} A_{m_i-1} \sqcup \mathcal{R}^{(2)},
$$ 
where $\mathcal{R}^{(2)}=A_1$ when $l=0$ and $\mathcal{R}^{(2)}= B_{l+1}$ when $l  \geq 1$. The root system $\mathcal{R}^{(2)}$ contains $\widehat{\beta}$
so $\mathcal{R}^{(2)} = {\mathcal R}_{D, \widehat\beta}^{(0)} $ in the decomposition \eqref{decomposition}, and $k_{\widehat{H}} = 2(m+1)= h(\mathcal{R}^{(2)})$ as required.
 
  If $m_a > 1$ then the root system ${\mathcal R}_{D, \widehat\beta} =  \langle S, \widehat{\beta} \rangle \cap B_n$ has decomposition into irreducible subsystems 
 \begin{align*}
 {\mathcal R}_{D, \widehat\beta}  =  \bigsqcup_{\substack{i: m_i >1\\ i \neq a }} A_{m_i-1}\sqcup  B_{l+m_a},
 \end{align*}
 where the root system $B_{l+m_a}$ contains $\widehat{\beta}$. Therefore 
$ {\mathcal R}_{D, \widehat\beta}^{(0)}=B_{l+m_a}$ and $k_{\widehat{H}} = h(B_{l+m_a})$ as required.

Let us now consider the form $\widetilde{l}(\xi)$. 
%
%Let us firstly suppose that $m_a=m_b=1$. Then $\widetilde{\beta}$ is defined uniquely by $\widetilde{l}$. We have that $\langle S, \widetilde{\beta}\rangle \cap B_N= \mathcal{R}_D \sqcup  A_1$ where the last term $A_1= \{ \pm\widetilde{\beta}\}$. Therefore $m_a + m_b =2= h(A_1)= k_{\widetilde{H}}$ as required. 
%
%
%Now let us suppose that $m_a > 1$ and $m_b=1$. Then $$\langle S, \tilde{\beta}\rangle \cap B_N=\bigsqcup_{\substack{ i: m_i >1 \\ i \neq a}} A_{m_i-1} \sqcup A_{m_a} \sqcup \mathcal{R}^{(1)}, $$ where the root system $A_{m_a}$ contains $\widetilde{\beta}$. Therefore $m_a+m_b=m_a+1= h(A_{m_a})=k_{\widetilde{H}}$ as required. 
%
Then $ {\mathcal R}_{D, \widetilde\beta}  = \langle S, \widetilde{\beta}\rangle \cap B_n$  has decomposition into irreducible subsystems
 $$
{\mathcal R}_{D, \widetilde\beta}  =\bigsqcup_{\substack{i: m_i >1\\ i \neq a,b }} A_{m_i-1} \sqcup  A_{m_a + m_b -1} \sqcup \mathcal{R}^{(1)},
$$ 
where the root system $A_{m_a + m_b -1}$ contains $\widetilde{\beta}$ and $\mathcal{R}^{(1)}$ is the same as in \eqref{mathcalR1}.   Therefore
${\mathcal R}_{D, \widetilde\beta}^{(0)} =   A_{m_a + m_b -1}$ and 
 $k_{\widetilde{H}} = h(A_{m_a + m_b -1})$ as required. 
\end{proof}

\begin{comment}
\textbf{\small Case} \pmb{$m = 0$}: Let $L \subset B_N$ be a collection of roots such that the stratum $D=\cap_{\gamma \in L} \Pi_{\gamma}$, $\operatorname{dim}D=n$ is given as in subsection \ref{BDzeroallowedsubsection}. Here we have that $\widetilde{L}= \langle L \rangle \cap B_N =\bigsqcup_{i \in I} A_{m_i-1}$, where $I=\{1, \dots, k\}$. 

Let $\beta= e_a$ and suppose that $a \notin I$. Then $U_\beta= A_1 \sqcup \widetilde{L}$ and therefore $m_\beta= h(A_1)$ which agrees with formula (\ref{BNcasem=0}) since $m_a=1$. Suppose that $ a \in I$. Then $U_\beta= B_{m_a} \sqcup (\bigsqcup_{i \in I \setminus \{a\}} A_{m_i-1})$. Thus $m_\beta= h(B_{m_a})=2m_a$ which agrees with (\ref{BNcasem=0}).  

Let $\beta= e_a \pm e_b$. Let us suppose that $a, b \notin I$. Then $U_\beta = A_1 \sqcup \widetilde{L}$ and therefore $m_\beta= h(A_1)=2=m_a +m_b$. Suppose that $a \in I$ and $b \notin I$. Then $U_\beta = A_{m_a} \sqcup (\bigsqcup_{i \in I \setminus \{a\}} A_{m_i-1})$. and hence $m_\beta= h(A_{m_a})=m_a +1$ which agrees with (\ref{BNcasem=0}) since $m_b=1$. Finally suppose that $a, b \in I$. Then $U_\beta= A_{m_a+m_b -1} \sqcup (\bigsqcup_{i \in I \setminus \{a,b\}} A_{m_i-1})$. Therefore $m_\beta= h(A_{m_a+m_b -1})=m_a +m_b$ which agrees with (\ref{BNcasem=0}). This concludes the proof for $B_N$. 
\end{comment}

\begin{theorem}
Suppose $\mathcal{R}=D_n$. Then the statement of Theorem \ref{thma1} is true. 
\end{theorem}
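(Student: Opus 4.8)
The plan is to follow the same scheme as in the proofs just given for $A_n$ and $B_n$, replacing the role of the $B_l$ subsystem by a $D_l$ subsystem and adjusting the relevant Coxeter numbers. I would start with a collection $S \subset D_n$ such that $D = \cap_{\gamma \in S}\Pi_\gamma$ has the form \eqref{stratumBDNsuper}, where for $D_n$ one has $l \neq 1$, so that $l = 0$ or $l \geq 2$; the superpotential parameter is then $m = l - 1$, and the determinant of $\eta_D$ is given by Theorem \ref{etaDBDN} with this value of $m$. The first step is to identify $\mathcal{R}_D = \langle S\rangle \cap D_n$. The roots of $D_n$ vanishing on $D$ are the differences $e_p - e_q$ (up to the signs $\varepsilon_j$) within each block of equal coordinates, producing $A_{m_i - 1}$ for every $i$ with $m_i > 1$, together with all roots $\pm e_i \pm e_j$ supported on the first $l$ coordinates. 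Hence $\mathcal{R}_D = \bigsqcup_{i:\, m_i > 1} A_{m_i - 1} \sqcup \mathcal{R}^{(1)}$, where $\mathcal{R}^{(1)} = D_l$ when $l \geq 2$ and there is no such component when $l = 0$.

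Next I would list the linear forms occurring in \eqref{restrctedsaitodetBNDNcasemneq0}, namely $\widehat{l}(\xi) = \xi_a$ and $\widetilde{l}(\xi) = \xi_a \pm \xi_b$, with multiplicities $k_{\widehat H} = 2(m_a + m) = 2(m_a + l - 1)$ and $k_{\widetilde H} = m_a + m_b$ respectively. For $\widehat{l}$ one may take $\widehat\beta = e_{i_0} \pm e_{a_1}$ with $i_0 \leq l$ when $l \geq 2$, or $\widehat\beta = e_{a_1} + e_{a_2}$ inside block $a$ (possible only when $m_a \geq 2$) when $l = 0$; for $\widetilde{l}$ one takes $\widetilde\beta$ connecting blocks $a$ and $b$. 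By Proposition \ref{propodeg} the resulting subsystem $\mathcal{R}_{D,\beta}^{(0)}$ is independent of these choices.

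The key computation is to determine $\mathcal{R}_{D,\beta}^{(0)}$ for each form by computing the linear span $\langle \mathcal{R}_D, \beta\rangle$ and intersecting with $D_n$. For $\widetilde{l} = \xi_a \pm \xi_b$ the span of $\langle A_{m_a-1}\rangle \oplus \langle A_{m_b-1}\rangle \oplus \langle \widetilde\beta\rangle$ is exactly the subspace of the combined $(m_a+m_b)$-dimensional block space on which the total coordinate sum vanishes; intersecting with $D_n$ leaves only the difference roots $e_p - e_q$, so $\mathcal{R}_{D,\widetilde\beta}^{(0)} = A_{m_a+m_b-1}$ and $k_{\widetilde H} = m_a + m_b = h(A_{m_a+m_b-1})$, exactly as in the $B_n$ case, while $D_l$ remains a separate component. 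For $\widehat{l} = \xi_a$ the span computation is different: since $\langle A_{m_a-1}\rangle$ together with $\widehat\beta$ recovers the block-$a$ sum vector, the span is the full block-$a$ coordinate space together with $\langle e_1, \dots, e_l\rangle$, and intersecting with $D_n$ yields $\mathcal{R}_{D,\widehat\beta}^{(0)} = D_{l+m_a}$ when $l \geq 2$ and $\mathcal{R}_{D,\widehat\beta}^{(0)} = D_{m_a}$ when $l = 0$. In both cases $h = 2(l + m_a - 1) = k_{\widehat H}$ (with $l = 0$ in the second case), which is the required equality.

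The main obstacle is this span-and-intersection step for $\widehat{l}$: unlike the generating roots, the intersection $\langle \mathcal{R}_D, \widehat\beta\rangle \cap D_n$ reconnects the $D_l$ block with block $a$ through roots such as $e_{i_0} \pm e_{a_1}$, producing the larger irreducible system $D_{l+m_a}$ rather than a disjoint union. A secondary point to handle with care is the reducible boundary case $l = 0$, $m_a = 2$, where $D_{m_a} = D_2 \cong A_1 \times A_1$ is not irreducible; here $\mathcal{R}_{D,\widehat\beta}^{(0)}$ is the single $A_1$ factor containing $\widehat\beta$, whose Coxeter number is still $2 = 2(m_a - 1)$, so the formula remains valid. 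One should also note that for $l = 0$ the form $\xi_a$ occurs only when $m_a \geq 2$, consistently with $k_{\widehat H} = 2(m_a - 1)$ vanishing when $m_a = 1$. Once these subsystems are identified, the equality $k_H = h(\mathcal{R}_{D,\beta}^{(0)})$ follows in all cases, completing the proof of Theorem \ref{thma1} for $D_n$ and hence of Theorem \ref{MMtheorem} in the classical cases.
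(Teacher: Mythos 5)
Your proposal is correct and follows essentially the same route as the paper: identify $\mathcal{R}_D$, choose representative roots $\widehat\beta$, $\widetilde\beta$ for the forms $\xi_a$ and $\xi_a\pm\xi_b$, determine $\mathcal{R}_{D,\beta}^{(0)}$ (obtaining $D_{l+m_a}$, respectively $A_{m_a+m_b-1}$), and match Coxeter numbers against the multiplicities in \eqref{restrctedsaitodetBNDNcasemneq0}, including the degenerate $D_2\cong A_1\times A_1$ case. The only cosmetic difference is that the paper splits the $\widehat{l}$ analysis into the cases $m_a=1$ and $m_a\geq 2$ and records $\mathcal{R}^{(1)}$ as $A_1\times A_1$ when $l=2$ (since the decomposition \eqref{RDdecompo} is into irreducibles), whereas you phrase the identification as a single span-and-intersect computation; both yield the same subsystems and the same conclusion.
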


\begin{proof}
 Let $S \subset D_n$ be a collection of roots such that the discriminant stratum $D=\cap_{\gamma \in S} \Pi_{\gamma}$ is given by equations \eqref{stratumBDNsuper}, where $l \neq 1$. Let $\xi_1, \dots, \xi_d$ be the corresponding coordinates on $D$ (see \eqref{stratumBDNsuper}).

Let $\mathcal{R}_D$ be the root system $$\mathcal{R}_D= \langle S \rangle \cap D_n= \{ \alpha \in D_n | \left. \alpha \right|_D=0\}.$$ Note that 

\begin{align}\label{RDDN}
\mathcal{R}_D = \bigsqcup_{i: m_i >1} A_{m_i-1} \sqcup \mathcal{R}^{(1)},
\end{align}
where the root system $\mathcal{R}^{(1)}=D_l$ if $l \geq 3$,   $\mathcal{R}^{(1)}=A_1\times A_1$ if $l=2$, and $\mathcal{R}^{(1)}$ is empty if $l=0$.

Let $\widehat{H}$ and $\widetilde{H}$ be hyperplanes in $D$ given by the kernels of the forms $\widehat{l}(\xi)= \widehat{\beta}|_D(\xi)=\xi_a$  ($1\le a \le d$) and $\widetilde{l}(\xi)= \widetilde{\beta}|_D(\xi)= \xi_a \pm \xi_b$ ($1\le a<b \le d$)  respectively, for the corresponding root vectors $\widehat{\beta}, \widetilde{\beta} \in D_n$.

%as follows: $$\widehat{\beta} = e_{l +m_1 + \dots + m_a}, \quad \quad \widetilde{\beta}= \varepsilon_{m_1 + \dots + m_a} e_{l+m_1 +\dots + m_a} \pm \varepsilon_{m_1 + \dots + m_b} e_{l+m_1 + \dots + m_b}.$$ 

%We are interested in the multiplicities of the linear forms $\widehat{l}(\xi)= \xi_a$ ($1\leq a \leq n$) and $\widetilde{l}(\xi)= \xi_a \pm \xi_b$ $(1 \leq a < b \leq n)$ in \eqref{restrctedsaitodetBNDNcasemneq0}. We choose corresponding roots $\widehat{\beta}, \widetilde{\beta} \in D_N$ such that $\widehat{l}= \left. \widehat{\beta} \right|_D$ and $\widetilde{l}= \left. \widetilde{\beta}\right|_D$. Let $\widehat{H}$ and $\widetilde{H}$ be hyperplanes in $D$ given by the kernels of $\widehat{l}$ and $\widetilde{l}$ respectively.
 
Let $k_{\widehat H}=2(m_a+m)$ and $k_{\widetilde H}= m_a+m_b $  be the multiplicities of the linear forms $\widehat{l}(\xi)$ and $\widetilde{l}(\xi)$  respectively in the product \eqref{restrctedsaitodetBNDNcasemneq0}, where we assume that $k_{\widehat H}>0$ . Recall that $m=l-1$ in the superpotential \eqref{superpotentialBNDN} and Theorem \ref{restrctedsaitodetBNDNcasemneq0} for the $D_n$ strata.

 Let us consider firstly the form $\widehat{l}(\xi)$. This form has non-zero power in the formula \eqref{restrctedsaitodetBNDNcasemneq0} which implies that $l \geq 2$ or $m_a \geq 2$. In the former case we choose $\widehat{\beta}= e_l + e_{l+m_1 + \dots + m_a}$ and in the latter case we choose $$ \widehat{\beta}= \varepsilon_{m_1 + \dots + m_{a}-1} e_{l+ m_1 + \dots + m_{a}-1} + \varepsilon_{m_1 + \dots + m_a} e_{l+ m_1 + \dots + m_a}.
$$ 
If $m_a=1$ then $l \geq 2$ and we have that
 the root system ${\mathcal R}_{D, \widehat\beta} =  \langle S, \widehat{\beta} \rangle \cap D_n$ has decomposition into irreducible subsystems 
 $${\mathcal R}_{D, \widehat\beta} =  \bigsqcup_{i: m_i >1} A_{m_i-1} \sqcup D_{l+1},$$ where the root system $D_{l+1}$ contains $\widehat{\beta}$. Therefore $ {\mathcal R}_{D, \widehat\beta}^{(0)}=D_{l+1}$ and 
$$ k_{\widehat{H}} =2(m+1)=2l= h(D_{l+1})$$ as required. 
 
 If $m_a \geq 2$ then  the root system ${\mathcal R}_{D, \widehat\beta}$ has decomposition into irreducible subsystems  
 \begin{align*}
{\mathcal R}_{D, \widehat\beta} =  \bigsqcup_{\substack{i: m_i >1\\ i \neq a }} A_{m_i-1}\sqcup  \mathcal{R}^{(2)},
 \end{align*}
 where $\mathcal{R}^{(2)}= A_1 \times A_1$ if $m_a=2$ and $l=0$, and $\mathcal{R}^{(2)}= D_{l+m_a}$ if $l+m_a \geq 3$. The root system $\mathcal{R}^{(2)}$ contains $\widehat{\beta}$. If $\mathcal{R}^{(2)}= A_1 \times A_1$ then $ {\mathcal R}_{D, \widehat\beta}^{(0)}=A_1$ and 
$
 k_{\widehat{H}}  =
   2(m_a + l-1)= 2 = h ( A_1 )  
$
as required.
If $\mathcal{R}^{(2)}= D_{l+m_a} $ then $ {\mathcal R}_{D, \widehat\beta}^{(0)}= \mathcal{R}^{(2)}$ and 
$ k_{\widehat{H}}  =
   2(m_a + l-1)= h ( \mathcal{R}^{(2)})  
$
 as required. 

Let us now consider the form $\widetilde{l}(\xi)$. The root $\widetilde{\beta}$ can be chosen as 
\begin{align*}
\widetilde{\beta}= \varepsilon_{m_1 + \dots + m_a} e_{l+m_1 +\dots + m_a} \pm \varepsilon_{m_1 + \dots + m_b} e_{l+m_1 + \dots + m_b}.
\end{align*}
%
% Let us first suppose that $m_a=m_b=1$. Then $\widetilde{\beta}$ is defined uniquely by $\widetilde{l}$. We have that $\langle S, \widetilde{\beta}\rangle \cap D_N= \mathcal{R}_D \sqcup  A_1$ where the last term $A_1= \{ \pm\widetilde{\beta}\}$. Therefore $m_a + m_b =2= h(A_1)= k_{\widetilde{H}}$ as required. 
%
%
%Now let us suppose that $m_a > 1$ and $m_b=1$. Then $$\langle S, \tilde{\beta}\rangle \cap D_N=\bigsqcup_{\substack{ i: m_i >1 \\ i \neq a}} A_{m_i-1} \sqcup A_{m_a} \sqcup \mathcal{R}^{(1)}, $$ where the root system $A_{m_a}$ contains $\widetilde{\beta}$. Therefore $m_a+m_b=m_a+1= h(A_{m_a})=k_{\widetilde{H}}$ as required. 
%
Then the root system ${\mathcal R}_{D, \widetilde\beta} =  \langle S, \widetilde{\beta} \rangle \cap D_n$ has decomposition into irreducible subsystems 

$${\mathcal R}_{D, \widetilde\beta} = \bigsqcup_{\substack{i: m_i >1\\ i \neq a,b }} A_{m_i-1} \sqcup  A_{m_a + m_b -1} \sqcup \mathcal{R}^{(1)},$$ where the root system $A_{m_a + m_b -1}$ contains $\widetilde{\beta}$ and $\mathcal{R}^{(1)}$ is the same as in \eqref{RDDN}.  Therefore $ {\mathcal R}_{D, \widetilde\beta}^{(0)} = A_{m_a + m_b -1}$ and $k_{\widetilde{H}} = h(A_{m_a + m_b -1})$ as required. 
\end{proof}

\section{A general formula for the restricted Saito determinant}\label{generalformsection} Let us fix a basis $\Delta$ of simple roots $\alpha_k$ ($k=1, \dots, n$) for the root system $\mathcal R$. %We will find a general formula for the determinant of $\eta_D$.
These vectors have the form  $\alpha_k=(\alpha^{(1)}_k, \dots, \alpha^{(n)}_k)$ in the coordinate system $x^i$.
Let us define the corresponding directional partial derivatives operators 
\begin{equation}\label{defnpartial}
 \partial_{\alpha_k} = \sum_{i=1}^n \alpha_{k}^{(i)} \frac{\partial}{\partial x^{i}}.
\end{equation}
The basis of fundamental coweights  $\omega^{i} \in V$ ($i=1, \dots, n$) is defined by
\begin{equation}\label{fundamcow}
(\omega^{i}, \alpha_j)=\delta^{i}_j.
\end{equation}
Let us  also define a new coordinate system on $V$ given by 
\beq{xtilde}
\widetilde{x}^i=(\omega^{i}, x), \quad  i=1, \dots, n, \,\,  x \in V. 
\eeq

\begin{lemma}\label{lemmaforg}
In the coordinates $\widetilde{x}^{i}$, $1 \leq i \leq n$, we have $\frac{\partial}{\partial \widetilde{x}^{i}}= \partial_{\alpha_i}$.  
\end{lemma}

\begin{proof}
Let $x=(x^{1}, \dots, x^n)^{\intercal}$ and $\widetilde{x}=(\widetilde{x}^1, \dots, \widetilde{x}^n)^{\intercal}$. Then $\widetilde{x}^\intercal = \Omega x^{\intercal}$, where $\Omega$ is the $n \times n$ matrix $\Omega=(\Omega_{ij})_{i,j=1}^n$ with $\Omega_{ij} = \omega_{(j)}^i$ if $\omega^i = (\omega_{(1)}^i, \dots, \omega_{(n)}^i )$. Then $x^\intercal = \Omega^{-1} \widetilde{x}^\intercal$, and it is easy to see that the $(i, j)$-th entry of $\Omega^{-1}$ equals $\alpha^{(i)}_j$. Therefore $\frac{\partial}{\partial \widetilde{x}^{i}}=  \frac{\partial x^{k}}{\partial \widetilde{x}^{i}} \frac{\partial}{\partial x^{k}}=  \alpha^{(k)}_i \frac{\partial}{\partial x^k}= \partial_{\alpha_i}$. 
\end{proof}

For any set of (homogeneous) basic invariants $p^i\in {\mathbb C}[x]^W$, $i=1, \dots, n$, let $B_k(p)$ be the $(n-1) \times (n-1)$ matrix obtained from the Jacobi matrix $ (\partial_{\alpha_j} p^{i})_{i,j=1}^n$ by eliminating the $k$-th column and $n$-th row ($1\le k \le n$). Let 
\beq{Jk}
J_{k}(p)=J_k(p^1, \dots, p^{n-1})=\operatorname{det}B_k(p).
\eeq
 Note that $J_{k}(p)$ is a homogeneous polynomial in $x$ of degree $|\mathcal{R}_+|- h +1$, where $h$ is the Coxeter number.  
%, since the entries of the $n$-th row consist of homogeneous polynomials of degrees $h-1$, and $\operatorname{deg}J=|\mathcal{R}_+|$. 
Let also $J(p)$ be the Jacobian $J(p^1, \dots, p^n)=\operatorname{det}\big(\partial_{\alpha_j} p^i \big)_{i, j =1}^n$. 

The next statement will be useful below.
\begin{proposition}\label{yoshi}\cite{yoshinaga}
The vector field $\frac{\partial}{\partial p^n}$ can be represented as
\begin{align*}\label{identityyoshi}
\frac{\partial}{\partial p^n}=
J^{-1} (p)
\begin{vmatrix}
{\partial_{\alpha_1}  p^1} & \dots &{\partial_{\alpha_n}  p^1} \\
\vdots & \ddots & \vdots \\
{\partial_{\alpha_{1}}  p^{n-1}} & \dots & {\partial_{\alpha_{n}}  p^{n-1}}\\
{\partial_{\alpha_{1}}  }& \dots &{\partial_{\alpha_{n}} }
\end{vmatrix}.
\end{align*}
\end{proposition}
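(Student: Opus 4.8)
The plan is to derive the formula from the change of frame between the directional operators $\partial_{\alpha_j}$ and the coordinate vector fields $\partial/\partial p^k$, followed by Cramer's rule. First I would note that the asserted identity is one of differential operators on the open set $V \setminus \cup_{\alpha \in \mathcal{R}_+}\Pi_\alpha$, where the quotient map is a local diffeomorphism, the $p^i$ serve as local coordinates, and $J(p) \sim \mathcal{I}(\mathcal{A})$ is invertible; all divisions by $J(p)$ below are therefore legitimate. On this set I would invoke Lemma \ref{lemmaforg} to identify $\partial_{\alpha_i}$ with $\partial/\partial \widetilde{x}^i$, so that the chain rule gives
\[
\partial_{\alpha_i} = \sum_{k=1}^n (\partial_{\alpha_i} p^k)\, \frac{\partial}{\partial p^k}, \qquad i=1,\dots,n.
\]
Writing $\mathcal{M} = (\partial_{\alpha_i} p^k)_{i,k=1}^n$, this says that the column of operators $(\partial_{\alpha_1},\dots,\partial_{\alpha_n})^{\intercal}$ equals $\mathcal{M}$ times the column $(\partial/\partial p^1,\dots,\partial/\partial p^n)^{\intercal}$. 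Since the Jacobian in the statement is $J(p)=\det(\partial_{\alpha_j}p^i)_{i,j}=\det\mathcal{M}^{\intercal}=\det\mathcal{M}\neq 0$ here, the matrix $\mathcal{M}$ is invertible, and inverting the relation yields
\[
\frac{\partial}{\partial p^n} = \sum_{j=1}^n (\mathcal{M}^{-1})_{nj}\, \partial_{\alpha_j} = \frac{1}{J(p)} \sum_{j=1}^n \operatorname{Cof}_{jn}(\mathcal{M})\, \partial_{\alpha_j},
\]
where $\operatorname{Cof}_{jn}(\mathcal{M}) = (-1)^{j+n}$ times the minor of $\mathcal{M}$ obtained by deleting row $j$ and column $n$.

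The second step is to recognise this right-hand side as the stated symbolic determinant divided by $J(p)$. I would expand the determinant in the proposition along its last (operator) row $(\partial_{\alpha_1},\dots,\partial_{\alpha_n})$: the coefficient of $\partial_{\alpha_j}$ is $(-1)^{n+j}$ times the determinant of the $(n-1)\times(n-1)$ block with rows indexed by $p^1,\dots,p^{n-1}$ and columns indexed by the $\alpha_k$ with $k\neq j$, whose entry in row $p^i$, column $\alpha_k$ is $\partial_{\alpha_k}p^i$. This block is exactly the transpose of the $\mathcal{M}$-minor appearing in $\operatorname{Cof}_{jn}(\mathcal{M})$, since transposing swaps the roles of the invariant index and the root direction. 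Hence the two determinants coincide, the signs $(-1)^{n+j}$ match, and dividing by $J(p)$ reproduces the displayed expression for $\partial/\partial p^n$.

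The only real obstacle is the bookkeeping that identifies the cofactor expansion of $\mathcal{M}^{-1}$ along its $n$-th row with the expansion of the symbolic determinant along its operator row. This hinges on the single clean observation that the relevant $(n-1)\times(n-1)$ minors are transposes of one another — so their determinants agree — together with the verification that the sign conventions are consistent. Once these are in place the proof is a short computation, and I expect no further difficulty beyond being careful that $\mathcal{M}$ and the matrix defining $J(p)$ differ by a transpose, which leaves both the determinant and the argument unaffected.
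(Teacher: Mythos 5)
Your argument is correct. The chain rule $\partial_{\alpha_i}=\sum_k(\partial_{\alpha_i}p^k)\,\partial/\partial p^k$ holds wherever the $p^k$ are local coordinates (which is the complement of the discriminant, exactly where $J(p)\neq 0$), inverting via Cramer's rule gives $\partial/\partial p^n=J(p)^{-1}\sum_j\operatorname{Cof}_{jn}(\mathcal{M})\,\partial_{\alpha_j}$, and your identification of $\operatorname{Cof}_{jn}(\mathcal{M})$ with the coefficient of $\partial_{\alpha_j}$ in the Laplace expansion of the symbolic determinant along its operator row is right: the two $(n-1)\times(n-1)$ blocks are transposes of one another and the signs $(-1)^{n+j}$ agree. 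The paper itself does not prove this proposition (it is cited from Yoshinaga), but the verification it sketches for the analogous Proposition \ref{identity} is different in flavour: one applies the right-hand side to each basic invariant $p^i$, observes that for $i<n$ the determinant acquires a repeated row and vanishes while for $i=n$ it equals $J(p)$, and concludes since a vector field is determined by its action on a coordinate system. That route is shorter and avoids any matrix inversion; yours is constructive and explains where the cofactor structure of the formula comes from. Both are complete proofs, and no gap remains in your version.
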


%The above proposition can be checked easily by applying left-hand side and right-hand side of equality \eqref{identityyoshi} to the polynomials $p^i$. Similarly, one can replace coordinates $x^i$ in the right-hand-side of \eqref{identityyoshi} with another coordinate system on $V$. This gives the following statement. 

We will write $J_k=J_k(t^1, \dots, t^{n-1})$ and $J=J(t^1, \ldots, t^n)$ for the basis $p^i=t^i$  of Saito polynomials. Similarly to Proposition \ref{yoshi} we have its following version.

\begin{proposition}\label{identity}
The  vector field  $e=\frac{\partial}{\partial t^n}$  can be represented as
\begin{equation}\label{identityfield}
e=    \sum_{i=1}^n e^i \partial_{\alpha_i}, \quad e^i = (-1)^{n+i} J^{-1} J_{i}.
\end{equation}
\end{proposition}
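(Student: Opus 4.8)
The plan is to obtain Proposition \ref{identity} as an immediate consequence of Proposition \ref{yoshi} via a single cofactor expansion, exactly as the phrase ``similarly to Proposition \ref{yoshi}'' suggests. First I would apply Proposition \ref{yoshi} to the basis of Saito polynomials, taking $p^i = t^i$; since $e = \frac{\partial}{\partial t^n}$ by definition, this already expresses $e$ as $J^{-1}$ times the formal determinant whose last row consists of the operators $\partial_{\alpha_1}, \dots, \partial_{\alpha_n}$ and whose rows $1, \dots, n-1$ are $(\partial_{\alpha_j} t^i)_{j=1}^n$ for $i = 1, \dots, n-1$. Here $J = J(t)$ is the Jacobian for the Saito basis fixed just before the statement.

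Next I would perform the Laplace expansion of this formal determinant along its bottom row. Because that row holds the differential operators $\partial_{\alpha_i}$ while every other entry is a scalar polynomial, expanding along the bottom row is precisely what renders the expression a well-defined first-order operator: each $\partial_{\alpha_i}$ is multiplied on the left by the scalar cofactor of the $(n,i)$ position, namely $(-1)^{n+i}$ times the minor $M_i$ obtained by deleting the $n$-th row and the $i$-th column. This yields $e = J^{-1} \sum_{i=1}^n (-1)^{n+i} M_i \, \partial_{\alpha_i}$.

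The only point requiring care --- and the sole content beyond bookkeeping --- is to identify each minor $M_i$ with the polynomial $J_i$ from \eqref{Jk}. By construction $M_i$ is the determinant of the matrix obtained from $(\partial_{\alpha_j} t^i)_{i,j=1}^n$ by removing the $n$-th row (the operator row) and the $i$-th column; this is exactly the matrix $B_i(t)$ of \eqref{Jk}, so $M_i = \operatorname{det} B_i(t) = J_i$. Substituting gives $e^i = (-1)^{n+i} J^{-1} J_i$, which is the claim. I expect no genuine obstacle: the whole argument is a cofactor expansion, and the only places to stay attentive are the cofactor sign $(-1)^{n+i}$ and the convention that one expands along the operator-valued row, so that the scalars $J_i$ appear as left multipliers of $\partial_{\alpha_i}$ and the resulting operator is unambiguous.
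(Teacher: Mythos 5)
Your proof is correct and follows essentially the same route as the paper, which presents Proposition \ref{identity} as the specialization of Proposition \ref{yoshi} to the Saito basis and notes that the formula can be checked by applying it to the polynomials $t^i$. Your Laplace expansion along the operator-valued bottom row, with the identification of the $(n,i)$-minor as $\operatorname{det}B_i(t)=J_i$ and the cofactor sign $(-1)^{n+i}$, is precisely the unpacking of that formal determinant.
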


Formula \eqref{identityfield} can be checked by applying it to the polynomials $t^i$ (cf. \cite{yoshinaga}).
 
In the next statement we specify components of the contravariant Saito metric in terms of the identity field $e$.

\begin{proposition}\label{Saitom} In the coordinates $\widetilde{x}^{i}$ ($i=1, \dots, n$) the contravariant Saito metric $ \eta^{ij} \frac{\partial}{\partial \widetilde{x}^i}  \frac{\partial}{\partial \widetilde{x}^j}$ is given by
\begin{equation}
\eta^{i j} = (-1)^{n+1+j} \partial_{\omega^{i}} \frac{J_j}{J} + (-1)^{n+1+i}\partial_{\omega^{j}} \frac{J_i}{J}.
\end{equation}
\end{proposition}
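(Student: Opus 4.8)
The plan is to start from the contravariant characterisation of the Saito metric as the Lie derivative $\eta = \mathcal{L}_e g$ of the contravariant metric $g$ along the identity field $e = \partial_{t^n}$, which is consistent with the flat-coordinate normalisation $\eta^{\alpha\beta} = \delta^{\alpha+\beta,n+1}$ in \eqref{saitometricflatcoords}. I would carry out the whole calculation in the linear coordinate system $\widetilde{x}^i$ of \eqref{xtilde}, using the standard coordinate expression for the Lie derivative of a symmetric $(2,0)$-tensor,
\begin{equation*}
\eta^{ij} = (\mathcal{L}_e g)^{ij} = \sum_k \left( e^k \frac{\partial g^{ij}}{\partial \widetilde{x}^k} - g^{kj}\frac{\partial e^i}{\partial \widetilde{x}^k} - g^{ik}\frac{\partial e^j}{\partial \widetilde{x}^k}\right),
\end{equation*}
where $e^i$ denote the components of $e$ in the coordinate frame $\partial/\partial\widetilde{x}^i$.

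The first step is to observe that $g^{ij}$ is constant in these coordinates. Since $\widetilde{x}^i = (\omega^i, x)$ is linear in $x$ and $g$ is the standard Euclidean form, the contravariant components are $g^{ij} = g(\mathrm{d}\widetilde{x}^i, \mathrm{d}\widetilde{x}^j) = (\omega^i, \omega^j)$, independent of $x$. Consequently the transport term in the Lie derivative vanishes and only the two terms involving derivatives of $e$ survive.

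Next I would identify the components $e^i$. By Lemma \ref{lemmaforg} we have $\partial/\partial\widetilde{x}^k = \partial_{\alpha_k}$, so the expansion $e = \sum_i e^i \partial_{\alpha_i}$ of Proposition \ref{identity} is precisely the expansion of $e$ in the coordinate frame $\partial/\partial\widetilde{x}^i$; hence $e^i = (-1)^{n+i} J_i/J$. The key algebraic step is then to convert the metric contractions into the directional derivatives $\partial_{\omega^j}$. Using the duality $(\omega^i, \alpha_j) = \delta^i_j$ from \eqref{fundamcow}, one expands $\omega^j = \sum_k (\omega^j, \omega^k)\alpha_k$, and therefore
\begin{equation*}
\partial_{\omega^j} = \sum_k (\omega^j, \omega^k)\,\partial_{\alpha_k} = \sum_k g^{kj}\frac{\partial}{\partial \widetilde{x}^k}.
\end{equation*}
Substituting this into the two surviving terms gives $\eta^{ij} = -\partial_{\omega^j} e^i - \partial_{\omega^i} e^j$, and inserting $e^i = (-1)^{n+i} J_i/J$ together with the sign identity $-(-1)^{n+i} = (-1)^{n+1+i}$ yields the asserted formula.

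The computation is essentially routine once the coordinate system $\widetilde{x}^i$ is chosen, so there is no deep obstacle. The two points that require care are the simplification caused by $g^{ij}$ being constant in $\widetilde{x}$ (which removes the transport term of the Lie derivative), and the bookkeeping of signs — both in the Lie-derivative convention and in the formula $e^i = (-1)^{n+i}J_i/J$ of Proposition \ref{identity} — so that the duality expansion is contracted in the correct slot and the contractions $\sum_k g^{kj}\partial_{\alpha_k}$ collapse exactly to $\partial_{\omega^j}$.
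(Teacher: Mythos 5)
Your proposal is correct and follows essentially the same route as the paper: both compute $\eta^{ij}=\mathcal{L}_e g^{ij}$ in the $\widetilde{x}$-coordinates, use the constancy of $g^{ij}$ there to drop the transport term, identify $e^i=(-1)^{n+i}J_i/J$ via Proposition \ref{identity}, and collapse the contraction $g^{kj}\partial_{\alpha_k}$ into $\partial_{\omega^j}$ (the paper does this by showing the vector $u^j=g^{jk}\alpha_k$ equals $\omega^j$, which is the same duality computation you perform). No gaps.
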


\begin{proof}
For the Saito metric we have
\begin{align*}
\eta^{i j} =\mathcal{L}_e g^{ij} = - g^{k j} \partial_{\alpha_k} e^{i}  - g^{i k} \partial_{\alpha_k} e^{j} 
= - \partial_{ u^j} e^{i} - \partial_{ u^i} e^{j},
\end{align*}
where vector $u^{i} = g^{ik}\alpha_k$,  and $e^i$ is the $i$-th component of the vector field $e$. 

For the Euclidean metric $g$ in the coordinates $\widetilde{x}^{i}$ we have by Lemma \ref{lemmaforg} $$g_{ij}=(\partial_{\alpha_i}, \partial_{\alpha_j})= (\alpha^{(k)}_{i} \frac{\partial}{\partial x^{k}}, \alpha^{(l)}_{j} \frac{\partial}{\partial x^{l}})= \alpha^{(k)}_{i} \alpha^{(l)}_{j} \delta_{kl}= \sum_{k=1}^n  \alpha^{(k)}_{i} \alpha^{(k)}_{j} = (\alpha_{i}, \alpha_j).$$ 
  Therefore we have 
\begin{equation*}
(u^{i}, \alpha_j)= \sum_{k=1}^n g^{ik} (\alpha_k, \alpha_j)= g^{ik} g_{kj} = \delta^{i}_j.
\end{equation*}
Hence   $u^{i}=\omega^i$, and the result follows by Proposition \ref{identity}. 
\end{proof}
%
%\begin{lemma}\label{deri}
%We have $$\partial_{\omega^i} \prod_{\alpha \in \Delta} \alpha = \prod_{\alpha \in \Delta \setminus \{\alpha_i \}}\alpha, \quad i=1, \dots, n.$$
%\end{lemma}
%\begin{proof} We have
%
%\begin{align*}
%\partial_{\omega^i} \prod_{\alpha \in \Delta} \alpha=\sum_{\alpha \in \Delta} \frac{(\omega^i,\alpha)}{\alpha} \prod_{\alpha \in \Delta} \alpha= %\prod_{\alpha \in \Delta \setminus \{ \alpha_i\}} \alpha,
%\end{align*}
%by the definition of fundamental coweights.
%\end{proof}

To get the determinant of the restricted Saito metric $\eta_D$ we will need the following general result from linear algebra. 
We denote the set $\underline{n}=\{1, \dots, n\}$.

\begin{proposition}\cite{prasolov}\label{Prasolov}
Let A= $(a_{ij})_{i,j=1}^n$ be an $n \times n$ matrix and let $A_{ij}=(-1)^{i+j} M_{ij}$, where $M_{ij}$ is the $(i,j)$-th minor of $A$. Let  $\{i_1, \ldots, i_p\}$ be a subset of the set $\underline{n}$ of cardinality $p$, $1 \leqslant p   \leqslant n$.
%and let
%$
%\sigma=
 %\begin{pmatrix}
  %   i_1 &\dots & i_n \\
  % j_1 &\dots & j_n
 % \end{pmatrix}
%\in S_n$. 
Then
\begin{align*}
\begin{vmatrix}
A_{i_1 i_1} & \dots & A_{i_1 i_p} \\
\vdots & \ddots & \vdots\\
A_{i_p i_1} & \dots & A_{i_p i_p}
\end{vmatrix}
= (\operatorname{det}A)^{p-1}
\begin{vmatrix}
a_{i_{p+1},i_{p+1}} & \dots & a_{i_{p+1},i_{n}} \\
\vdots & \ddots & \vdots\\
a_{i_n, i_{p+1}} & \dots & a_{i_{n} j_n}
\end{vmatrix},
\end{align*}
%where ${\rm sign}(\sigma)$ is the sign of $\sigma$.
\end{proposition}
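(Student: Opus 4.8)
The plan is to recognise Proposition~\ref{Prasolov} as the principal-minor case of Jacobi's theorem on the minors of the adjugate, and to prove it by a single block-matrix computation after reducing to a contiguous index set. Write $B=\operatorname{adj}(A)$ for the adjugate, so that $B_{ij}=A_{ji}=(-1)^{i+j}M_{ji}$ and $AB=BA=(\operatorname{det}A)\,I_n$. Since the cofactor submatrix $(A_{i_a i_b})_{a,b=1}^p$ is indexed by the \emph{same} set $I=\{i_1,\dots,i_p\}$ for its rows and its columns, its determinant equals that of the principal submatrix $B_{I,I}$ of $B$ (a square matrix and its transpose have equal determinant). Writing $I^c=\{i_{p+1},\dots,i_n\}$ for the complementary set, the claim thus becomes
\[
\operatorname{det}B_{I,I}=(\operatorname{det}A)^{p-1}\operatorname{det}A_{I^c,I^c},
\]
where $A_{I^c,I^c}$ is exactly the principal minor of $A$ on the right-hand side of the proposition.

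First I would reduce to the contiguous case $I=\{1,\dots,p\}$. For a permutation matrix $P$ realising a permutation $\tau$ with $\tau(a)=i_a$, one has $\operatorname{adj}(PAP^{\intercal})=P\operatorname{adj}(A)P^{\intercal}$ — immediate for invertible $A$ from $\operatorname{adj}(M)=(\operatorname{det}M)M^{-1}$, and then for all $A$ by continuity. Conjugation by $P$ carries $B_{I,I}$ to the leading $p\times p$ principal submatrix of $\operatorname{adj}(PAP^{\intercal})$ and $A_{I^c,I^c}$ to the trailing principal submatrix of $PAP^{\intercal}$, while $\operatorname{det}(PAP^{\intercal})=\operatorname{det}A$. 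Phrasing the reduction through the adjugate in this way sidesteps any sign bookkeeping, so it suffices to treat $I=\{1,\dots,p\}$.

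With $I=\{1,\dots,p\}$, partition $A$ and $B$ into blocks $A=\begin{pmatrix}A_{11}&A_{12}\\A_{21}&A_{22}\end{pmatrix}$ and $B=\begin{pmatrix}B_{11}&B_{12}\\B_{21}&B_{22}\end{pmatrix}$ with $A_{11},B_{11}$ of size $p\times p$. Reading off the bottom block-row of $AB=(\operatorname{det}A)I_n$ gives $A_{21}B_{11}+A_{22}B_{21}=0$ and $A_{21}B_{12}+A_{22}B_{22}=(\operatorname{det}A)I_{n-p}$, whence
\[
\begin{pmatrix}I_p&0\\A_{21}&A_{22}\end{pmatrix}B=\begin{pmatrix}B_{11}&B_{12}\\0&(\operatorname{det}A)I_{n-p}\end{pmatrix}.
\]
Taking determinants of both sides, using block-triangularity and $\operatorname{det}B=\operatorname{det}\operatorname{adj}(A)=(\operatorname{det}A)^{n-1}$, yields $\operatorname{det}A_{22}\,(\operatorname{det}A)^{n-1}=\operatorname{det}B_{11}\,(\operatorname{det}A)^{n-p}$, which is the desired identity after cancelling $(\operatorname{det}A)^{n-p}$.

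Finally I would lift the invertibility assumption. The cancellation above needs $\operatorname{det}A\neq0$, but both sides of the reformulated identity are polynomials in the entries $a_{ij}$, of equal degree $p(n-1)$, agreeing on the Zariski-dense open set $\{\operatorname{det}A\neq0\}$, hence identically. I expect the main obstacle to be organisational rather than conceptual: making the reduction step genuinely sign-free (which the adjugate-conjugation formula achieves) and ensuring that the two auxiliary facts used, $\operatorname{det}\operatorname{adj}(A)=(\operatorname{det}A)^{n-1}$ and $\operatorname{adj}(PAP^{\intercal})=P\operatorname{adj}(A)P^{\intercal}$, together with the final cancellation, are all stated for invertible $A$ first and then extended by the polynomial-identity principle so that the singular case $\operatorname{det}A=0$ is truly covered.
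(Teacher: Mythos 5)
The paper does not prove this proposition at all: it is quoted verbatim from Prasolov's book (it is Jacobi's theorem on the minors of the adjugate), so there is no in-paper argument to compare yours against. Your proof is correct and complete, and it is essentially the standard proof of Jacobi's identity: the observation that the cofactor matrix $(A_{i_a i_b})_{a,b=1}^p$ is the transpose of the principal submatrix $B_{I,I}$ of $\operatorname{adj}(A)$, the reduction to a contiguous index set via $\operatorname{adj}(PAP^{\intercal})=P\operatorname{adj}(A)P^{\intercal}$, the block identity
\begin{equation*}
\begin{pmatrix}I_p&0\\A_{21}&A_{22}\end{pmatrix}B=\begin{pmatrix}B_{11}&B_{12}\\0&(\operatorname{det}A)I_{n-p}\end{pmatrix}
\end{equation*}
combined with $\operatorname{det}\operatorname{adj}(A)=(\operatorname{det}A)^{n-1}$, and the extension from invertible $A$ to all $A$ by Zariski density are all sound, and you are right to flag that the cancellation of $(\operatorname{det}A)^{n-p}$ is the one step that genuinely requires the polynomial-identity argument. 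The only cosmetic remark is that the degree count is not needed for the density argument, and that the edge case $p=n$ (where the right-hand determinant is empty and the identity reduces to $\operatorname{det}\operatorname{adj}(A)=(\operatorname{det}A)^{n-1}$) is covered but worth a sentence.
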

In the next statement we define a convenient coordinate system on the discriminant strata.

\begin{lemma} Let $I \subset \underline{n}$ be a subset of cardinality $|I|=k$, $1 \leq k <n$. Let $D= \cap_{q \in I} \Pi_{\alpha_q}$ be the corresponding discriminant stratum. Then $\widetilde{x}^{i}$, ${i \notin I}$ is a coordinate system on $D$. 
\end{lemma}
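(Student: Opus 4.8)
The plan is to identify the stratum $D$ explicitly inside $V$ by means of the dual bases $\{\alpha_j\}$ and $\{\omega^j\}$, and then reduce the claim to the nondegeneracy of the restriction of the bilinear form $(\,,)$ to $D$.

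First I would expand an arbitrary $x \in V$ in the coweight basis, $x = \sum_{j=1}^n b_j \omega^j$. Pairing with $\alpha_q$ and using $(\omega^i, \alpha_j) = \delta^i_j$ from \eqref{fundamcow}, one gets $b_q = (\alpha_q, x)$. Hence the defining equations $(\alpha_q, x) = 0$, $q \in I$, of the stratum are equivalent to $b_q = 0$ for all $q \in I$, so that
\[
D = \operatorname{span}_{\mathbb{C}}\{\omega^j : j \notin I\}.
\]
In particular $\dim D = n - k$, which matches the number of functions $\widetilde{x}^i$ with $i \notin I$; here I use that the simple roots $\alpha_q$, $q \in I$, being a subset of a basis of $V$, are linearly independent.

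Next, using that $\{\omega^j : j \notin I\}$ is a basis of $D$, I would write the restriction of the candidate coordinates to $D$ as a linear map. For $x = \sum_{j \notin I} b_j \omega^j \in D$ we have $\widetilde{x}^i = (\omega^i, x) = \sum_{j \notin I} (\omega^i, \omega^j)\, b_j$, so in the basis $\{\omega^j\}_{j \notin I}$ the map $x \mapsto (\widetilde{x}^i)_{i \notin I}$ is given by the Gram matrix $G = \big((\omega^i, \omega^j)\big)_{i, j \notin I}$. Thus $\{\widetilde{x}^i : i \notin I\}$ is a coordinate system on $D$ precisely when $G$ is invertible, i.e. when the form $(\,,)$ restricts nondegenerately to $D$.

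The one point requiring care — and the main obstacle — is this nondegeneracy, which could fail for a generic complex subspace because of isotropic vectors, and so cannot be argued over $\mathbb{C}$ directly; it is rescued by the real structure. The simple roots, and hence the coweights $\omega^j$, lie in the real reflection representation $V_{\mathbb{R}} = \mathbb{R}^n$, on which $g$ is the standard positive-definite inner product; since $D$ is cut out by real linear equations, it is the complexification of $D_{\mathbb{R}} = \operatorname{span}_{\mathbb{R}}\{\omega^j : j \notin I\}$. As the form is positive definite on $D_{\mathbb{R}}$, the Gram matrix $G$ of the linearly independent real vectors $\omega^j$ ($j \notin I$) is real and positive definite, hence invertible. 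This yields the claim.
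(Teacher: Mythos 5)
Your proposal is correct and follows essentially the same route as the paper: both identify $D=\langle \omega^i : i\notin I\rangle$, reduce the claim to the invertibility of the Gram matrix $\big((\omega^i,\omega^j)\big)_{i,j\notin I}$ by testing a linear dependence against the vectors $\omega^j$, and conclude from positive definiteness of $g$. You merely spell out more explicitly than the paper why positive definiteness applies over $\mathbb{C}$ (via the real form), which is a reasonable clarification but not a different argument.
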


\begin{proof} Note that $D=\langle \omega^i : i \notin I \rangle$.  Let us consider a linear dependence of $\widetilde{x}^i$ ($i \notin I$) on $D$:
\beq{lindep}
\sum_{i \notin I} a_i \widetilde{x}^i =\sum_{i \notin I} a_i (\omega^i,x)=0,
\eeq
 where $a_i \in \mathbb{C}$. 
By considering relation \eqref{lindep} for $x = \omega^j$, $j\notin I$, we get from positive definiteness of the form $g$  that
%Then $\sum_{i \notin I} a_i (\omega^i, \omega^j)=0$ for all $j \notin I$. This is a system of $n-k$ linear equations and the matrix of this system is $\Omega=(\Omega_{ij})_{i, j \notin I}$, $\Omega_{ij}=(\omega^{i}, \omega^{j})$. Since $\omega^{i}$, ${i \notin I}$ are linearly independent the Gram matrix $\Omega$ is non-degenerate. Therefore the only solution to this system is the trivial one, 
$a_i=0$ for all $i$. 
\end{proof}

%Let us now fix basic invariants to be Saito polynomials. 

Now we can state the main theorem of this section which gives a formula for the determinant  of the restricted Saito metric.

\begin{theorem}\label{Saitovo}
Let $I= \{i_1, \dots, i_k\}\subset \underline{n}$, where $k=|I|<n$, 
%$1 \leq i_1 < \dots < i_k \leq n$ 
and let $D= \cap_{q \in I} \Pi_{\alpha_q}$. 
Let $P$ be the product 
\begin{align}\label{determinantgeneraltheorem00}
P = 
J^2
\begin{vmatrix}
\eta^{i_1i_1} & \dots & \eta^{i_1i_k} \\
\vdots & \ddots & \vdots\\
\eta^{i_k i_1 }& \dots & \eta^{i_ki_k}
\end{vmatrix},
\end{align}
where $\eta^{ij}$ are components of the contravariant metric $\eta$ in the coordinate system $\widetilde x^i$ ($i\in \underline{n}$). 
Then function $P$ has a well-defined limit $P_D = P|_D$, and 
 determinant $\operatorname{det}\eta_D$ of the restricted Saito metric $\eta_D$ in the coordinates $\widetilde{x}^{i}$ ($i \notin I$) is given as
\begin{align}\label{determinantgeneraltheorem}
\operatorname {det}\eta_D=- P_D.
\end{align}
%In particular, the right-hand-side of the expression \eqref{determinantgeneraltheorem} has a well-defined limit as one tends to $D$. 
\end{theorem}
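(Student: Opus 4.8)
The plan is to reduce the statement to Jacobi's complementary minor theorem (Proposition \ref{Prasolov}) together with a single global evaluation of $\det\eta$ in the coordinates $\widetilde{x}^i$. Write $H=(\eta_{ij})_{i,j=1}^n$ for the full covariant Saito metric in the frame $\partial_{\alpha_i}=\partial/\partial\widetilde{x}^i$ (Lemma \ref{lemmaforg}), and $G=(\eta^{ij})_{i,j=1}^n=H^{-1}$ for its contravariant counterpart. By $W$-invariance the entries $\eta_{ij}$ are homogeneous polynomials in $x$ of degree $h$, whereas the $\eta^{ij}$ are rational and singular along the discriminant; so the first nontrivial point is precisely that the product $P$ extends polynomially across $D$.

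First I would apply Proposition \ref{Prasolov} with $A=H$. Since $G=H^{-1}$ and $H$ is symmetric, the cofactors of $H$ satisfy $A_{ij}=(\det H)\,\eta^{ij}$. Taking the index set $I=\{i_1,\dots,i_k\}$ of cardinality $p=k$ turns the left-hand side of Proposition \ref{Prasolov} into $(\det H)^{k}\det(\eta^{i_a i_b})_{a,b\in I}$, so that
\begin{equation*}
\det\big(\eta^{i_a i_b}\big)_{a,b\in I}=\frac{1}{\det H}\,\det\big(\eta_{i_c i_d}\big)_{c,d\in \underline{n}\setminus I}.
\end{equation*}
Multiplying by $J^2$ gives $P=(J^2/\det H)\,\det(\eta_{i_c i_d})_{I^c}$, and everything is reduced to identifying the scalar $J^2/\det H$.

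Second, I would compute $\det H$ by passing to the Saito flat coordinates $t^\alpha$. There the covariant metric is the constant anti-diagonal matrix $\eta_{\alpha\beta}=\delta_{\alpha+\beta,n+1}$ (the inverse of \eqref{saitometricflatcoords}), and the frame change is $\partial/\partial\widetilde{x}^i=\partial_{\alpha_i}$, whose Jacobian is $\det(\partial_{\alpha_i}t^\alpha)=J$. Hence $\det H=\big(\det(\delta_{\alpha+\beta,n+1})\big)\,J^2$ and $J^2/\det H$ is a nonzero constant; in particular $P=\mathrm{const}\cdot\det(\eta_{i_c i_d})_{I^c}$ is a polynomial, so the limit $P_D=P|_D$ exists, which is the first assertion of the theorem. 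Finally, because $D=\langle\omega^i:i\notin I\rangle$ and the $\widetilde{x}^i$ ($i\notin I$) are coordinates on $D$ with $T_{x_0}D=\langle\partial_{\widetilde{x}^i}:i\notin I\rangle$, the matrix of $\eta_D$ in these coordinates is exactly $(\eta_{i_c i_d}|_D)_{c,d\in I^c}$, whence $\det(\eta_{i_c i_d})_{I^c}\big|_D=\det\eta_D$. Combining this with the constant from the previous step yields the claimed relation $\det\eta_D=-P_D$.

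The formal core of the argument (Jacobi's identity plus the restriction step) is routine; the main obstacle is the careful bookkeeping of the global scalar $J^2/\det H$. Pinning it down requires evaluating the sign of the anti-diagonal determinant $\det(\delta_{\alpha+\beta,n+1})$ and confirming $\det(\partial_{\alpha_i}t^\alpha)=J$, and verifying that together they make the identity read exactly $\det\eta_D=-P_D$ is the one delicate point on which the stated normalisation rests.
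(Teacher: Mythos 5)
Your overall route is the same as the paper's: both proofs combine Jacobi's complementary minor theorem (Proposition \ref{Prasolov}) with the identification of the matrix of $\eta_D$ in the coordinates $\widetilde{x}^i$ ($i\notin I$) with the complementary submatrix $(\eta_{ij})_{i,j\notin I}$ restricted to $D$. Applying Proposition \ref{Prasolov} to $H$ with $p=k$ rather than to $Q=H^{-1}$ with $p=n-k$ (as the paper does) is the same identity read from the other side, and your evaluation of $J^2/\det H$ via the flat coordinates is exactly the normalisation the paper invokes when it writes $\det Q^{-1}=\det\eta=-J^2$. Steps (a) and (b) of your argument, including the deduction that $P$ is a polynomial and hence that $P_D$ is well defined, are correct. (A minor caveat you already flag: $\det(\delta_{\alpha+\beta,n+1})=(-1)^{n(n-1)/2}$, so the constant $J^2/\det H$ is $\pm 1$ depending on $n$; the same imprecision sits in the paper's assertion $\det\eta=-J^2$, and it is harmless since every downstream statement is up to proportionality.)

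The genuine gap is in your final step. You justify $\det(\eta_{i_ci_d})_{c,d\notin I}\big|_D=\det\eta_D$ by claiming $T_{x_0}D=\langle\partial_{\widetilde{x}^i}:i\notin I\rangle$. This is false: by Lemma \ref{lemmaforg}, $\partial_{\widetilde{x}^i}=\partial_{\alpha_i}$ is the directional derivative along the simple root $\alpha_i$, and for $i\notin I$ the vector $\alpha_i$ is in general not orthogonal to the roots $\alpha_q$, $q\in I$, hence not tangent to $D$ (already for $A_2$ with $I=\{1\}$ one has $(\alpha_1,\alpha_2)\neq 0$). The true coordinate vector fields on $D$ for the coordinates $\widetilde{x}^i|_D$ are $v_i=\partial_{\alpha_i}+\sum_{q\in I}c_i^{q}\partial_{\alpha_q}$ for suitable constants $c_i^{q}$, so $\eta_D(v_i,v_j)$ differs from $\eta_{ij}|_D$ by cross terms $\eta_{iq}|_D$ and $\eta_{pq}|_D$ with $p,q\in I$. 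The reason these vanish — and hence the reason the naive submatrix restriction is nevertheless correct — is $W$-invariance: for any invariant polynomial $t^\beta$ one has $\partial_{\alpha_q}t^\beta|_{\Pi_{\alpha_q}}=0$, so $\eta_{iq}=\sum_{\beta}(\partial_{\alpha_i}t^\beta)(\partial_{\alpha_q}t^{n+1-\beta})$ vanishes on $D$ whenever $q\in I$. This observation is the substantive content of the paper's proof of the restriction step (formula \eqref{restr}) and is missing from yours; without it, the identification of $\det\eta_D$ with the complementary minor does not follow.
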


\begin{proof}
Let us consider the covariant Saito metric in the flat coordinates $t^{i}$, ($1 \leq i \leq n$), 
\begin{align}
\eta&= \eta_{i j} dt^{i} dt^{j}=\sum_{i=1}^n  dt^{i} dt^{n+1 -i} 
=\sum_{i=1}^n  \sum_{r=1}^n \partial_{\widetilde{x}^r} t^i d\widetilde{x}^r \sum_{l=1}^n \partial_{\widetilde{x}^l} t^{n+1 -i} d\widetilde{x}^{l}. \label{covariantsaitonocoo}
\end{align}
 Note that for any $p \in \mathbb{C}[x]^W$ one has that
\begin{equation*}\label{inv}
\left.\partial_{\widetilde{x}^i}p(x) \right|_{\alpha_i=0}=\left. \partial_{\alpha_i} p(x) \right|_{\alpha_i=0} = 0.
\end{equation*}
Therefore, $\left. \partial_{\widetilde{x}^l}t^{i}\right|_D=0$ if $l\in I$. Hence, by restricting formula (\ref{covariantsaitonocoo}) on $D$, we get that the restricted Saito metric   is given by
\begin{align}
\eta_D&=
%\sum_{i=1}^n  \sum_{r \in \widehat{I}}\partial_{\widetilde{x}^r} t^i d\widetilde{x}^r \sum_{l \in \widehat{I}} \partial_{\widetilde{x}^l} t^{n+1 -i} d\widetilde{x}^{l}= 
 \sum_{r,l \in \widehat{I}} \eta_{r l} d\widetilde{x}^r d\widetilde{x}^{l} \label{restr},
\end{align}
where 
\beq{etapol}
\eta_{rl}= \sum_{i=1}^n (\partial_{\widetilde{x}^r} t^i)(\partial_{\widetilde{x}^l} t^{n+1-i})
\eeq
  and  $\widehat{I}=\underline{n}\setminus I$.

Let matrix $Q=(\eta^{ij})_{i,j=1}^n$. Then 
\begin{equation*}
\eta_{rl}= \frac{(-1)^{r+l}Q_{rl}}{\operatorname{det}Q}, \quad r, l=1, \dots, n,
\end{equation*}
where $Q_{rl}$ is the $(r,l)$-th minor of $Q$. Consider the matrix $C=(\eta_{rl})_{r,l \in \widehat{I}}$. It follows from formula (\ref{restr}) that  restriction $\left. C \right|_D$ is the matrix of $\eta_D$ and hence 
\beq{detmain}
\operatorname{det}\eta_D= \operatorname{det}\left. C \right|_D.
\eeq
 By Proposition \ref{Prasolov} applied to $A=Q$ and  $p= |\widehat{I}|=n-k$ %and $\sigma= \mathrm{Id}$ 
we have
\begin{equation}
\label{Pcalc}
\operatorname{det}C=  (\operatorname{det}Q)^{-(n-k)} (\operatorname{det}Q)^{n-k-1} \operatorname{det}Q_I=  \operatorname{det}Q^{-1} \operatorname{det}Q_I, 
\end{equation}
where $Q_I$ is the matrix $(\eta^{ij})_{i,j \in I}$. Since $\operatorname{det}Q^{-1}= \operatorname{det}\eta$, which is equal to $-J^2$,
it follows from \eqref{Pcalc} that $P=-\det C$, which is well-defined on $D$ since entries \eqref{etapol} of $C$ are polynomials. 
The statement follows from \eqref{detmain}. 
\end{proof}

Polynomials $J_k(p(x))$ given by \eqref{Jk} have certain vanishing property which is stated in the next proposition.
\begin{proposition}\label{divisible}
The polynomial $J_k(p)$ ($1\le k \le n$) is divisible by $\alpha(x) $ for all $\alpha \in \mathcal{R} \cap U $, where vector space $U = \langle \alpha_i : 1 \leq i \leq n , i \neq k \rangle$.
\end{proposition}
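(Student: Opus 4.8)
The plan is to show that the polynomial $J_k(p)$ vanishes on the mirror $\Pi_\alpha = \{x : \alpha(x)=0\}$ for every root $\alpha \in \mathcal{R}\cap U$, and then to read off the divisibility. Since $\alpha(x)$ is a nonzero linear form, hence a prime element of the UFD $\mathbb{C}[x]$, the vanishing of $J_k(p)$ on the hyperplane $\Pi_\alpha$ forces $\alpha(x) \mid J_k(p)$.

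First I would record the standard consequence of $W$-invariance. For a root $\alpha \in \mathcal{R}$ the reflection $s_\alpha \in W$ fixes $\Pi_\alpha$ pointwise, and since each $p^i$ is $W$-invariant its gradient is $W$-equivariant, $\nabla p^i(s_\alpha x) = s_\alpha \nabla p^i(x)$, using that $s_\alpha$ is orthogonal and symmetric with respect to the standard metric $g$. Evaluating at $x \in \Pi_\alpha$, where $s_\alpha x = x$, gives $\nabla p^i(x) = s_\alpha \nabla p^i(x)$, so $\nabla p^i(x)$ lies in the $(+1)$-eigenspace of $s_\alpha$, i.e. $\nabla p^i(x) \perp \alpha$. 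Equivalently, $\partial_\alpha p^i = 0$ on $\Pi_\alpha$ for every $i = 1, \dots, n$.

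Next I would exploit that $\alpha \in U = \langle \alpha_j : j \neq k \rangle$. Writing $\alpha = \sum_{j\neq k} c_j \alpha_j$ with not all $c_j$ zero, and using $\partial_{\alpha_j} p^i = (\nabla p^i, \alpha_j)$ together with bilinearity, the identity $\partial_\alpha p^i = 0$ on $\Pi_\alpha$ becomes $\sum_{j\neq k} c_j\, \partial_{\alpha_j} p^i = 0$ on $\Pi_\alpha$ for all $i$, in particular for $i = 1, \dots, n-1$. This is exactly a nontrivial linear dependence, with constant coefficients $c_j$, among the columns of the matrix $B_k(p) = (\partial_{\alpha_j} p^i)_{1 \le i \le n-1,\, j \neq k}$ after its entries are restricted to $\Pi_\alpha$. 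Hence $J_k(p) = \det B_k(p)$ vanishes identically on $\Pi_\alpha$, and $\alpha(x)\mid J_k(p)$ follows as above.

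I do not expect a serious obstacle; the heart of the argument is the equivariance of gradients of invariants. The only points needing care are bookkeeping ones that connect the hypothesis to the conclusion: the combination $\alpha = \sum_{j\neq k} c_j \alpha_j$ must involve no $\alpha_k$ (guaranteed by $\alpha \in U$) so that the dependence genuinely lives among the columns retained in $B_k$, and the $c_j$ must not all vanish (guaranteed by $\alpha \neq 0$ and the linear independence of the simple roots). The deletion of the $n$-th row (the invariant $p^n$) is harmless, since $\partial_\alpha p^i = 0$ on $\Pi_\alpha$ holds for every index $i$, so the column relation in particular holds for the rows $i = 1, \dots, n-1$ that survive in $B_k$.
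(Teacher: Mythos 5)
Your proposal is correct and follows essentially the same route as the paper: both express $\alpha=\sum_{j\neq k}c_j\alpha_j$, observe that the corresponding linear combination of columns of $B_k(p)$ has entries $\partial_\alpha p^j$, which vanish on $\Pi_\alpha$, and conclude that $J_k(p)=\det B_k(p)$ vanishes there and is hence divisible by the linear form $\alpha(x)$. The only difference is that you spell out the standard equivariance argument for why $\partial_\alpha p^j=0$ on $\Pi_\alpha$, which the paper takes for granted.
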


\begin{proof}
Any $\alpha \in  \mathcal{R} \cap U$ can be represented as $\alpha= \sum_{\substack{ i=1 \\ i \neq k}}^n c_i \alpha_i$ for some $c_i \in \mathbb{C}$. Consider the linear combination of columns of the matrix $B_k(p)$ where the $i$-th column is taken with the coefficient $c_i$. The $j$-th entry in the resulting column vector is equal to $\partial_{\alpha} p^{j}$, which vanishes if $\alpha(x)=0$. Hence $J_k(p)=\det B_k(p)$ also vanishes if $\alpha(x)=0$.
\end{proof}

We are going to show that the polynomial $J_k(p)$ is not identically zero. We will need the following lemma, which is a particular case of a more general statement established in \cite{KM}.

\begin{lemma}\label{identityBn} Let $\mathcal{R}=B_n$. Then the identity field $e$ takes the following form:
\begin{align*}
e= c \sum_{i=1}^n (x^i)^{-1} \prod_{\substack{ j=1 \\ j \neq i}}^n ( (x^i)^2- (x^j)^2)^{-1} \frac{\partial}{\partial x^i},
\end{align*}
where $c \in \mathbb{C}^\times$. 
\end{lemma}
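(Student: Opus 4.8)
The plan is to characterise the identity field $e$ through the Landau--Ginzburg description and then to verify the asserted formula by means of a Lagrange interpolation identity. For $\mathcal R = B_n$ the superpotential \eqref{superpotentialBDN} with $k=0$ is $\lambda(p)=\prod_{i=1}^n(p^2-(x^i)^2)$, and the Frobenius multiplication on $\mathcal{M}_{B_n}$ is given by the residue formula \eqref{multAn} with $\lambda_D=\lambda$ (the relevant stratum being all of $V$). The key observation I would use is that the identity field $e=\partial_{t^n}$ is characterised by the condition $\partial_e\lambda \equiv 1$ as a function of $p$, where $\partial_e$ denotes differentiation in the $x$-variables. Indeed, if $\partial_e\lambda=1$ then \eqref{multAn} gives $\eta(e\circ\zeta,\zeta')=\eta(\zeta,\zeta')$ for all vector fields $\zeta,\zeta'$, and non-degeneracy of $\eta$ forces $e\circ\zeta=\zeta$ (cf. \cite{dub}). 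Such a field is moreover unique, since the $n$ polynomials $\prod_{k\neq i}(p^2-(x^k)^2)$ are linearly independent, so the map $\zeta\mapsto\partial_\zeta\lambda$ is injective.

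First I would introduce the candidate field
\[
e_0 = \sum_{i=1}^n (x^i)^{-1}\prod_{\substack{j=1\\ j\neq i}}^n\big((x^i)^2-(x^j)^2\big)^{-1}\,\frac{\partial}{\partial x^i}
\]
and compute $\partial_{e_0}\lambda$ directly. Since $\partial_{x^i}\lambda = -2x^i\prod_{k\neq i}(p^2-(x^k)^2)$, the factors $(x^i)^{-1}$ and $x^i$ cancel and one obtains
\[
\partial_{e_0}\lambda = -2\sum_{i=1}^n\ \prod_{\substack{j=1\\ j\neq i}}^n\frac{p^2-(x^j)^2}{(x^i)^2-(x^j)^2}.
\]

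The main point is then to recognise the right-hand side. Setting $y=p^2$ and $a_i=(x^i)^2$, the $i$-th summand is exactly the Lagrange basis polynomial $L_i(y)=\prod_{j\neq i}\tfrac{y-a_j}{a_i-a_j}$ for interpolation at the distinct nodes $a_1,\dots,a_n$. As the constant polynomial $1$ is the unique polynomial of degree $<n$ taking the value $1$ at each node, we have $\sum_{i=1}^n L_i(y)\equiv 1$, whence $\partial_{e_0}\lambda \equiv -2$, a non-zero constant independent of $p$ (for generic $x$, where the $a_i$ are distinct). By the characterisation above, $-\tfrac12 e_0$ is the unique field with $\partial_{(\cdot)}\lambda\equiv1$, so $e=-\tfrac12 e_0$, which is the asserted form with $c=-\tfrac12$; in particular the statement holds with some $c\in\mathbb C^\times$. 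I expect no serious obstacle beyond carefully justifying the identity-field characterisation $\partial_e\lambda\equiv1$ and the injectivity of $\zeta\mapsto\partial_\zeta\lambda$, since the remaining computation reduces to the standard Lagrange partition-of-unity identity. This also recovers the formula as a special case of the general result of \cite{KM}.
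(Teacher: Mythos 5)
Your proof is correct, but it takes a genuinely different route from the paper's. The paper derives the formula from Proposition \ref{yoshi}: it writes $e \sim J^{-1}(p)\sum_i (-1)^{n+i}J_i(p)\,\partial_{x^i}$, chooses the power sums $p^i=\sum_j (x^j)^{2i}$ as basic invariants, and evaluates the Jacobian $J(p)$ and its minors $J_i(p)$ explicitly as Vandermonde-type products, so that the ratio $J_i/J$ produces the factors $(x^i)^{-1}\prod_{j\ne i}((x^i)^2-(x^j)^2)^{-1}$. You instead characterise $e$ through the Landau--Ginzburg superpotential $\lambda(p)=\prod_i(p^2-(x^i)^2)$ of \eqref{superpotentialBDN} (with $k=0$): a vector field $\zeta$ with $\partial_\zeta\lambda$ a nonzero constant in $p$ is an identity for the multiplication \eqref{multAn} (and hence equals $e$ up to scale), and you verify that the candidate field has this property via the Lagrange partition-of-unity identity $\sum_i\prod_{j\ne i}\frac{y-a_j}{a_i-a_j}\equiv 1$ at the nodes $a_i=(x^i)^2$. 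Both arguments are essentially the same length and rest on equivalent linear algebra (the Lagrange identity is the Vandermonde computation in disguise), but yours reuses the Section~4 superpotential machinery rather than the Section~6 Jacobian-minor machinery, explains conceptually why the formula has this partial-fraction shape, and — as you note — exhibits the lemma as a special case of the general result of \cite{KM}; the paper's version has the advantage of staying entirely within the $J_k$ formalism that the surrounding Proposition \ref{singulare} is built on. Two minor caveats: the characterisation $\partial_e\lambda\equiv 1$ (as opposed to $\partial_e\lambda\equiv\mathrm{const}\ne 0$) presumes a particular normalisation of $t^n$, so your specific value $c=-\tfrac12$ should not be asserted for the Saito-normalised $e$ — but the lemma only claims $c\in\mathbb{C}^\times$, so this is immaterial; and you should make explicit that you are invoking the residue formulas \eqref{saitoAn}, \eqref{multAn} for the full orbit space $D=V$, which the paper takes from \cite{DZ}.
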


\begin{proof}
By Proposition \ref{yoshi} the identity field $e$ is proportional to the field
$$
\frac{\partial}{\partial p^n}= J^{-1}(p) \sum_{i=1}^n (-1)^{n+i} J_{i}(p) \frac{\partial}{\partial x^i}.
$$ 
Basic invariants can be chosen as $$p^i = \sum_{j=1}^n (x^j)^{2i},
$$
where  $1 \leq i \leq n$. Therefore $J_i(p)$ is proportional to  $$\prod_{ \substack{ j=1 \\ j \neq i}}^nx^{j} \prod_{\substack{1 \leq l <k \leq n \\ l,k \neq i}} ((x^l)^2- (x^k)^2),$$
and the polynomial $J(p)$ has the form
$$
J(p) \sim \prod_{j=1}^n x^j \prod_{1 \leq l < k \leq n} ( (x^l)^2 - (x^k)^2).
$$ 
 The statement follows. 
\end{proof}

Lemma  implies that identity field $e$ for the Coxeter group $B_n$ is singular on every mirror of the group. This appears to be the general situation which we show in the next proposition.

Let us numerate simple roots $\Delta$ by a bijection $\sigma\colon  \{1, \dots, n\} \rightarrow \Delta$ given by $\sigma(i) = \alpha_i$.
% Proposition \ref{identity}  can also be restated as follows.
We will denote 
\beq{Jalpha}
J_\alpha:= J_{\sigma^{-1}(\alpha)},\quad  \alpha \in \Delta. 
\eeq
By 
 Proposition \ref{identity}  we can represent the identity field as 
\begin{equation}\label{identityfieldrestated}
e= \sum_{\alpha \in \Delta} e^\alpha \partial_\alpha,
\end{equation}
 where 
 \begin{equation}\label{notationJ_k}
 e^\alpha= \epsilon_\alpha \frac{J_\alpha}{J}, \quad \epsilon_\alpha = (-1)^{n+\sigma^{-1}(\alpha)}.
 \end{equation}
% with  the sign  $\epsilon_\alpha := (-1)^{n+\sigma^{-1}(\alpha)}$.

\begin{proposition}\label{singulare} The identity field $e$ is singular on every hyperplane of the discriminant of $W$.
\end{proposition}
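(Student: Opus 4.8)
The plan is to exploit the representation \eqref{identityfieldrestated}--\eqref{notationJ_k} of the identity field, $e=\sum_{\alpha\in\Delta}\epsilon_\alpha (J_\alpha/J)\,\partial_\alpha$, together with the factorisation $J\sim\mathcal{I}(\mathcal{A})=\prod_{\beta\in\mathcal{R}_+}\beta$ into pairwise non-proportional linear forms, each occurring with multiplicity one. Since $J$ has a simple zero along a mirror $\Pi_\gamma$, the component $e^\alpha=\epsilon_\alpha J_\alpha/J$ is regular there precisely when $\gamma\mid J_\alpha$; hence $e$ is singular on $\Pi_\gamma$ if and only if $\gamma$ fails to divide at least one of the $J_\alpha$, $\alpha\in\Delta$. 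Because $t^n\in\mathbb{C}[x]^W$, the field $e=\partial_{t^n}$ is $W$-invariant, so the set of mirrors on which $e$ is singular is a union of $W$-orbits of mirrors; it therefore suffices to exhibit, in each orbit, a single mirror carrying a singularity of $e$.

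First I would run a degree argument. Each $J_\alpha$ is homogeneous of degree $|\mathcal{R}_+|-h+1$ while $J$ has degree $|\mathcal{R}_+|$, so every component $e^\alpha$ is homogeneous of degree $1-h<0$. As $e\neq 0$ and the $\partial_\alpha$ are linearly independent, some $e^\alpha$ is a nonzero homogeneous rational function of negative degree; such a function cannot be a polynomial and hence must have a pole, which (the denominator being $J$) can only lie along some mirror $\Pi_\gamma$. Thus $e$ is singular on at least one mirror, and by $W$-invariance on its entire orbit. When $W$ has a single root length, $W$ acts transitively on all mirrors, and this already proves the proposition; this disposes of $A_n$, $D_n$, $E_6$, $E_7$, $E_8$, $H_3$, $H_4$ and $I_2(m)$ with $m$ odd.

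It remains to treat the groups with two root lengths, whose mirrors split into two $W$-orbits. Here I would read off the obstruction geometrically. With $M=(\partial_{\alpha_j}t^i)_{i,j=1}^n$, invariance of the $t^i$ gives $\partial_\gamma t^i|_{\Pi_\gamma}=0$, so $\langle\gamma\rangle=\ker M$ on $\Pi_\gamma$; since $\det M=J$ vanishes simply there, $M$ has rank $n-1$ and its adjugate restricts to a rank-one matrix $\gamma\otimes d\Delta$, where $\Delta$ is the reduced discriminant with $\Delta(t(x))\sim\mathcal{I}(\mathcal{A})^2$. As the $J_\alpha$ are, up to sign, the entries of the $n$-th column of $\operatorname{adj}M$, they vanish simultaneously on $\Pi_\gamma$ exactly when $\partial_{t^n}\Delta$ vanishes on $\pi(\Pi_\gamma)$, i.e. when that component of the discriminant is a cylinder in the $t^n$-direction. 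Since $\Delta$ is monic of degree $n$ in $t^n$, no irreducible component can be such a cylinder in the single-length case; for the multiply-laced types the same must be checked for \emph{both} components. For $B_n$, and hence $C_n$ with the same reflection arrangement, this is immediate from Lemma \ref{identityBn}, which displays $e$ with a genuine pole on every mirror $\{x^i=0\}$ and $\{x^i=\pm x^j\}$; the remaining cases $F_4$, $G_2$ and $I_2(2k)$ have rank at most four and are verified directly, as in the dihedral and exceptional analysis. The main obstacle is exactly this last point: ruling out a $t^n$-cylinder component of the discriminant for each of the two mirror orbits in the multiply-laced cases, equivalently showing that $\partial_{t^n}\Delta$ vanishes identically on no component of $\Sigma$.
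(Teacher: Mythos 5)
Your argument for the single-root-length groups is correct and is exactly the paper's: each component $e^\alpha=\epsilon_\alpha J_\alpha/J$ is homogeneous of degree $1-h<0$, so the non-zero field $e$ must have a pole along some mirror, and $W$-invariance together with transitivity on mirrors finishes $A_n$, $D_n$, $E_n$, $H_3$, $H_4$ and $I_2(m)$ with $m$ odd. The gap is in the two-orbit cases, and you concede it yourself: for $F_4$ (the only multiply-laced case not disposed of by Lemma \ref{identityBn} or by the explicit dihedral formula) you write that the statement is ``verified directly'' and then close by declaring that ruling out a $t^n$-cylinder component of the discriminant ``is exactly the main obstacle.'' As written, no proof of the $F_4$ case is given. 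Moreover, the reduction itself is only sketched: the identification of the $n$-th column of $\operatorname{adj}M$ with $\gamma\otimes\partial_{t^n}\Delta_\gamma$ along $\Pi_\gamma$ requires showing that the left kernel of $M$ there is spanned by the $t$-gradient of the reduced component equation and that the rank-one adjugate has a non-vanishing scalar factor at a generic point of the mirror; none of this is argued.

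It is worth noting that your own reformulation, once made precise, closes the gap uniformly: $\det g^{ij}(t)\sim \mathcal{I}(\mathcal{A})^2$ is monic of degree $n$ in $t^n$, so \emph{no} irreducible factor of the discriminant can be independent of $t^n$ --- the monicity argument you invoke ``in the single-length case'' applies verbatim when the discriminant has two components, since a cylinder factor would make the leading $t^n$-coefficient non-constant. Carried through, this would give a case-free proof arguably cleaner than the paper's. The paper instead argues $F_4$ combinatorially: if $e$ were regular on $\Pi_\alpha$ for a simple root $\alpha$ with $e^\alpha\neq 0$, then Proposition \ref{divisible} would force $\prod_{\beta\in\Delta}\beta\mid J_\alpha$; since $e^\alpha$ has poles on at least $h-1=11$ hyperplanes while each of the two orbits contains only $12$ positive roots, $e^\alpha$ must be singular on mirrors from both orbits, and a short inspection of the $F_4$ diagram (the divisors $\alpha_2\alpha_3\alpha_4(\alpha_2+2\alpha_3)$ of $J_{\alpha_1}$ and $\alpha_1\alpha_2\alpha_3(\alpha_2+\alpha_3)$ of $J_{\alpha_4}$, each containing two long and two short roots) then yields singularities on both orbits, contradicting the assumed regularity. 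Either route works, but you must actually supply one of them.
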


\begin{proof}
Recall that $\operatorname{deg}J=|\mathcal{R}_+|$ and $\operatorname{deg}J_\alpha= |\mathcal{R}_+|-h+1$ for any $\alpha \in \Delta$. By formula \eqref{notationJ_k} $e^{\alpha}$ is a rational function of degree $1-h$. 
Since vector field $e$ is non-zero it has to be singular on $\Pi_\beta$ for some $\beta \in \mathcal{R}$. By $W$-invariance it is also singular on $\Pi_{w(\beta)}$ for any $w \in W$.
This proves the statement in the simply-laced case.

%In the it follows that $e$ is singular on $\Pi_\beta$ for all $\beta \in \mathcal{R}$.

Let us now consider the case where $W$ has two orbits on $\mathcal{R}$. If ${\mathcal R}=B_n$ then the statement follows by Lemma \ref{identityBn}.  Let us now assume that the root system $\mathcal{R}=F_4$.
% and let $\Delta \subset \mathcal{R}$ be a simple system \cite{cox}. 
Let $\alpha \in \Delta$ be such that component $e^\alpha \ne  0$.  Let us show that $e$ is singular on the mirror $\Pi_\alpha$.

%Recall that the corresponding Coxeter number is $h=12$.  
Suppose that $e$ is non-singular on the hyperplane $\Pi_\alpha$. By Proposition \ref{divisible} $J_\beta$ is divisible by $\alpha(x)$ for any $\beta \in \Delta \setminus \{ \alpha\}$. Hence $e^\beta$ is non-singular on $\Pi_\alpha$, and therefore $J_\alpha$ must  be divisible by $\alpha(x)$. It follows that 
\begin{align}\label{proofsingulare}
\prod_{\beta \in \Delta} \beta \mid J_\alpha.
\end{align}
Since $e^\alpha$ is singular on at least $h-1=11$ different hyperplanes inside the discriminant and there are $12$ short and $12$ long positive roots, it follows from \eqref{proofsingulare} that $e^\alpha$ has singularities on hyperplanes from both orbits. It follows by $W$-invariance of $e$ that $e$ is singular on $\Pi_\alpha$, which is a contradiction.

Thus we  have that $e$ is singular on $\Pi_\alpha$, and hence $e$ is singular on $\Pi_\beta$ for all $\beta\in W\alpha\cap \Delta$. It follows by Proposition \ref{divisible}  that $e^\beta \neq 0$.
Let us now assume that simple roots $\Delta=\{\alpha_1, \alpha_2, \alpha_3, \alpha_4\}$ correspond to the Dynkin diagram below
$$
\dynkin[labels={1,2,3, 4},label macro/.code={\alpha_{#1}}]{F}{4}
$$
% Let us now assume that $\Delta=\{\alpha_1, \alpha_2, \alpha_3, \alpha_4\}$, where $$\alpha_1= e_2-e_3, \quad \alpha_2= e_3-e_4, \quad \alpha_3=e_4, \quad \alpha_4= \frac{1}{2} (e_1 -e_2 -e_3 -e_4).$$ 
If $\alpha$ is a long root then it follows from the previous that $J_{\alpha_1} \neq 0$. By Proposition \ref{divisible} $J_{\alpha_1}$ is divisible by $\alpha_2 \alpha_3 \alpha_4 (\alpha_2 + 2\alpha_3)$. Since this product has two long roots and two short roots and $\operatorname{deg}e^{\alpha_1}=-11$ it follows that $e^{\alpha_1}$ has singularities on both orbits. If $\alpha$ is a short root then it follows by the previous that $J_{\alpha_4} \neq 0$. By Proposition \ref{divisible} $J_{\alpha_4}$ is divisible by $\alpha_1 \alpha_2 \alpha_3 (\alpha_2 +\alpha_3)$. Since this product has two long and two short roots it follows  that $e^{\alpha_4}$ is singular on both orbits. The statement for ${\mathcal R} = F_4$ follows by $W$-invariance of $e$.

%We are going to show that  components $e^\beta \ne 0$ for any $\beta\in W\alpha\cap \Delta$.

Let us now consider  the dihedral  case $\mathcal{R}=I_2(2m)$, $m \geq 3$. By Proposition \ref{yoshi} the identity field $e$ has the form
$$
e= J^{-1}  ( x^1 \frac{\partial}{\partial x^2} - x^2 \frac{\partial}{\partial x^1}),
$$ and the statement follows. 
\end{proof}

Proposition \ref{singulare}  has the following implication which was obtained in \cite{yoshinaga} by another argument.

\begin{corollary} The polynomial $J_k$ is not identically zero on the hyperplane $\Pi_{\alpha_k}$ for any $k=1, \ldots, n$. In particular, $J_k$ is not a zero polynomial. 
\end{corollary}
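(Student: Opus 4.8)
The plan is to argue by contradiction, combining the singularity of the identity field $e$ on every mirror (Proposition \ref{singulare}) with the divisibility properties of the polynomials $J_\beta$ (Proposition \ref{divisible}). Fix $k \in \{1,\dots,n\}$ and suppose, towards a contradiction, that $J_k$ vanishes identically on $\Pi_{\alpha_k}$, which for the linear form $\alpha_k$ is equivalent to $\alpha_k \mid J_k$.

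First I would record the multiplicity of $\alpha_k$ in the denominator $J$. Since $J=\det(\partial_{\alpha_j}p^i)$ differs from the Jacobian $\mathcal{J}(p)=\det(\partial p^i/\partial x^j)$ only by the constant invertible factor $\det\Omega^{-1}$ coming from the change of coordinates in Lemma \ref{lemmaforg}, and since $\mathcal{J}(p)\sim\mathcal{I}(\mathcal{A})$ factors into the mirror-defining linear forms \cite{Coxeter}, \cite{cox}, we have $J\sim\prod_{H\in\mathcal{A}}\alpha_H$ with each factor occurring to the first power. In particular $\alpha_k$ divides $J$ exactly once. Next I would examine each component $e^\beta=\epsilon_\beta J_\beta/J$ of the identity field $e=\sum_{\beta\in\Delta}e^\beta\partial_\beta$ along $\Pi_{\alpha_k}$, where $\epsilon_\beta=\pm1$. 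For $\beta\neq\alpha_k$ one has $\sigma^{-1}(\beta)\neq k$, so $\alpha_k$ lies in the subspace $U=\langle\alpha_i: i\neq\sigma^{-1}(\beta)\rangle$ and hence in $\mathcal{R}\cap U$; by Proposition \ref{divisible} this gives $\alpha_k\mid J_\beta$. As $\alpha_k$ divides $J$ only once, this single factor cancels and $e^\beta$ is regular on $\Pi_{\alpha_k}$. Under the standing assumption $\alpha_k\mid J_k$, the same cancellation makes the remaining component $e^{\alpha_k}=\epsilon_{\alpha_k}J_k/J$ regular on $\Pi_{\alpha_k}$ as well. Since $\{\partial_\beta\}_{\beta\in\Delta}$ is a basis of constant-coefficient vector fields, a pole of $e$ along $\Pi_{\alpha_k}$ would have to appear in one of its coefficients $e^\beta$; as all of them are now regular, $e$ would be non-singular along $\Pi_{\alpha_k}$, contradicting Proposition \ref{singulare}. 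Hence $\alpha_k\nmid J_k$, so $J_k$ does not vanish identically on $\Pi_{\alpha_k}$, and in particular $J_k$ is not the zero polynomial.

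The genuine difficulty — namely establishing that $e$ really is singular on each mirror — has already been overcome in Proposition \ref{singulare}; granting that, the corollary reduces to a short divisibility bookkeeping. The only point that I expect to require care is the multiplicity count, i.e. verifying that each mirror linear form, and $\alpha_k$ in particular, appears to the first power in $J$, since it is precisely the exactness of this cancellation that isolates the pole of $e$ in the single component $e^{\alpha_k}$.
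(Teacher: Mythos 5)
Your proof is correct and follows essentially the same route as the paper: combine the singularity of $e$ on every mirror (Proposition \ref{singulare}) with the divisibility $\alpha_k \mid J_i$ for $i \neq k$ (Proposition \ref{divisible}) to conclude that the pole of $e$ along $\Pi_{\alpha_k}$ must sit in the component $e^{\alpha_k}$, forcing $\alpha_k \nmid J_k$. The only difference is that you spell out the multiplicity count (that $\alpha_k$ divides $J$ exactly once, via the Coxeter factorisation of the Jacobian), which the paper leaves implicit; this is a correct and worthwhile clarification rather than a change of method.
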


\begin{proof}
By Proposition \ref{singulare}  the identity field $e$ is singular on the hyperplane  $\Pi_{\alpha_k}$. By Proposition \ref{divisible} the polynomial $J_i$ is divisible by $\alpha_k(x)$ for all $i\ne k$ ($1\le i \le n$). It follows that $J_k$ is not divisible by $\alpha_k(x)$. 
\end{proof}

In the rest of this section we study properties of polynomials $J_k$ ($1\le k\le n$). 
%The  vanish on the intersections of two mirrors as we see in  the following statement.

\begin{proposition}\label{structureJ}
Let $\beta, \gamma \in \mathcal{R}$, $\beta \neq \pm \gamma$. Then $J_k$ vanishes  on the stratum $D= \Pi_\beta \cap \Pi_\gamma$ . 
\end{proposition}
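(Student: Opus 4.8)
The plan is to prove the vanishing pointwise: I would show that for every $x_0 \in D = \Pi_\beta \cap \Pi_\gamma$ the rows of the matrix $B_k(p)$ from \eqref{Jk} become linearly dependent at $x_0$, so that $J_k(p)(x_0) = \det B_k(p)|_{x_0} = 0$. This complements Proposition \ref{divisible}, which exploits \emph{column} dependence of $B_k(p)$ for roots $\alpha \in U$; here the mechanism is \emph{row} dependence coming from the invariants, and it works for arbitrary $\beta, \gamma$.

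First I would fix $x_0 \in D$ and consider the reflection subgroup $W' = \langle s_\beta, s_\gamma \rangle \subset W$. Since $(\beta, x_0) = (\gamma, x_0) = 0$, the point $x_0$ is fixed by $W'$. Because $\beta \neq \pm \gamma$, the roots $\beta, \gamma$ are linearly independent and span the two-dimensional space $D^\perp = \langle \beta, \gamma \rangle$, on which $W'$ acts with trivial fixed subspace: a vector $v \in D^\perp$ fixed by both $s_\beta$ and $s_\gamma$ must be orthogonal to $\beta$ and $\gamma$, hence lies in $D^\perp \cap D = \{0\}$.

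Next I would analyse the differentials $dp^i_{x_0}$ of the basic invariants at $x_0$. Since each $p^i$ is $W'$-invariant and $x_0$ is fixed by $W'$, the function $v \mapsto p^i(x_0 + v)$ is invariant under the linear action of $W'$, so its linear part $dp^i_{x_0}$ is a $W'$-invariant linear form on $V$. Restricting to $D^\perp$ and using that $W'$ acts orthogonally with trivial fixed subspace on $(D^\perp)^*$ as well, there is no nonzero invariant linear form on $D^\perp$, whence $dp^i_{x_0}(v) = 0$ for all $v \in D^\perp$. Thus all covectors $dp^1_{x_0}, \dots, dp^n_{x_0}$ lie in the annihilator $\operatorname{Ann}(D^\perp) \subset V^*$, which has dimension $n-2$.

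Finally I would count dimensions. The $i$-th row of $B_k(p)|_{x_0}$ is $(\partial_{\alpha_j} p^i(x_0))_{j \neq k} = (dp^i_{x_0}(\alpha_j))_{j \neq k}$, i.e. the image of $dp^i_{x_0}$ under the fixed linear evaluation map $\varphi \mapsto (\varphi(\alpha_j))_{j \neq k}$. The $n-1$ covectors $dp^1_{x_0}, \dots, dp^{n-1}_{x_0}$ lie in the $(n-2)$-dimensional space $\operatorname{Ann}(D^\perp)$, hence are linearly dependent; applying the evaluation map, the rows of $B_k(p)|_{x_0}$ are linearly dependent, so $\det B_k(p)|_{x_0} = 0$. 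As $x_0 \in D$ was arbitrary and $J_k(p)$ is a polynomial in $x$, this gives vanishing on all of $D$. I expect the only delicate point to be the absence of invariant linear forms on $D^\perp$, which is exactly where the hypothesis $\beta \neq \pm\gamma$ is used to ensure $D^\perp$ is genuinely two-dimensional with trivial $W'$-fixed subspace; the remainder is elementary linear algebra.
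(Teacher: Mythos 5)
Your proof is correct, but it is the transpose of the argument the paper gives. The paper produces a \emph{column} dependence of $B_k(p)$ on $D$: since $\dim\langle\beta,\gamma\rangle=2$ and $\dim\langle\alpha_i : i\neq k\rangle=n-1$, some nonzero $a_1\beta+a_2\gamma$ equals $\sum_{i\neq k}b_i\alpha_i$ with the $b_i$ not all zero, and the corresponding combination of columns is $\partial_{a_1\beta+a_2\gamma}p^j=a_1\partial_\beta p^j+a_2\partial_\gamma p^j$, which vanishes on $D$ because each $p^j$ is $s_\beta$- and $s_\gamma$-invariant. You instead produce a \emph{row} dependence: all $n-1$ differentials $dp^1_{x_0},\dots,dp^{n-1}_{x_0}$ lie in the $(n-2)$-dimensional annihilator of $\langle\beta,\gamma\rangle$, hence are dependent, and the rows of $B_k(p)|_{x_0}$ are their images under a fixed evaluation map. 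The underlying fact is identical in both proofs --- for a $W$-invariant $p$ one has $\partial_\beta p|_{\Pi_\beta}=0$, which is exactly the statement that $dp_{x_0}$ annihilates the normal space $\langle\beta,\gamma\rangle$ at $x_0\in D$; you derive it via the invariant theory of the rank-two subgroup $\langle s_\beta,s_\gamma\rangle$, which is a slightly heavier route to the same place but makes the geometric content (differentials of invariants kill the normal directions to a stratum) more explicit. The dimension counts are dual ($2+(n-1)>n$ for columns versus $n-1>n-2$ for rows), both use $\beta\neq\pm\gamma$ only to guarantee that $\langle\beta,\gamma\rangle$ is genuinely two-dimensional, and your appeal to $D\cap D^\perp=\{0\}$ is justified since the form is positive definite on the real span of the roots. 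The paper's version is shorter; yours is a valid and complete alternative.
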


\begin{proof}
There exists a non-trivial linear combination of $\beta$ and $\gamma$ such that
\begin{align*}
a_1 \beta + a_2 \gamma = \sum_{\substack{i=1 \\ i \neq k}}^n b_i \alpha_i, 
\end{align*}
where $a_1, a_2, b_i \in \mathbb{C}$. Note that $\left. \partial_\beta p \right|_D= \left. \partial_\gamma p \right|_D = 0$ for any    $p\in {\mathbb C}[x]^W$. Hence the linear combination of columns of the matrix $B_k(t)$ with coefficients $b_i$ vanishes on $D$. 
\end{proof}

For a subset 
$\{i_1, \ldots, i_m\}\subset \underline{n}$ of size $m$ 
let us denote the corresponding discriminant stratum as $ D_{i_1, \dots, i_m}=D_{\alpha_{i_1}, \dots, \alpha_{i_m}}$, that is 
\begin{align}\label{generalDsimple}
D_{i_1, \dots, i_m}=  \cap_{j=1}^m \Pi_{\alpha_{i_j}}.
\end{align}

%Using Propositions \ref{structureJ} and \ref{arrangement} 
Proposition \ref{structureJ} has the following implication  on the structure of the polynomial  $J_{k}$ in terms of defining polynomial of the arrangement  ${\mathcal A}_{D_k}$. Note that it is also stated in \cite[Remark 5]{yoshinaga}.

\begin{corollary}\label{factorJ}
There is a proportionality $\left. J_k \right|_{D_k}\sim {\mathcal I}( \mathcal{A}_{D_k})$.
\end{corollary}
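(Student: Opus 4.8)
The plan is to show that $\mathcal{I}(\mathcal{A}_{D_k})$ divides $\left.J_k\right|_{D_k}$ and that the two polynomials have the same degree, whence they must be proportional. To begin I would record the degree of $\left.J_k\right|_{D_k}$. By the remark following \eqref{Jk} the polynomial $J_k$ is homogeneous of degree $|\mathcal{R}_+| - h + 1$. The preceding corollary shows that $J_k$ is not divisible by $\alpha_k(x)$, so its restriction to $D_k = \Pi_{\alpha_k}$ is a \emph{nonzero} homogeneous polynomial of degree exactly $|\mathcal{R}_+| - h + 1$.

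Next I would establish the divisibility. Any hyperplane $H \in \mathcal{A}_{D_k}$ has the form $H = D_k \cap \Pi_\beta$ for some root $\beta \in \mathcal{R}$ with $\beta \neq \pm \alpha_k$, since the condition $D_k \not\subset \Pi_\beta$ in \eqref{resarr} forces $\beta \notin \mathbb{C}\alpha_k$; moreover $H = \{x \in D_k : l_H(x) = 0\}$ for the corresponding linear form $l_H = \left.\beta\right|_{D_k}$. Applying Proposition \ref{structureJ} with $\gamma = \alpha_k$ shows that $J_k$ vanishes on $\Pi_\beta \cap \Pi_{\alpha_k} = H$, and hence $l_H$ divides $\left.J_k\right|_{D_k}$ in the coordinate ring of $D_k$. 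Since distinct hyperplanes of $\mathcal{A}_{D_k}$ correspond to pairwise non-proportional, and hence coprime, irreducible linear forms, their product $\mathcal{I}(\mathcal{A}_{D_k}) = \prod_{H \in \mathcal{A}_{D_k}} l_H$ divides $\left.J_k\right|_{D_k}$.

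Finally I would compare degrees. By Proposition \ref{arrangement} together with the identity $|\mathcal{A}| = |\mathcal{R}_+|$, the cardinality of the restricted arrangement is $|\mathcal{A}_{D_k}| = |\mathcal{R}_+| - h + 1$, so $\deg \mathcal{I}(\mathcal{A}_{D_k}) = |\mathcal{R}_+| - h + 1 = \deg \left.J_k\right|_{D_k}$. A divisor of the same degree as a nonzero polynomial is proportional to it, which gives $\left.J_k\right|_{D_k} \sim \mathcal{I}(\mathcal{A}_{D_k})$ as required.

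The computations here are light, and all the required inputs are already available. The only point demanding genuine care is the first step: one must be sure that $\left.J_k\right|_{D_k}$ is truly nonzero and of full degree $|\mathcal{R}_+| - h + 1$, rather than a restriction whose degree has dropped, and this is exactly what the non-divisibility of $J_k$ by $\alpha_k$ from the preceding corollary guarantees. I therefore do not anticipate a substantive obstacle beyond assembling Propositions \ref{arrangement} and \ref{structureJ} with the degree bookkeeping.
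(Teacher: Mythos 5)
Your proof is correct and follows essentially the same route as the paper's: Proposition \ref{structureJ} gives the divisibility of $\left.J_k\right|_{D_k}$ by each $l_H$, Proposition \ref{arrangement} gives the degree match $\deg J_k = |\mathcal{A}_{D_k}|$, and the non-vanishing of $\left.J_k\right|_{D_k}$ from the preceding corollary closes the argument. The only difference is that you spell out the non-vanishing and coprimality steps that the paper leaves implicit.
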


\begin{proof}
By Proposition \ref{arrangement}  $\operatorname{deg}J_k=|{\mathcal A}_{D_k}|$.  It follows from Proposition \ref{structureJ} that $\left. J_{k}\right|_{D_k}$ is divisible by $\left. \beta \right|_{D_k}$ for any $\beta \in \mathcal{R}\setminus \{ \pm \alpha_k\}$, which implies the statement.
\end{proof}

Let us define the following polynomials:
\begin{equation}\label{defn}
I:= J \prod_{\alpha \in \Delta} \alpha^{-1} \quad \text{and} \quad I_k := J_k \prod_{ \alpha \in \Delta \setminus \alpha_k} \alpha^{-1},
\end{equation}
where $1 \leq k \leq n$. 
We have the following useful result on the relation between polynomials $I_m$ and $I_l$ on the stratum $D_{l, m}$. 

\begin{proposition}\label{relationJ}
Let $\alpha_l, \alpha_m \in \Delta$ be such that $|\mathcal{R}_+ \cap S| > 2$, where $S=\langle \alpha_l, \alpha_m \rangle$. Let $D= D_{l, m}$ be the corresponding stratum. Then
\begin{equation}
\left. I_m \right|_D = (-1)^{l-m-1} \left. I_l \right|_D. 
\end{equation}
\end{proposition}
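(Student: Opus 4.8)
The plan is to compute both sides as restrictions to $D$ of explicit minors of the Jacobi matrix $M=(\partial_{\alpha_j}t^i)_{i,j=1}^n$ and to extract the sign from a single linear-algebra identity. Write $R=\prod_{\alpha\in\Delta\setminus\{\alpha_l,\alpha_m\}}\alpha$, so that $\prod_{\alpha\in\Delta\setminus\alpha_l}\alpha=\alpha_m R$ and $\prod_{\alpha\in\Delta\setminus\alpha_m}\alpha=\alpha_l R$. Since each $t^i$ is $W$-invariant, $\partial_{\alpha_l}t^i$ vanishes on $\Pi_{\alpha_l}$ and hence is divisible by $\alpha_l$, and likewise $\partial_{\alpha_m}t^i$ is divisible by $\alpha_m$; set $P^i=\partial_{\alpha_l}t^i/\alpha_l$ and $Q^i=\partial_{\alpha_m}t^i/\alpha_m$. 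First I would factor $\alpha_m$ out of the $m$-th column of $B_l$ and $\alpha_l$ out of the $l$-th column of $B_m$, giving $J_l=\alpha_m\det\widetilde B_l$ and $J_m=\alpha_l\det\widetilde B_m$, where $\widetilde B_l$ (resp.\ $\widetilde B_m$) is $B_l$ (resp.\ $B_m$) with that column replaced by $(Q^i)_{i=1}^{n-1}$ (resp.\ $(P^i)_{i=1}^{n-1}$). Comparing with $J_l=I_l\alpha_m R$ and $J_m=I_m\alpha_l R$ from \eqref{defn} yields the exact polynomial identities $I_l R=\det\widetilde B_l$ and $I_m R=\det\widetilde B_m$, which I then restrict to $D$.

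The hard part, and the only place the hypothesis $|\mathcal R_+\cap S|>2$ enters, is the claim that $P^i$ and $Q^i$ restrict to the \emph{same} function on $D$. To prove it I would fix a generic $x_0\in D$ and expand each $t^i$ in the normal directions $n\in S=\langle\alpha_l,\alpha_m\rangle$. The restriction of $t^i$ to $x_0+S$ is invariant under the rank-two reflection subgroup $W_{\mathcal R_D}$ acting on $S$, and under the assumption $|\mathcal R_+\cap S|>2$ this group is the irreducible dihedral group $I_2(p)$ with $p\ge 3$. Consequently the space of $W_{\mathcal R_D}$-invariant quadratic forms on $S$ is one-dimensional, spanned by the restriction of $(\cdot,\cdot)$, so that $t^i(x_0+n)=t^i(x_0)+\tfrac12\kappa_i(x_0)(n,n)+O(|n|^3)$ with no linear term. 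Since $(\alpha_l,n)=\alpha_l(x)$ and $(\alpha_m,n)=\alpha_m(x)$ on $x_0+S$, differentiating gives $\partial_{\alpha_l}t^i=\kappa_i\alpha_l+O(|n|^2)$ and $\partial_{\alpha_m}t^i=\kappa_i\alpha_m+O(|n|^2)$, whence $P^i|_D=Q^i|_D=\kappa_i$. I would stress that irreducibility is essential here: in the excluded case $A_1\times A_1$ there are two independent invariant quadratics $\alpha_l^2$ and $\alpha_m^2$, the coefficients of $t^i$ along them generally differ, and then $P^i|_D\ne Q^i|_D$.

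It remains to assemble the sign. On $D$ the two determinants $\det\widetilde B_l|_D$ and $\det\widetilde B_m|_D$ are maximal minors of one $(n-1)\times n$ matrix $\mathcal M$ whose column of index $j\notin\{l,m\}$ is $(\partial_{\alpha_j}t^i|_D)_i$ and whose columns of indices $l$ and $m$ are both equal to $\kappa=(\kappa_i)_{i=1}^{n-1}$; indeed $\det\widetilde B_l|_D=\det(\mathcal M\ \text{with column}\ l\ \text{removed})$ and $\det\widetilde B_m|_D=\det(\mathcal M\ \text{with column}\ m\ \text{removed})$. The vector $u_j=(-1)^j\det(\mathcal M\ \text{with column}\ j\ \text{removed})$ lies in $\ker\mathcal M$; as $\mathcal M$ has two equal columns, $u_j=0$ for $j\notin\{l,m\}$ and the equation $\mathcal M u=0$ reduces to $(u_l+u_m)\kappa=0$. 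Since $\kappa\ne 0$ (already $\kappa_1\ne 0$ because $t^1\sim(\cdot,\cdot)$), this forces $(-1)^l\det\widetilde B_l|_D+(-1)^m\det\widetilde B_m|_D=0$, i.e.\ $\det\widetilde B_l|_D=(-1)^{m-l+1}\det\widetilde B_m|_D$. Feeding this into $I_l R=\det\widetilde B_l$ and $I_m R=\det\widetilde B_m$ and cancelling $R|_D\not\equiv 0$ (which holds because no $\alpha_j$ with $j\ne l,m$ lies in $S$) produces $I_m|_D=(-1)^{m-l+1}I_l|_D=(-1)^{l-m-1}I_l|_D$, as required. The two-equal-columns device is what lets me avoid tracking by hand the permutation sign between the two minors, which is the one genuinely error-prone point of the calculation.
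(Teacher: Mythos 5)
Your proof is correct, and its skeleton coincides with the paper's: both arguments divide the column $v_k=(\partial_{\alpha_k}t^i)_i$ by $\alpha_k$ to get quotient vectors (your $P^i,Q^i$ are the paper's $Q_{li},Q_{mi}$), both show these two quotient vectors agree on $D$, and both compare the two resulting minors of the Jacobi matrix. Where you diverge is in the two sub-steps. For the agreement $P^i|_D=Q^i|_D$ the paper simply equates the two Leibniz expansions of the mixed partial $\partial_{\alpha_m}\partial_{\alpha_l}t^i$ and restricts to $D$, using only $(\alpha_l,\alpha_m)\neq 0$; your route via the Taylor expansion of $t^i$ on $x_0+S$ and the one-dimensionality of the space of $I_2(p)$-invariant quadratics ($p\ge 3$) is more conceptual and makes transparent \emph{why} the hypothesis $|\mathcal R_+\cap S|>2$ is needed (your remark about the two independent invariants $\alpha_l^2,\alpha_m^2$ in the excluded $A_1\times A_1$ case is exactly the right counterpoint), at the cost of a slightly longer argument and a small point you should make explicit: the error term $O(|n|^2)/(\alpha_l,n)$ is controlled by evaluating the limit along $n=t\alpha_l$ and invoking continuity of the polynomial $P^i$. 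For the sign, the paper observes that $A_{lm}|_D$ and $A_{ml}|_D$ differ by a cyclic shift of columns of sign $(-1)^{m-l-1}$; your kernel-vector identity for the $(n-1)\times n$ matrix with two equal columns is a clean way to avoid counting transpositions, though it formally needs $\kappa\neq 0$ (which you verify via $t^1\sim(\cdot,\cdot)$, and which is anyway harmless since $\kappa=0$ would make both minors vanish). Both versions are complete proofs; yours trades a short computation for more structural insight.
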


\begin{proof}
 Let $v_k$ denote the column vector
\begin{equation}
v_k = \big(\partial_{\alpha_k} t^1, \dots, \partial_{\alpha_k}  t^{n-1}   \big)^\intercal,
\end{equation}
where $k=1, \dots, n$. We have
\begin{equation}\label{partial}
\partial_{\alpha_k} t^i = \alpha_{k}(x) Q_{ki}(x) 
\end{equation}
for some $Q_{ki}(x) \in \mathbb{C}[x]$. Denote the  column vector $Q_k=(Q_{k1}, \dots, Q_{k,n-1})^\intercal$. 
Note that  $\alpha_l, \alpha_m$ are simple roots for the irreducible two-dimensional root system $\mathcal{R} \cap S$, hence  $(\alpha_l, \alpha_m) \neq 0$. 
%Let us firstly consider the case when $(\alpha_l, \alpha_m) \neq 0$. 
It follows from equality \eqref{partial} that 
\begin{equation}\label{7.83}
\partial_{\alpha_m} \partial_{\alpha_l} t^i= (\alpha_l, \alpha_m) Q_{li}(x) + \alpha_l (x)\partial_{\alpha_m} Q_{li}(x) = (\alpha_l, \alpha_m) Q_{mi}(x) + \alpha_m (x)\partial_{\alpha_l} Q_{mi}(x). 
\end{equation}
%
%Similarly,
%
%\begin{align}\label{7.84}
%\partial_{\alpha_l}  \partial_{\alpha_m}  p^i = (\alpha_l, \alpha_m) Q_{mi}(x) + \alpha_m (x)\partial_{\alpha_l} Q_{mi}(x)
%\end{align}
%
Restriction of equalities \eqref{7.83} to $D$ gives
\begin{equation}\label{equals}
\left. Q_{li}(x)\right|_D=\left.Q_{mi}(x) \right|_D.
\end{equation}
Let $\widetilde{\Delta}= \Delta \setminus \{\alpha_m, \alpha_l\}$. Note that $I_m = a(x) \alpha_l(x)^{-1} J_m(x)$ and $I_l(x)= a(x) \alpha_m(x)^{-1} J_l(x)$, where $a(x)= \prod_{\alpha \in \widetilde{\Delta}} \alpha(x)^{-1}$. If $l < m$ then $\alpha_l(x)^{-1} J_m(x) = \operatorname{det}A_{lm}$, where the matrix $A_{lm}$ has columns 
$$
v_1, \dots, v_{l-1}, Q_l, v_{l+1}, \dots, \widehat{v}_m, \dots, v_n,
$$
and $\widehat{v}_m$ denotes the omitted column $\widehat{v}_m$. Similarly, $\alpha_{m}(x)^{-1} J_l(x)= \operatorname{det}A_{ml}$, where the matrix $A_{ml}$ has columns 
$$
v_1, \dots, \widehat{v}_l, \dots, v_{m-1}, Q_m, v_{m+1}, \dots, v_n
$$ 
and  $\widehat{v}_l$ denotes the omitted  column $\widehat{v}_l$.  By the property (\ref{equals}) the matrices $\left. A_{lm} \right|_D$, $\left. A_{ml}\right|_D$ have the same columns up to a permutation, which implies the statement. The case $m <l$ is similar.

\end{proof}

\section{ Exceptional groups: dimension $1$ and codimensions $1$, $2$, $3$.}\label{exceptionalsection}

In this section we find the determinant of the restricted Saito metric for  the discriminant strata of  codimensions $1, 2 ,3$ and $n-1$. Analysis of strata of codimensions $1, 2$ and $n-1$ applies to any (irreducible) Coxeter group so these results cover exceptional groups and give another derivation of the determinant for the classical groups for such strata. Analysis of codimension $3$ strata applies to any simply laced root system so it covers all codimension 3 strata in the exceptional groups since codimension 3 strata in $F_4 $ and $H_4$ cases are covered by general  dimension $1$ considerations (this also gives another derivation of the determinant for codimension 3  strata in simply laced classical cases).  

Thus we show that the statement of Theorem \ref{MMtheorem} is true for any root system and a stratum of  codimensions $1, 2 ,3$ and $n-1$. Our analysis depends on the type of the subgraph $\Gamma$ of the Coxeter graph of $\mathcal{R}$ such that the stratum corresponds to simple roots which are vertices of $\Gamma$. In some cases different subgraphs of the same type define non-equivalent strata under the group action \cite{OT} but this does not affect our considerations.

\subsection{\bf{Dimension} $\pmb{1}$} Let us choose $n-1$ different elements $i_1, \dots, i_{n-1} \in \underline{n}$ and consider the stratum $D= D_{i_1, \dots, i_{n-1}}$. Let $i_n\in \underline{n}\setminus \{i_1, \ldots, i_{n-1}\}$.

\begin{theorem}\label{theoremDimension1} The determinant of the restricted Saito metric $\eta_D$ is proportional to $(\widetilde{x}^{i_n})^h$, where $h$ is the Coxeter number of $\mathcal{R}$, and coordinates $\widetilde x$ are defined in \eqref{xtilde}.
\end{theorem}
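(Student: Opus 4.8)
The plan is to reduce $\det\eta_D$ to a single metric component and then determine it by homogeneity, the only real work being the non-vanishing of the leading coefficient. First I would identify $D$: since $(\alpha_{i_j},\omega^{i_n})=\delta^{i_n}_{i_j}=0$ for $j=1,\dots,n-1$ by \eqref{fundamcow}, the coweight $\omega^{i_n}$ lies in $D=\cap_{j=1}^{n-1}\Pi_{\alpha_{i_j}}$, and as $D$ has dimension $1$ we get $D=\mathbb{C}\,\omega^{i_n}$ with coordinate $\widetilde{x}^{i_n}$. Specialising the restriction computation in the proof of Theorem \ref{Saitovo} to $\widehat{I}=\{i_n\}$, the vanishing $\partial_{\widetilde{x}^l}t^i|_D=0$ for $l\in I$ (valid because $\partial_{\widetilde{x}^l}t^i=\partial_{\alpha_l}t^i$ vanishes on $\Pi_{\alpha_l}\supset D$) gives $dt^i|_D=(\partial_{\widetilde{x}^{i_n}}t^i)|_D\,d\widetilde{x}^{i_n}$. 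Hence $\eta_D=\eta_{i_ni_n}|_D\,(d\widetilde{x}^{i_n})^2$ with $\eta_{i_ni_n}=\sum_{i=1}^n(\partial_{\widetilde{x}^{i_n}}t^i)(\partial_{\widetilde{x}^{i_n}}t^{n+1-i})$, so that $\det\eta_D=\eta_{i_ni_n}|_D$.

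The key step is a degree count. Each summand $(\partial_{\widetilde{x}^{i_n}}t^i)(\partial_{\widetilde{x}^{i_n}}t^{n+1-i})$ is homogeneous in $x$ of degree $(d_i-1)+(d_{n+1-i}-1)=d_i+d_{n+1-i}-2=h$, using the duality of exponents $d_i+d_{n+1-i}=d_1+d_n=h+2$. Thus $\eta_{i_ni_n}$ is homogeneous of degree $h$. Writing a point of $D$ as $x=s\,\omega^{i_n}$, every coordinate $\widetilde{x}^j$ restricts on $D$ to a scalar multiple of $\widetilde{x}^{i_n}$, so any homogeneous polynomial of degree $h$ becomes proportional to $(\widetilde{x}^{i_n})^h$. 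This already gives $\det\eta_D=c\,(\widetilde{x}^{i_n})^h$ for some constant $c$.

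The main obstacle is to show $c\neq0$, i.e. that $\eta_D$ is non-degenerate at a generic point of $D$. I would make this concrete as follows. By \eqref{saitogD} and Proposition \ref{multonD} the operator $E_D\circ$ preserves the line $T_{x_0}D$, acting as a scalar $\mu$, and $\det\eta_D\propto\mu$ by Corollary \ref{cordet}. At $x_0=s\,\omega^{i_n}$ one has $E_D=\tfrac{s}{h}\omega^{i_n}$, and since $E\circ E=e^{-1}$ by \eqref{inversee} this yields $E_D\circ\omega^{i_n}=\tfrac{h}{s}\,e^{-1}(x_0)$; hence $\mu$ and $c$ are nonzero precisely when $e^{-1}$ does not vanish identically on $D$. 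For the classical series this non-degeneracy is part of Propositions \ref{criticalv} and \ref{criticalvBDN}, while for the exceptional groups in dimension $1$ it is the non-degeneracy recorded in the remark after Proposition \ref{multonD}, checked directly. Granting $c\neq0$, the proportionality $\det\eta_D\sim(\widetilde{x}^{i_n})^h$ follows.
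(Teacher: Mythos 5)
Your argument is correct and essentially identical to the paper's: restricting to the one-dimensional stratum kills all differentials $d\widetilde{x}^{i_j}$ for $j=1,\dots,n-1$, leaving the single component $\sum_{k}(\partial_{\widetilde{x}^{i_n}}t^k)(\partial_{\widetilde{x}^{i_n}}t^{n+1-k})$, which is homogeneous of degree $h$ by the duality $d_k+d_{n+1-k}=h+2$ and hence proportional to $(\widetilde{x}^{i_n})^h$ on the line $D$. Your extra step justifying that the proportionality constant is nonzero (via $E_D\circ$ and $e^{-1}$) goes slightly beyond the paper's own proof, which does not address this and instead relies on the separate non-degeneracy remark following Proposition \ref{multonD}.
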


\begin{proof}The covariant Saito metric $\eta$ can be expressed as
\begin{equation}\label{dim1}
\eta=\sum_{k=1}^n \sum_{i=1}^n \partial_{\widetilde{x}^i} t^k d\widetilde{x}^i \sum_{j=1}^n \partial_{\widetilde{x}^j} t^{n+1 -k} d\widetilde{x}^{j}.
\end{equation}
Since $\partial_{\widetilde{x}^{i_j}}t^k|_D=  \partial_{\alpha_{i_j}} t^k|_D=0$ for $j=1, \dots, n-1$ we get 
\begin{equation}
\eta_D= \sum_{k=1}^n \partial_{\widetilde{x}^{i_n}} t^{k} \partial_{\widetilde{x}^{i_n}} t^{n+1-k} (d\widetilde{x}^{i_n})^2.
\end{equation}
Note that $\partial_{\widetilde{x}^{i_n}} t^k \partial_{\widetilde{x}^{i_n}} t^{n+1-k}$ is proportional to $(\widetilde{x}^{i_n})^h$ since degrees $d_k =\operatorname{deg}t^k$ satisfy $d_k + d_{n+1 -k} = h+2$. This implies the statement. 
\end{proof}

\begin{corollary}
The statement of Theorems \ref{MMtheorem} for $D$ is true. 
\end{corollary}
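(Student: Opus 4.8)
The plan is to read off the Corollary from Theorem~\ref{theoremDimension1} together with the equivalence established in Section~\ref{degreessection}. Recall that Theorem~\ref{theoremequiv} shows Theorem~\ref{MMtheorem} to be equivalent to the conjunction of Theorem~\ref{thma} (that $\det\eta_D$ is a product of linear forms whose zero locus is the restricted arrangement $\mathcal{A}_D$) and Theorem~\ref{thma1} (that the multiplicity of each factor $l_H$ is $h(\mathcal{R}_{D,\beta}^{(0)})$). So I would verify these two statements for the one-dimensional stratum $D = D_{i_1,\dots,i_{n-1}}$ and invoke that equivalence.

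First I would describe the restricted arrangement. Since $\dim D = 1$, the only hyperplane of $D$ through the origin is the origin itself, so $\mathcal{A}_D$ consists of a single element $\{0\}$ with defining covector proportional to $\widetilde{x}^{i_n}$ (for instance $\left.\alpha_{i_n}\right|_D$, which is nonzero because $\alpha_{i_n}(\omega^{i_n}) = 1$). Theorem~\ref{theoremDimension1} gives $\det\eta_D \sim (\widetilde{x}^{i_n})^h$, which is precisely the factorised form predicted by Theorem~\ref{thma}, with $l_H = \widetilde{x}^{i_n}$ and $k_H = h$.

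It then remains to check Theorem~\ref{thma1}, i.e. that $h = h(\mathcal{R}_{D,\beta}^{(0)})$ for this $D$. Here I would pick any $\beta \in \mathcal{R}$ with $\left.\beta\right|_D$ a nonzero multiple of $\widetilde{x}^{i_n}$; such $\beta$ lies outside $\langle \mathcal{R}_D\rangle$, since every element of $\mathcal{R}_D$ restricts to zero on $D$. Because $\mathcal{R}_D = \langle \alpha_{i_1},\dots,\alpha_{i_{n-1}}\rangle \cap \mathcal{R}$ has rank $n-1$, adjoining $\beta$ spans all of $V$, whence $\mathcal{R}_{D,\beta} = \langle \mathcal{R}_D, \beta\rangle \cap \mathcal{R} = \mathcal{R}$. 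As $W$ is irreducible, $\mathcal{R}$ is irreducible, so the decomposition \eqref{decomposition} is trivial and $\mathcal{R}_{D,\beta}^{(0)} = \mathcal{R}$, giving $h(\mathcal{R}_{D,\beta}^{(0)}) = h$, in agreement with the multiplicity computed above.

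There is no serious obstacle here: the one-dimensional case is the degenerate end of the arrangement picture, with the substantive work already carried out in Theorem~\ref{theoremDimension1} and Theorem~\ref{theoremequiv}. The only point requiring a moment's care is confirming that $\mathcal{R}_{D,\beta}^{(0)}$ is the whole (irreducible) root system $\mathcal{R}$ rather than a proper subsystem, which follows purely from the rank count $\operatorname{rank}\mathcal{R}_D = n-1$. With Theorems~\ref{thma} and \ref{thma1} both verified for $D$, Theorem~\ref{theoremequiv} yields Theorem~\ref{MMtheorem} for $D$, completing the proof of the Corollary.
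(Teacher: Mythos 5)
Your argument is correct and is exactly the route the paper leaves implicit: the corollary is stated without proof because Theorem \ref{theoremDimension1} already exhibits $\det\eta_D$ as $(\widetilde{x}^{i_n})^{h}$, and since $\mathcal{R}_D$ has rank $n-1$ the only possible $\mathcal{R}_{D,\beta}$ is the full irreducible system $\mathcal{R}$, so the multiplicity $h$ is the required Coxeter number. Your verification of Theorems \ref{thma} and \ref{thma1} for the one-dimensional stratum, combined with the equivalence from Theorem \ref{theoremequiv}, is a complete and faithful filling-in of that step.
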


\subsection{\bf{Codimension} $\pmb{1}$}
Let us fix $m\in \mathbb N$, $1 \leq m \leq n$ and consider the corresponding $(n-1)$-dimensional stratum $D=D_m$. 
%It is sufficient to prove the following statement in order to establish Theorem \ref{MMtheorem} in the codimension 1 case. 

\begin{theorem}\label{codim1th}
The determinant of the restricted Saito metric $\eta_D$ is proportional to 
\begin{equation}\label{cod11}
 I(\mathcal{A}_D) \prod_{\beta \in \mathcal{R}_+ \setminus \{\alpha_m\}} \left. \beta\right|_D.
\end{equation}
\end{theorem}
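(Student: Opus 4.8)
The plan is to read off the determinant from the general formula of Theorem \ref{Saitovo} in the special case of a single index, and then use the divisibility properties of the Jacobian together with Corollary \ref{factorJ}. Concretely, I would apply Theorem \ref{Saitovo} with $I=\{m\}$, so that $k=1$ and $D=\Pi_{\alpha_m}$. Then the $k\times k$ determinant in \eqref{determinantgeneraltheorem00} collapses to the single entry $\eta^{mm}$, giving $P=J^{2}\eta^{mm}$ and, by \eqref{determinantgeneraltheorem}, $\det\eta_D=-P|_D$. Substituting the expression for $\eta^{mm}$ from Proposition \ref{Saitom} and noting that its two summands coincide when $i=j=m$, I obtain $\eta^{mm}=2(-1)^{n+1+m}\partial_{\omega^m}(J_m/J)$. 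Applying the quotient rule and clearing the denominator yields
\[
P=2(-1)^{n+1+m}\bigl[(\partial_{\omega^m}J_m)\,J-J_m\,(\partial_{\omega^m}J)\bigr],
\]
which is manifestly a polynomial, confirming that $P_D=P|_D$ is simply the evaluation at $\alpha_m=0$.

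The decisive observation is then that $J$ is proportional to $\mathcal{I}(\mathcal{A})=\prod_{\alpha\in\mathcal{R}_+}\alpha$, so $\alpha_m$ divides $J$ and hence $J|_D=0$. This annihilates the first term of $P$. Writing $J=\alpha_m\widetilde J$ with $\widetilde J=\prod_{\alpha\in\mathcal{R}_+\setminus\{\alpha_m\}}\alpha$ and using the normalisation $\partial_{\omega^m}\alpha_m=(\omega^m,\alpha_m)=1$ from \eqref{fundamcow}, I get $(\partial_{\omega^m}J)|_D=\widetilde J|_D$. Therefore
\[
\det\eta_D=-P|_D\sim J_m|_D\cdot\prod_{\alpha\in\mathcal{R}_+\setminus\{\alpha_m\}}\alpha\Big|_D .
\]
Finally I would invoke Corollary \ref{factorJ}, which gives $J_m|_{D_m}\sim\mathcal{I}(\mathcal{A}_{D_m})=\mathcal{I}(\mathcal{A}_D)$, and conclude that $\det\eta_D$ is proportional to $\mathcal{I}(\mathcal{A}_D)\prod_{\beta\in\mathcal{R}_+\setminus\{\alpha_m\}}\beta|_D$, which is exactly \eqref{cod11}. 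Since the statement is only up to proportionality, the explicit signs $(-1)^{n+1+m}$ and numerical factors are irrelevant and need not be tracked.

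The computation is short, and no step is genuinely hard; the two points that deserve care are (a) verifying that the quotient-rule form of $P$ is a polynomial, so that restriction to $D$ really is the literal substitution $\alpha_m=0$ rather than a limiting procedure (this is guaranteed abstractly by Theorem \ref{Saitovo}, but the explicit form above makes it transparent), and (b) correctly pinning down the divisibility $\alpha_m\mid J$ together with $\partial_{\omega^m}\alpha_m=1$, since these are precisely what force the first term to vanish and leave behind exactly the product over $\mathcal{R}_+\setminus\{\alpha_m\}$. As a consistency check, I would note that this matches the prediction of Theorem \ref{MMtheorem}: here $\mathcal{R}_D=\{\pm\alpha_m\}\cong A_1$, so $l=1$, $r_1=1$, $m=2-1=1$, and $\mathcal{A}^D=\{\Pi_{\alpha_m}\}$, whence $\mathcal{I}(\mathcal{A}\setminus\mathcal{A}^D)|_D=\prod_{\beta\in\mathcal{R}_+\setminus\{\alpha_m\}}\beta|_D$ and $\mathcal{I}_1|_D=\mathcal{I}(\mathcal{A}_D)$, in agreement with \eqref{cod11}.
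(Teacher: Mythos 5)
Your proposal is correct and follows essentially the same route as the paper: specialise Theorem \ref{Saitovo} to $I=\{m\}$, use Proposition \ref{Saitom} to get $\det\eta_D=-2(-1)^{n+1+m}J^2\partial_{\omega^m}(J_m/J)|_D$, observe that after clearing denominators the term containing $J$ itself dies on $D$ leaving $J_m\,\partial_{\omega^m}J|_D$ with $\partial_{\omega^m}J|_D\sim\prod_{\beta\in\mathcal{R}_+\setminus\{\alpha_m\}}\beta|_D$, and finish with Corollary \ref{factorJ}. The only cosmetic difference is that you make the polynomiality of $P$ and the role of $(\omega^m,\alpha_m)=1$ explicit, which the paper leaves implicit.
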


\begin{proof}
 By Theorem \ref{Saitovo} we have that $\operatorname{det}\eta_D$ is equal to  $ -\left.  \eta^{mm} J^2 \right|_D$, and therefore by Proposition \ref{Saitom} that 
\begin{align*}
\operatorname{det}\eta_D= (-1)^{n+m+1} \left. 2J^2 \partial_{\omega^m} \frac{J_m}{J}  \right|_D=  (-1)^{n+m}\left.  2 J_m \partial_{\omega^m} J\right|_D.
\end{align*}
Since Jacobian $J\sim  \alpha_m \prod_{\beta \in \mathcal{R}_+ \setminus \{\alpha_m\}} \beta$, we get that 
 $$
\left. \partial_{\omega^m} J \right|_D \sim  \prod_{\beta \in \mathcal{R}_+ \setminus \{\alpha_m\}} \left. \beta\right|_D.
$$
 The statement follows by Corollary \ref{factorJ}. 
\end{proof}

This establishes Theorem \ref{thma} for the stratum $D$. 
\begin{theorem}
Theorem \ref{thma1} holds for the stratum $D$. 
\end{theorem}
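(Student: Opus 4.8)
The plan is to read off the multiplicities directly from the explicit product obtained in Theorem \ref{codim1th}. Writing $\mathcal{I}(\mathcal{A}_D) = \prod_{H \in \mathcal{A}_D} l_H$ and recalling that for $D = D_m = \Pi_{\alpha_m}$ one has $\operatorname{det}\eta_D \sim \mathcal{I}(\mathcal{A}_D)\prod_{\beta' \in \mathcal{R}_+ \setminus \{\alpha_m\}} \left.\beta'\right|_D$, the exponent $k_H$ of a fixed linear form $l_H$ splits as $k_H = 1 + m_H$, where $m_H$ is the number of positive roots $\beta' \neq \alpha_m$ whose restriction $\left.\beta'\right|_D$ is proportional to $l_H$. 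Thus the statement reduces to computing this count $m_H$ and matching $1+m_H$ with the Coxeter number of $\mathcal{R}_{D,\beta}^{(0)}$.

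First I would fix $H \in \mathcal{A}_D$ together with a root $\beta \in \mathcal{R}$ such that $\left.\beta\right|_D$ is a nonzero multiple of $l_H$, and set $\mathcal{R}' := \mathcal{R}_{D,\beta}^{(0)}$, $h' := h(\mathcal{R}')$, $r' := \operatorname{rank}\mathcal{R}'$. Since here $\mathcal{R}_D = \{\pm\alpha_m\} \cong A_1$, the ambient system $\mathcal{R}_{D,\beta} = \langle \alpha_m, \beta\rangle \cap \mathcal{R}$ has rank at most $2$. The key identification is that the positive roots contributing to $m_H$ are exactly the positive roots of $\mathcal{R}'$ that are not proportional to $\alpha_m$. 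Indeed, on one hand Proposition \ref{propodeg} guarantees that every root whose restriction is a nonzero multiple of $l_H$ already lies in $\mathcal{R}'$; on the other hand every root $\gamma \in \mathcal{R}'$ lies in $\langle \alpha_m, \beta\rangle$, so $\gamma = c\alpha_m + c'\beta$ and hence $\left.\gamma\right|_D = c'\left.\beta\right|_D$ is a multiple of $l_H$, vanishing precisely when $\gamma \in \langle\alpha_m\rangle$. Therefore $m_H$ equals the number of positive roots of $\mathcal{R}'$ minus the number of those proportional to $\alpha_m$.

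It then remains to evaluate this count using Proposition \ref{propos8}, which gives $r'h'/2$ positive roots in $\mathcal{R}'$, and to distinguish the two possibilities for $r'$. If $r' = 2$ then $\mathcal{R}'$ spans $\langle\alpha_m,\beta\rangle$ and exhausts $\mathcal{R}_{D,\beta}$, so $\alpha_m \in \mathcal{R}'$ and exactly one positive root, namely $\alpha_m$ itself, is proportional to $\alpha_m$; hence $m_H = h' - 1$ and $k_H = h'$. If $r' = 1$ then $\mathcal{R}' = \{\pm\beta\} \cong A_1$ with $\beta \notin \langle\alpha_m\rangle$ (as $\beta \notin \mathcal{R}_D$), so no positive root of $\mathcal{R}'$ is proportional to $\alpha_m$, giving $m_H = 1$ and $k_H = 2 = h(A_1) = h'$. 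In both cases $k_H = h(\mathcal{R}_{D,\beta}^{(0)})$, which is the asserted multiplicity.

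The only genuinely delicate point is this clean identification of the roots contributing to $m_H$: a priori one is tempted to separate cases according to whether $(\alpha_m,\beta)=0$, but this is misleading, since orthogonal roots can coexist in an irreducible rank-two system (for instance $e_1 \perp e_2$ inside $B_2$). Routing the argument through Proposition \ref{propodeg} and the rank of $\mathcal{R}_{D,\beta}$, rather than through orthogonality, is what makes the count uniform and avoids this trap; the remaining arithmetic via Proposition \ref{propos8} is then immediate.
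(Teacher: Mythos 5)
Your proof is correct and follows essentially the same route as the paper: both arguments identify the multiplicity $k_H$ with the number of mirrors of the rank-two localization $\mathcal{R}_{D,\beta}=\mathcal{R}\cap\langle\alpha_m,\beta\rangle$ (the paper phrases this as the dihedral group with $k_H$ lines, you phrase it as a root count via Propositions \ref{propodeg} and \ref{propos8}), and then split into the irreducible and reducible cases. Your closing remark that the correct dichotomy is irreducibility of $\mathcal{R}_{D,\beta}$ rather than orthogonality of $\alpha_m$ and $\beta$ is a valid and worthwhile caution, consistent with how the paper argues.
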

\begin{proof}
The product in equality (\ref{cod11}) can be written as $\prod_{ H \in \mathcal{A}_D} l_{H}^{m_H}$, where $m_H= |\widetilde{\Sigma}_H|$ with $\widetilde{\Sigma}_H=\{X \in \mathcal{A} | X \supset H, H \neq D\}$. Therefore by Theorem \ref{codim1th} the determinant of $\eta_D$ is  proportional to
\begin{equation*}
\prod_{H \in \mathcal{A}_D} l_H^{k_H},
\end{equation*}
where $k_{H} =m_H+1 =  |\Sigma_H|$, with $\Sigma_H=\{X \in \mathcal{A} |   H \subset X\}$. Let $\beta_H \in {\mathcal R}$ be such that $\beta_H|D$ is a non-zero multiple of $l_H$. Then the root system ${\mathcal R}_{D, \beta} = {\mathcal R}\cap \langle \beta, \alpha_m\rangle$ is the root system of the dihedral group with $k_H$ lines. 
If  it is irreducible then the root system ${\mathcal R}_{D, \beta}^{(0)}$  given by the decomposition \eqref{decomposition} coincides with  ${\mathcal R}_{D, \beta} $ and $k_H = h({\mathcal R}_{D, \beta}^{(0)})$ as required. If ${\mathcal R}_{D, \beta}$ is reducible then  ${\mathcal R}_{D, \beta}^{(0)} = \{\pm \beta\}$ and $k_H =2 = h({\mathcal R}_{D, \beta}^{(0)})$, so the statement follows.
\end{proof}

\subsection{\bf{Codimension} $\pmb{2}$}
Let $\alpha_l, \alpha_m \in \Delta$ be different simple roots. Let us consider the $(n-2)$-dimensional stratum $D=D_{l,m}$. The restriction $\eta_D$  of the Saito metric $\eta$ to the stratum $D$ is well-defined as the components of the metric $\eta$ are polynomials   in the coordinates $x^i$ ($i=1, \dots, n$). However, this is not necessarily true for the individual terms in the expansion \eqref{determinantgeneraltheorem} , \eqref{determinantgeneraltheorem00} of $\operatorname{det}\eta_D$ as these terms can be singular on $D$. Below we will calculate limits of these terms as $x$ tends to $D$ in a prescribed way which will give us the value of $\operatorname{det}\eta_D$. 

More specifically, by Theorem \ref{Saitovo}, the determinant of the metric $\eta_D$ is given by
\begin{align}\label{volume2}
\operatorname{det}\eta_D=-
J^2
\begin{vmatrix}
\eta^{mm} \quad   \eta^{ml} \\
\eta^{ml} \quad  \eta^{ll}
\end{vmatrix},
\end{align}
where the limit of the right-hand side as $x$ tends to $D$ is taken. By Proposition \ref{Saitom} we have
\begin{equation}\label{etaik}
\eta^{ik} = (-1)^{n+k+1} \partial_{\omega^{i}} \frac{J_k}{J} + (-1)^{n+i+1}\partial_{\omega^{k}} \frac{J_i}{J},
\end{equation}
$i, k=1, \dots, n$. And by formulae (\ref{defn}) one has
\begin{align}\label{JkoverJ}
\partial_{\omega^i}\frac{J_k}{J}&= \partial_{\omega^i}\frac{I_{k}}{\alpha_k I}= -\frac{1}{\alpha_{k}^2} \frac{I_k}{I} \delta_{ik} + \frac{1}{\alpha_{k}} \partial_{\omega^i}\frac{I_k}{I}.
\end{align}
Further on, we are interested in the structure of $I$. Let us define 
\beq{defd}
d= | \mathcal{R}_+ \cap \langle \alpha_l, \alpha_m \rangle |-2,
\eeq
which is the degree of vanishing of the polynomial  $I$ on $D$. 
Let us represent $I$ as
\begin{equation}\label{widetildeJ}
 I= f g,
 \end{equation}
where $f \in \mathbb{C}[x]$ is a homogeneous polynomial of degree $d$ in the variables $\alpha_{m}(x), \alpha_{l}(x)$ and $g \in \mathbb{C}[x]$ is not identically zero on $D$. Let $d_0$ be the degree of $f(x)$ as a polynomial in $\alpha_l$, $d_0 \leq d$. We represent $f(x)$ as 
\begin{equation}\label{f}
f(x)=\alpha_m^{d-d_0}(x)\sum_{i=0}^{d_0} a_i \alpha^{i}_l(x) \alpha_m^{d_0-i}(x),
\end{equation}
where $a_i \in \mathbb{C}$, $a_{d_0} \neq 0$.  
%
\begin{comment}
\begin{lemma}\label{functionf}
We have 
$$\left. \alpha_m (x) \partial_{\omega^m} f(x) \right|_{\alpha_{l}=0} =\left. d a_d \alpha_m  (x) ^ d \right|_{\alpha_{l}=0}, $$
\quad \quad and
$$\left. \alpha_l (x) \partial_{\omega^l} f(x) \right|_{\alpha_{m}=0} =\left. d a_0 \alpha_l  (x) ^ d \right|_{\alpha_{m}=0}.$$
\end{lemma}

\begin{proof}
We have by Lemma \ref{deri} that

%
\begin{align*}
\partial_{\omega^{m}} f(x) &=\sum_{i=0}^{d}a_{i} \alpha_{l}(x)^{d-i}\partial_{\omega^{m}} \alpha_{m}(x)^{i}=\sum_{i=0}^{d}a_{i} \alpha_{l}(x)^{d-i}i \alpha_{m}(x)^{i-1}.
\end{align*}
%
Therefore, restricting onto $\alpha_l=0$ we get
%
$$\left. \alpha_m (x) \partial_{\omega^m} f(x) \right|_{\alpha_{l}=0} =\left. d a_d \alpha_m  (x) ^ d \right|_{\alpha_{l}=0}. $$
%
Similarly, one has that
%
\begin{equation}
\left. \alpha_l (x) \partial_{\omega^l} f(x) \right|_{\alpha_{m}=0} =\left. d a_0 \alpha_l  (x) ^ d \right|_{\alpha_{m}=0},
\end{equation}
%
as required. 
\end{proof}
\end{comment}

Let $\alpha, \beta \in \mathcal{R}$. In what follows, we will mean by $\left. F \right|_{\substack{\alpha=0 \\ \beta=0}}$ the restriction of a function $F$ onto $\alpha= \beta=0$ in the order $\alpha=0$ first followed by taking the limit $\beta \rightarrow 0$.
The following lemma takes place.

\begin{lemma}\label{limitf}We have
\begin{equation}
\left. \alpha_l(x) I(x)^{-1} \partial_{\omega^l} I(x) \right|_{\substack{\alpha_m=0 \\ \alpha_l=0}} =d_0.
\end{equation}
\end{lemma}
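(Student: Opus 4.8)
The plan is to exploit the factorisation $I=fg$ of \eqref{widetildeJ} together with the product rule, reducing everything to a logarithmic derivative. Since $\partial_{\omega^l}(fg)=g\,\partial_{\omega^l}f+f\,\partial_{\omega^l}g$, one has
\[
\alpha_l\,I^{-1}\partial_{\omega^l}I=\alpha_l\frac{\partial_{\omega^l}f}{f}+\alpha_l\frac{\partial_{\omega^l}g}{g},
\]
and I would treat the two summands separately. The single computational input needed is that, by the definition \eqref{fundamcow} of the fundamental coweights, $\partial_{\omega^l}\alpha_j(x)=(\alpha_j,\omega^l)=\delta_{lj}$; in particular $\partial_{\omega^l}\alpha_l=1$ and $\partial_{\omega^l}\alpha_m=0$ because $l\neq m$.

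For the $f$-term I would use the representation \eqref{f}, writing $f=\alpha_m^{\,d-d_0}\widetilde f$ with $\widetilde f=\sum_{i=0}^{d_0}a_i\alpha_l^{\,i}\alpha_m^{\,d_0-i}$ and $a_{d_0}\neq 0$. Since $\partial_{\omega^l}\alpha_m=0$, the prefactor $\alpha_m^{\,d-d_0}$ is annihilated by $\partial_{\omega^l}$ and cancels in the ratio, giving $\alpha_l\,\partial_{\omega^l}f/f=\alpha_l\,\partial_{\omega^l}\widetilde f/\widetilde f$. Using $\partial_{\omega^l}\alpha_l=1$ and $\partial_{\omega^l}\alpha_m=0$ yields
\[
\alpha_l\,\partial_{\omega^l}\widetilde f=\sum_{i=1}^{d_0}i\,a_i\,\alpha_l^{\,i}\alpha_m^{\,d_0-i}.
\]
Now I would impose the restriction in the prescribed order: setting $\alpha_m=0$ first kills every term except $i=d_0$ in both $\widetilde f$ and in $\alpha_l\,\partial_{\omega^l}\widetilde f$, leaving $a_{d_0}\alpha_l^{\,d_0}$ and $d_0\,a_{d_0}\alpha_l^{\,d_0}$ respectively. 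As $a_{d_0}\neq 0$ the quotient equals the constant $d_0$, which survives unchanged under the subsequent limit $\alpha_l\to 0$.

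For the $g$-term I would invoke that $g$ is, by construction in \eqref{widetildeJ}, not identically zero on $D=\{\alpha_l=\alpha_m=0\}$; hence $g$ is coprime to $\alpha_l$ and to $\alpha_m$, so at a generic point of $D$ both $g$ and $\partial_{\omega^l}g$ are finite with $g\neq 0$. Consequently $\alpha_l\,\partial_{\omega^l}g/g\to 0$ as $\alpha_m=0$ and then $\alpha_l\to 0$, and combining the two contributions gives the claimed value $d_0$. The only delicate point — and the step I would state most carefully — is the order of the two restrictions: the cancellation of $\alpha_m^{\,d-d_0}$ and the selection of the top term $i=d_0$ both rely on evaluating $\alpha_m=0$ before $\alpha_l\to 0$, which is exactly the meaning fixed for the symbol $\left.\,\cdot\,\right|_{\substack{\alpha_m=0\\ \alpha_l=0}}$ in the paragraph preceding the lemma.
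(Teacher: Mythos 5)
Your proposal is correct and follows essentially the same route as the paper's proof: both split the logarithmic derivative via $I=fg$, use the explicit form \eqref{f} to evaluate $\alpha_l f^{-1}\partial_{\omega^l}f$ at $\alpha_m=0$ (obtaining $d_0$ from the surviving top term $i=d_0$), and discard the $g$-term because $g$ does not vanish identically on $D$. Your additional remarks on $\partial_{\omega^l}\alpha_j=\delta_{lj}$ and on the order of the two restrictions only make explicit what the paper leaves implicit.
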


\begin{proof} 
From  formula (\ref{widetildeJ}) we get 
\begin{align}\label{equalityforffucntion}
 \alpha_l(x) I(x)^{-1} \partial_{\omega^l} I(x)= \alpha_l(x)  g(x)^{-1} \partial_{\omega^l}g(x) + f(x)^{-1} \alpha_{l}(x)  \partial_{\omega^l}f(x).
\end{align}
By formula \eqref{f} one has
\begin{align}\label{equalityforffucntion22}
\left. f(x)^{-1} \alpha_{l}(x)  \partial_{\omega^l}f(x) \right|_{\alpha_m=0}= \left. \frac{\sum_{i=1}^{d_0} i a_i \alpha_l^i \alpha_m^{d_0-i}}{\sum_{i=0}^{d_0} a_i \alpha_l^i \alpha_{m}^{d_0 -i}}\right|_{\alpha_m=0}=d_0. 
\end{align}
The statement follows from formulae \eqref{equalityforffucntion}, \eqref{equalityforffucntion22}.
%Therefore restricting expression \eqref{equalityforffucntion} onto $\alpha_m=0$ first followed by the restriction onto $\alpha_l=0$, the statement follows. 
\end{proof}

Let $\widetilde{\Delta}= \Delta \setminus \{\alpha_l, \alpha_m\}$. Let us consider the diagonal and anti-diagonal terms in the determinant in (\ref{volume2}) separately.  

\begin{lemma}\label{diag} Let  $A=J^2 \eta^{mm} \eta^{ll}$. Then
\begin{equation}\label{Acodime2}
\left. A \right|_{\substack{\alpha_m=0 \\ \alpha_l=0}}=  (-1)^{m+l}4(d_0+1)I_l I_m  \prod_{\alpha \in \widetilde{\Delta}}\left. \alpha^2 \right|_{\substack{\alpha_m=0 \\ \alpha_l=0}}.
\end{equation}
\end{lemma}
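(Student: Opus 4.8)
The plan is to expand $A=J^{2}\eta^{mm}\eta^{ll}$ by means of Proposition \ref{Saitom} and the quotient formula \eqref{JkoverJ}, and then to pass to the iterated limit $\alpha_{m}=0$, $\alpha_{l}\to 0$, where the one genuinely singular ingredient is controlled by Lemma \ref{limitf}. Since the diagonal entries are $\eta^{mm}=2(-1)^{n+1+m}\partial_{\omega^{m}}\tfrac{J_{m}}{J}$ and $\eta^{ll}=2(-1)^{n+1+l}\partial_{\omega^{l}}\tfrac{J_{l}}{J}$, I would first write
\[
A=4(-1)^{m+l}J^{2}\,\partial_{\omega^{m}}\tfrac{J_{m}}{J}\,\partial_{\omega^{l}}\tfrac{J_{l}}{J}.
\]
Using \eqref{JkoverJ} together with $J=I\prod_{\alpha\in\Delta}\alpha$, so that $J^{2}=I^{2}\alpha_{m}^{2}\alpha_{l}^{2}\prod_{\alpha\in\widetilde{\Delta}}\alpha^{2}$, the factors $\alpha_{m}^{2}$ and $\alpha_{l}^{2}$ cancel the poles coming from \eqref{JkoverJ}, and $A$ reduces to $4(-1)^{m+l}P\,(I\Phi_{m})(I\Phi_{l})$, where $P=\prod_{\alpha\in\widetilde{\Delta}}\alpha^{2}$ and $\Phi_{m}=\alpha_{m}\partial_{\omega^{m}}\tfrac{I_{m}}{I}-\tfrac{I_{m}}{I}$, $\Phi_{l}=\alpha_{l}\partial_{\omega^{l}}\tfrac{I_{l}}{I}-\tfrac{I_{l}}{I}$ (indeed $\alpha_{m}^{2}\partial_{\omega^{m}}\tfrac{J_{m}}{J}=\Phi_{m}$ and likewise for $l$).

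Next I would isolate the remaining $1/I$ in a single logarithmic-derivative term. Since $I_{m}$ and $I_{l}$ are genuine polynomials (by Proposition \ref{divisible} the product $\prod_{\alpha\in\Delta\setminus\alpha_{m}}\alpha$ divides $J_{m}$, and similarly for $l$), one obtains
\[
I\Phi_{m}=\alpha_{m}\partial_{\omega^{m}}I_{m}-I_{m}\Big(\tfrac{\alpha_{m}\partial_{\omega^{m}}I}{I}+1\Big),\qquad I\Phi_{l}=\alpha_{l}\partial_{\omega^{l}}I_{l}-I_{l}\Big(\tfrac{\alpha_{l}\partial_{\omega^{l}}I}{I}+1\Big).
\]
The crux is to evaluate the two logarithmic derivatives under the prescribed order of restriction. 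For the $\partial_{\omega^{l}}$ direction this is precisely Lemma \ref{limitf}, giving $\tfrac{\alpha_{l}\partial_{\omega^{l}}I}{I}\to d_{0}$, while the explicit term $\alpha_{l}\partial_{\omega^{l}}I_{l}\to 0$; hence $I\Phi_{l}\to-(d_{0}+1)\,I_{l}|_{D}$. For the $\partial_{\omega^{m}}$ direction the behaviour is different because $\alpha_{m}$ is sent to zero first: no positive root other than the simple ones is proportional to $\alpha_{m}$, so $I\sim\prod_{\beta\in\mathcal{R}_{+}\setminus\Delta}\beta$ does not vanish identically on $\Pi_{\alpha_{m}}$ (equivalently $\alpha_{m}\nmid f$ in \eqref{f}, i.e. $d_{0}=d$). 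Consequently $\tfrac{\alpha_{m}\partial_{\omega^{m}}I}{I}\to 0$ and $\alpha_{m}\partial_{\omega^{m}}I_{m}\to 0$, so that $I\Phi_{m}\to-I_{m}|_{D}$.

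Combining these limits then yields
\[
A\big|_{\substack{\alpha_{m}=0\\ \alpha_{l}=0}}=4(-1)^{m+l}P\big|_{D}\,(-I_{m}|_{D})\big(-(d_{0}+1)I_{l}|_{D}\big)=(-1)^{m+l}4(d_{0}+1)\,I_{l}I_{m}\prod_{\alpha\in\widetilde{\Delta}}\alpha^{2}\Big|_{\substack{\alpha_{m}=0\\ \alpha_{l}=0}},
\]
which is the assertion. I expect the main obstacle to be the careful treatment of the two singular logarithmic-derivative limits: their asymmetry is forced entirely by the convention that one restricts $\alpha_{m}=0$ before letting $\alpha_{l}\to 0$, and the vanishing of the $\partial_{\omega^{m}}$ contribution rests on the geometric fact that $I$ is non-vanishing at a generic point of $\Pi_{\alpha_{m}}$. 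One must also check that every product of a factor tending to $0$ (such as $\alpha_{m}\partial_{\omega^{m}}I_{m}$ or $\alpha_{l}\partial_{\omega^{l}}I_{l}$) with a factor having a finite limit on $D$ indeed vanishes, which is automatic since all the remaining quantities are polynomials or admit finite limits on $D\setminus\Sigma_{D}$, so that the manipulations are valid rational-function identities off $D$ with the limit taken last.
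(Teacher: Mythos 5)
Your argument is correct and follows essentially the same route as the paper: expand $\eta^{mm}\eta^{ll}$ via Proposition \ref{Saitom} and \eqref{JkoverJ}, absorb the $\alpha_m^2\alpha_l^2$ from $J^2$ to cancel the poles, restrict to $\alpha_m=0$ first (where the $m$-factor reduces to $-I_m$ since $I$ is generically nonzero on $\Pi_{\alpha_m}$), and then apply Lemma \ref{limitf} in the $\alpha_l\to 0$ limit to pick up the factor $d_0+1$. The only difference is notational bookkeeping (your $\Phi_m,\Phi_l$ versus the paper's direct expansion of $J\eta^{kk}$), plus your explicit—and correct—justification of the vanishing of $\alpha_m\partial_{\omega^m}I/I$ on $\Pi_{\alpha_m}$, which the paper leaves implicit.
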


\begin{proof} 
By formulae (\ref{etaik}), (\ref{JkoverJ}) we have that
\begin{equation*}
\eta^{kk}=  (-1)^{n+k +1}2 \partial_{\omega^k} \frac{J_k}{J}=  (-1)^{n+k+1}2 \big( - \frac{1}{\alpha^2_k} \frac{I_k}{I} + \frac{1}{\alpha_k} \partial_{\omega^k} \frac{I_k}{I} \big),
\end{equation*}
 for any $k=1, \dots, n$. Therefore
\begin{equation*}
J \eta^{kk}=  (-1)^{n+k+1} 2   \big( -\alpha_{k}^{-1}I_k  + \partial_{\omega^k} I_k - I_k I^{-1} \partial_{\omega^{k} }I\big)  \prod_{\alpha \in \Delta \setminus \{\alpha_k \}}\alpha.
\end{equation*}
Then 
\begin{align*}
%A&=    (-1)^{m+l}4   \big( -\alpha_{m}^{-1}I_m  + \partial_{\omega^m} I_m - I_m I^{-1} \partial_{\omega^{m} }I\big) \big( -\alpha_{l}^{-1}I_l  + \partial_{\omega^l} I_l - I_l I^{-1} %\partial_{\omega^{l} }I\big) \prod_{\alpha \in \Delta} \alpha  \prod_{\alpha \in \widetilde{\Delta}} \alpha  \\
A&=   (-1)^{m+l}4  \Big(-I_m  + \alpha_{m} \big( \partial_{\omega^m} I_m - I_m I^{-1} \partial_{\omega^{m} }I \big)  \Big)  \Big(-I_l  + \alpha_{l} \big( \partial_{\omega^l} I_l - I_l I^{-1} \partial_{\omega^{l} }I \big)  \Big)  \prod_{\alpha \in \widetilde{\Delta}}\alpha^2 .
\end{align*}
We consider the restriction of $A$ on $D_m$ at first. This gives
\begin{equation}
\label{res1tom} 
\left. A \right|_{\alpha_{m}=0} =  (-1)^{m+l}4  I_m \Big(I_l  - \alpha_{l} \big( \partial_{\omega^l} I_l - I_l I^{-1} \partial_{\omega^{l} }I \big)  \Big) \prod_{\alpha \in \widetilde{\Delta}} \left. \alpha^2\right|_{\alpha_m=0}.
\end{equation}
By restricting equality \eqref{res1tom}  further on $\alpha_l=0$ and applying Lemma \ref{limitf} we obtain the statement.
\end{proof}

Let us now consider the anti-diagonal terms. 

\begin{lemma}\label{Blemma}Let $B= \eta^{ml} J$. Then
\begin{equation}
\left. B \right|_{\substack{\alpha_m=0 \\ \alpha_l=0}}  =  (-1)^{n+m}  d_0 I_m\prod_{\alpha \in \widetilde{\Delta}}\left. \alpha \right|_{\substack{\alpha_m=0 \\ \alpha_l=0}}.
\end{equation}
\end{lemma}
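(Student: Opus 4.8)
The plan is to expand $B=\eta^{ml}J$ by means of Proposition \ref{Saitom} and then reduce everything to the prescribed iterated limit $\alpha_m=0$ followed by $\alpha_l\to0$. Since $m\neq l$, the Kronecker deltas in \eqref{JkoverJ} drop out, so that $\partial_{\omega^m}\frac{J_l}{J}=\alpha_l^{-1}\partial_{\omega^m}\frac{I_l}{I}$ and $\partial_{\omega^l}\frac{J_m}{J}=\alpha_m^{-1}\partial_{\omega^l}\frac{I_m}{I}$ by \eqref{etaik}, \eqref{JkoverJ}. Multiplying $\eta^{ml}$ by $J=I\prod_{\alpha\in\Delta}\alpha=I\,\alpha_l\alpha_m\prod_{\alpha\in\widetilde{\Delta}}\alpha$ clears the denominators $\alpha_l,\alpha_m$, and using $I\,\partial_{\omega^i}\tfrac{I_k}{I}=\partial_{\omega^i}I_k-I_kI^{-1}\partial_{\omega^i}I$ together with \eqref{defn} I would obtain
\begin{align*}
B = \prod_{\alpha\in\widetilde{\Delta}}\alpha\,\Big[&(-1)^{n+l+1}\alpha_m\big(\partial_{\omega^m}I_l - I_lI^{-1}\partial_{\omega^m}I\big)\\
&+ (-1)^{n+m+1}\alpha_l\big(\partial_{\omega^l}I_m - I_mI^{-1}\partial_{\omega^l}I\big)\Big].
\end{align*}
The whole argument then rests on the asymmetry between the two bracketed terms that is forced by the order of the two limits.

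First I would dispose of the term carrying the explicit factor $\alpha_m$, which is killed already at the first stage $\alpha_m=0$. The key point is that $I$ is coprime to $\alpha_m$: by \eqref{defn} and the Coxeter factorisation $J\sim\mathcal{I}(\mathcal{A})$ one has $I\sim\prod_{\beta\in\mathcal{R}_+\setminus\Delta}\beta$, and no non-simple positive root is proportional to $\alpha_m$, so $I$ does not vanish identically on $\Pi_{\alpha_m}$. Consequently $\partial_{\omega^m}I_l$ and $I_lI^{-1}\partial_{\omega^m}I$ are both regular at a generic point of $\Pi_{\alpha_m}$, and the prefactor $\alpha_m$ forces the first term to vanish in the limit $\alpha_m\to0$; taking $\alpha_l\to0$ afterwards keeps it zero.

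For the surviving term I would restrict to $\alpha_m=0$ (again harmless, by the same regularity of $I$ away from $\Pi_{\alpha_l}$) and only then let $\alpha_l\to0$, where $I$ genuinely vanishes on $D$. The polynomial contribution $\alpha_l\,\partial_{\omega^l}I_m$ tends to $0$, while $\alpha_l\,I_mI^{-1}\partial_{\omega^l}I=I_m\cdot\alpha_l I^{-1}\partial_{\omega^l}I$ is exactly the indeterminate expression evaluated in Lemma \ref{limitf}, whose limit is $d_0$. Hence this term contributes $(-1)^{n+m+1}(-d_0)I_m|_D=(-1)^{n+m}d_0\,I_m|_D$. Reinstating the regular prefactor $\prod_{\alpha\in\widetilde{\Delta}}\alpha$ gives
\[
\left.B\right|_{\substack{\alpha_m=0\\ \alpha_l=0}} = (-1)^{n+m}\,d_0\,I_m\prod_{\alpha\in\widetilde{\Delta}}\left.\alpha\right|_{\substack{\alpha_m=0\\ \alpha_l=0}},
\]
as required.

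The main obstacle I anticipate is making the order-of-limits step fully rigorous, namely justifying that imposing $\alpha_m=0$ first lands on a generic point of $\Pi_{\alpha_m}$ at which $I\neq0$, so that the first term is truly $\alpha_m\times(\text{regular})$ and not an unresolved $0/0$. This is precisely the content of the coprimality of $I$ with $\alpha_m$, equivalently the statement that the polynomial $f$ in \eqref{widetildeJ}, \eqref{f} carries no factor of $\alpha_m$; it is exactly what breaks the naive symmetry between $\alpha_l$ and $\alpha_m$ and singles out the factor $d_0$ (rather than a symmetric combination) in the final answer.
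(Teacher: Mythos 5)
Your proof is correct and follows essentially the same route as the paper: expand $B$ via Proposition \ref{Saitom} and formulae \eqref{etaik}, \eqref{JkoverJ}, kill the term prefixed by $\alpha_m$ at the first restriction, and evaluate the surviving term on $\alpha_l=0$ via Lemma \ref{limitf}. The extra justification you give for the regularity of the first bracket on a generic point of $\Pi_{\alpha_m}$ (coprimality of $I$ with $\alpha_m$) is sound and merely makes explicit what the paper leaves implicit.
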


\begin{proof}Using formulae  (\ref{etaik}), (\ref{JkoverJ}) we have 
%
%\begin{align*}
%B&=  J\Big( (-1)^{n+l+1} \partial_{\omega^m} \frac{J_l}{J} + (-1)^{n+m+1} \partial_{\omega^l} \frac{J_m}{J} \Big), 
%\end{align*}
%that is
%
\begin{align*}
B=\Big( (-1)^{n+l+1} \alpha_m \big( \partial_{\omega^m} I_l - I_l I^{-1} \partial_{\omega^m} I \big)+ (-1)^{n+m+1} \alpha_l \big( \partial_{\omega^l} I_m - I_m I^{-1} \partial_{\omega^l} I \big) \Big)  \prod_{\alpha \in \widetilde{\Delta}}\alpha. 
\end{align*}
We consider the restriction of $B$ on the hyperplane $D_m$ at first. This gives
\begin{equation}
\label{res01}
\left. B \right|_{\alpha_m=0}=(-1)^{n+m+1} \alpha_l \big( \partial_{\omega^l} I_m - I_m I^{-1} \partial_{\omega^l} I \big) \prod_{\alpha \in \widetilde{\Delta}}\left. \alpha \right|_{\alpha_m=0}.
\end{equation}
Then restricting equality \eqref{res01}  further on $\alpha_l=0$ and applying  Lemma \ref{limitf} we obtain the required result. 
\end{proof}

Now we are ready to obtain a general expression for the determinant $\operatorname{det} \eta_D$. 

\begin{theorem}\label{saitocodimensio2}
The determinant of the metric $\eta_D$ is equal  to 
\begin{align}
(-1)^{l+m+1}(d_0+2)^2  I_m I_l \prod_{\alpha \in \widetilde{\Delta}} \left. \alpha^2\right|_D.
\end{align}
\end{theorem}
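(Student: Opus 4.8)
The plan is to combine the expansion of the $2\times 2$ determinant in \eqref{volume2} with the two lemmas just established. Expanding the determinant gives
\[
\operatorname{det}\eta_D = -J^2\bigl(\eta^{mm}\eta^{ll} - (\eta^{ml})^2\bigr) = -A + B^2,
\]
where $A = J^2\eta^{mm}\eta^{ll}$ is the quantity treated in Lemma \ref{diag} and $B = J\eta^{ml}$ is that of Lemma \ref{Blemma}. Since $\operatorname{det}\eta_D$ is regular on $D$ by Theorem \ref{Saitovo}, while the iterated limits (setting $\alpha_m=0$ first, then letting $\alpha_l\to 0$) of $A$ and $B$ exist by those lemmas, I would evaluate the right-hand side as $-A|_D + B^2|_D$ in that iterated sense.

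First I would substitute Lemma \ref{diag}, producing the contribution
\[
-A|_D = (-1)^{l+m+1}\,4(d_0+1)\,I_m I_l \prod_{\alpha\in\widetilde{\Delta}}\left.\alpha^2\right|_D,
\]
which is already proportional to $I_m I_l$ as wanted. Then I would square Lemma \ref{Blemma} to get $B^2|_D = d_0^2\, I_m^2 \prod_{\alpha\in\widetilde{\Delta}}\left.\alpha^2\right|_D$, whose only discrepancy with the target is the factor $I_m^2$ in place of $I_m I_l$.

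The crux is converting $I_m^2$ into $I_m I_l$ on $D$. If $d_0=0$ this step is unnecessary, since then $B|_D=0$ and the $-A$ term alone matches the claim because $(d_0+2)^2 = 4 = 4(d_0+1)$ in that case. If $d_0>0$ then $d>0$, that is $|\mathcal{R}_+\cap\langle\alpha_l,\alpha_m\rangle|>2$, so Proposition \ref{relationJ} applies and gives $\left.I_m\right|_D = (-1)^{l-m-1}\left.I_l\right|_D$; multiplying through by $I_m$ yields $\left.I_m^2\right|_D = (-1)^{l+m+1}\left.I_m I_l\right|_D$, where I use the parity identity $(-1)^{l-m-1}=(-1)^{l+m+1}$. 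Substituting this into $B^2|_D$ and summing, the two contributions combine through the completing-the-square identity $4(d_0+1)+d_0^2=(d_0+2)^2$ into the asserted value $(-1)^{l+m+1}(d_0+2)^2\, I_m I_l \prod_{\alpha\in\widetilde{\Delta}}\left.\alpha^2\right|_D$.

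I expect the main obstacle to be purely bookkeeping: keeping the signs $(-1)^{m+l}$ and $(-1)^{n+m}$ from the two lemmas consistent, verifying the parity identity $(-1)^{l-m-1}=(-1)^{l+m+1}$, and cleanly isolating the $d_0=0$ case where Proposition \ref{relationJ} is unavailable but also unneeded because $B$ vanishes there. Everything else reduces to the algebraic identity $4(d_0+1)+d_0^2=(d_0+2)^2$.
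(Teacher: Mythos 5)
Your proposal is correct and follows essentially the same route as the paper: expand the $2\times2$ determinant into the $-A$ and $B^2$ contributions of Lemmas \ref{diag} and \ref{Blemma}, use Proposition \ref{relationJ} to convert $I_m^2$ into $(-1)^{l+m+1}I_mI_l$, and combine via $d_0^2+4(d_0+1)=(d_0+2)^2$. The only (harmless) difference is that you split cases on $d_0=0$ versus $d_0>0$, noting $B|_D=0$ in the former, while the paper splits on $d=0$ versus $d>0$; since $d_0\le d$, both case analyses cover exactly the situations where Proposition \ref{relationJ} is needed and available.
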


\begin{proof}
In the notation of Lemmas \ref{diag}, \ref{Blemma} by Theorem \ref{Saitovo} we have
\begin{align*}
\operatorname{det}\eta_D &=  \left. B^2\right|_{\substack{\alpha_m=0 \\ \alpha_l=0}} - \left. A\right|_{\substack{\alpha_m=0 \\ \alpha_l=0}}. 
\end{align*}
By these lemmas we get
\begin{equation}\label{expresscod2}
\operatorname{det}\eta_D=   \left(d_0^2 I^2_m +   (-1)^{m+l+1}4(d_0+1)I_l I_m  \right) \prod_{\alpha \in \widetilde{\Delta}} \left. \alpha^2\right|_D .
\end{equation}
Let us first consider the case where degree $d$ defined by \eqref{defd} satisfies $d>0$. Then 
%$|\mathcal{R}_+ \cap \langle \alpha_l, \alpha_m \rangle | >2$ and from 
by Proposition \ref{relationJ} we get  $I_m=(-1)^{l-m-1} I_l$ on $D$. Therefore
\begin{equation}\label{expresscod2new}
\operatorname{det}\eta_D=(-1)^{l+m+1}(d_0+2)^2 I_m I_l \prod_{\alpha \in \widetilde{\Delta}} \left. \alpha^2\right|_D,
\end{equation}
as required. Let us now suppose that $d=0$. Then $f$ is constant and $d_0=0$, hence equality \eqref{expresscod2} implies the statement. 
\end{proof}

Theorem \ref{saitocodimensio2} implies Theorem \ref{MMtheorem} for $D$. Let $\mathcal{R}_D = \mathcal{R} \cap \langle \alpha_m, \alpha_l \rangle$. 

\begin{theorem}\label{MT1cod2}
The statement of Theorem \ref{MMtheorem} is true for the stratum $D$.
%, that is the determinant of the metric $\eta_D$ is proportional to 
%
\end{theorem}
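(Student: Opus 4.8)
The plan is to reduce to the two equivalent statements. Since $\mathcal{R}_D = \mathcal{R}\cap\langle\alpha_l,\alpha_m\rangle$ is a rank-two root system — irreducible (a dihedral system, when $\alpha_l$ and $\alpha_m$ are joined in the Coxeter graph) or of type $A_1\times A_1$ (when $(\alpha_l,\alpha_m)=0$) — Theorem \ref{theoremequiv} tells us it suffices to verify Theorems \ref{thma} and \ref{thma1} for $D$, and Theorem \ref{saitocodimensio2} supplies the explicit determinant $\det\eta_D = (-1)^{l+m+1}(d_0+2)^2\,(I_mI_l)|_D\prod_{\alpha\in\widetilde\Delta}(\alpha|_D)^2$ to analyze. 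I would first record that in the irreducible case $d>0$, so Proposition \ref{relationJ} applies and gives $I_m|_D=\pm I_l|_D$; combined with $I_l\prod_{\alpha\in\widetilde\Delta}\alpha = J_l/\alpha_m$ this rewrites the determinant as the perfect square $\det\eta_D\sim\bigl((J_l/\alpha_m)|_D\bigr)^2$, and a count of positive roots of $\mathcal{R}_D$ identifies $d_0+2=h(\mathcal{R}_D)$. In the reducible case $d=d_0=0$, so $f$ is constant and $\det\eta_D\sim (I_mI_l)|_D\prod_{\alpha\in\widetilde\Delta}(\alpha|_D)^2$ with $I_m|_D$ and $I_l|_D$ now genuinely independent.

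For Theorem \ref{thma} the factorization is then transparent: the forms $\alpha|_D$ with $\alpha\in\widetilde\Delta$ are linear and cut out hyperplanes of $\mathcal{A}_D$, while $I_k|_D$ is a product of linear forms because $J_k = I_k\prod_{\alpha\in\Delta\setminus\alpha_k}\alpha$ with $J_k$ divisible by all $\alpha\in\Delta\setminus\alpha_k$ (Proposition \ref{divisible}) and $J_k|_{D_k}\sim{\mathcal I}(\mathcal{A}_{D_k})$ a product of distinct linear forms (Corollary \ref{factorJ}); restricting first to $\Pi_{\alpha_k}$, cancelling the simple-root factors, and then restricting to $D\subset D_k$ keeps the expression a product of linear forms, none of which can vanish identically since $\det\eta_D\not\equiv 0$. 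Thus $\det\eta_D\sim\prod_{H\in\mathcal{A}_D}l_H^{k_H}$ in both cases.

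The substance, and the main obstacle, is Theorem \ref{thma1}: showing each multiplicity equals the Coxeter number $h(\mathcal{R}_{D,\beta}^{(0)})$. Here I would compute $k_H$, for a fixed $H\in\mathcal{A}_D$ with root $\beta$, as the order of vanishing of $\det\eta_D$ along $H$. From the perfect-square form (irreducible case) this is $k_H = 2\,\mathrm{ord}_H\bigl((J_l/\alpha_m)|_D\bigr)$, and $\mathrm{ord}_H$ equals the number of hyperplanes of the codimension-one restricted arrangement $\mathcal{A}_{D_l}$ that further restrict to $H$; these correspond exactly to the $\langle\alpha_l\rangle$-cosets of positive roots of $\mathcal{R}_{D,\beta}^{(0)}$ lying over the direction $\beta|_D$ (roots $\gamma$ with $\gamma-c\beta\in\langle\alpha_l,\alpha_m\rangle$, which all lie in $\mathcal{R}_{D,\beta}^{(0)}$). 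A coset count within the rank-three system $\mathcal{R}_{D,\beta}$, using Propositions \ref{arrangement} and \ref{propos8}, then yields $k_H=h(\mathcal{R}_{D,\beta}^{(0)})$; I would check this uniformly over the sub-cases $\mathcal{R}_{D,\beta}^{(0)}=A_1$ (when $\beta\perp\mathcal{R}_D$, giving $k_H=2$) and $\mathcal{R}_{D,\beta}^{(0)}$ an irreducible rank-three system $A_3,B_3,H_3$ (giving $k_H=h$), consistently with the dimension-one outcome of Theorem \ref{theoremDimension1} applied inside $\mathcal{R}_{D,\beta}$. The delicate part is the reducible case $\mathcal{R}_D=A_1\times A_1$, where the perfect-square structure is unavailable and the contributions of $I_m|_D$, $I_l|_D$ and $\prod_{\alpha\in\widetilde\Delta}(\alpha|_D)^2$ to $\mathrm{ord}_H$ must be counted separately (via the analogous cosets in $\mathcal{A}_{D_m}$ and $\mathcal{A}_{D_l}$ and the simple-root factors), and then shown to sum to $h(\mathcal{R}_{D,\beta}^{(0)})$; organizing this bookkeeping so that it matches the decomposition \eqref{decomposition} of $\mathcal{R}_{D,\beta}$ is where I expect the real work to lie.
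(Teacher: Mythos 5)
Your reduction and your first computation coincide with the paper's: Theorem \ref{saitocodimensio2} gives $\det\eta_D\sim (I_mI_l\prod_{\alpha\in\widetilde\Delta}\alpha^2)|_D$, Proposition \ref{relationJ} gives $I_m|_D=\pm I_l|_D$ in the irreducible case, and the identity $I_mI_l\prod_{\alpha\in\widetilde\Delta}\alpha^2=J_mJ_l\,\alpha_m^{-1}\alpha_l^{-1}$ is exactly what the paper uses. Where you diverge is in what you do next. The paper closes the argument in one step: by Corollary \ref{factorJ}, $(\alpha_m^{-1}J_l)|_D\sim{\mathcal I}({\mathcal A}_{D_l}\setminus{\mathcal A}^D_{D_l})|_D$ and likewise with $l,m$ exchanged; since ${\mathcal A}^D_{D_l}={\mathcal A}^D_{D_m}=\{D\}$ and $\operatorname{rk}{\mathcal R}_D=2$ forces the exponent $2-\sum r_i=0$ in \eqref{rfth12}, the determinant is literally the polynomial $Q$ of \eqref{polQ} (with ${\mathcal I}_1^2$ in the irreducible case, using that ${\mathcal I}_1|_D$ is independent of the choice of $\gamma_1\in{\mathcal R}_D^{(1)}$, and ${\mathcal I}_1{\mathcal I}_2$ in the reducible one). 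That already \emph{is} Theorem \ref{MMtheorem} for $D$, and the multiplicity statement of Theorem \ref{thma1} then comes for free from Theorem \ref{theoremequiv}, which was established combinatorially in Section \ref{degreessection} for arbitrary strata.

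Consequently what you single out as ``the substance and the main obstacle'' --- computing each $k_H$ as an order of vanishing and matching it to $h({\mathcal R}_{D,\beta}^{(0)})$ by a coset count inside the rank-three system ${\mathcal R}_{D,\beta}$ --- is redundant work: carried out, it would amount to reproving Theorem \ref{theoremequiv} in the codimension-two case. It is also the one place where your sketch is not yet sound: in the reducible case ${\mathcal R}_D=A_1\times A_1$ the component ${\mathcal R}_{D,\beta}^{(0)}$ need not be $A_1$ or an irreducible rank-three system; it can be an irreducible rank-two system containing exactly one of the two $A_1$ factors together with $\beta$, a case missing from your list, and the deferred ``bookkeeping'' is precisely the computation with Propositions \ref{arrangement} and \ref{propos8} already performed in the proof of Theorem \ref{theoremequiv}. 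So: your completed portion is correct and is the paper's argument; to finish, replace the proposed multiplicity count by the observation that Corollary \ref{factorJ} identifies $\det\eta_D$ with $Q|_D$ directly.
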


\begin{proof}
By Corollary \ref{factorJ} we have that $J_k \sim {\mathcal I}({\mathcal A}_{D_k})$ on $D_k$ for  $k=l, m$.
Therefore $\alpha_l^{-1} J_m \sim {\mathcal I}({\mathcal A}_{D_m}\setminus \{D\})$ and $\alpha_m^{-1} J_l \sim {\mathcal I}({\mathcal A}_{D_l}\setminus \{D\})$ on $D$.
Hence by Theorem \ref{saitocodimensio2} determinant of $\eta_D$ is proportional to
$$
I_m I_l \prod_{\alpha \in \widetilde{\Delta}}  \alpha^2 = J_m J_l \alpha_m^{-1} \alpha_l^{-1} \sim {\mathcal I}({\mathcal A}_{D_m}\setminus \{D\}) {\mathcal I}({\mathcal A}_{D_l}\setminus \{D\})
$$
on $D$. Since  $ \{D\} = \mathcal{A}^D_{D_m}= \mathcal{A}^D_{D_l} $ and the rank of the root system ${\mathcal R}_D$ given by \eqref{RDdecompo} is 2, the statement follows. 
%For any $\widehat{H} \in \mathcal{A}_{D_m}$ let $\alpha_{\widehat{H}} \in \mathcal{R}$ be the corresponding covector such that  $\widehat{H}= \{x \in D_m | \alpha_{\widehat{H}}(x)=0\}$. Similarly for any $\widehat{H} \in \mathcal{A}_{D_l}$ we choose $\alpha_{\widehat{H}} \in \mathcal{R}$ such that $\widehat{H} = \{ x \in D_l | \alpha_{\widehat{H}}(x)=0\}$. We note that from Theorem \ref{saitocodimensio2}  and Corollary \ref{factorJ} $\operatorname{det}\eta_D$ is proportional to
%
%\begin{align}\label{cod2lhformula}
%\left. \alpha_l^{-1} J_m \right|_D  \left. \alpha_m^{-1} J_l \right|_D \sim
 %\prod_{\substack{ \widehat{H} \in \mathcal{A}_{D_m}\\ \widehat{H} \neq D}}\left. \alpha_{\widehat{H}} \right|_D \prod_{\substack{ \widehat{H} \in \mathcal{A}_{D_l}\\ \widehat{H} \neq D}}\left. \alpha_{\widehat{H}} \right|_D  \sim \prod_{H \in \mathcal{A}_D} l_H^{k_H},
%\end{align}
%
%where $l_H =\left.  \alpha_{\widehat{H}}\right|_D$ for $\widehat{H} \in \mathcal{A}_{D_m} \cup \mathcal{A}_{D_l}$ such that $H= \widehat{H} \cap D$, $k_H \in \mathbb{N}$. %Formula \eqref{cod2defininingeq} follows from \eqref{cod2lhformula}.
%\begin{align}\label{cod2defininingeq}
%I(\mathcal{A}_{D_m} \setminus \mathcal{A}^D_{D_m})I(\mathcal{A}_{D_l} \setminus \mathcal{A}^D_{D_l}),
%\end{align}
%on $D$.
\end{proof}

The above analysis shows that the statement of Theorem \ref{MMtheorem} for the strata of codimensions $1$, $2$ and $n-1$ is true. This covers all strata in finite Coxeter groups $I_2(p), H_3, H_4, F_4$. This leaves us to study simply laced cases $E_6, E_7, E_8$ only.

\subsection{\bf{Codimension} $\pmb{3}$}\label{codim3}
We consider now $(n-3)$-dimensional strata $D$ for simply laced  root systems $\mathcal{R} =E_n$, $n=6, 7, 8$, although the analysis works for any irreducible simply laced root system. The analysis  will depend on the types of the Coxeter graphs of the strata in addition to their (co)dimensions.
Thus we consider three possible cases for a stratum $D$, namely, it can be  of type $A_3$, $A_2 \times A_1$ or $A_1^3$.
%Let $\mathcal{R}_+$ be the positive root system of the root systems $E_n$, $n=6, 7, 8$, although the presented analysis below works for any irreducible simply laced root system. 
Let $\lambda, \nu, \theta \in \Delta$  so that $D= D_{\lambda, \nu, \theta}$.

 \textbf{Strata of type} $\bf{A_3}.$  Let us assume that the root system $\mathcal{R}_D=\mathcal{R}\cap \langle \lambda, \nu, \theta \rangle$ is a subsystem of $\mathcal{R}$ of type $A_3$ and consider the corresponding Coxeter subgraph
\begin{align*}
A_3 : \quad
\begin{dynkinDiagram}{A}{3}
\dynkinLabelRoots*{\lambda, \nu, \theta}
\end{dynkinDiagram}
\end{align*}
Note that $\lambda+\nu$, $\nu+\theta$, $\lambda +\nu +\theta$ $\in \mathcal{R}_+$. The Jacobian  $J$ can be represented as
\begin{equation}\label{JA3}
J= \lambda \nu \theta (\lambda+\nu)(\nu+\theta) (\lambda+\nu+\theta) \Pi,
\end{equation}
where $\Pi$ is proportional to ${\mathcal I}(\mathcal{A} \setminus \mathcal{A}^D)$. Note that  $\Pi$ is non-zero on $D$. By Proposition \ref{divisible} we have in the notation  \eqref{Jalpha}
\vspace{-0.3cm}
\begin{align}
J_\lambda&= \nu \theta (\nu+\theta) K_\lambda, \\
J_\nu&= \lambda \theta K_\nu, \label{JnuA3}\\
J_\theta&=\lambda \nu (\lambda+\nu) K_\theta, \label{Jtheta}
\end{align}
for some polynomials $K_\lambda, K_\theta, K_\nu \in \mathbb{C}[x]$.

 We assume without loss of generality that the ordering of simple roots $\sigma\colon \underline{n} \rightarrow \Delta$ is such that $n+ \sigma^{-1}(\lambda)$ is odd, and that $\sigma^{-1}(\nu)= \sigma^{-1}(\lambda)+1$, $\sigma^{-1}(\theta)= \sigma^{-1}(\lambda)+2$. The following statement follows from formula \eqref{notationJ_k} and formulae (\ref{JA3})--(\ref{Jtheta}). 

\begin{proposition}\label{identityA3}
The $\lambda$, $\nu$, and $\theta$ components of the identity field \eqref{identityfieldrestated} are given by
\begin{align}
e^\lambda &= - \frac{K_\lambda}{\lambda (\lambda+\nu) (\lambda+\nu+ \theta) \Pi}, \\
e^\nu&= \frac{K_\nu}{ \nu (\lambda+\nu) (\nu+\theta) (\lambda+\nu + \theta) \Pi}, \\
e^\theta &= -\frac{K_\theta}{\theta(\nu+\theta) (\lambda+\nu+\theta)\Pi}.
\end{align}
\end{proposition}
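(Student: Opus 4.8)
The plan is to substitute directly into formula \eqref{notationJ_k} and cancel common factors; no idea beyond careful bookkeeping is needed. Recall that \eqref{notationJ_k} writes each component as $e^\alpha = \epsilon_\alpha J_\alpha / J$ with $\epsilon_\alpha = (-1)^{n+\sigma^{-1}(\alpha)}$. First I would fix the three signs from the ordering convention imposed just before the statement: since $n + \sigma^{-1}(\lambda)$ is odd while $\sigma^{-1}(\nu) = \sigma^{-1}(\lambda)+1$ and $\sigma^{-1}(\theta) = \sigma^{-1}(\lambda)+2$, this gives $\epsilon_\lambda = -1$, $\epsilon_\nu = +1$ and $\epsilon_\theta = -1$.

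Next I would insert the Jacobian factorisation \eqref{JA3} together with the three factorisations of $J_\lambda$, $J_\nu$, $J_\theta$ displayed just above (these follow from Proposition \ref{divisible} applied to the $A_3$ subsystem $\mathcal{R}_D$, using that $\lambda + \theta$ is not a root since $(\lambda, \theta) = 0$, so that $J_\nu$ is divisible only by $\lambda\theta$). For the $\lambda$-component the numerator $\nu\theta(\nu+\theta)K_\lambda$ cancels the matching factors in the denominator $\lambda\nu\theta(\lambda+\nu)(\nu+\theta)(\lambda+\nu+\theta)\Pi$, leaving $e^\lambda = -K_\lambda/\bigl(\lambda(\lambda+\nu)(\lambda+\nu+\theta)\Pi\bigr)$. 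The identical cancellation, carried out with $\epsilon_\nu = +1$ and $\epsilon_\theta = -1$, yields the stated formulae for $e^\nu$ and $e^\theta$. Since $\Pi$ does not vanish on $D$, these rational expressions are well defined near $D$.

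The argument is entirely routine, so there is no genuine obstacle; the only point demanding attention is keeping the signs $\epsilon_\alpha$ consistent across the three components, which is precisely why the ordering of the simple roots was normalised in advance.
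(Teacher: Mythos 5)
Your proposal is correct and follows exactly the route the paper intends: the paper simply asserts that the proposition "follows from formula \eqref{notationJ_k} and formulae (\ref{JA3})--(\ref{Jtheta})", and your computation of the signs $\epsilon_\lambda=-1$, $\epsilon_\nu=+1$, $\epsilon_\theta=-1$ from the stated ordering convention, followed by cancellation of the common root factors between each $J_\alpha$ and $J$, is precisely that verification. The bookkeeping checks out in all three components.
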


In what follows, we deal with the restricted metric $\eta_D$ by restricting $\eta$ on $D_\nu$ firstly, then on $D_{\nu, \theta}$ and finally on $D$. Firstly, we derive certain relations between polynomials $K_\lambda, K_\theta, K_\nu$.

\begin{lemma}\label{structureKn}We have
\begin{equation}\label{Kn}
\left. K_\nu\right|_{D_\nu} =\left.  \lambda K_\theta + \theta B \right|_{D_\nu}, 
\end{equation}
for some polynomial $B \in \mathbb{C}[x]$ such that
\begin{equation} \label{B}
\left. B \right|_{D}= \left. K_\theta \right|_D. 
\end{equation}
\end{lemma}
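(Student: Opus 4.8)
The plan is to express $K_\nu,K_\theta,K_\lambda$ as a common factor times maximal minors of the polynomial matrix $(Q_{\rho i})$, $\partial_\rho t^i=\rho\,Q_{\rho i}$, introduced in the proof of Proposition \ref{relationJ}, and then to combine Proposition \ref{relationJ} on the two adjacent codimension-two walls of $D$ with one new computation on $D$ itself. Setting $\Xi=\prod_{\alpha\in\Delta\setminus\{\lambda,\nu,\theta\}}\alpha$ and factoring the linear form $\rho$ out of its column in each Jacobian minor $J_\rho$, relations \eqref{JnuA3}, \eqref{Jtheta} become $K_\nu=\Xi\,\mathcal D_\nu$, $(\lambda+\nu)K_\theta=\Xi\,\mathcal D_\theta$ and $(\nu+\theta)K_\lambda=\Xi\,\mathcal D_\lambda$, where $\mathcal D_\nu,\mathcal D_\theta,\mathcal D_\lambda$ are the maximal minors of $(Q_{\rho i})$ obtained by deleting the column $Q_\nu$, $Q_\theta$, $Q_\lambda$ respectively. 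In particular $I_\nu=K_\nu/\Xi$, $I_\theta=(\lambda+\nu)K_\theta/\Xi$ and $I_\lambda=(\nu+\theta)K_\lambda/\Xi$.

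To prove \eqref{Kn} I would apply Proposition \ref{relationJ} to the adjacent pair $(\nu,\theta)$ (here $|\mathcal R_+\cap\langle\nu,\theta\rangle|=3>2$); through the identities above its conclusion reads $K_\nu=\lambda K_\theta$ on $D_{\nu,\theta}=\Pi_\nu\cap\Pi_\theta$. Thus $(K_\nu-\lambda K_\theta)|_{D_\nu}$ vanishes on the hyperplane $\Pi_\theta\cap D_\nu$ of $D_\nu=\Pi_\nu$ and is therefore divisible by $\theta$ there; this produces a polynomial $B$ with $(K_\nu-\lambda K_\theta)|_{D_\nu}=\theta B|_{D_\nu}$, which is \eqref{Kn}.

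For \eqref{B} I would split it as $B|_D=K_\lambda|_D$ and $K_\lambda|_D=K_\theta|_D$. The first is immediate: Proposition \ref{relationJ} applied to the adjacent pair $(\lambda,\nu)$ gives $K_\nu=(\nu+\theta)K_\lambda=\theta K_\lambda$ on $D_{\lambda,\nu}=\Pi_\lambda\cap\Pi_\nu$, while \eqref{Kn} restricted to $D_{\lambda,\nu}$ (where the term $\lambda K_\theta$ drops) gives $K_\nu=\theta B$ there; cancelling $\theta$ yields $B|_{D_{\lambda,\nu}}=K_\lambda|_{D_{\lambda,\nu}}$, hence $B|_D=K_\lambda|_D$. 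The remaining identity $K_\lambda|_D=K_\theta|_D$ is the crux and the step I expect to be hardest, because the pair $(\lambda,\theta)$ is \emph{orthogonal} and so is not governed by Proposition \ref{relationJ}. I would treat it on $D$ directly, using $Q_\lambda|_D=Q_\nu|_D=Q_\theta|_D=:q$ (a consequence of the two adjacencies, exactly as in \eqref{equals}) together with an L'Hospital differentiation along $\nu$ to resolve the factors $\nu+\theta$ and $\lambda+\nu$. Since the two columns of $\mathcal D_\lambda$ (resp.\ $\mathcal D_\theta$) indexed by the collapsing roots equal $q$ on $D$, only two terms of the differentiated minor survive and recombine into a single minor of $(Q_{\rho i})|_D$ whose distinguished column is $\partial_\nu(Q_\theta-Q_\nu)|_D$ for $K_\lambda$ and $\partial_\nu(Q_\nu-Q_\lambda)|_D$ for $K_\theta$, with the same scalar prefactor $\Xi|_D/((\nu,\nu)+(\nu,\theta))$ and the same remaining frame of columns.

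It then remains to see that these two minors coincide, i.e.\ that their distinguished columns differ by $\partial_\nu(Q_\lambda+Q_\theta-2Q_\nu)|_D=0$. Differentiating $\partial_\rho t^i=\rho\,Q_{\rho i}$ twice along $\nu$ and restricting to $D$ gives $\partial_\nu Q_{\rho i}|_D=\dfrac{\partial_\rho\partial_\nu^2 t^i}{2(\rho,\nu)}\Big|_D$ for $\rho\in\{\lambda,\nu,\theta\}$; using $(\lambda,\nu)=(\theta,\nu)=-\tfrac12(\nu,\nu)$ the combination becomes a nonzero multiple of $\partial_\gamma\partial_\nu^2 t^i|_D$ with $\gamma=\lambda+\nu+\theta$. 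As $\gamma$ is the highest root of the $A_3$ subsystem it lies in $\mathcal R_D$ and vanishes on $D$, so $\partial_\gamma t^i=\gamma R_i$; differentiating twice along $\nu$ and restricting to $D$ leaves only $2(\nu,\gamma)\,\partial_\nu R_i|_D$, which is zero because $(\nu,\gamma)=(\nu,\lambda)+(\nu,\nu)+(\nu,\theta)=0$. This yields $K_\lambda|_D=K_\theta|_D$ and hence \eqref{B}. The careful tracking of column positions, signs and prefactors through the L'Hospital limit, all collapsing onto the single orthogonality $(\nu,\gamma)=0$, is where I anticipate the real work.
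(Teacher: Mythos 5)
Your proof of \eqref{Kn} coincides with the paper's: Proposition \ref{relationJ} applied to the adjacent pair $(\nu,\theta)$ gives $K_\nu=\lambda K_\theta$ on $D_{\nu,\theta}$, whence divisibility of $(K_\nu-\lambda K_\theta)|_{D_\nu}$ by $\theta$. For \eqref{B} you take a genuinely different route. The paper invokes Corollary \ref{factorJ} to see that $K_\nu|_{D_\nu}$ is divisible by $(\lambda+\nu+\theta)|_{D_\nu}=\lambda+\theta$, writes $\lambda K_\theta+\theta B=(\lambda+\theta)P$ on $D_\nu$, and reads off $B=P=K_\theta$ on $D$ by restricting to the two walls $D_{\nu,\lambda}$ and $D_{\nu,\theta}$. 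You instead reduce \eqref{B} to $B|_D=K_\lambda|_D$ (which is the paper's own argument for Lemma \ref{Klambda1}, run in the opposite logical direction) plus the orthogonal-pair identity $K_\lambda|_D=K_\theta|_D$, proved by differentiating the minors $\mathcal D_\lambda,\mathcal D_\theta$ once along $\nu$. I checked the sign bookkeeping you flag as the real work: ordering the columns as $\lambda,\nu,\theta,\dots$ one gets $\tfrac12(\nu,\nu)K_\lambda|_D=\Xi|_D\det[\partial_\nu(Q_\nu-Q_\theta),q,\dots]$ and $\tfrac12(\nu,\nu)K_\theta|_D=\Xi|_D\det[\partial_\nu(Q_\lambda-Q_\nu),q,\dots]$, so the difference is indeed governed by $\partial_\nu(Q_\lambda+Q_\theta-2Q_\nu)|_D$ and not by $\partial_\nu(Q_\lambda-Q_\theta)$ (which would have been fatal, since $\lambda-\theta$ is not a root); the vanishing then follows from $\partial_\gamma t^i=\gamma R_i$ together with $(\nu,\gamma)=0$ for $\gamma=\lambda+\nu+\theta$. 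Both arguments thus hinge on the highest root $\gamma$ of the $A_3$ subsystem but exploit different properties of it: the paper uses that $\gamma$ is a root, so $J_\nu$ vanishes on $\Pi_\gamma$; you use that $\gamma$ is a root orthogonal to $\nu$. The paper's proof is shorter and stays at the level of divisibility; yours is more computational but self-contained modulo Proposition \ref{relationJ}, and it establishes Lemma \ref{Klambda1} simultaneously rather than deducing it afterwards from \eqref{B}.
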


\begin{proof}
By Proposition \ref{relationJ}, $\frac{J_\nu}{\theta}=\frac{J_\theta}{\nu}$ on $D_{\nu, \theta}$, therefore $K_\nu= \lambda K_\theta$ on $D_{\nu, \theta}$ by formulae (\ref{JnuA3}), (\ref{Jtheta}). Consider $K_\nu - \lambda K_\theta$ on the hyperplane $D_\nu$. This polynomial vanishes if $\theta=0$. Therefore we can represent $K_\nu$ on $D_\nu$ as 
\begin{equation}
\left. K_\nu \right|_{D_\nu }= \left. \lambda K_\theta + \theta B\right|_{D_\nu},
\end{equation}
for some $B \in \mathbb{C}[x]$ as required. 

Furthermore, polynomial $K_\nu$ is divisible by $\lambda+\theta$ on $D_\nu$ since by Corollary \ref{factorJ} the restriction   $\left. J_\nu \right|_{D_\nu}$ is divisible by $\left. \lambda+ \nu +\theta \right|_{D_\nu}$. Hence,
\begin{equation}\label{KnP}
\left. K_\nu \right|_{D_\nu} = \left. \lambda K_\theta + \theta B\right|_{D_\nu}=\left.  (\lambda+\theta) P\right|_{D_\nu},
\end{equation}
for some $P \in \mathbb{C}[x]$. Moreover by restricting equality (\ref{KnP}) further on $D_{\nu, \lambda}$, we get that 
%
%\begin{equation}%\label{B=P}
$\left. B \right|_{D_{\nu,\lambda}}=\left. P\right|_{D_{\nu,\lambda}}$.
%\end{equation}
%
 Similarly, restricting equality \eqref{KnP}  on $D_{\nu, \theta}$, we get that $\left. P\right|_{D_{\nu,\theta}}=\left. K_\theta \right|_{D_{\nu,\theta}}$.  
%from equality (\ref{B=P}) 
 %
 %\vspace{-0.3cm}
 %\begin{equation*}
% $  B_D= K_\theta|_D$,
% \end{equation*} 
% as required. 
Relation \eqref{B} follows.
\end{proof}

\begin{lemma} \label{Klambda1}We have
\begin{equation}
 \left. K_\lambda \right|_D=  \left. K_\theta\right|_D. 
\end{equation}
\end{lemma}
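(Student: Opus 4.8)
The plan is to recycle the polynomial $P$ that already appears inside the proof of Lemma \ref{structureKn} and to show that \emph{both} $K_\lambda$ and $K_\theta$ restrict to it on $D$. Recall from that proof (equation \eqref{KnP} and the restrictions following it) that on the hyperplane $D_\nu$ there is a polynomial $P\in\mathbb{C}[x]$ with $\left.K_\nu\right|_{D_\nu}=(\lambda+\theta)P$, and that $\left.P\right|_D=\left.K_\theta\right|_D$; indeed, this is exactly the intermediate identity from which relation \eqref{B} was deduced. Consequently it suffices to prove that $\left.K_\lambda\right|_D=\left.P\right|_D$, since then $\left.K_\lambda\right|_D=\left.P\right|_D=\left.K_\theta\right|_D$.

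To tie $K_\lambda$ to $K_\nu$ I would apply Proposition \ref{relationJ} to the codimension-two stratum $D_{\lambda,\nu}$. This is legitimate since $\lambda,\nu$ and $\lambda+\nu$ all lie in $\mathcal{R}_+$, so that $|\mathcal{R}_+\cap\langle\lambda,\nu\rangle|=3>2$. Setting $\widetilde{\Delta}=\Delta\setminus\{\lambda,\nu,\theta\}$ and using the factorisations $J_\nu=\lambda\theta K_\nu$ and $J_\lambda=\nu\theta(\nu+\theta)K_\lambda$ from \eqref{JnuA3}, \eqref{Jtheta} together with the definition \eqref{defn}, one computes $I_\nu=K_\nu\prod_{\alpha\in\widetilde{\Delta}}\alpha^{-1}$ and $I_\lambda=(\nu+\theta)K_\lambda\prod_{\alpha\in\widetilde{\Delta}}\alpha^{-1}$. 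With the ordering convention $\sigma^{-1}(\nu)=\sigma^{-1}(\lambda)+1$ the sign $(-1)^{l-m-1}$ in Proposition \ref{relationJ} equals $+1$, so $\left.I_\nu\right|_{D_{\lambda,\nu}}=\left.I_\lambda\right|_{D_{\lambda,\nu}}$. Since $\nu+\theta=\theta$ on $D_{\lambda,\nu}$ and the common factor $\prod_{\alpha\in\widetilde{\Delta}}\alpha$ is non-vanishing there, this gives $\left.K_\nu\right|_{D_{\lambda,\nu}}=\theta\,\left.K_\lambda\right|_{D_{\lambda,\nu}}$.

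Finally I would restrict the identity $\left.K_\nu\right|_{D_\nu}=(\lambda+\theta)P$ to $D_{\lambda,\nu}\subset D_\nu$, that is, set $\lambda=0$, obtaining $\left.K_\nu\right|_{D_{\lambda,\nu}}=\theta\,\left.P\right|_{D_{\lambda,\nu}}$. Comparing with the previous step and cancelling the non-zero-divisor $\theta$ in the coordinate ring of $D_{\lambda,\nu}$ yields $\left.K_\lambda\right|_{D_{\lambda,\nu}}=\left.P\right|_{D_{\lambda,\nu}}$; one further restriction to $D$ (setting $\theta=0$) produces $\left.K_\lambda\right|_D=\left.P\right|_D=\left.K_\theta\right|_D$, which is the assertion.

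I expect the only delicate points to be bookkeeping rather than substance. First, one must carry out the successive restrictions (to the hyperplane $D_\nu$, then to $D_{\lambda,\nu}$, then to $D$) in a consistent order so that each divisibility claim remains meaningful at the stage it is used; this is the same care already exercised in the proof of Lemma \ref{structureKn}. Second, one must confirm that the sign $(-1)^{l-m-1}$ of Proposition \ref{relationJ} evaluates to $+1$ for the pair $(\lambda,\nu)$ under the chosen numbering of $\lambda,\nu,\theta$ — precisely the normalisation that already trivialised the sign for the pair $(\nu,\theta)$ in Lemma \ref{structureKn}. Everything else is the routine cancellation of the non-vanishing product $\prod_{\alpha\in\widetilde{\Delta}}\alpha^{-1}$ and of the linear forms $\theta$ and $\lambda+\theta$.
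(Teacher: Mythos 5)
Your proof is correct and follows essentially the same route as the paper's: both arguments use Proposition \ref{relationJ} on $D_{\nu,\lambda}$ to obtain $K_\nu=\theta K_\lambda$ there, then restrict the decomposition of $\left.K_\nu\right|_{D_\nu}$ from the proof of Lemma \ref{structureKn} to $\lambda=0$ and cancel $\theta$. The only cosmetic difference is that you work with the auxiliary polynomial $P$ from \eqref{KnP} where the paper works with $B$ from \eqref{Kn}; since $\left.B\right|_{D_{\nu,\lambda}}=\left.P\right|_{D_{\nu,\lambda}}$ and both restrict to $K_\theta$ on $D$, the two versions are interchangeable.
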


\begin{proof}
By Proposition \ref{relationJ}, we have $\frac{J_\nu}{\lambda}= \frac{J_\lambda}{\nu}$ on $D_{\nu, \lambda}$ and hence $K_\nu= \theta K_\lambda$ on $D_{\nu, \lambda}$. It follows from equality (\ref{Kn}) that $\left. K_\nu \right|_{D_{\nu, \lambda}}= \left. \theta B \right|_{D_{\nu, \lambda}}$,  hence $\left. K_\lambda \right|_{D_{\nu, \lambda}}= \left. B \right|_{D_{\nu, \lambda}}$.  The statement now follows from formula \eqref{B}.
\end{proof}

Theorem \ref{Saitovo} gives a general formula for the determinant of the Saito metric $\eta_D$ which we now specialize to the case of codimension $3$ stratum. Let us represent $J$ given by formula (\ref{JA3})  as $J= \lambda \nu \theta \bar{J}$, where
\begin{equation} \label{barJ}
\bar{J}=(\lambda+\nu)(\nu+\theta) (\lambda+\nu+\theta) \Pi.
\end{equation}
We will write components of Saito metric $\eta^{\sigma^{-1}(\alpha)\sigma^{-1}(\beta)}$ as $\eta^{\alpha \beta}$, $\alpha, \beta \in \Delta$. We rearrange $\operatorname{det} \eta_D$ as
\begin{align}\label{detA3}
\operatorname{det}\eta_D&=-
\begin{vmatrix}
\eta^{\lambda \lambda} & \eta^{\lambda \nu} & \eta^{\lambda \theta} \\
\eta^{\lambda \nu} & \eta^{\nu \nu} & \eta^{\nu \theta}\\
\eta^{\lambda \theta} & \eta^{ \nu \theta} & \eta^{\theta \theta}
\end{vmatrix} 
 \left. J^2 \right|_D =-\begin{vmatrix}
\lambda^2 \eta^{\lambda \lambda}  & \lambda \nu \eta^{\lambda \nu} &\lambda \theta  \eta^{\lambda \theta} \\
\lambda \nu \eta^{\lambda \nu}  & \nu^2 \eta^{\nu \nu}  & \nu \theta \eta^{\nu \theta}\\
 \lambda \theta  \eta^{\lambda \theta}& \nu \theta  \eta^{ \nu \theta} &\theta^2  \eta^{\theta \theta} 
\end{vmatrix}  \left. \bar{J}^2 \right|_D. 
\end{align}
Let $A=(a_{ij})_{i,j=1}^3$ be the matrix 
\begin{align}\label{Amatrix}
A=\begin{pmatrix}
\lambda^2 \eta^{\lambda \lambda}  & \lambda \nu \eta^{\lambda \nu} &\lambda \theta  \eta^{\lambda \theta} \\
\lambda \nu \eta^{\lambda \nu}  & \nu^2 \eta^{\nu \nu}  & \nu \theta \eta^{\nu \theta}\\
 \lambda \theta  \eta^{\lambda \theta}& \nu \theta  \eta^{ \nu \theta} &\theta^2  \eta^{\theta \theta} 
\end{pmatrix} .
\end{align}
%
%Let us recall the basis of fundamental coweights $\omega^i$ ($i=1, \dots, n$), 
We will also write $\omega^\alpha$ for the corresponding fundamental coweight  $\omega^{\sigma^{-1}(\alpha)}$ given by formula \eqref{fundamcow}. Then we have  
\begin{equation*}
(\omega^\alpha, \beta)= \begin{dcases} 1, & \alpha=\beta \\  0,& \alpha \neq \beta. \end{dcases}
\end{equation*}

\begin{proposition}\label{A}
The matrix entries $a_{ij}$ ($1 \leq i, j \leq 3$) are well-defined generically on $D_\nu$, and they have the following form on $D_\nu$:
\begin{alignat}{2}
 a_{11}&=\lambda^2 \eta^{\lambda \lambda}  = 2 \lambda^2 \partial_{\omega^\lambda} \Big( \frac{K_\lambda}{\lambda^2 (\lambda+\theta) \Pi}\Big), \label{fo1}\\
a_{22}&= \nu^2 \eta^{\nu \nu}  =  \frac{2 K_\nu}{\lambda \theta (\lambda +\theta) \Pi}, \label{fo2} \\
a_{33}&= \theta^2 \eta^{\theta \theta}  = 2 \theta^2 \partial_{\omega^\theta} \Big( \frac{K_\theta}{\theta^2 (\lambda+\theta) \Pi}\Big),\label{fo3}\\
a_{12}&= \lambda \nu \eta^{\lambda \nu}  = - \frac{\lambda}{\theta} \partial_{\omega^\lambda} \Big( \frac{K_\nu}{\lambda (\lambda +\theta)\Pi}\Big),\label{fo4}\\
a_{13}&=\lambda \theta \eta^{\lambda \theta}   = \frac{\lambda}{\theta} \partial_{\omega^\lambda} \Big( \frac{K_\theta}{(\lambda+\theta)\Pi}\Big) + \frac{\theta}{\lambda} \partial_{\omega^\theta} \Big( \frac{K_\lambda}{(\lambda+\theta)\Pi}\Big),\label{fo5}\\
a_{23}&= \nu \theta \eta^{\nu \theta}=- \frac{\theta}{\lambda} \partial_{\omega^\theta} \Big( \frac{K_\nu}{ \theta (\lambda+\theta)\Pi}\Big)\label{fo6}. 
\end{alignat}
\end{proposition}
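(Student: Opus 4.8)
The plan is to start from the representation of the contravariant metric in terms of the identity field. Combining Proposition \ref{Saitom} with formulae \eqref{identityfield} and \eqref{notationJ_k}, one rewrites each entry as $\eta^{\alpha\beta} = -\partial_{\omega^\alpha}e^\beta - \partial_{\omega^\beta}e^\alpha$ for $\alpha,\beta\in\{\lambda,\nu,\theta\}$, and then substitutes the explicit components $e^\lambda,e^\nu,e^\theta$ supplied by Proposition \ref{identityA3}. The geometric input that drives the whole computation is that the coweights $\omega^\lambda$ and $\omega^\theta$ are tangent to the hyperplane $D_\nu=\Pi_\nu$, since $(\omega^\lambda,\nu)=(\omega^\theta,\nu)=0$ by \eqref{fundamcow}, whereas $\omega^\nu$ is transverse, as $(\omega^\nu,\nu)=1$. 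Consequently $\partial_{\omega^\lambda}$ and $\partial_{\omega^\theta}$ commute with restriction to $\nu=0$, while $\partial_{\omega^\nu}$ does not. I would also record that $\partial_{\omega^\lambda}\theta=\partial_{\omega^\theta}\lambda=0$ by \eqref{fundamcow}, which lets one pull the factors $\theta^{-1}$ and $\lambda^{-1}$ through the corresponding derivatives.

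For the diagonal entries I would argue as follows. Since $e^\lambda$ and $e^\theta$ carry no pole along $\nu=0$ at a generic point of $D_\nu$, the tangency of $\omega^\lambda,\omega^\theta$ gives $\left.\partial_{\omega^\lambda}e^\lambda\right|_{D_\nu}=\partial_{\omega^\lambda}(\left.e^\lambda\right|_{D_\nu})$, and restricting the denominators in Proposition \ref{identityA3} $((\lambda+\nu)\mapsto\lambda,\ (\lambda+\nu+\theta)\mapsto(\lambda+\theta))$ yields \eqref{fo1} and, symmetrically, \eqref{fo3}. The entry $a_{22}=-2\nu^2\partial_{\omega^\nu}e^\nu$ is the one place a pole matters: writing $e^\nu=\nu^{-1}g$ with $g=K_\nu/((\lambda+\nu)(\nu+\theta)(\lambda+\nu+\theta)\Pi)$ regular on $D_\nu$, one gets $a_{22}=2g-2\nu\,\partial_{\omega^\nu}g$, whose transverse-derivative term carries a factor $\nu$ and hence vanishes on $\nu=0$, leaving $\left.a_{22}\right|_{D_\nu}=2\left.g\right|_{\nu=0}$, which is \eqref{fo2}.

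The off-diagonal entries follow the same pole bookkeeping. In $a_{12}=-\lambda\nu(\partial_{\omega^\lambda}e^\nu+\partial_{\omega^\nu}e^\lambda)$ the prefactor $\nu$ cancels the $\nu^{-1}$ pole of $e^\nu$ through $\nu\partial_{\omega^\lambda}e^\nu=\partial_{\omega^\lambda}g$, whereas $\nu\partial_{\omega^\nu}e^\lambda$ acquires an honest factor $\nu$ and drops out on $D_\nu$; pulling $\theta^{-1}$ through $\partial_{\omega^\lambda}$ then gives \eqref{fo4}, and $a_{23}$ is handled identically to produce \eqref{fo6}. Finally $a_{13}=-\lambda\theta(\partial_{\omega^\lambda}e^\theta+\partial_{\omega^\theta}e^\lambda)$ involves only the pole-free components $e^\lambda,e^\theta$, so tangency lets one restrict term by term and pull out $\theta^{-2}$ and $\lambda^{-2}$ respectively, giving the two summands of \eqref{fo5}.

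The one point requiring care — and the only genuine obstacle — is the claim that the entries are well-defined generically on $D_\nu$: the individual functions $\eta^{\alpha\beta}$ are singular along the mirrors of $\mathcal R_D$, and it must be checked that the prefactors $\lambda^2,\nu^2,\theta^2,\lambda\nu,\lambda\theta,\nu\theta$ clear precisely the poles along $\nu=0$ that enter through the $\nu^{-1}$ factor of $e^\nu$. The analysis above shows that this is exactly what happens, so each $a_{ij}$ extends regularly to a generic point of $D_\nu$ and the stated formulae hold.
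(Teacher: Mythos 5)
Your proposal is correct and follows essentially the same route as the paper: it combines Proposition \ref{Saitom} with the explicit components of $e$ from Proposition \ref{identityA3}, and then the Leibniz rule together with the limit $\nu\to 0$ yields each entry, the only nontrivial cancellations being exactly the ones you track via the tangency of $\omega^\lambda,\omega^\theta$ to $D_\nu$ and the relations $\partial_{\omega^\lambda}\theta=\partial_{\omega^\theta}\lambda=0$. The paper states this computation more tersely ("by Leibniz rule and taking the limit"), so your write-up simply makes the same argument explicit.
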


%
%The matrix entries $a_{ij}$ are well-defined generically on $D_\nu$ and have the form
%\begin{alignat}{2}
%a_{11}&=\eta^{\lambda \lambda} \lambda^2 =2 (\partial_{\omega^\lambda} \frac{K_\lambda}{\Pi}) (\lambda+\theta)^{-1} - 2 \frac{K_\lambda}{\Pi} \Big(  2 \lambda^{-1} (\lambda+\theta)^{-1} + (\lambda+\theta)^{-2} \Big), \\
%a_{22}&= \eta^{\nu \nu} \nu^2 = 2 \frac{K_\nu}{\Pi} \Big(  \lambda^{-1} \theta^{-1} (\lambda+\theta)^{-1}\Big), \\
%a_{33}&= \eta^{\theta \theta} \theta^2 =2 (\partial_{\omega^\theta} \frac{K_\theta}{\Pi}) (\lambda+\theta)^{-1} -2 \frac{K_\theta}{\Pi}\Big ( 2 \theta^{-1} (\lambda+\theta)^{-1} + (\lambda+\theta)^{-2}\Big),\\
%a_{12}&= \eta^{\lambda \nu} \lambda \nu =- (\partial_{\omega^\lambda} \frac{K_\nu}{\Pi}) \theta^{-1} (\lambda+\theta)^{-1} + \frac{K_\nu}{\Pi} \Big( \lambda^{-1} \theta^{-1} (\lambda+\theta)^{-1} + \theta^{-1} (\lambda+\theta)^{-2} \Big),\\
%a_{13}&=\eta^{\lambda \theta} \lambda \theta  =(\partial_{\omega^\lambda} \frac{K_\theta}{\Pi}) \lambda \theta^{-1} (\lambda+\theta)^{-1} - \frac{K_\theta}{\Pi} \Big (  \lambda \theta^{-1} (\lambda+\theta)^{-2} \Big) \\
%&  \hspace{1.63cm} +(\partial_{\omega^\theta} \frac{K_\lambda}{\Pi}) \theta \lambda^{-1} (\lambda+\theta)^{-1}-\frac{K_\lambda}{\Pi} \Big ( \theta \lambda^{-1} (\lambda+\theta)^{-2} \Big),\\
%a_{23}&= \eta^{\nu \theta}\nu \theta= -(\partial_{\omega^\theta} \frac{K_\nu}{\Pi}) \lambda^{-1} (\lambda+\theta)^{-1} + \frac{K_\nu}{\Pi}  \Big( \lambda^{-1} \theta^{-1} (\lambda+\theta)^{-1} + \lambda^{-1} (\lambda+\theta)^{-2} \Big).
%\end{alignat}

\begin{proof}
By Proposition \ref{Saitom}  we have $\eta^{\alpha\beta} = - \partial_{\omega^{\alpha}}e^{\beta} - \partial_{\omega^{\beta}}e^{\alpha}$ for $\alpha, \beta \in \{ \lambda, \mu, \nu\}$,
where $e^\alpha$ is given by formula  \eqref{notationJ_k}.  Formulae (\ref{fo1}), (\ref{fo3}), (\ref{fo5}) follow from Proposition \ref{identityA3} immediately.  Let us prove formula (\ref{fo2}). We have
 \vspace{-0.2cm}
  \begin{equation*} \nu^2 \eta^{\nu \nu}= -2 \nu^2 \partial_{\omega^\nu} \Big( \frac{K_\nu}{\nu (\lambda+\nu)(\nu+\theta)(\lambda+\nu+\theta)\Pi}\Big).
  \end{equation*}
  By Leibniz rule and taking the limit $\nu(x) \rightarrow 0$ we obtain the formula. Formulae (\ref{fo4}), (\ref{fo6}) follow similarly. 
\end{proof}

By Proposition \ref{A} we see that the entries of $A$ may be singular on $D_{\nu, \theta}$. Therefore, in order to restrict $\bar{J}^2 \operatorname{det}A$ on $D_{\nu, \theta}$ we consider the expansion of $\operatorname{det}A$ and collect the terms with the same order of poles at $\theta=0$. 
Let $\operatorname{det}A$ be 
\begin{equation}\label{detA}
\operatorname{det}A= C + E,
\end{equation} 
where
\begin{equation*}
C=-a_{12}^2 a_{33} + 2 a_{12} a_{23} a_{13} - a_{13}^2 a_{22},
\end{equation*}
and 
\begin{equation*}
E= a_{11} \big( a_{22} a_{33} - a^2_{23}\big).
\end{equation*}
Note that $E$ has a pole at $\theta=0$ of order at most $2$. Let us study the term $C$ near $\theta=0$. 

\begin{lemma}\label{C1,C2}We have 
\begin{equation}\label{C}
C=\frac{1}{\theta^3} C_1+ \frac{1}{\theta^2} C_2,
\end{equation}
where $C_1$, $C_2$ are well-defined generically on $D_{\nu,\theta}$ and have the following form on $D_\nu$:
\begin{align*}
C_1 &=(\lambda+\theta)^{-3} \Bigg( \frac{4 K_\theta}{\Pi}  \Big(  \frac{K_\nu}{\Pi} \Big( \lambda^{-1}  +  (\lambda+\theta)^{-1} \Big) - \partial_{\omega^\lambda} \frac{K_\nu}{\Pi} \Big)^2 \\ & -    \frac{2 K_\nu}{\Pi}  
 \Big( \frac{K_\nu}{\Pi} \Big( \lambda^{-1}   + (\lambda+\theta)^{-1} \Big)- \partial_{\omega^\lambda} \frac{K_\nu}{\Pi}\Big) \times \\
 &\times \Big(  \frac{K_\theta}{\Pi}   ( \lambda+\theta)^{-1}- \partial_{\omega^\lambda} \frac{K_\theta}{\Pi} \Big) - \frac{2 \lambda K_\nu}{\Pi} \Big(  \frac{K_\theta}{\Pi}  ( \lambda+\theta)^{-1}  - \partial_{\omega^\lambda} \frac{K_\theta}{\Pi}  \Big)^2 \Bigg), 
\end{align*}
\begin{align}\label{C2}
C_2 & =(\lambda+\theta)^{-3} \Bigg ( 2 \Big( \partial_{\omega^\lambda} \frac{K_
\nu}{\Pi}   - \frac{K_\nu}{\Pi} \Big( \lambda^{-1}  + (\lambda + \theta)^{-1} \Big) \Big)^2  \Big( \frac{K_\theta}{\Pi} (\lambda+\theta)^{-1} - \partial_{\omega^\theta} \frac{K_
\theta}{\Pi} \Big) \nonumber\\ & + 2 \Big(  \partial_{\omega^\theta} \frac{K_
\nu}{\Pi}   - \frac{K_\nu}{\Pi} (\lambda+\theta)^{-1} \Big)
 \Big( \partial_{\omega^\lambda} \frac{K_
\nu}{\Pi} - \frac{K_\nu}{\Pi} \Big( \lambda^{-1}+ (\lambda + \theta)^{-1} \Big) \Big) \times \nonumber \\
&\times  \Big(  \partial_{\omega^\lambda} \frac{K_
\theta}{\Pi}   - \frac{K_\theta}{\Pi}  (\lambda+\theta)^{-1} \Big) + 2 \theta^2\lambda^{-2} \Big( \partial_{\omega^\theta} \frac{K_\lambda}{\Pi}-\frac{K_\lambda}{\Pi}  (\lambda+\theta)^{-1} \Big) \times \nonumber \\
& \times \Big(  \frac{K_\nu}{\Pi} \Big( \lambda^{-1}   + (\lambda+\theta)^{-1} \Big)- \partial_{\omega^\lambda} \frac{K_\nu}{\Pi} \Big)  \Big(  \frac{K_\nu}{\Pi}  \Big(  \theta^{-1} +  (\lambda+\theta)^{-1} \Big)-\partial_{\omega^\theta} \frac{K_\nu}{\Pi} \Big) - \nonumber \\
& -  2 \theta \lambda^{-1}  \frac{ K_\nu}{\Pi}    \Bigg( (\theta \lambda^{-1})^2 \Big( \partial_{\omega^\theta} \frac{K_\lambda}{\Pi} - \frac{K_\lambda}{\Pi}  (\lambda+\theta)^{-1} \Big)^2 + \nonumber \\
&+  2  \Big( \partial_{\omega^\theta} \frac{K_\lambda}{\Pi} -\frac{K_\lambda}{\Pi}  (\lambda+\theta)^{-1} \Big) \Big( \partial_{\omega^\lambda} \frac{K_\theta}{\Pi} - \frac{K_\theta}{\Pi}  (\lambda+\theta)^{-1} \Big) \Bigg)  \Bigg).
\end{align}
\end{lemma}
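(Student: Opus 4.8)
The plan is to feed the explicit expressions for $a_{12}, a_{13}, a_{22}, a_{23}, a_{33}$ furnished by Proposition \ref{A} (valid on $D_\nu$) into $C = -a_{12}^2 a_{33} + 2 a_{12}a_{23}a_{13} - a_{13}^2 a_{22}$ and then to read off the pole part in $\theta$. First I would carry out the derivatives $\partial_{\omega^\lambda}$ and $\partial_{\omega^\theta}$ using the Leibniz and quotient rules together with $(\omega^\alpha,\beta)=\delta_{\alpha\beta}$ for $\alpha,\beta\in\{\lambda,\nu,\theta\}$, so that $\partial_{\omega^\lambda}\lambda=\partial_{\omega^\theta}\theta=1$, $\partial_{\omega^\lambda}\theta=\partial_{\omega^\theta}\lambda=0$, $\partial_{\omega^\lambda}(\lambda+\theta)=\partial_{\omega^\theta}(\lambda+\theta)=1$, while $\partial_{\omega^\lambda},\partial_{\omega^\theta}$ act as derivations on the quotients $K_\bullet/\Pi$. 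This rewrites each entry in the form $a_{ij}=\theta^{-1}\beta_{ij}+\gamma_{ij}$, where $\beta_{ij}$ and $\gamma_{ij}$ are rational functions whose only denominators are powers of $\lambda$, $(\lambda+\theta)$ and $\Pi$, all nonvanishing generically on $D_{\nu,\theta}$. Two features of this normal form are decisive: $a_{12}$ and $a_{22}$ are pure simple poles, i.e.\ $\gamma_{12}=\gamma_{22}=0$, whereas $\gamma_{13}=\frac{\theta}{\lambda(\lambda+\theta)}\bigl(\partial_{\omega^\theta}\frac{K_\lambda}{\Pi}-\frac{K_\lambda}{(\lambda+\theta)\Pi}\bigr)$ is divisible by $\theta$.

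Next I would substitute these decompositions into $C$ and organise the resulting monomials by the number of residue factors $\beta_{ij}$ (each contributing a pole $\theta^{-1}$) versus regular factors $\gamma_{ij}$. The monomials built from residue factors alone give the leading $\theta^{-3}$ behaviour, and I would set
\[
C_1 = -\beta_{12}^2\beta_{33} + 2\beta_{12}\beta_{23}\beta_{13} - \beta_{22}\beta_{13}^2 .
\]
Every other monomial carries at least one factor $\gamma_{ij}$, and such a monomial has pole order at most two in $\theta$: either the factor is $\gamma_{23}$ or $\gamma_{33}$, which leaves at most two residue factors, or it is $\gamma_{13}$, whose divisibility by $\theta$ lowers the order by one. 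Hence $C-\theta^{-3}C_1$ has pole order at most two, so $C_2:=\theta^{2}\bigl(C-\theta^{-3}C_1\bigr)$ is regular at $\theta=0$ and the identity $C=\theta^{-3}C_1+\theta^{-2}C_2$ holds by construction, with $C_1,C_2$ well-defined on $D_{\nu,\theta}$.

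Finally I would verify that these $C_1$ and $C_2$ coincide with the displayed formulas by inserting the explicit $\beta_{ij},\gamma_{ij}$. Writing $u=\frac{K_\nu}{\Pi}(\lambda^{-1}+(\lambda+\theta)^{-1})-\partial_{\omega^\lambda}\frac{K_\nu}{\Pi}$ and $v=\frac{K_\theta}{\Pi}(\lambda+\theta)^{-1}-\partial_{\omega^\lambda}\frac{K_\theta}{\Pi}$, one has $\beta_{12}=\frac{u}{\lambda+\theta}$, $\beta_{13}=-\frac{\lambda v}{\lambda+\theta}$, $\beta_{22}=\frac{2K_\nu}{\lambda(\lambda+\theta)\Pi}$, $\beta_{23}=\frac{K_\nu}{\lambda(\lambda+\theta)\Pi}$, $\beta_{33}=-\frac{4K_\theta}{(\lambda+\theta)\Pi}$, and substitution into the combination displayed above reproduces the stated $C_1=(\lambda+\theta)^{-3}\bigl(\frac{4K_\theta}{\Pi}u^2-\frac{2K_\nu}{\Pi}uv-\frac{2\lambda K_\nu}{\Pi}v^2\bigr)$; the analogous substitution for $\gamma_{23},\gamma_{33},\gamma_{13}$, retaining the $\theta$-rescaling that converts the monomials involving $\gamma_{13}$ into the $\theta^{2}\lambda^{-2}$ and $\theta\lambda^{-1}$ contributions, reproduces the formula \eqref{C2} for $C_2$. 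I expect the main obstacle to be entirely organisational: keeping the Leibniz expansions error-free and, crucially, recognising that the monomials containing $\gamma_{13}$ — which at first sight threaten to create a spurious $\theta^{-1}$ pole — are in fact regular after the $\theta^{2}$ rescaling because of the explicit factor $\theta$ in $\gamma_{13}$. It is precisely this cancellation that makes the two-term expansion $C=\theta^{-3}C_1+\theta^{-2}C_2$ exact.
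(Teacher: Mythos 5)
Your proposal is correct and is essentially the paper's own argument: the paper likewise splits $a_{33}$ and $a_{23}$ into their $\theta^{-1}$ and regular parts (with $a_{12}$, $a_{22}$ pure simple poles and the second summand of $a_{13}$ in Proposition \ref{A} carrying the explicit factor $\theta$), substitutes into $C=-a_{12}^2a_{33}+2a_{12}a_{23}a_{13}-a_{13}^2a_{22}$, and collects terms by pole order at $\theta=0$. Your $\beta_{ij},\gamma_{ij}$ bookkeeping and the observation that $\theta\mid\gamma_{13}$ kills the would-be $\theta^{-1}$ terms just make explicit what the paper leaves to the reader.
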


\begin{proof} We expand formulae (\ref{fo3}), (\ref{fo6}) as 
\begin{alignat*}{2}
a_{33}& = 2 \partial_{\omega^\theta} \Big( \frac{K_\theta}{ (\lambda+\theta) \Pi}\Big) -  \frac{4 K_\theta}{\theta (\lambda+\theta) \Pi}, \\
a_{23}& = - \frac{1}{\lambda} \partial_{\omega^\theta} \Big( \frac{K_\nu}{(\lambda+\theta)\Pi}\Big)+\frac{K_\nu}{\lambda \theta (\lambda+\theta) \Pi} .
\end{alignat*}
Then expressions for $C_1$, $C_2$ follow by Proposition \ref{A} using Leibniz rule and by collecting terms with the same degree of the pole at $\theta=0$. 
\end{proof}

%By Lemma \ref{structureKn} we have 
%
%\begin{equation}\label{KnuDnuA3}
%\left. K_\nu \right|_{D_{\nu}} = \left. \lambda K_\theta + \theta B \right|_{D_\nu}, 
%\end{equation}
%
%and hence we can represent $K_\nu$ as 
%
%\begin{equation}\label{KnuglobalA3}
%K_\nu = \lambda K_\theta + \theta B + \nu Q,
%\end{equation}
%
%or some polynomial $Q \in \mathbb{C}[x]$. Therefore we get
%

%\begin{equation}\label{partialomegalambdaKn}
%\left.  \partial_{\omega^{\lambda}}\frac{K_\nu}{\Pi} \right|_{D_\nu}= \left.\partial_{\omega^{\lambda}} \frac{\lambda K_\theta + \theta B}{\Pi}\right|_{D_\nu}.
%\end{equation}
 
%since for $\lambda, \mu \in \Delta$ we have
%
%\begin{equation*}
%(\omega^\lambda, \mu)= \begin{dcases} 1, & \lambda=\mu \\  0,& \lambda \neq \mu. \end{dcases}
%\end{equation*}
%
\begin{lemma}\label{C1,C2,next} The function $\left. C_1\right|_{D_\nu}$ is divisible by $\theta$, that is we can represent it as $$C_1|_{D_\nu} =( \widetilde{C}_1 + F \theta) \theta|_{D_\nu},$$ where $\widetilde{C}_{1}$, $F$ are well-defined generically on $D_{\nu, \theta}$ and have the following form on $D_\nu$: 
\begin{align}\label{widetildeC1}
\widetilde{C}_1 = (\lambda+\theta)^{-3} \Bigg(&-4 \lambda \frac{B}{\Pi} \Big( \partial_{\omega^\lambda} \frac{ K_\theta}{\Pi}   - \frac{K_\theta}{\Pi} (\lambda + \theta)^{-1} \Big) ^2  + 6 \lambda  \frac{K_\theta}{\Pi}    \Big( \partial_{\omega^\lambda} \frac{ K_\theta}{\Pi} - \frac{K_\theta}{\Pi}    (\lambda + \theta)^{-1} \Big) \times \nonumber \\
&\times \Big( \widehat{B}- \frac{B}{\Pi} \Big( \lambda^{-1} + (\lambda + \theta)^{-1} \Big)\Big)\Bigg),
\end{align}
and 
\begin{align*}
F &=(\lambda+\theta)^{-3} \Bigg( 4  \Big(\frac{K_\theta}{\Pi}  \Big) \Big( -\widehat{B} + \frac{B}{\Pi} \Big( \lambda^{-1} + (\lambda+\theta)^{-1} \Big)\Big)^2 - \\ 
&-  \frac{2 B}{\Pi}\Big( -\widehat{B}+ \frac{B}{\Pi} \Big( \lambda^{-1} + (\lambda+\theta)^{-1} \Big)\Big) \Big( - \partial_{\omega^\lambda} \frac{K_\theta}{\Pi} + \frac{K_\theta}{\Pi} (\lambda+\theta)^{-1} \Big)\Bigg),
\end{align*}
where $\widehat{B}= \partial_{\omega^\lambda} \frac{B}{\Pi}$.
\end{lemma}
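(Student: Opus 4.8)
The plan is to reduce every quantity on $D_\nu$ to the three polynomials $K_\theta$, $B$ and $\Pi$ by means of the structural identity $\left.K_\nu\right|_{D_\nu}=\left.\lambda K_\theta+\theta B\right|_{D_\nu}$ from Lemma~\ref{structureKn}, and then to expand the explicit formula for $C_1$ in Lemma~\ref{C1,C2} in powers of $\theta$. First I would record the elementary rules $\partial_{\omega^\lambda}\lambda=(\lambda,\omega^\lambda)=1$ and $\partial_{\omega^\lambda}\theta=(\theta,\omega^\lambda)=0$, which hold because $\lambda,\theta\in\Delta$ are distinct simple roots and $\omega^\lambda$ is the dual fundamental coweight from \eqref{fundamcow}. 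Applying the Leibniz rule to $K_\nu/\Pi=\lambda K_\theta/\Pi+\theta B/\Pi$ then yields, on $D_\nu$,
\[
\partial_{\omega^\lambda}\frac{K_\nu}{\Pi}=\frac{K_\theta}{\Pi}+\lambda\,\partial_{\omega^\lambda}\frac{K_\theta}{\Pi}+\theta\,\widehat{B},
\]
with $\widehat{B}=\partial_{\omega^\lambda}(B/\Pi)$ exactly as in the statement.

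To control the bookkeeping I would set
\[
X:=(\lambda+\theta)^{-1}\frac{K_\theta}{\Pi}-\partial_{\omega^\lambda}\frac{K_\theta}{\Pi},\qquad Y:=\Big(\lambda^{-1}+(\lambda+\theta)^{-1}\Big)\frac{B}{\Pi}-\widehat{B}.
\]
A direct substitution then shows that the repeated combination in $C_1$ factorises on $D_\nu$ as $\big(\lambda^{-1}+(\lambda+\theta)^{-1}\big)\tfrac{K_\nu}{\Pi}-\partial_{\omega^\lambda}\tfrac{K_\nu}{\Pi}=\lambda X+\theta Y$, while the other recurring factor $(\lambda+\theta)^{-1}\tfrac{K_\theta}{\Pi}-\partial_{\omega^\lambda}\tfrac{K_\theta}{\Pi}$ equals $X$ itself. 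Rewriting the formula of Lemma~\ref{C1,C2} in these terms turns it into
\[
C_1\big|_{D_\nu}=(\lambda+\theta)^{-3}\Big[\,4\frac{K_\theta}{\Pi}(\lambda X+\theta Y)^2-2\frac{K_\nu}{\Pi}(\lambda X+\theta Y)X-2\lambda\frac{K_\nu}{\Pi}X^2\,\Big].
\]

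Finally I would substitute $K_\nu/\Pi=\lambda K_\theta/\Pi+\theta B/\Pi$ once more and expand, collecting terms by the explicit power of $\theta$. The decisive point is that the terms free of an explicit $\theta$ combine into $\lambda^2X^2\tfrac{K_\theta}{\Pi}(4-2-2)=0$, so that $C_1|_{D_\nu}$ is divisible by $\theta$; this is the divisibility assertion. The coefficient of $\theta^1$ equals $(\lambda+\theta)^{-3}\big(6\lambda\tfrac{K_\theta}{\Pi}XY-4\lambda\tfrac{B}{\Pi}X^2\big)$, which, upon using $X=-\big(\partial_{\omega^\lambda}\tfrac{K_\theta}{\Pi}-(\lambda+\theta)^{-1}\tfrac{K_\theta}{\Pi}\big)$ and $Y=-\big(\widehat{B}-(\lambda^{-1}+(\lambda+\theta)^{-1})\tfrac{B}{\Pi}\big)$, is exactly the claimed $\widetilde{C}_1$; the coefficient of $\theta^2$ reproduces $F$ in the same way. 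Both $\widetilde{C}_1$ and $F$ are manifestly regular wherever $\lambda+\theta$ and $\Pi$ are non-zero, i.e. generically on $D_{\nu,\theta}$, as required. The only real obstacle is the discipline of the expansion: one must keep the explicit occurrences of $\theta$ strictly separate from the $\theta$ concealed inside each factor $(\lambda+\theta)^{-1}$ and track the signs through $X$ and $Y$, since it is precisely the numerical identity $4-2-2=0$ that forces the $\theta^0$-term to cancel and thereby delivers the divisibility by $\theta$.
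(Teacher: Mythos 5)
Your proof is correct and follows essentially the same route as the paper: substitute $K_\nu|_{D_\nu}=\lambda K_\theta+\theta B$ from Lemma \ref{structureKn}, use the identity \eqref{somerelationA3} (which you rederive via $\partial_{\omega^\lambda}\lambda=1$, $\partial_{\omega^\lambda}\theta=0$), and collect powers of $\theta$ in the expression for $C_1$ from Lemma \ref{C1,C2}. Your $X,Y$ bookkeeping simply makes explicit the cancellation $4-2-2=0$ that the paper leaves implicit, and your coefficients of $\theta$ and $\theta^2$ match $\widetilde{C}_1$ and $F$ exactly.
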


\begin{proof}
Note that 
\begin{equation}\label{somerelationA3}
 \frac{\lambda K_\theta}{\Pi}\big( \lambda^{-1} + (\lambda+\theta)^{-1} \big) - \partial_{\omega^\lambda} \frac{\lambda K_\theta}{\Pi} = \lambda\big( \frac{K_\theta}{\Pi}(\lambda+\theta)^{-1} - \partial_{\omega^\lambda} \frac{K_\theta}{\Pi} \big).
\end{equation}
The statement follows by substituting formulae  % (\ref{KnuDnuA3}), 
\eqref{Kn}  and   its consequence
\begin{equation*}\label{partialomegalambdaKn}
\left.  \partial_{\omega^{\lambda}}\frac{K_\nu}{\Pi} \right|_{D_\nu}= \left.\partial_{\omega^{\lambda}} \frac{\lambda K_\theta + \theta B}{\Pi}\right|_{D_\nu}
\end{equation*}
into $C_1$, collecting equal powers of $\theta$ and making use of (\ref{somerelationA3}). 
\end{proof}

\begin{lemma}\label{lemmaCitilde} We have
\begin{align*}
\left. \widetilde{C}_1  \right|_{D_{\nu, \theta}} &= 2 \lambda^{-4} \Bigg(  4B \frac{K_\theta^2}{\Pi^3} -2 B \lambda \frac{ K_\theta}{\Pi^2}  \partial_{\omega^\lambda} \frac{K_\theta}{\Pi} - 3 \widehat{B} \lambda  \frac{ K_\theta^2}{\Pi^2} -2 \frac{B \lambda^2}{\Pi}  (\partial_{\omega^\lambda} \frac{K_\theta}{\Pi})^2 +3 \widehat{B} \lambda^2 \frac{ K_\theta}{\Pi} \partial_{\omega^\lambda} \frac{K_\theta}{\Pi} \Bigg)\bigg\rvert_{D_{\nu, \theta}}.
\end{align*}
\end{lemma}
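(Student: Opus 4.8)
The plan is to read off $\widetilde{C}_1|_{D_{\nu,\theta}}$ directly from the explicit expression for $\widetilde{C}_1$ on $D_\nu$ recorded in Lemma~\ref{C1,C2,next}, since passing from $D_\nu$ to $D_{\nu,\theta}$ amounts to the further specialisation $\theta=0$. The key point is that the right-hand side of that expression is regular at $\theta=0$ for generic $x\in D_{\nu,\theta}$, where $\lambda(x)\neq 0$ and $\Pi(x)\neq 0$; hence no separate limiting or pole-cancellation argument is required and one may substitute $\theta=0$ termwise. Under this substitution $(\lambda+\theta)^{-1}\mapsto\lambda^{-1}$, so that $(\lambda+\theta)^{-3}\mapsto\lambda^{-3}$, and the combination $\lambda^{-1}+(\lambda+\theta)^{-1}$ occurring in the factor $\widehat{B}-\frac{B}{\Pi}\bigl(\lambda^{-1}+(\lambda+\theta)^{-1}\bigr)$ becomes $2\lambda^{-1}$.

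Carrying this out, the first summand of $\widetilde{C}_1$ becomes $-4\lambda^{-2}\frac{B}{\Pi}\bigl(\partial_{\omega^\lambda}\frac{K_\theta}{\Pi}-\lambda^{-1}\frac{K_\theta}{\Pi}\bigr)^2$ and the second becomes $6\lambda^{-2}\frac{K_\theta}{\Pi}\bigl(\partial_{\omega^\lambda}\frac{K_\theta}{\Pi}-\lambda^{-1}\frac{K_\theta}{\Pi}\bigr)\bigl(\widehat{B}-2\lambda^{-1}\frac{B}{\Pi}\bigr)$. I would then expand the square in the first and the product in the second, and collect the result according to the five monomials $\frac{B}{\Pi}\bigl(\partial_{\omega^\lambda}\frac{K_\theta}{\Pi}\bigr)^2$, $\frac{B}{\Pi}\frac{K_\theta}{\Pi}\partial_{\omega^\lambda}\frac{K_\theta}{\Pi}$, $\frac{B}{\Pi}\bigl(\frac{K_\theta}{\Pi}\bigr)^2$, $\widehat{B}\frac{K_\theta}{\Pi}\partial_{\omega^\lambda}\frac{K_\theta}{\Pi}$ and $\widehat{B}\bigl(\frac{K_\theta}{\Pi}\bigr)^2$, each weighted by the appropriate power of $\lambda$. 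Comparing the coefficients of these five monomials with the claimed right-hand side $2\lambda^{-4}(\cdots)$ then yields the identity.

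The computation is elementary, so the only real difficulty is the bookkeeping of signs and powers of $\lambda$. The three monomials $\frac{B}{\Pi}\bigl(\partial_{\omega^\lambda}\frac{K_\theta}{\Pi}\bigr)^2$, $\widehat{B}\frac{K_\theta}{\Pi}\partial_{\omega^\lambda}\frac{K_\theta}{\Pi}$ and $\widehat{B}\bigl(\frac{K_\theta}{\Pi}\bigr)^2$ each arise from a single summand and reproduce the coefficients $-2$, $+3$ and $-3$ of the statement at once. The remaining two monomials receive contributions from both summands: the numerical coefficient of $\frac{B}{\Pi}\frac{K_\theta}{\Pi}\partial_{\omega^\lambda}\frac{K_\theta}{\Pi}$ combines as $8-12=-4$, and that of $\frac{B}{\Pi}\bigl(\frac{K_\theta}{\Pi}\bigr)^2$ combines as $-4+12=8$; after extracting the common factor $2\lambda^{-4}$ these agree with the coefficients $-2$ and $4$ in the stated formula. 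This confirms the claim, the anticipated source of error being merely the careful tracking of the $\lambda^{-1}$ factors produced by differentiating the rational expressions.
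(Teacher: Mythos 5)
Your proposal is correct and is exactly the paper's argument: the paper also proves this lemma by directly restricting formula \eqref{widetildeC1} to $D_{\nu,\theta}$, i.e.\ setting $\theta=0$ termwise and expanding. Your coefficient bookkeeping ($8-12=-4$ and $-4+12=8$, together with the single-source terms $-4\lambda^{-2}$, $6\lambda^{-2}$, $-6\lambda^{-3}$) matches the stated right-hand side after factoring out $2\lambda^{-4}$.
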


\begin{proof}
The statement follows immediately from the restriction of formula (\ref{widetildeC1}) to the stratum $D_{\nu, \theta}$.
\end{proof}

Let us now consider the term $C_2$ in equality (\ref{C}). The restriction of $C_2$ to $D_{\nu, \theta}$ is given in the following lemma. 

\begin{lemma} \label{lemmaC2}We have
\begin{align*}
\left. C_2  \right|_{D_{\nu, \theta}} & = 2\lambda^{-2}  \frac{B}{\Pi} \Big(  \partial_{\omega^\lambda} \frac{K_
\theta}{\Pi}-\frac{K_\theta}{\Pi}\lambda^{-1} \Big)^2 \bigg\rvert_{D_{\nu, \theta}}.
\end{align*}
\end{lemma}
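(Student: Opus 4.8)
The plan is to start from the explicit formula \eqref{C2} for $C_2$, which is already valid on $D_\nu$, and to compute its limit as $\theta \to 0$ (this is precisely the further restriction to $D_{\nu,\theta}$). Throughout I would substitute the structural relation $\left.K_\nu\right|_{D_\nu} = \left.(\lambda K_\theta + \theta B)\right|_{D_\nu}$ from Lemma~\ref{structureKn}, so that $\left.\frac{K_\nu}{\Pi}\right|_{D_\nu} = \lambda\frac{K_\theta}{\Pi} + \theta\frac{B}{\Pi}$, together with its consequences for the directional derivatives. Since $\partial_{\omega^\lambda}\lambda = \partial_{\omega^\theta}\theta = 1$ and $\partial_{\omega^\lambda}\theta = \partial_{\omega^\theta}\lambda = 0$ by \eqref{fundamcow}, these read
\begin{align*}
\partial_{\omega^\lambda}\tfrac{K_\nu}{\Pi}\Big|_{D_\nu} &= \tfrac{K_\theta}{\Pi} + \lambda\,\partial_{\omega^\lambda}\tfrac{K_\theta}{\Pi} + \theta\,\partial_{\omega^\lambda}\tfrac{B}{\Pi}, &
\partial_{\omega^\theta}\tfrac{K_\nu}{\Pi}\Big|_{D_\nu} &= \tfrac{B}{\Pi} + \lambda\,\partial_{\omega^\theta}\tfrac{K_\theta}{\Pi} + \theta\,\partial_{\omega^\theta}\tfrac{B}{\Pi}.
\end{align*}

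First I would dispose of the two summands of \eqref{C2} that are proportional to $\theta^2\lambda^{-2}$ and to $\theta\lambda^{-1}\frac{K_\nu}{\Pi}$, by power counting in $\theta$. In the $\theta^2$-summand the only singular factor is $\frac{K_\nu}{\Pi}\bigl(\theta^{-1}+(\lambda+\theta)^{-1}\bigr)$, which by the substitution produces at most a simple pole $\lambda\frac{K_\theta}{\Pi}\theta^{-1}$; the remaining two factors are regular at $\theta=0$, so the whole summand is $O(\theta)$ and vanishes in the limit. The $\theta$-summand has regular prefactor $\frac{K_\nu}{\Pi}\to\lambda\frac{K_\theta}{\Pi}$ and a bracket that is regular at $\theta=0$ (its $\theta^2$-part drops out), so it too is $O(\theta)$ and vanishes. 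Thus only the first two summands of \eqref{C2} survive.

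Next I would take the $\theta\to 0$ limit of the two surviving summands factor by factor. Using the substitutions above one finds that $\partial_{\omega^\lambda}\frac{K_\nu}{\Pi} - \frac{K_\nu}{\Pi}\bigl(\lambda^{-1}+(\lambda+\theta)^{-1}\bigr)$ tends to $\lambda\bigl(\partial_{\omega^\lambda}\frac{K_\theta}{\Pi} - \frac{K_\theta}{\Pi}\lambda^{-1}\bigr)$, that $\frac{K_\theta}{\Pi}(\lambda+\theta)^{-1} - \partial_{\omega^\theta}\frac{K_\theta}{\Pi}$ tends to $\frac{K_\theta}{\Pi}\lambda^{-1} - \partial_{\omega^\theta}\frac{K_\theta}{\Pi}$, and that $\partial_{\omega^\theta}\frac{K_\nu}{\Pi} - \frac{K_\nu}{\Pi}(\lambda+\theta)^{-1}$ tends to $\frac{B}{\Pi} - \frac{K_\theta}{\Pi} + \lambda\,\partial_{\omega^\theta}\frac{K_\theta}{\Pi}$. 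Multiplying by the overall prefactor $(\lambda+\theta)^{-3}\to\lambda^{-3}$ and factoring the common square $\bigl(\partial_{\omega^\lambda}\frac{K_\theta}{\Pi}-\frac{K_\theta}{\Pi}\lambda^{-1}\bigr)^2$ out of the sum, the residual scalar bracket becomes $\lambda\bigl(\frac{K_\theta}{\Pi}\lambda^{-1}-\partial_{\omega^\theta}\frac{K_\theta}{\Pi}\bigr) + \bigl(\lambda\partial_{\omega^\theta}\frac{K_\theta}{\Pi}+\frac{B}{\Pi}-\frac{K_\theta}{\Pi}\bigr)$, in which the terms $\frac{K_\theta}{\Pi}$ and $\pm\lambda\partial_{\omega^\theta}\frac{K_\theta}{\Pi}$ cancel, leaving exactly $\frac{B}{\Pi}$. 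This produces the claimed value $\left.C_2\right|_{D_{\nu,\theta}} = 2\lambda^{-2}\frac{B}{\Pi}\bigl(\partial_{\omega^\lambda}\frac{K_\theta}{\Pi}-\frac{K_\theta}{\Pi}\lambda^{-1}\bigr)^2$.

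The hard part will be the careful bookkeeping of the $\theta\to 0$ limits in the presence of the $\theta^{-1}$ and $(\lambda+\theta)^{-1}$ singularities: one must check that the explicit powers of $\theta$ in the discarded summands genuinely dominate the induced poles, and, more delicately, that in the two surviving summands all dependence on $\partial_{\omega^\theta}\frac{K_\theta}{\Pi}$ cancels so that the residual bracket collapses precisely to $\frac{B}{\Pi}$. This final cancellation is the crucial step and is exactly where the relation of Lemma~\ref{structureKn} enters.
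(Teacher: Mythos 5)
Your proposal is correct and follows essentially the same route as the paper: restrict \eqref{C2} to $D_{\nu,\theta}$, discard the two summands carrying explicit factors of $\theta^2$ and $\theta$, and substitute the relations $K_\nu|_{D_\nu}=\lambda K_\theta+\theta B$ and $\partial_{\omega^\theta}\frac{K_\nu}{\Pi}|_{D_{\nu,\theta}}=\lambda\partial_{\omega^\theta}\frac{K_\theta}{\Pi}+\frac{B}{\Pi}$ so that the $\partial_{\omega^\theta}\frac{K_\theta}{\Pi}$ terms cancel and the bracket collapses to $\frac{B}{\Pi}$. The only (cosmetic) difference is the order of operations — the paper restricts first and then substitutes, while you substitute on $D_\nu$ and then take $\theta\to 0$, with a slightly more explicit power-counting justification for dropping the third and fourth summands.
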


\begin{proof} Restricting formula (\ref{C2}) to $D_{\nu, \theta}$ we get
\begin{align*}
\left. C_2  \right|_{D_{\nu, \theta}} & =2\lambda^{-3} \Bigg (  \Big( \partial_{\omega^\lambda} \frac{K_
\nu}{\Pi}   - \frac{2 K_\nu}{ \lambda \Pi} \Big)^2  \Big( - \partial_{\omega^\theta} \frac{K_
\theta}{\Pi}  +  \frac{K_\theta}{\lambda \Pi} \Big) +  \Big(  \partial_{\omega^\theta} \frac{K_
\nu}{\Pi}   - \frac{K_\nu}{\lambda \Pi} \Big)
 \Big( \partial_{\omega^\lambda} \frac{K_
\nu}{ \Pi} - \frac{2 K_\nu}{\lambda \Pi}  \Big) \times \\
&\times  \Big(  \partial_{\omega^\lambda} \frac{K_
\theta}{\Pi}   - \frac{K_\theta}{\lambda \Pi}  \Big) \Bigg)\bigg\rvert_{D_{\nu, \theta}}.
\end{align*}
It follows from \eqref{Kn} that
%(\ref{KnuDnuA3}) that 
%
\begin{equation}\label{KnudnuthetaA3}
\left. K_\nu \right|_{D_{\nu, \theta}} = \left. \lambda K_\theta \right|_{D_{\nu, \theta}},
\end{equation}
and hence
\begin{equation}\label{partialomegalambdaKnudnutheta}
\left. \partial_{\omega^\lambda} \frac{K_\nu}{\Pi} \right|_{D_{\nu, \theta}} = \left. \partial_{\omega^\lambda} \frac{\lambda K_\theta}{\Pi} \right|_{D_{\nu, \theta}} . 
\end{equation}
We also have from (\ref{Kn}) that 
\begin{equation}\label{partialomegathetaKndnutheta}
\left. \partial_{\omega^\theta} \frac{K_\nu}{\Pi} \right|_{D_{\nu, \theta}}  = \left. \lambda \partial_{\omega^\theta} \frac{K_\theta}{\Pi} + \frac{B}{\Pi} \right|_{D_{\nu, \theta}} .
\end{equation}
By using (\ref{KnudnuthetaA3}), (\ref{partialomegalambdaKnudnutheta}) we get 
\begin{equation*}
\left. \partial_{\omega^\lambda} \frac{K_\nu}{\Pi}  - \frac{2 K_\nu}{\lambda \Pi} \right|_{D_{\nu, \theta}} = \left. \lambda \partial_{\omega^\lambda} \frac{K_\theta}{\Pi} - \frac{K_\theta}{\Pi} \right|_{D_{\nu, \theta}} .
\end{equation*}
Hence, 
\begin{equation}\label{C2Dnutheta}
\left. C_2 \right|_{D_{\nu, \theta}} = \left. 2 \lambda^{-3} \big( \lambda \partial_{\omega^\lambda} \frac{K_\theta}{\Pi} - \frac{K_\theta}{\Pi}\big)^2  \Big( - \partial_{\omega^\theta} \frac{K_\theta}{\Pi} + \frac{K_\theta}{\lambda \Pi} + \lambda^{-1} ( \partial_{\omega^\theta} \frac{K_\nu}{\Pi} - \frac{K_\nu}{\lambda \Pi} ) \Big) \right|_{D_{\nu, \theta}} . 
\end{equation}
The statement follows from formula (\ref{C2Dnutheta}) after substituting expressions (\ref{KnudnuthetaA3}) and (\ref{partialomegathetaKndnutheta}). 
\end{proof}

It follows from Lemmas \ref{C1,C2}, \ref{C1,C2,next} that function $C|_{D_\nu}$ has (at most) the second order pole on $D_{\nu, \theta}$. In the next lemma we find the coefficient at this second order pole in its Laurent expansion.

\begin{lemma}\label{ingr1} Let $z=\theta^2 C.$ Then
\begin{align}\label{zfunction}
\left. z \right|_{D_{\nu, \theta}}&= 2 \lambda^{-4} \Bigg(  4B \frac{K_\theta^2}{\Pi^3} -2 B \lambda \frac{ K_\theta}{\Pi^2}  (\partial_{\omega^\lambda} \frac{K_\theta}{\Pi})  - 3 \widehat{B} \lambda  \frac{ K_\theta^2}{\Pi^2} -2 \frac{B \lambda^2}{\Pi}  (\partial_{\omega^\lambda} \frac{K_\theta}{\Pi})^2 +3 \widehat{B} \lambda^2 \frac{ K_\theta}{\Pi} \partial_{\omega^\lambda} \frac{K_\theta}{\Pi} \Bigg)\bigg\rvert_{D_{\nu, \theta}} \nonumber\\ & + 2\lambda^{-2}  \frac{B}{\Pi} \Big(  \partial_{\omega^\lambda} \frac{K_
\theta}{\Pi} -\frac{K_\theta}{\Pi}\lambda^{-1} \Big)^2 \bigg\rvert_{D_{\nu, \theta}}.
\end{align}
%
%where $\widehat{B}= \partial_{\omega^\lambda}\frac{B}{\Pi}$.  
Further to that, 
\begin{equation}\label{zto4}
\left.  \lambda^4  z \right|_{D} = 10 \left. \big(\frac{K_\theta}{\Pi}\big)^3 \right|_{D}.
\end{equation}
\end{lemma}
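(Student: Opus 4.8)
The plan is to assemble the statement directly from the Laurent-expansion data already recorded in Lemmas \ref{C1,C2}, \ref{C1,C2,next}, \ref{lemmaCitilde} and \ref{lemmaC2}. By definition $z=\theta^2 C$, so the decomposition $C=\theta^{-3}C_1+\theta^{-2}C_2$ of Lemma \ref{C1,C2} gives $z=\theta^{-1}C_1+C_2$. First I would invoke Lemma \ref{C1,C2,next}, which factors $C_1|_{D_\nu}=(\widetilde{C}_1+F\theta)\theta|_{D_\nu}$; dividing by $\theta$ yields $z|_{D_\nu}=\widetilde{C}_1+F\theta+C_2$ on $D_\nu$. Passing to the further restriction $D_{\nu,\theta}$ amounts to taking $\theta\to 0$, and since $F$ is regular there the term $F\theta$ drops out, leaving $z|_{D_{\nu,\theta}}=\widetilde{C}_1|_{D_{\nu,\theta}}+C_2|_{D_{\nu,\theta}}$. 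Substituting the explicit restrictions from Lemmas \ref{lemmaCitilde} and \ref{lemmaC2} then reproduces formula \eqref{zfunction} verbatim.

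For the final identity \eqref{zto4} I would multiply \eqref{zfunction} through by $\lambda^4$ and organise the result by powers of $\lambda$. In the $\lambda^{-4}$ block only the term $4B K_\theta^2/\Pi^3$ survives multiplication by $\lambda^4$ without acquiring a positive power of $\lambda$, contributing $8BK_\theta^2/\Pi^3$ after the overall factor $2$. In the $\lambda^{-2}$ block I would expand the square $\bigl(\partial_{\omega^\lambda}(K_\theta/\Pi)-\lambda^{-1}K_\theta/\Pi\bigr)^2$ and observe that, after multiplying by $2\lambda^2 B/\Pi$, only the $\lambda^{-2}K_\theta^2/\Pi^2$ piece is free of positive powers of $\lambda$, giving $2BK_\theta^2/\Pi^3$. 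Restricting to $D$ sends $\lambda\to 0$, so every term still carrying a factor $\lambda$ or $\lambda^2$ vanishes and one is left with $\lambda^4 z|_D=(8+2)BK_\theta^2/\Pi^3=10\,BK_\theta^2/\Pi^3$ on $D$. Finally I would use relation \eqref{B} of Lemma \ref{structureKn}, namely $B|_D=K_\theta|_D$, to replace $B$ and conclude $\lambda^4 z|_D=10(K_\theta/\Pi)^3|_D$, as required.

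The computation is essentially bookkeeping, and the only genuinely delicate points are matters of the order of restriction. The expressions for $C_1$, $C_2$, $\widetilde{C}_1$ and $F$ are individually singular along $D_{\nu,\theta}$, and the $\lambda^{-4}$, $\lambda^{-2}$ prefactors make $z|_{D_{\nu,\theta}}$ singular along $\lambda=0$ as well; I must therefore keep the restriction to $D_\nu$ in force throughout, take the limit $\theta\to 0$ only in the prescribed order, and check at each stage that the combination being restricted is regular even though its summands are not. The one thing to verify carefully is that $F\theta$ genuinely tends to zero, i.e. that $F$ carries no pole strong enough to survive the multiplication by $\theta$; this is precisely the assertion that $F$ is well-defined generically on $D_{\nu,\theta}$ in Lemma \ref{C1,C2,next}. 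Granting that, no input beyond the algebraic collection of terms and the substitution $B|_D=K_\theta|_D$ is needed, and I expect this verification of regularity under the staged restriction — rather than any hard computation — to be the main subtlety.
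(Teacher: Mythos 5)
Your argument is correct and coincides with the paper's own proof: both derive $z|_{D_{\nu,\theta}}=(\widetilde{C}_1+C_2)|_{D_{\nu,\theta}}$ from the decomposition in Lemmas \ref{C1,C2} and \ref{C1,C2,next} using the regularity of $F$, substitute Lemmas \ref{lemmaCitilde} and \ref{lemmaC2}, and then extract the constant term $10\,B K_\theta^2/\Pi^3$ before applying $B|_D=K_\theta|_D$ from Lemma \ref{structureKn}. No gaps.
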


\begin{proof}
By Lemmas \ref{lemmaCitilde}, \ref{lemmaC2} we have
\begin{align*}
  z|_{D_{\nu, \theta}}&=  (  {\theta}^{-1} C_1 +C_2)|_{D_{\nu, \theta}}=   (\widetilde{C}_{1}+ C_2)|_{D_{\nu, \theta}} ,
\end{align*}
since $F$ is regular on $D_{\nu, \theta}$ by Lemma \ref{C1,C2,next}. This implies (\ref{zfunction}). Furthermore, 
\begin{equation*}
\left. \lambda^4 z \right|_{D}= \left.10 B \frac{K_\theta^2}{\Pi^3}\right|_{D},
\end{equation*}
which implies the relation \eqref{zto4} by Lemma \ref{structureKn}.
\end{proof}

Finally, we consider the term $E$ in the expression \eqref{detA} for $\operatorname{det}A$. Note that $\theta^2 E$ is well-defined generically at $\theta=0$. We obtain the following result. 

\begin{lemma}\label{ingr2}We have
\begin{equation}\label{theta^2E}
\left. \theta^2 E \right|_{D_{\nu, \theta}} =- \frac{18 K_\theta^2}{\Pi^2}   \partial_{\omega^\lambda} \Big( \frac{K_\lambda}{ \Pi}\lambda^{-3}\Big) \bigg\rvert_{D_{\nu, \theta}}. 
\end{equation}
Furthermore,
\begin{equation}
\left. \lambda^4  \theta^2 E \right|_{D} = 54 \big(\frac{K_\theta}{\Pi}\big)^3\bigg\rvert_{D},
\end{equation}
where we take restrictions on $\nu=0$ at first, then on $\theta=0$ and then on $\lambda=0$. 
\end{lemma}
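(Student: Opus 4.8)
The plan is to start from the decomposition $E = a_{11}\big(a_{22}a_{33} - a_{23}^2\big)$ recorded after \eqref{detA} and to track the order of the pole at $\theta = 0$ of each factor using the formulae of Proposition \ref{A} valid on $D_\nu$. Inspection shows that $a_{11}$ is regular at $\theta = 0$ (the only $\theta$-dependence in its denominator is through $\lambda+\theta\neq 0$), whereas each of $a_{22}$, $a_{33}$, $a_{23}$ carries a simple pole there; hence $a_{22}a_{33}-a_{23}^2$ has a pole of order at most two and $\theta^2 E$ is regular on $D_{\nu,\theta}$. Consequently
\begin{equation*}
\left.\theta^2 E\right|_{D_{\nu,\theta}} = \left. a_{11}\right|_{\theta=0}\cdot\left[\theta^2\big(a_{22}a_{33}-a_{23}^2\big)\right]_{\theta=0},
\end{equation*}
so that only the leading Laurent coefficients of $a_{22},a_{33},a_{23}$ at $\theta=0$ enter, the regular parts being multiplied by a vanishing power of $\theta$.

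First I would extract these leading coefficients from the expansions already used in the proof of Lemma \ref{C1,C2}: writing $a_{22}=\theta^{-1}\psi_{22}$, $a_{33}=\theta^{-1}\psi_{33}+(\mathrm{reg})$ and $a_{23}=\theta^{-1}\psi_{23}+(\mathrm{reg})$, one reads off $\psi_{22}|_{\theta=0}=\tfrac{2K_\nu}{\lambda^2\Pi}$, $\psi_{33}|_{\theta=0}=-\tfrac{4K_\theta}{\lambda\Pi}$ and $\psi_{23}|_{\theta=0}=\tfrac{K_\nu}{\lambda^2\Pi}$. This gives
\begin{equation*}
\left[\theta^2\big(a_{22}a_{33}-a_{23}^2\big)\right]_{\theta=0} = \left.\Big(-\frac{8K_\nu K_\theta}{\lambda^3\Pi^2}-\frac{K_\nu^2}{\lambda^4\Pi^2}\Big)\right|_{\theta=0}.
\end{equation*}
Restricting to $D_{\nu,\theta}$ and using the relation $K_\nu=\lambda K_\theta$ obtained from \eqref{Kn} collapses the two terms into $-\tfrac{9K_\theta^2}{\lambda^2\Pi^2}$. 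For the factor $a_{11}|_{\theta=0}$ I would note that $\partial_{\omega^\lambda}\theta=(\omega^\lambda,\theta)=0$, so $\partial_{\omega^\lambda}$ is tangent to $\{\theta=0\}$ and commutes with the restriction, while $\partial_{\omega^\lambda}(\lambda+\theta)=\partial_{\omega^\lambda}\lambda$; hence $a_{11}|_{\theta=0}=2\lambda^2\partial_{\omega^\lambda}\big(K_\lambda\lambda^{-3}\Pi^{-1}\big)$. Multiplying the two pieces yields \eqref{theta^2E}.

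For the second displayed identity I would multiply \eqref{theta^2E} by $\lambda^4$ and apply the Leibniz rule to $\lambda^4\partial_{\omega^\lambda}\big(\phi\lambda^{-3}\big)$ with $\phi=K_\lambda/\Pi$, obtaining $\lambda\,\partial_{\omega^\lambda}\phi-3\phi$; since $\phi$ is regular on $D$ (because $\Pi$ does not vanish there), the first summand vanishes at $\lambda=0$ and the restriction equals $-3\phi|_D=-3\left.\tfrac{K_\lambda}{\Pi}\right|_D$. Finally Lemma \ref{Klambda1} gives $K_\lambda|_D=K_\theta|_D$, so $\left.\lambda^4\theta^2E\right|_D = -18\tfrac{K_\theta^2}{\Pi^2}\cdot\big(-3\tfrac{K_\theta}{\Pi}\big)\big|_D = 54(K_\theta/\Pi)^3|_D$. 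The main obstacle is the careful bookkeeping of the double pole at $\theta=0$: one must verify that $a_{11}$ is genuinely regular, that only the leading $\theta^{-1}$ coefficients of $a_{22},a_{33},a_{23}$ survive in the product, and that the operator $\partial_{\omega^\lambda}$ may be interchanged with the successive restrictions $\theta=0$ and $\lambda=0$. Once this is justified, the remaining steps are the algebraic identities above together with Lemmas \ref{structureKn} and \ref{Klambda1}.
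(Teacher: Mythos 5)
Your proposal is correct and follows essentially the same route as the paper: extracting the leading Laurent coefficients of $a_{22},a_{33},a_{23}$ at $\theta=0$ from Proposition \ref{A}, noting that $a_{11}$ is regular there, collapsing the two terms via $K_\nu|_{D_{\nu,\theta}}=\lambda K_\theta|_{D_{\nu,\theta}}$, and finishing with the Leibniz rule and Lemma \ref{Klambda1}. The extra care you take about commuting $\partial_{\omega^\lambda}$ with the restrictions is a correct (and welcome) elaboration of what the paper leaves implicit.
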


\begin{proof}By Proposition \ref{A} we have
\begin{align*}
\left. \theta^2 E \right|_{D_{\nu, \theta}} &= 2 \lambda^2 \partial_{\omega^\lambda} \Big( \frac{K_\lambda}{ \Pi}\lambda^{-3}\Big) \Big( -8  \frac{K_\nu K_\theta}{ \Pi^2} \lambda^{-3} - \big(\frac{K_\nu}{\Pi}\big)^2 \lambda^{-4}\Big)\bigg\rvert_{D_{\nu, \theta}},
\end{align*}
which implies (\ref{theta^2E})  since  $\left. K_\nu \right|_{D_{\nu, \theta}}= \left. \lambda K_\theta \right|_{D_{\nu, \theta}}$ by Lemma \ref{structureKn}. Therefore 
\begin{align*}
\left.  \lambda^4  \theta^2 E \right|_{D}& = 54 \frac{K_\theta^2 K_\lambda}{\Pi^3} =  \left. 54 \big(\frac{K_\theta}{\Pi}\big)^3\right|_{D}
\end{align*}
by Lemma \ref{Klambda1}. 
\end{proof}

Using the above we have the following result.
\begin{theorem}\label{theoremA3}
The determinant of the metric $\eta_D$ is equal  to  $ -64 \Pi^{-1} K_\theta^3 $ restricted on $D$. 
\end{theorem}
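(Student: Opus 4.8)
The plan is to assemble the statement directly from the expansion $\operatorname{det}A = C + E$ in \eqref{detA} together with Lemmas \ref{ingr1} and \ref{ingr2}, feeding these into the determinant formula \eqref{detA3}. Recall from \eqref{detA3} that $\operatorname{det}\eta_D = -\bar{J}^2\operatorname{det}A$, where the right-hand side is understood as the limit onto $D$, taken in the order $\nu\to 0$, then $\theta\to 0$, then $\lambda\to 0$. By \eqref{barJ} the polynomial $\bar{J}$ restricts on $D_\nu$ as $\bar{J}|_{D_\nu} = \lambda\theta(\lambda+\theta)\Pi$, since $(\lambda+\nu)|_{\nu=0}=\lambda$, $(\nu+\theta)|_{\nu=0}=\theta$ and $(\lambda+\nu+\theta)|_{\nu=0}=\lambda+\theta$. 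Hence $\bar{J}^2|_{D_\nu} = \lambda^2\theta^2(\lambda+\theta)^2\Pi^2$.

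The key structural point is that $\operatorname{det}A$ has a pole of order at most two at $\theta = 0$: indeed Lemma \ref{ingr1} shows $z = \theta^2 C$ is regular generically on $D_{\nu,\theta}$, and the term $E$ has a pole of order at most two by Proposition \ref{A}, so that $\theta^2 E$ is likewise regular there. The factor $\theta^2$ coming from $(\nu+\theta)^2$ in $\bar{J}^2|_{D_\nu}$ therefore exactly cancels this double pole, making the product $\bar{J}^2\operatorname{det}A$ have a well-defined limit. Writing $\theta^2\operatorname{det}A = \theta^2 C + \theta^2 E = z + \theta^2 E$, I would then record
\begin{align*}
\bar{J}^2\operatorname{det}A\big|_{D_\nu} = \lambda^2(\lambda+\theta)^2\Pi^2\,\big(z + \theta^2 E\big).
\end{align*}

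It remains to pass to the full restriction on $D$. Taking the limit $\theta\to 0$ replaces $(\lambda+\theta)^2$ by $\lambda^2$, while $\Pi$ is non-zero on $D$, so
\begin{align*}
\bar{J}^2\operatorname{det}A\big|_{D} = \Pi^2\big|_D\cdot\lambda^4\big(z + \theta^2 E\big)\big|_D.
\end{align*}
Now I invoke the two lemmas: by \eqref{zto4} one has $\lambda^4 z|_D = 10\,(K_\theta/\Pi)^3|_D$, and by Lemma \ref{ingr2} one has $\lambda^4\theta^2 E|_D = 54\,(K_\theta/\Pi)^3|_D$. Adding these gives $\lambda^4(z+\theta^2 E)|_D = 64\,(K_\theta/\Pi)^3|_D$, whence
\begin{align*}
\bar{J}^2\operatorname{det}A\big|_D = 64\,\Pi^2\,\frac{K_\theta^3}{\Pi^3}\Big|_D = 64\,\Pi^{-1}K_\theta^3\big|_D,
\end{align*}
and therefore $\operatorname{det}\eta_D = -64\,\Pi^{-1}K_\theta^3|_D$, as claimed.

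The computation is essentially bookkeeping once the lemmas are in place, so there is no genuine obstacle at this stage; the only point requiring care is the consistent order of restrictions ($\nu$, then $\theta$, then $\lambda$) and the matching of the power of $\lambda$. Specifically, one must verify that the collapse $(\lambda+\theta)^2\to\lambda^2$ supplies exactly the extra $\lambda^2$ needed so that the total $\lambda^4$ agrees with the normalisations $\lambda^4 z|_D$ and $\lambda^4\theta^2 E|_D$ appearing in Lemmas \ref{ingr1} and \ref{ingr2}; the arithmetic $10+54 = 64$ and the cancellation $\Pi^2\cdot\Pi^{-3} = \Pi^{-1}$ then produce the stated constant and power.
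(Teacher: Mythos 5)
Your proposal is correct and follows essentially the same route as the paper's proof: both start from $\operatorname{det}\eta_D=-\bar J^2(C+E)$, use $\bar J|_{D_\nu}=\lambda\theta(\lambda+\theta)\Pi$ to convert the $\theta^2$ factor into the regularising prefactor for the second-order pole, and then combine $\lambda^4 z|_D=10(K_\theta/\Pi)^3$ from Lemma \ref{ingr1} with $\lambda^4\theta^2E|_D=54(K_\theta/\Pi)^3$ from Lemma \ref{ingr2} to get $10+54=64$. The only cosmetic difference is that the paper restricts $\bar J^2C$ and $\bar J^2E$ separately before adding, whereas you add $z+\theta^2E$ first; the content is identical.
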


\begin{proof}
We have
$
\operatorname{det}\eta_D = \left. - \bar{J}^2 \operatorname{det}A \right|_D = - \left.  \bar{J}^2 ( C+ E) \right|_D.
$
Note that $\theta^2 C$ and $\theta^2 E$ are well-defined generically on $D_{\nu, \theta}$, and $\left. \bar{J}\right|_{D_\nu} = \left. \lambda \theta (\lambda+\theta) \Pi \right|_{D_\nu}$. Hence we have
\begin{equation*}
\left. \bar{J}^2E \right|_{D_{\nu, \theta}} = \left. \lambda^2 \theta^2 (\lambda + \theta)^2 E \Pi^2 \right|_{D_{\nu, \theta}} = \left. \lambda^4 (\theta^2 E)\Pi^2 \right|_{D_{\nu, \theta}} .
\end{equation*}
By Lemma \ref{ingr2} we get $$\left. \bar{J}^2 E \right|_D = \left. 54 \frac{K_\theta^3}{\Pi}\right|_D.$$ Similarly we have $$\left. \bar{J}^2 C \right|_{D_{\nu, \theta}} = \left. \lambda^4 (\theta^2 C) \Pi^2 \right|_{D_{\nu, \theta}}.$$ By Lemma \ref{ingr1} we get
\begin{equation*}
\left. \bar{J}^2 C \right|_D = \left. 10 \frac{K_\theta^3}{\Pi} \right|_D, 
\end{equation*}
and the statement follows. 
\end{proof}

%For any $H \in \mathcal{A}$ let $\alpha_{H} \in \mathcal{R}$ be such that $H= \{ x \in V | \alpha_H(x)=0\}$. Similarly for any $H \in \mathcal{A}_{D_\theta}$ we choose $\alpha_H \in \mathcal{R}$ such that $H=\{x \in D_\theta | \alpha_H(x)=0\}$. It follows from Corollary \ref{factorJ} and formula \eqref{Jtheta} that
%

It follows by Corollary \ref{factorJ}  and formula \eqref{Jtheta}   that 

\begin{equation}\label{A3KthetaonD}
\left. K_\theta \right|_D  \sim \left.  {\mathcal I}(\mathcal{A}_{D_\theta} \setminus \mathcal{A}^D_{D_\theta}) \right|_D.
\end{equation}
Also we have that
\begin{equation}\label{A3PionD}
  \Pi   \sim  \mathcal{I}(\mathcal{A} \setminus \mathcal{A}^D).  
\end{equation}

Therefore Theorem \ref{theoremA3} can be reformulated as follows. 

\begin{theorem}\label{D3irr}
The determinant of the metric $\eta_D$ is proportional to 
\begin{equation}\label{detA3defining}
 \mathcal{I}(\mathcal{A}_{D_\theta} \setminus \mathcal{A}^D_{D_\theta})^3   \mathcal{I}(\mathcal{A} \setminus \mathcal{A}^D)^{-1}
\end{equation}
on the stratum $D$. The same is true with $\theta$ replaced with $\lambda$ or $\nu$.
\end{theorem}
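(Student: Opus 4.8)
The plan is to obtain Theorem~\ref{D3irr} as a direct translation of Theorem~\ref{theoremA3} into the language of arrangement-defining polynomials, using the dictionary supplied by \eqref{A3KthetaonD} and \eqref{A3PionD}. Theorem~\ref{theoremA3} already furnishes the closed form $\det\eta_D = -64\,\Pi^{-1}K_\theta^3$ on $D$, so the only remaining work is to replace the auxiliary polynomials $K_\theta$ and $\Pi$ by the geometric quantities $\mathcal{I}(\mathcal{A}_{D_\theta}\setminus\mathcal{A}^D_{D_\theta})$ and $\mathcal{I}(\mathcal{A}\setminus\mathcal{A}^D)$, and then to record the symmetry under permuting the three nodes $\lambda,\nu,\theta$ of the $A_3$ diagram.

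First I would justify \eqref{A3PionD}. By \eqref{JA3} the Jacobian factors as $J=\lambda\nu\theta(\lambda+\nu)(\nu+\theta)(\lambda+\nu+\theta)\Pi$, and the six explicit linear factors are precisely the mirrors of $\mathcal{R}_D=A_3$, i.e.\ the elements of $\mathcal{A}^D$, while $\Pi$ collects the remaining factors of $J\sim\mathcal{I}(\mathcal{A})$, namely the hyperplanes not containing $D$; this gives $\Pi\sim\mathcal{I}(\mathcal{A}\setminus\mathcal{A}^D)$. Next I would justify \eqref{A3KthetaonD}: by Corollary~\ref{factorJ} one has $\left.J_\theta\right|_{D_\theta}\sim\mathcal{I}(\mathcal{A}_{D_\theta})$, and by \eqref{Jtheta} $J_\theta=\lambda\nu(\lambda+\nu)K_\theta$, where the three factors $\lambda,\nu,\lambda+\nu$ restrict on $\Pi_\theta$ to exactly the mirrors of $\mathcal{R}_D$ meeting $\Pi_\theta$, that is, to $\mathcal{A}^D_{D_\theta}$. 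Dividing these out identifies $\left.K_\theta\right|_{D_\theta}\sim\mathcal{I}(\mathcal{A}_{D_\theta}\setminus\mathcal{A}^D_{D_\theta})$, and a further restriction to $D$ yields \eqref{A3KthetaonD}. Substituting both identities into the formula of Theorem~\ref{theoremA3} produces the asserted proportionality $\det\eta_D\sim\mathcal{I}(\mathcal{A}_{D_\theta}\setminus\mathcal{A}^D_{D_\theta})^3\,\mathcal{I}(\mathcal{A}\setminus\mathcal{A}^D)^{-1}$ on $D$.

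For the claim that $\theta$ may be replaced by $\lambda$ or $\nu$, I would invoke the group-action independence noted in Section~\ref{degreessection}: since $\mathcal{R}_D=A_3$ is simply laced, its Weyl group acts transitively on its roots, so the restriction $\left.\mathcal{I}(\mathcal{A}_{\Pi_\gamma}\setminus\mathcal{A}^D_{\Pi_\gamma})\right|_D$ is independent (up to proportionality) of the choice of $\gamma\in\mathcal{R}_D$. Hence the three polynomials obtained for $\gamma=\lambda,\nu,\theta$ are proportional on $D$, and each reproduces $\det\eta_D$. For the end nodes this also follows more directly from Lemma~\ref{Klambda1}, which gives $\left.K_\lambda\right|_D=\left.K_\theta\right|_D$.

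I do not expect a genuine obstacle, since the analytic core of the codimension-$3$ computation is already contained in Theorem~\ref{theoremA3}. The one point requiring care is the bookkeeping of the previous paragraph: one must verify that the linear factors split off from $J$ and from $J_\theta$ are precisely the elements of $\mathcal{A}^D$ and of $\mathcal{A}^D_{D_\theta}$, each with multiplicity one, so that $\Pi$ and $K_\theta$ match the restricted arrangements exactly and no stray linear form is left unaccounted. The middle node $\nu$ behaves slightly differently, since \eqref{JnuA3} reads $J_\nu=\lambda\theta K_\nu$ with only two explicit factors and the third mirror $\lambda+\theta$ concealed inside $\left.K_\nu\right|_{D_\nu}$; this is why I would settle the $\nu$ case via the group-action symmetry rather than by a direct factor count.
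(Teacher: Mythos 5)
Your proposal is correct and follows essentially the same route as the paper: it deduces the theorem from Theorem \ref{theoremA3} together with the identifications \eqref{A3KthetaonD} and \eqref{A3PionD}, which you justify exactly as the paper does via Corollary \ref{factorJ} and the explicit factorisations \eqref{JA3}, \eqref{Jtheta}. The only (harmless) divergence is in the final symmetry step: the paper replaces $\theta$ by $\lambda$ via Lemma \ref{Klambda1} and by $\nu$ via Proposition \ref{relationJ} combined with Corollary \ref{factorJ} applied to $J_\nu$, whereas you invoke the $W_D$-equivariance of $\left.\mathcal{I}(\mathcal{A}_{\Pi_\gamma}\setminus\mathcal{A}^D_{\Pi_\gamma})\right|_D$ for the simply laced system $\mathcal{R}_D=A_3$, an argument the paper itself records in the introduction; both are valid.
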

Indeed, formula \eqref{detA3defining} is immediate from Theorem \ref{theoremA3} and formulae  \eqref{A3KthetaonD}, \eqref{A3PionD}. One can replace $\theta$ with $\lambda$ by Lemma \ref{Klambda1}. And one can replace $\theta$ with $\nu$ since $K_\nu  = \theta K_\lambda$ on $D_{\nu, \lambda}$ by Proposition \ref{relationJ}, which implies that $K_\lambda$ is proportional to ${\mathcal I}({\mathcal A}_{D_\nu}\setminus {\mathcal A}^D_{D_\nu})$ on $D_{\nu, \lambda}$ with the help of Corollary \ref{factorJ} applied to $J_{\nu}$.

Theorem \ref{D3irr} implies Theorem \ref{MMtheorem} for the stratum $D$ since  the root system ${\mathcal R}_D$ given by   \eqref{RDdecompo} is irreducible of rank 3.  

Let us now consider the cases where the root system $\mathcal{R}_D = \mathcal{R} \cap \langle \lambda, \nu, \theta \rangle$ is reducible, that is $\mathcal{R}_D=A_2 \times A_1$ or $\mathcal{R}_D=A_1^3$. 

\textbf{Strata of type} $\bf{A_2 \times A_1}$. 
Let us assume that the root system $\mathcal{R}_D=\mathcal{R}\cap \langle \lambda, \nu, \theta \rangle$ is a subsystem of $\mathcal{R}$ of type $A_2 \times A_1$ and consider the corresponding Coxeter subgraph
\begin{align*}
A_2 \times A_1 : \quad
\begin{dynkinDiagram}{A}{**}
\dynkinLabelRoots*{\lambda, \nu}
\end{dynkinDiagram}
\quad \quad
\begin{dynkinDiagram}{A}{*}
\dynkinLabelRoots*{\theta}
\end{dynkinDiagram}
\end{align*}
 Note that $\lambda+\nu \in \mathcal{R}_+$. The Jacobian can be represented as
\begin{equation}
J= \lambda \nu \theta (\lambda+\nu) \Pi, \label{JA2A1}
\end{equation}
where $\Pi$ is non-zero on $D$ and it satisfies  proportionality  
\beq{PionDA2A1}
\Pi \sim {\mathcal I}( \mathcal{A} \setminus \mathcal{A}^D). 
\eeq
%Note that $\Pi$ is non-zero on $D$. 
By Proposition \ref{divisible} we have
\begin{align}
J_{\lambda}&= \nu \theta K_{\lambda},\\
J_{\nu}&= \lambda \theta K_{\nu}, \label{KnuA2A1}\\
J_{\theta}&=  \lambda \nu ( \lambda +\nu) K_{\theta}\label{KthetaA2A1} \,
\end{align}
for some polynomials $K_{\lambda}, K_{\nu}, K_{\theta}  \in \mathbb{C}[x]$. We assume without loss of generality that $n+\sigma^{-1}(\lambda)$ is even, $\sigma^{-1}(\nu)=\sigma^{-1}(\lambda)+1$ and $\sigma^{-1}(\theta)-\sigma^{-1}(\lambda)$ is  even. This leads to the following expressions of components \eqref{notationJ_k} of the identity field 
\eqref{identityfieldrestated}.
\begin{proposition}\label{identityA2A1}
The $\lambda$, $\nu$ and $\theta$ components of the identity field $e$ are given by
\begin{align}
e^{\lambda}&= \frac{K_\lambda}{ \lambda(\lambda+\nu)\Pi}, \quad  e^{\nu} = - \frac{K_\nu}{\nu (\lambda+\nu)\Pi}, \quad \text{and} \quad e^{\theta}= \frac{K_\theta}{\theta \Pi}.
\end{align}
\end{proposition}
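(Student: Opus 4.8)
The plan is to read the three components straight off the general formula \eqref{notationJ_k} for the identity field \eqref{identityfieldrestated}, which expresses each coefficient as $e^\alpha = \epsilon_\alpha J_\alpha / J$ with $\epsilon_\alpha = (-1)^{n+\sigma^{-1}(\alpha)}$. This is exactly the mechanism behind Proposition \ref{identityA3} in the $A_3$ case, and the inputs are again only the factorisation \eqref{JA2A1} of the Jacobian together with the factorisations $J_\lambda=\nu\theta K_\lambda$, $J_\nu=\lambda\theta K_\nu$ from \eqref{KnuA2A1} and $J_\theta=\lambda\nu(\lambda+\nu)K_\theta$ from \eqref{KthetaA2A1}, all of which come from Proposition \ref{divisible} and are recorded immediately before the statement.

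First I would substitute these factorisations into the ratios $J_\alpha/J$ and cancel the common monomial factors. For $\alpha=\lambda$ this gives
\begin{equation*}
\frac{J_\lambda}{J}=\frac{\nu\theta K_\lambda}{\lambda\nu\theta(\lambda+\nu)\Pi}=\frac{K_\lambda}{\lambda(\lambda+\nu)\Pi},
\end{equation*}
and the analogous cancellations yield $J_\nu/J=K_\nu/(\nu(\lambda+\nu)\Pi)$ and $J_\theta/J=K_\theta/(\theta\Pi)$. Since $\Pi$ is non-vanishing on $D$, these rational expressions are genuine components of $e$ in a neighbourhood of the stratum, so no limiting procedure is needed at this stage.

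It then remains only to pin down the three signs $\epsilon_\lambda,\epsilon_\nu,\epsilon_\theta$, which is the sole point requiring attention. By the parity conventions imposed before the proposition, $n+\sigma^{-1}(\lambda)$ is even, so $\epsilon_\lambda=1$; because $\sigma^{-1}(\nu)=\sigma^{-1}(\lambda)+1$, the integer $n+\sigma^{-1}(\nu)$ is odd and $\epsilon_\nu=-1$; and because $\sigma^{-1}(\theta)-\sigma^{-1}(\lambda)$ is even, $n+\sigma^{-1}(\theta)$ is even and $\epsilon_\theta=1$. Multiplying the three ratios above by these signs reproduces exactly the asserted formulae. The main obstacle, such as it is, is purely bookkeeping rather than mathematical: one must keep the sign conventions consistent and note that the three parity conditions can indeed be arranged simultaneously by choosing the ordering $\sigma$ of the simple roots appropriately, which is legitimate since relabelling the ordering alters $J$ and each $J_\alpha$ compatibly and leaves the intrinsic coefficients $e^\alpha$ of $e$ unchanged.
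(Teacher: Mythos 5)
Your proposal is correct and follows exactly the route the paper takes: the paper derives Proposition \ref{identityA2A1} directly from $e^\alpha=\epsilon_\alpha J_\alpha/J$ in \eqref{notationJ_k}, the factorisations \eqref{JA2A1}--\eqref{KthetaA2A1}, and the stated parity conventions, just as for Proposition \ref{identityA3}. Your cancellations and the three sign determinations all check out, and your closing remark justifying the ``without loss of generality'' choice of ordering is a harmless (and slightly more explicit) addition to what the paper leaves implicit.
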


Let us introduce $\bar{J}= (\lambda+\nu)\Pi$ so that $J= \lambda \nu \theta \bar{J}$. Recall that in these notations $\operatorname{det}\eta_D$ is given by formula (\ref{detA3}). The entries of the matrix $A=(a_{ij})_{i,j=1}^3$ defined in (\ref{Amatrix}) are given as follows. 

\begin{proposition}\label{AmatrixpropA2A1} All the matrix entries $a_{ij}$ are well-defined generically on $D_{\lambda, \theta}$.They have the following form on $D_{\lambda, \theta}$:
\begin{align*}
a_{11}= \frac{2 K_\lambda}{\nu \Pi}, \quad 
 a_{22}=2 \partial_{\omega^{\nu}} \frac{K_\nu}{\Pi}- \frac{4 K_{\nu}}{\nu \Pi} , \quad 
  a_{33}= \frac{2 K_\theta}{\Pi},
  \end{align*}
  \begin{align*}
 a_{12}= \frac{K_\lambda}{\nu\Pi}  -  \partial_{\omega^{\nu}} \frac{K_\lambda}{\Pi} , \quad
 a_{23}= -  \nu  \partial_{\omega^{\nu}} \frac{K_\theta}{\Pi}, \quad a_{13}=0.
\end{align*}
\end{proposition}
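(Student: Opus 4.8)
The plan is to obtain each entry $a_{ij}$ by direct differentiation, starting from the identity $\eta^{\alpha\beta} = -\partial_{\omega^\alpha}e^\beta - \partial_{\omega^\beta}e^\alpha$ established in the proof of Proposition \ref{A} as a consequence of Proposition \ref{Saitom}, and then substituting the components $e^\lambda$, $e^\nu$, $e^\theta$ of the identity field supplied by Proposition \ref{identityA2A1}. The only input needed about the directional derivatives is the pairing $\partial_{\omega^\alpha}\beta = (\omega^\alpha,\beta) = \delta_{\alpha\beta}$ for simple roots $\alpha,\beta \in \{\lambda,\nu,\theta\}$, together with $\partial_{\omega^\alpha}(\lambda+\nu) = \delta_{\alpha\lambda}+\delta_{\alpha\nu}$. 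The Leibniz rule then turns each $a_{ij}$ into an explicit rational expression in $\lambda,\nu,\theta,\Pi$ and the polynomials $K_\lambda,K_\nu,K_\theta$ and their $\omega$-derivatives, after which I would restrict generically to $D_{\lambda,\theta}$.

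For the diagonal entries I would write $a_{11} = -2\lambda^2\partial_{\omega^\lambda}e^\lambda$ and note that, after differentiating $K_\lambda/(\lambda(\lambda+\nu)\Pi)$, the only summand free of a positive power of $\lambda$ is the one produced by differentiating the explicit factor $\lambda^{-1}$; multiplication by $\lambda^2$ annihilates every other term and leaves $2K_\lambda/(\nu\Pi)$ at $\lambda=0$. The entry $a_{33}$ is identical with $\theta^2$ absorbing the double pole in $\theta$, while for $a_{22}$ the variable $\nu$ is nonzero on $D_{\lambda,\theta}$, so both the differentiated term and the pole term survive and reassemble as $2\,\partial_{\omega^\nu}(K_\nu/\Pi)-4K_\nu/(\nu\Pi)$. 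For the off-diagonal entries I would expand $a_{12} = \lambda\nu(-\partial_{\omega^\lambda}e^\nu-\partial_{\omega^\nu}e^\lambda)$ and $a_{23} = \nu\theta(-\partial_{\omega^\nu}e^\theta-\partial_{\omega^\theta}e^\nu)$, observing that every summand originating from $\partial_{\omega^\lambda}e^\nu$ (respectively from $\partial_{\omega^\theta}e^\nu$) carries a factor $\lambda$ (respectively $\theta$) and hence drops out in the limit, whereas the surviving contributions reorganise via the quotient rule into $K_\lambda/(\nu\Pi)-\partial_{\omega^\nu}(K_\lambda/\Pi)$ and $-\nu\,\partial_{\omega^\nu}(K_\theta/\Pi)$. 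Finally, in $a_{13} = \lambda\theta(-\partial_{\omega^\lambda}e^\theta-\partial_{\omega^\theta}e^\lambda)$ each of the two pieces acquires a leftover factor $\lambda$ or $\theta$ once the single poles $\theta^{-1}$ and $\lambda^{-1}$ are cancelled, so $a_{13}$ vanishes identically on $D_{\lambda,\theta}$.

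The step demanding the most care is the bookkeeping of pole orders, that is, checking that each $a_{ij}$ genuinely has a well-defined generic limit on $D_{\lambda,\theta}$ rather than a residual singularity. The decisive structural input is the factorisation of the Jacobians in \eqref{JA2A1}--\eqref{KthetaA2A1}, which pins down exactly which simple-root forms occur in the denominators of $e^\lambda,e^\nu,e^\theta$: because $\theta$ is orthogonal to the $A_2$ component $\langle\lambda,\nu\rangle$, the form $\theta$ never enters $e^\lambda$ or $e^\nu$ and the form $\lambda+\nu$ never enters $e^\theta$. This is precisely what forces $a_{13}$ to vanish and what prevents any spurious $\nu^{-1}$ singularity from surviving in the limit. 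Once this pattern is recorded, the limits exist and are independent of the order in which $\lambda$ and $\theta$ are sent to zero, and evaluating them yields the six stated formulae.
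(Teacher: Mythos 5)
Your proposal is correct and follows exactly the route of the paper's own (very terse) proof: apply the identity $\eta^{\alpha\beta}=-\partial_{\omega^\alpha}e^\beta-\partial_{\omega^\beta}e^\alpha$ from Proposition \ref{Saitom}, substitute the components of $e$ from Proposition \ref{identityA2A1}, and evaluate by the Leibniz rule on $D_{\lambda,\theta}$. Your pole-order bookkeeping (in particular the observation that $\theta$ is absent from the denominators of $e^\lambda,e^\nu$ and $\lambda+\nu$ from that of $e^\theta$, which forces $a_{13}=0$) correctly fills in the computation the paper leaves implicit, and all six limits check out.
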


\begin{proof}
By Proposition \ref{Saitom} we have $ \eta^{\alpha\beta} = - \partial_{\omega^\alpha}e^{\beta} - \partial_{\omega^\beta}e^{\alpha}$ for $\alpha, \beta \in \{ \lambda, \nu, \theta\}$. The statement follows from Proposition \ref{identityA2A1}.
\end{proof}

%For any $H \in \mathcal{A}$ let $\alpha_H \in \mathcal{R}$ be such that  $H= \{ x \in V| \alpha_{H}(x)=0\}$. Similarly for any $H \in \mathcal{A}_{D_\gamma}$, $\gamma \in \{\theta, \nu\}$ we choose $\alpha_H \in \mathcal{R}$ such that  $H= \{ x \in D_\gamma | \alpha_{H}(x)=0\}$.  
It follows from Corollary \ref{factorJ} and formulae \eqref{KnuA2A1}, \eqref{KthetaA2A1} that
\begin{equation}\label{KnuonDA2A1}
\left. K_\nu \right|_D   \sim \left. {\mathcal I}( \mathcal{A}_{D_\nu} \setminus \mathcal{A}^D_{D_\nu}) \right|_D,
\end{equation}
and 
\begin{equation}\label{KthetaonDA2A1}
\left. K_\theta \right|_D \sim  \left. {\mathcal I}( \mathcal{A}_{D_\theta} \setminus \mathcal{A}^D_{D_\theta}) \right|_D.
\end{equation}

We are ready to establish Theorem \ref{MMtheorem} for $D$ which can be formulated as follows.

\begin{theorem}\label{theoremcodimensionA2A1saito}
The determinant of the metric $\eta_D$ is proportional to
\begin{equation}\label{codA2A1defin}
{\mathcal I}( \mathcal{A}_{D_\nu} \setminus \mathcal{A}^D_{D_\nu})^2  {\mathcal I}( \mathcal{A}_{D_\theta} \setminus \mathcal{A}^D_{D_\theta})  {\mathcal I}(\mathcal{A} \setminus \mathcal{A}^D)^{-1}
\end{equation}
on $D$. The same is true with $\nu$ replaced with $\lambda$ in \eqref{codA2A1defin}.
\end{theorem}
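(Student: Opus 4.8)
The plan is to follow the strategy of the type $A_3$ computation but to exploit the simplification produced by the orthogonality of $\theta$ to the $A_2$-part $\langle\lambda,\nu\rangle$, which is recorded in the vanishing $a_{13}=0$ of Proposition \ref{AmatrixpropA2A1}. By Theorem \ref{Saitovo} and formula \eqref{detA3} we have $\operatorname{det}\eta_D = -\bar{J}^2\operatorname{det}A\,|_D$ with $\bar{J}=(\lambda+\nu)\Pi$ and $A=(a_{ij})_{i,j=1}^3$ as in \eqref{Amatrix}. Expanding along the vanishing entry gives
\begin{equation*}
\operatorname{det}A = a_{11}\bigl(a_{22}a_{33}-a_{23}^2\bigr)-a_{12}^2 a_{33}.
\end{equation*}
First I would restrict to $D_{\lambda,\theta}$ and read off the pole orders in $\nu$ from Proposition \ref{AmatrixpropA2A1}: the entries $a_{11}$, $a_{12}$ and $a_{22}$ each carry a simple pole at $\nu=0$ coming from the $\nu^{-1}$ terms, whereas $a_{33}$ is regular and $a_{23}$ vanishes to first order. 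Hence $\operatorname{det}A$ has a pole of order at most $2$ at $\nu=0$; since $\bar{J}^2|_{D_{\lambda,\theta}}=\nu^2\Pi^2$, the product $\bar{J}^2\operatorname{det}A$ has a finite limit as $\nu\to0$, which justifies evaluating $\operatorname{det}\eta_D$ as the iterated restriction, first to $\lambda=\theta=0$ and then $\nu\to0$.

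The computational core is the limit of $\nu^2\operatorname{det}A$. Writing
\begin{equation*}
\nu^2\operatorname{det}A = (\nu a_{11})\bigl((\nu a_{22})a_{33}-\nu a_{23}^2\bigr)-(\nu a_{12})^2 a_{33}
\end{equation*}
and substituting the formulae of Proposition \ref{AmatrixpropA2A1}, the contribution $\nu a_{23}^2=\nu^3\bigl(\partial_{\omega^\nu}\tfrac{K_\theta}{\Pi}\bigr)^2$ together with the derivative part of $\nu a_{22}$ drop out as $\nu\to0$, leaving
\begin{equation*}
\nu^2\operatorname{det}A\big|_D = \frac{2K_\lambda}{\Pi}\Bigl(-\frac{8K_\nu K_\theta}{\Pi^2}\Bigr)-\Bigl(\frac{K_\lambda}{\Pi}\Bigr)^2\frac{2K_\theta}{\Pi}\bigg|_D.
\end{equation*}
To collapse this I would establish the relation $K_\lambda=K_\nu$ on $D$. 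This comes from Proposition \ref{relationJ} applied to the two-dimensional subsystem $\mathcal{R}\cap\langle\lambda,\nu\rangle$ of type $A_2$, for which $|\mathcal{R}_+\cap\langle\lambda,\nu\rangle|=3>2$: unwinding $I_\lambda$ and $I_\nu$ through \eqref{defn} and the factorisations $J_\lambda=\nu\theta K_\lambda$, $J_\nu=\lambda\theta K_\nu$ of \eqref{KnuA2A1} gives $I_\lambda=K_\lambda/\prod_{\alpha\in\widetilde{\Delta}}\alpha$ and $I_\nu=K_\nu/\prod_{\alpha\in\widetilde{\Delta}}\alpha$, where $\widetilde{\Delta}=\Delta\setminus\{\lambda,\nu,\theta\}$, so that $I_\nu|_{D_{\lambda,\nu}}=I_\lambda|_{D_{\lambda,\nu}}$ forces $K_\nu=K_\lambda$ on $D_{\lambda,\nu}\supset D$. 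With this, the display above becomes $-18K_\lambda^2K_\theta\,\Pi^{-3}|_D$, and hence
\begin{equation*}
\operatorname{det}\eta_D = -\Pi^2\cdot\nu^2\operatorname{det}A\,\big|_D = 18\,K_\lambda^2 K_\theta\,\Pi^{-1}\big|_D.
\end{equation*}

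It remains to pass to the arrangement form. Combining $K_\lambda=K_\nu$ on $D$ with \eqref{KnuonDA2A1}, \eqref{KthetaonDA2A1} and \eqref{PionDA2A1} yields at once the proportionality of $\operatorname{det}\eta_D$ to ${\mathcal I}(\mathcal{A}_{D_\nu}\setminus\mathcal{A}^D_{D_\nu})^2\,{\mathcal I}(\mathcal{A}_{D_\theta}\setminus\mathcal{A}^D_{D_\theta})\,{\mathcal I}(\mathcal{A}\setminus\mathcal{A}^D)^{-1}$, which is \eqref{codA2A1defin}. The assertion with $\nu$ replaced by $\lambda$ follows from the same identity $K_\lambda=K_\nu$ on $D$ together with Corollary \ref{factorJ} applied to $J_\lambda$, which gives $K_\lambda|_D\sim{\mathcal I}(\mathcal{A}_{D_\lambda}\setminus\mathcal{A}^D_{D_\lambda})|_D$. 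I expect the only genuine subtlety to be the bookkeeping of pole and zero orders in the iterated limit — in particular verifying that no surviving order-$\nu^{-2}$ term comes from the off-diagonal $a_{23}$ — and it is precisely the orthogonality $a_{13}=0$ that makes this case considerably lighter than the type $A_3$ analysis.
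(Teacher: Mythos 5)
Your proposal is correct and follows essentially the same route as the paper's own proof: expand $\operatorname{det}A$ using $a_{13}=0$, evaluate the limit of $\nu^{2}\operatorname{det}A$ via Proposition \ref{AmatrixpropA2A1} to get $16K_\lambda K_\nu K_\theta\Pi^{-3}+2K_\lambda^{2}K_\theta\Pi^{-3}$ (up to sign), identify $K_\lambda=K_\nu$ on $D_{\lambda,\nu}$ through Proposition \ref{relationJ}, and conclude with Corollary \ref{factorJ} and formulae \eqref{KnuonDA2A1}--\eqref{PionDA2A1}. The only additions are the explicit pole-order bookkeeping justifying the iterated limit and the unwinding of $I_\lambda$, $I_\nu$, both of which the paper leaves implicit; the arithmetic ($-16-2=-18$, hence $\operatorname{det}\eta_D=18K_\nu^{2}K_\theta\Pi^{-1}$) matches.
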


\begin{proof}By formula (\ref{detA3}) we have $\operatorname{det}\eta_D= -\left. \bar{J}^2 \operatorname{det}A \right|_D$, where $A$ is given by \eqref{Amatrix}. Therefore by Proposition \ref{AmatrixpropA2A1}
\begin{align*}
\operatorname{det}\eta_D&= \left. \big( (a_{12}^2-a_{11} a_{22} )a_{33} + a_{11} a_{23}^2 \big)(\lambda+\nu)^2 \Pi^2 \right|_D=   \left. 16 \frac{K_{\lambda} K_{\nu} K_\theta}{\Pi} \right|_D + \left. 2 \frac{K_{\lambda}^2 K_\theta}{\Pi}\right|_D. 
\end{align*}
By Proposition \ref{relationJ},  we have $\frac{J_\lambda}{\nu}= \frac{J_\nu}{\lambda}$ on $D_{\lambda, \nu}$ and hence $\left. K_\nu \right|_{D_{\lambda, \nu}} = \left. K_{\lambda} \right|_{D_{\lambda, \nu}}$.  Therefore $\operatorname{det}\eta_D =18 \Pi^{-1} K_\nu^2 K_\theta$ on $D$. The statement follows by formulae \eqref{KnuonDA2A1}, \eqref{KthetaonDA2A1} and \eqref{PionDA2A1}.
\end{proof}

\textbf{Strata of type} $\bf{A_1^3}$. 
Let us assume that the root system $\mathcal{R}_D=\mathcal{R}\cap \langle \lambda, \nu, \theta \rangle$ is a subsystem of $\mathcal{R}$ of type $A_1^3$ and consider the corresponding Coxeter subgraph
\begin{align*}
A_1^3 : \quad
\begin{dynkinDiagram}{A}{*}
\dynkinLabelRoots*{\lambda}
\end{dynkinDiagram}
\quad\quad
\begin{dynkinDiagram}{A}{*}
\dynkinLabelRoots*{\nu}
\end{dynkinDiagram}
\quad\quad
\begin{dynkinDiagram}{A}{*}
\dynkinLabelRoots*{\theta}
\end{dynkinDiagram}
\end{align*}
The Jacobian $J$ can be represented as 
\begin{equation} \label{JA1^3}
J= \lambda \nu \theta \Pi,
\end{equation} 
where $\Pi$ is non-zero on $D$ and it satisfies the proportionality
\beq{PionDA1^3}
\Pi \sim {\mathcal I}(\mathcal{A} \setminus \mathcal{A}^D).
\eeq
 By Proposition \ref{divisible} we have
\begin{align}
J_{\lambda}&= \nu \theta K_{\lambda}, \label{JlambdaA1^3}\\
J_{\nu}&= \lambda \theta K_{\nu}, \label{JnuA1^3}\\
J_{\theta}&= \lambda \nu K_{\theta}\label{JthetaA1^3}
\end{align}
for some polynomials $K_\lambda, K_\nu, K_\theta \in \mathbb{C}[x]$. We assume without loss of generality that $n + \sigma^{-1}(\gamma)$ is even for any $\gamma \in \{ \lambda, \nu, \theta\}$. This leads to the following expressions of components \eqref{notationJ_k} of the identity field 
\eqref{identityfieldrestated}.
%This leads to the following expressions of components of the identity field $e$ by Proposition \ref{identity}.
\begin{proposition}\label{identityA1^3}The $\lambda$, $\nu$ and $\theta$ components of the identity field $e$ are given by
\begin{equation}
e^{\lambda}=\frac{K_{\lambda}}{\lambda \Pi}, \quad e^{\nu}= \frac{K_{\nu}}{\nu \Pi}, \quad \text{and} \quad e^{\theta}= \frac{K_{\theta}}{\theta\Pi}.
\end{equation}
\end{proposition}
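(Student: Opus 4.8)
The plan is to obtain each component by direct substitution into the general expression for the identity field, since the proposition is simply the specialisation of \eqref{notationJ_k} to the factorisations recorded in \eqref{JA1^3} and \eqref{JlambdaA1^3}--\eqref{JthetaA1^3}.

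First I would recall from \eqref{notationJ_k} that $e^\gamma = \epsilon_\gamma J_\gamma / J$ with $\epsilon_\gamma = (-1)^{n+\sigma^{-1}(\gamma)}$ for $\gamma\in\{\lambda,\nu,\theta\}$. Substituting $J = \lambda\nu\theta\,\Pi$ together with $J_\lambda = \nu\theta K_\lambda$, $J_\nu = \lambda\theta K_\nu$ and $J_\theta = \lambda\nu K_\theta$, the factor $\nu\theta$ (respectively $\lambda\theta$, $\lambda\nu$) cancels against the corresponding part of $J$, leaving $\epsilon_\lambda K_\lambda/(\lambda\Pi)$, $\epsilon_\nu K_\nu/(\nu\Pi)$ and $\epsilon_\theta K_\theta/(\theta\Pi)$ respectively. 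Since $\Pi$ is non-zero on $D$, these expressions are the sought-after components.

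Finally I would invoke the normalisation adopted just before the statement, namely that $n+\sigma^{-1}(\gamma)$ is even for each of the three roots, so that $\epsilon_\lambda=\epsilon_\nu=\epsilon_\theta=+1$ and the displayed formulae follow. There is no genuine obstacle here: the only point requiring any care is the sign bookkeeping fixed by the parity assumption, while the cancellation of the common factors is immediate. The whole argument runs exactly parallel to the proofs of Propositions \ref{identityA3} and \ref{identityA2A1}.
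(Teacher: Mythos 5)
Your proposal is correct and is exactly the argument the paper intends: the proposition is the direct substitution of the factorisations \eqref{JA1^3} and \eqref{JlambdaA1^3}--\eqref{JthetaA1^3} into $e^{\gamma}=\epsilon_{\gamma}J_{\gamma}/J$ from \eqref{notationJ_k}, with the parity convention on $\sigma^{-1}(\gamma)$ making all three signs equal to $+1$. Nothing further is needed.
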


Let us introduce $\bar{J}= \Pi$ so that $J= \lambda \nu \theta \bar{J}$. Recall that in these notations $\operatorname{det}\eta_D$ is given by formula (\ref{detA3}). The entries of the matrix $A=(a_{ij})_{i,j=1}^3$ defined in \eqref{Amatrix} are given as follows. 

\begin{proposition}\label{propoAmatrixA1^3}All the matrix entries $a_{ij}$ ($1 \leq i, j \leq 3$) are well-defined generically on $D$. They have the following form on $D$:
\begin{align*}
a_{11}= \frac{2K_{\lambda}}{\Pi}, \hspace{1cm} a_{22}&= \frac{2K_{\nu}}{\Pi},\hspace{1cm} a_{33}=\frac{2K_{\theta}}{\Pi}, \hspace{1cm} % a_{12}= a_{13}= a_{23}=0
a_{ij}=0 \quad \text{if} \quad i \neq j.
\end{align*}
\end{proposition}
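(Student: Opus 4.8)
The statement to prove is Proposition~\ref{propoAmatrixA1^3}, which computes the entries of the matrix $A$ given by \eqref{Amatrix} in the case where $\mathcal{R}_D$ is of type $A_1^3$. This is the reducible analogue of Propositions~\ref{A} and~\ref{AmatrixpropA2A1}, and it should be the simplest of the three codimension-$3$ cases because the three orthogonal $A_1$ factors mean there are no roots of the form $\lambda+\nu$, $\nu+\theta$, etc., inside $\mathcal{R}_D$, so the Jacobian factorises cleanly as in \eqref{JA1^3} with no cross terms.

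The plan is to start from Proposition~\ref{Saitom}, which gives $\eta^{\alpha\beta} = -\partial_{\omega^\alpha}e^\beta - \partial_{\omega^\beta}e^\alpha$ for $\alpha,\beta\in\{\lambda,\nu,\theta\}$, together with the explicit components of the identity field recorded in Proposition~\ref{identityA1^3}, namely $e^\lambda = K_\lambda/(\lambda\Pi)$, $e^\nu = K_\nu/(\nu\Pi)$, $e^\theta = K_\theta/(\theta\Pi)$. I would compute each entry $a_{ij}$ of $A$ and then restrict to $D$. For the diagonal entries, for instance $a_{11}=\lambda^2\eta^{\lambda\lambda} = -2\lambda^2\partial_{\omega^\lambda}\bigl(K_\lambda/(\lambda\Pi)\bigr)$; applying the Leibniz rule one obtains a term $2\lambda^2\cdot\lambda^{-2}K_\lambda/\Pi$ coming from differentiating $\lambda^{-1}$ (since $\partial_{\omega^\lambda}\lambda = (\omega^\lambda,\lambda)=1$ by \eqref{fundamcow}) plus a term proportional to $\lambda$ that vanishes on $D$ upon setting $\lambda=0$. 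Thus $a_{11}|_D = 2K_\lambda/\Pi$, and symmetrically $a_{22}|_D = 2K_\nu/\Pi$, $a_{33}|_D=2K_\theta/\Pi$.

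For the off-diagonal entries I expect everything to vanish on $D$. Consider $a_{12}=\lambda\nu\,\eta^{\lambda\nu} = -\lambda\nu\bigl(\partial_{\omega^\lambda}e^\nu + \partial_{\omega^\nu}e^\lambda\bigr)$. Now $e^\nu = K_\nu/(\nu\Pi)$ has no $\lambda$ in its denominator, so $\partial_{\omega^\lambda}e^\nu$ is finite (no pole at $\lambda=0$); multiplying by $\lambda$ and restricting to $\lambda=0$ kills this contribution. Likewise $\lambda\nu\,\partial_{\omega^\nu}e^\lambda$ contains the factor $\lambda$ in front while $e^\lambda$ is regular in $\nu$, so it too vanishes at $\lambda=0$. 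Hence $a_{12}|_D=0$, and the same orthogonality argument applies to $a_{13}$ and $a_{23}$ by symmetry among $\lambda,\nu,\theta$. The crucial structural input is that, because the three $A_1$'s are mutually orthogonal, the denominator of each $e^\gamma$ involves only the single factor $\gamma$ (apart from $\Pi$, which is non-zero on $D$), so the only surviving pole cancellation happens on the diagonal.

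The only mild subtlety, and the step I would be most careful about, is the bookkeeping of signs: the parity assumptions $n+\sigma^{-1}(\gamma)$ even for each $\gamma$ (stated just before Proposition~\ref{identityA1^3}) ensure that the sign factors $\epsilon_\gamma=(-1)^{n+\sigma^{-1}(\gamma)}$ in \eqref{notationJ_k} are all $+1$, which is what makes the components $e^\gamma$ come out with the stated positive signs and hence the diagonal entries positive multiples $2K_\gamma/\Pi$; I would verify that these sign conventions are consistent with Proposition~\ref{Saitom} so that the factor of $2$ and the sign in each $a_{ii}$ are correct. Otherwise the computation is a direct application of the Leibniz rule combined with the observation that $\partial_{\omega^\alpha}\beta = (\omega^\alpha,\beta) = \delta_{\alpha\beta}$, with all regular-on-$D$ terms discarded after restriction.
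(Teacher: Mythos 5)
Your proposal is correct and follows essentially the same route as the paper: the paper's proof simply cites Proposition \ref{Saitom} for $\eta^{\alpha\beta}=-\partial_{\omega^\alpha}e^\beta-\partial_{\omega^\beta}e^\alpha$ and Proposition \ref{identityA1^3} for the components of $e$, leaving the Leibniz-rule computation and the vanishing of the off-diagonal entries implicit. Your write-up supplies exactly those omitted details (including the correct sign bookkeeping and the observation that each $e^\gamma$ has only the single pole $\gamma$ because the three $A_1$ factors are orthogonal), so there is nothing to correct.
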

\begin{proof}
By Proposition \ref{Saitom} we have $ \eta^{\alpha \beta} = - \partial_{\omega^\alpha}e^{\beta} - \partial_{\omega^\beta}e^{\alpha}$ for $\alpha, \beta \in \{ \lambda, \nu, \theta\}$. The statement follows by  Proposition \ref{identityA1^3}. 
\end{proof}
 
  It follows from Corollary \ref{factorJ} and formulae \eqref{JlambdaA1^3}-- \eqref{JthetaA1^3} that
\begin{equation}\label{KnuonDA1^3}
  K_\gamma |_D   \sim  {\mathcal I}( \mathcal{A}_{D_\gamma} \setminus \mathcal{A}^D_{D_\gamma})|_D
\end{equation}
for $\gamma =  \lambda,  \nu, \theta$.
Now we can prove Theorem \ref{MMtheorem} for $D$.

\begin{theorem}\label{theoremsaitodetA_1^3}
The determinant of the metric $\eta_D$ is proportional to 
\begin{equation}%\label{theoremA1^3detdefin}
{\mathcal I}(\mathcal{A}\setminus \mathcal{A}^D)^{-1} \prod_{\gamma \in \{ \lambda, \nu, \theta\}} {\mathcal I}( \mathcal{A}_{D_\gamma} \setminus \mathcal{A}^D_{D_\gamma}). 
\end{equation} 
\end{theorem}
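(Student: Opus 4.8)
The plan is to follow exactly the template established in the $A_2\times A_1$ case (Theorem \ref{theoremcodimensionA2A1saito}), which here becomes even more transparent because the governing matrix is diagonal on $D$. The starting point is the general codimension-$3$ formula \eqref{detA3}, which in the present notation $\bar{J}=\Pi$, $J=\lambda\nu\theta\bar{J}$ reads $\operatorname{det}\eta_D=-\bar{J}^2\operatorname{det}A|_D$, where $A=(a_{ij})_{i,j=1}^3$ is the matrix \eqref{Amatrix}. All the genuine work has already been carried out in Proposition \ref{propoAmatrixA1^3}: since $\lambda,\nu,\theta$ are mutually orthogonal, the off-diagonal entries $a_{ij}$ ($i\neq j$) vanish on $D$ and the diagonal entries are $a_{11}=2K_\lambda/\Pi$, $a_{22}=2K_\nu/\Pi$, $a_{33}=2K_\theta/\Pi$.

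First I would simply read off
\begin{equation*}
\operatorname{det}A\big|_D=a_{11}a_{22}a_{33}\big|_D=\frac{8\,K_\lambda K_\nu K_\theta}{\Pi^3}\bigg|_D,
\end{equation*}
and then substitute $\bar{J}=\Pi$ to obtain
\begin{equation*}
\operatorname{det}\eta_D=-\bar{J}^2\operatorname{det}A\big|_D=-\frac{8\,K_\lambda K_\nu K_\theta}{\Pi}\bigg|_D.
\end{equation*}
Thus $\operatorname{det}\eta_D$ is proportional to $\Pi^{-1}K_\lambda K_\nu K_\theta$ restricted to $D$. Note that, unlike the $A_2\times A_1$ computation, no appeal to Proposition \ref{relationJ} is needed to identify the factors $K_\gamma$ with one another: the three roots lie in pairwise orthogonal $A_1$ components, so there is no dihedral relation to exploit, and each $K_\gamma$ contributes independently.

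To finish I would translate the three polynomial factors back into defining polynomials of restricted arrangements. The relation \eqref{KnuonDA1^3}, which comes from Corollary \ref{factorJ} applied to $J_\gamma$ together with the factorisations \eqref{JlambdaA1^3}--\eqref{JthetaA1^3}, gives $K_\gamma|_D\sim{\mathcal I}(\mathcal{A}_{D_\gamma}\setminus\mathcal{A}^D_{D_\gamma})|_D$ for each $\gamma\in\{\lambda,\nu,\theta\}$, while \eqref{PionDA1^3} gives $\Pi\sim{\mathcal I}(\mathcal{A}\setminus\mathcal{A}^D)$. Combining these proportionalities yields
\begin{equation*}
\operatorname{det}\eta_D\sim{\mathcal I}(\mathcal{A}\setminus\mathcal{A}^D)^{-1}\prod_{\gamma\in\{\lambda,\nu,\theta\}}{\mathcal I}(\mathcal{A}_{D_\gamma}\setminus\mathcal{A}^D_{D_\gamma}),
\end{equation*}
which is the asserted statement, and matches Theorem \ref{MMtheorem} for a rank-$3$ reducible root system $\mathcal{R}_D=A_1^3$ (here each $r_i=1$, so $m=2-3=-1$, and the three factors ${\mathcal I}_i$ are precisely the ${\mathcal I}(\mathcal{A}_{D_\gamma}\setminus\mathcal{A}^D_{D_\gamma})$). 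There is essentially no obstacle in this argument; the only point requiring a moment's care is confirming that the limits defining the entries $a_{ij}$ exist generically on $D$, which is exactly what Proposition \ref{propoAmatrixA1^3} provides, so the short computation above is rigorous.
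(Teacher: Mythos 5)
Your proposal is correct and follows essentially the same route as the paper: both apply formula \eqref{detA3} with $\bar J=\Pi$, read off the diagonal determinant from Proposition \ref{propoAmatrixA1^3} to get $\operatorname{det}\eta_D=-8K_\lambda K_\nu K_\theta/\Pi$ on $D$, and conclude via \eqref{PionDA1^3} and \eqref{KnuonDA1^3}. Your side remark that no analogue of Proposition \ref{relationJ} is needed here is accurate but does not change the argument.
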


\begin{proof}By formula (\ref{detA3}) we have $\operatorname{det}\eta_D= -\left.  \bar{J}^2 \operatorname{det}A \right|_D$, where $A$ is given by formula \eqref{Amatrix}. Therefore by Proposition \ref{propoAmatrixA1^3} we get
\begin{align*}
\operatorname{det}\eta_D&= \left. - \bar{J}^2 \operatorname{det}A \right|_D =\left. -  a_{11}a_{22}a_{33}\Pi^2 \right|_D  =-  \left.   \frac{8 K_{\lambda}K_{\nu} K_{\theta}}{\Pi} \right|_D,
\end{align*}
and the statement follows by formulae \eqref{PionDA1^3}, \eqref{KnuonDA1^3}.
\end{proof}

\section{Exceptional groups: the remaining cases}\label{exceptionalsectionrem}
Determinant of the metric $\eta_D$ for the strata $D$ of codimension 4 in the simply laced groups can be analyzed similarly to the codimension 3 case considered in Subsection \ref{codim3}. Considerations become more technical and we refer to \cite{GA} where this is done in detail.
 
In this section we obtain formulae for the determinant of the restricted Saito metric 
%and analyse the corresponding multiplicities 
for the remaining cases with the help of Mathematica. Thus we consider codimension $5$ strata for $\mathcal{R}=E_7$ and codimension $5$ and $ 6$ strata for $\mathcal{R}=E_8$. We consider Saito metric and use Saito polynomials for these root systems $\mathcal{R}=E_n$, $n=7, 8$. These are explicitly constructed in \cite{flat} and also in \cite{DAB}. Note that in these cases discriminant strata are fully determined by the type of subgraphs of the Coxeter graph of $\mathcal{R}$ defined by $S \subset \Delta$ with the exception of $\mathcal{R}=E_7$ which has two non-equivalent strata of type $A_5$ (see \cite{OT}).

Let us start with the case $n=8$, $\mathcal{R}=E_8$. We use Saito polynomials from \cite{flat} which are written in terms of coordinates $y_i$ ($i=1, \dots, 8$) (denoted as $x_i$ in \cite{flat}) defined by 
\begin{align}\label{coordsyiE8}
y_i= \begin{dcases} \frac{1}{2}(x^i + x^{i+1}), & i \text{ odd}, \\  \frac{1}{2}(x^{i-1} - x^{i}),& i \text{ even} . \end{dcases}
\end{align}
Let us recall the  root system $E_8 \subset V=\mathbb{C}^8$ (see e.g. \cite{cox}):
\begin{align*}
\pm e_i \pm e_j, \quad 1 \leq i < j \leq 8, \quad \frac{1}{2} (\pm e_1 \pm e_2 \pm \dots \pm e_8) \quad \text{(even number of minuses)}.
\end{align*}
Let us fix the following simple system $\Delta \subset E_8$: 
\begin{align}\label{simplesystemE8}
\alpha_1 &= \frac{1}{2}(e_1 - e_2 -e_3 -e_4 -e_5 -e_6-e_7 +e_8), \nonumber\\
\alpha_2&= e_1 +e_2,\\
\alpha_i&= e_{i-1}-e_{i-2}, \quad 3 \leqslant i \leqslant 8, \nonumber
\end{align}
and consider the corresponding Coxeter graph:
$$\dynkin[labels={1,2,3, 4, 5, 6, 7, 8},label macro/.code={\alpha_{#1}}]{E}{8}$$ 
Let us also introduce coordinates $z_i = (\alpha_i, x)$, $1 \leq i \leq 8$. Note that $z_i= \sum_{j=1}^8 a^{(8)}_{ij} y_j$, where $A=A^{(8)}=(a^{(8)}_{ij})_{i,j=1}^8$ is the following matrix:
\begin{align*}
A^{(8)}=\begin{pmatrix}
0 & 1 &-1& 0&-1 &0&0&-1 \\
2 & 0 &0& 0&0 &0&0&0\\
0 & -2 &0& 0&0 &0&0&0\\
-1 & 1 &1& 1&0 &0&0&0\\
0 & 0 &0& -2&0 &0&0&0\\
0 & 0 &-1& 1&1 &1&0&0\\
0 &0 &0& 0&0 &-2&0&0\\
0 & 0 &0& 0&-1 &1&1&1
\end{pmatrix}.
\end{align*}
We have 
\begin{equation*}
\eta=\sum_{i=1}^n dt^i dt^{n+1-i}=\sum_{i=1}^n  \sum_{r=1}^n  \sum_{l=1}^n\frac{\partial t^i}{\partial y_r}  \frac{\partial t^{n+1-i}}{\partial y_l}dy_r dy_{l} =\sum_{r, l=1}^n \eta'_{rl}(y) dy_r dy_l. 
\end{equation*}
In $z$-coordinates we have $\eta= \sum_{i, j=1}^n \eta''_{ij}(z) dz_i dz_j$, where 
\begin{align}\label{saito1forE7,8}
\eta_{ij}''(z)=\sum_{k,l=1}^n (A^{(n)})^{-1}_{ki} (A^{(n)})^{-1}_{lj} \eta'_{kl}(y).
\end{align}

 Let $I=\{i_1, \dots, i_k\}$, $1 \leqslant i_1< \dots < i_k \leqslant n$ and let $J=  \underline{n} \setminus I$. Consider the corresponding stratum $D= D_{i_1, \dots, i_k}$. It is given by equations $z_i=0$, where $i \in I$. The restriction of $\eta$ on $D$ takes the form
\begin{equation}\label{saitoforE7,8}
\eta_D= \sum_{l, k \in J} \left. \eta_{lk}''(z)\right|_D  dz_{l} dz_{k}.
\end{equation}
We use formula \eqref{saitoforE7,8} to find the determinant of the restricted Saito metric with the help of Mathematica. Tables \ref{codimension 5, E_8} and \ref{codimension 6 in E_8}  below give $\operatorname{det}\eta_D$ up to a non-zero proportionality factor for all three-dimensional and two-dimensional strata $D$ in $E_8$ respectively. We list types of strata $\mathcal{R}_D= \mathcal{R} \cap \langle S \rangle$, where $S=\{\alpha_{i_1}, \dots, \alpha_{i_k}\} \subset \mathcal{R}$ in the first column of these tables. We use the notation $\{i_{1}, \dots, i_{k}\}$ to denote the stratum $D$. We get the following statement by inspecting the tables. 

\begin{theorem}
Let $D$ be any  three-dimensional  or two-dimensional stratum for $\mathcal{R}=E_8$. Then the statement of Theorem \ref{thma} is true. 
\end{theorem}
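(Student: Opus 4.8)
The plan is to reduce the statement to a finite symbolic computation and then verify the required factorisation case by case. First I would invoke the reductions already in place: by Proposition \ref{propwaction} it suffices to treat strata of the form $D=D_S$ with $S=\{\alpha_{i_1},\dots,\alpha_{i_k}\}\subset\Delta$ a subset of simple roots, and Lemma \ref{relationdeterminants} guarantees that the factorised form of $\det\eta_D$ is preserved under the $W$-action, so nothing is lost by this restriction. Since for $\mathcal{R}=E_8$ the strata are determined up to the $W$-action by the type of the Coxeter subgraph spanned by $S$, I only need one representative $S$ for each rank-$5$ (codimension $5$, so $\dim D=3$) and rank-$6$ (codimension $6$, so $\dim D=2$) subgraph type of the $E_8$ diagram attached to \eqref{simplesystemE8}. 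This makes the list of cases finite, and it is precisely the list recorded in the first columns of Tables \ref{codimension 5, E_8} and \ref{codimension 6 in E_8}.

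Next I would set up the explicit computation. Using the Saito polynomials $t^1,\dots,t^8$ for $E_8$ from \cite{flat} (or \cite{DAB}) written in the coordinates $y_i$ of \eqref{coordsyiE8}, I would pass to the coordinates $z_i=(\alpha_i,x)$ adapted to $\Delta$ via the matrix $A^{(8)}$ and formula \eqref{saito1forE7,8}, so that each stratum is cut out by the vanishing of $z_i$, $i\in I$. Restricting through \eqref{saitoforE7,8} produces the matrix of $\eta_D$ in the coordinates $z_j$, $j\in J=\underline{n}\setminus I$, which are linear combinations of the $x^i$ as the theorem requires. The determinant $\det\eta_D$ is then a single explicit homogeneous polynomial in the $z_j$, computed in Mathematica, whose value up to a nonzero scalar is the content of the two tables.

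The final and essential step is the inspection of these determinants. For each case I would factor $\det\eta_D$ symbolically and check two things: that it splits, up to a nonzero constant, into a product of powers of linear forms, and that every such linear factor is proportional to the restriction $\left.\beta\right|_D$ of some root $\beta\in\mathcal{R}\setminus\mathcal{R}_D$, i.e.\ that its zero locus is an element $H$ of the restricted arrangement $\mathcal{A}_D$ of \eqref{resarr}. Reading the factorisations off the tables then exhibits exactly the form $\prod_{H\in\mathcal{A}_D} l_H^{k_H}$ of Theorem \ref{thma} with $k_H\in\mathbb{N}$; in particular $\det\eta_D\not\equiv 0$, which also confirms the non-degeneracy of $\eta_D$. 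The main obstacle I anticipate is computational rather than conceptual: the Saito polynomials for $E_8$ are high-degree polynomials in eight variables, so the symbolic manipulation and, above all, the complete factorisation of $\det\eta_D$ are heavy. Moreover, splitting into \emph{linear} forms is genuinely special, since as Example \ref{example1} and the discussion after it show, a generic constant metric $\widehat{\eta}$ of the same shape can restrict with irreducible higher-degree factors; hence the real work lies in organising and trusting the symbolic factorisation and in matching each factor against a root of $E_8$ restricted to $D$.
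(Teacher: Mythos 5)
Your proposal is correct and follows essentially the same route as the paper: reduce to one representative stratum per Coxeter subgraph type via the $W$-action, compute $\eta_D$ in the coordinates $z_i=(\alpha_i,x)$ using the explicit Saito polynomials of \cite{flat} together with formulae \eqref{saito1forE7,8}--\eqref{saitoforE7,8}, evaluate $\det\eta_D$ in Mathematica, and verify by inspection that each result factors into powers of restrictions of roots, which is exactly the content of Tables \ref{codimension 5, E_8} and \ref{codimension 6 in E_8}.
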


\begin{table}[h]
\caption{ Determinant of the restricted Saito metric, $\operatorname{dim}D=3$, $\mathcal{R}=E_8$}
\label{codimension 5, E_8}
\footnotesize
\renewcommand{\arraystretch}{1.5}
\begin{tabular}{ |m{2cm}|m{13.2cm}|  }
 \hline
\centering $\mathcal{R}_D$, $S$ &  \hspace{5.8cm} $\operatorname{det}\eta_D$ \\
 \hline
\centering $A_5$, \\ $\{4, 5, 6, 7, 8\}$ & $\alpha_1^2 \alpha_2^7 \alpha_3^7 \left(\alpha_1+\alpha_3\right){}^7 \left(\alpha_2+\alpha_3\right){}^{10} \left(\alpha_1+\alpha_2+\alpha_3\right){}^{10} \left(\alpha_1+\alpha_2+2 \alpha_3\right){}^{12} \times $ $\times  \left(\alpha_1+2 \alpha_2+3 \alpha_3\right){}^7  \left(2 \alpha_1+2 \alpha_2+3 \alpha_3\right){}^7 \left(2 \alpha_1+3 \alpha_2+4 \alpha_3\right){}^7 \left(\alpha_1+2 \left(\alpha_2+\alpha_3\right)\right){}^{10}\times $ $\times  \left(\alpha_1+3 \left(\alpha_2+\alpha_3\right)\right){}^2  \left(2 \alpha_1+3 \left(\alpha_2+\alpha_3\right)\right){}^2$   \\
\hline
 \centering $D_5$, \\ $\{1, 2, 3, 4, 5\}$ &  $\alpha_6^{12} \alpha_7^2 \alpha_8^2\left(\alpha_6+\alpha_7\right){}^{12} \left(2 \alpha_6+\alpha_7\right){}^{10}  \left(\alpha_7+\alpha_8\right){}^2 \left(\alpha_6+\alpha_7+\alpha_8\right){}^{12} \left(2 \alpha_6+\alpha_7+\alpha_8\right){}^{10} \times $ $\times \left(2 \alpha_6+2 \alpha_7+\alpha_8\right){}^{10} \left(3 \alpha_6+2 \alpha_7+\alpha_8\right){}^{12} \left(4 \alpha_6+2 \alpha_7+\alpha_8\right){}^2 \left(4 \alpha_6+3 \alpha_7+\alpha_8\right){}^2 \times $ $\times \left(4 \alpha_6+3 \alpha_7+2 \alpha_8\right){}^2$
  \\
  \hline
 \centering  $D_4 \times A_1$,  \\ $\{2, 3, 4, 5, 7\}$ & $\alpha_1^8 \alpha_6^{10} \alpha_8^3 \left(\alpha_1+\alpha_6\right){}^{10} \left(\alpha_1+2 \alpha_6\right){}^8  \left(\alpha_6+\alpha_8\right){}^8 \left(\alpha_1+\alpha_6+\alpha_8\right){}^8 \left(2 \alpha_6+\alpha_8\right){}^3 \times$ $\times \left(\alpha_1+2 \alpha_6+\alpha_8\right){}^{10} \left(2 \alpha_1+2 \alpha_6+\alpha_8\right){}^3   \left(\alpha_1+3 \alpha_6+\alpha_8\right){}^8 \left(2 \alpha_1+3 \alpha_6+\alpha_8\right){}^8\times$ $\times \left(2 \alpha_1+4 \alpha_6+\alpha_8\right){}^3$ \\
  \hline
\centering  $A_4 \times A_1$,  \\ $\{1, 3, 4, 5, 7\}$& $\alpha_2^8 \alpha_6^7 \alpha_8^3\left(\alpha_2+\alpha_6\right){}^{12} \left(2 \alpha_2+\alpha_6\right){}^3 \left(\alpha_2+2 \alpha_6\right){}^6  \left(\alpha_6+\alpha_8\right){}^6 \left(\alpha_2+\alpha_6+\alpha_8\right){}^8\times $ $ \times \left(2 \alpha_2+\alpha_6+\alpha_8\right){}^2  \left(\alpha_2+2 \alpha_6+\alpha_8\right){}^7 \left(2 \alpha_2+2 \alpha_6+\alpha_8\right){}^7 \left(\alpha_2+3 \alpha_6+\alpha_8\right){}^2 \times $ $ \times\left(2 \alpha_2+3 \alpha_6+\alpha_8\right){}^8  \left(3 \alpha_2+3 \alpha_6+\alpha_8\right){}^6  \left(3 \alpha_2+4 \alpha_6+\alpha_8\right){}^3 \left(3 \alpha_2+4 \alpha_6+2 \alpha_8\right){}^2$\\
 \hline
 \centering $A_3 \times A_2$,  \\ $\{2, 3, 4, 6, 7\}$& $\alpha_1^5 \alpha_5^{10}  \alpha_8^4 \left(\alpha_1+\alpha_5\right){}^7 \left(\alpha_1+2 \alpha_5\right){}^7 \left(\alpha_1+3 \alpha_5\right){}^5 \left(2 \alpha_1+3 \alpha_5\right){}^2 \left(\alpha_5+\alpha_8\right){}^6 \times $ $ \times \left(\alpha_1+\alpha_5+\alpha_8\right){}^5  \left(2 \alpha_5+\alpha_8\right){}^4 \left(\alpha_1+2 \alpha_5+\alpha_8\right){}^7 \left(\alpha_1+3 \alpha_5+\alpha_8\right){}^7 \times $ $ \times  \left(2 \alpha_1+3 \alpha_5+\alpha_8\right){}^4\left(\alpha_1+4 \alpha_5+\alpha_8\right){}^5  \left(2 \alpha_1+4 \alpha_5+\alpha_8\right){}^6 \left(2 \alpha_1+5 \alpha_5+\alpha_8\right){}^4  \times $ $ \times  \left(2 \alpha_1+5 \alpha_5+2 \alpha_8\right){}^2$\\
 \hline
\centering  $A_3 \times A_1^2$,  \\ $\{2, 3, 5, 6, 7\}$ & $\alpha_1^3 \alpha_4^{10}  \alpha_8^5\left(\alpha_1+\alpha_4\right){}^6 \left(\alpha_1+2 \alpha_4\right){}^8 \left(\alpha_1+3 \alpha_4\right){}^6 \left(\alpha_1+4 \alpha_4\right){}^3  \left(\alpha_4+\alpha_8\right){}^4 \times $ $ \times  \left(\alpha_1+\alpha_4+\alpha_8\right){}^3  \left(2 \alpha_4+\alpha_8\right){}^5 \left(\alpha_1+2 \alpha_4+\alpha_8\right){}^6 \left(\alpha_1+3 \alpha_4+\alpha_8\right){}^8 \left(\alpha_1+4 \alpha_4+\alpha_8\right){}^6 \times $ $ \times \left(2 \alpha_1+4 \alpha_4+\alpha_8\right){}^5   \left(\alpha_1+5 \alpha_4+\alpha_8\right){}^3 \left(2 \alpha_1+5 \alpha_4+\alpha_8\right){}^4  \left(2 \alpha_1+6 \alpha_4+\alpha_8\right){}^5$\\
 \hline
\centering  $A_2^2 \times A_1$,  \\ $\{1, 2, 3, 5, 6\}$ & $\alpha_4^{12} \alpha_7^4 \alpha_8^2 \left(\alpha_4+\alpha_7\right){}^5 \left(2 \alpha_4+\alpha_7\right){}^6 \left(3 \alpha_4+\alpha_7\right){}^5 \left(4 \alpha_4+\alpha_7\right){}^4  \left(\alpha_7+\alpha_8\right){}^4 \left(\alpha_4+\alpha_7+\alpha_8\right){}^5 \times $ $\times \left(2 \alpha_4+\alpha_7+\alpha_8\right){}^6 \left(3 \alpha_4+\alpha_7+\alpha_8\right){}^5 \left(4 \alpha_4+\alpha_7+\alpha_8\right){}^4 \left(2 \alpha_4+2 \alpha_7+\alpha_8\right){}^4 \times $ $ \times \left(3 \alpha_4+2 \alpha_7+\alpha_8\right){}^5   \left(4 \alpha_4+2 \alpha_7+\alpha_8\right){}^6 \left(5 \alpha_4+2 \alpha_7+\alpha_8\right){}^5 \left(6 \alpha_4+2 \alpha_7+\alpha_8\right){}^4 \times $ $ \times  \left(6 \alpha_4+3 \alpha_7+\alpha_8\right){}^2 \left(6 \alpha_4+3 \alpha_7+2 \alpha_8\right){}^2$\\
 \hline
\centering $A_2 \times A_1^3$, \\ $\{2, 3, 5, 7, 8\}$& $\alpha_1^3 \alpha_4^6 \alpha_6^5 \left(\alpha_1+\alpha_4\right){}^4 \left(\alpha_1+2 \alpha_4\right){}^3  \left(\alpha_4+\alpha_6\right){}^8 \left(\alpha_1+\alpha_4+\alpha_6\right){}^5 \left(2 \alpha_4+\alpha_6\right){}^5  \times $ $ \times \left(\alpha_1+2 \alpha_4+\alpha_6\right){}^8 \left(\alpha_1+3 \alpha_4+\alpha_6\right){}^5 \left(\alpha_1+3 \alpha_4+2 \alpha_6\right){}^8 \left(\alpha_1+4 \alpha_4+2 \alpha_6\right){}^5\times $ $ \times  \left(\alpha_1+4 \alpha_4+3 \alpha_6\right){}^4  \left(2 \alpha_1+4 \alpha_4+3 \alpha_6\right){}^3  \left(\alpha_1+5 \alpha_4+3 \alpha_6\right){}^3 \left(2 \alpha_1+5 \alpha_4+3 \alpha_6\right){}^4 \times $ $ \times\left(2 \alpha_1+6 \alpha_4+3 \alpha_6\right){}^3  \left(\alpha_1+2 \left(\alpha_4+\alpha_6\right)\right){}^5 \left(\alpha_1+3 \left(\alpha_4+\alpha_6\right)\right){}^3$ \\
 \hline
\end{tabular}
\end{table}

\begin{table}[h]
\caption{Determinant of the restricted Saito metric, $\operatorname{dim}D=2$, $\mathcal{R}=E_8$}
\label{codimension 6 in E_8}
\footnotesize
\renewcommand{\arraystretch}{1.5}
\begin{tabular}{ |P{2.2cm}|P{13cm}|  }
 \hline
\centering $\mathcal{R}_D$, $S$ & $\operatorname{det}\eta_D$ \\
 \hline 
\centering  $ A_6$,\\ $\{2, 4, 5, 6, 7, 8\}$& $\alpha_1^2 \alpha_3^{12} \left(\alpha_1+\alpha_3\right){}^{12} \left(\alpha_1+2 \alpha_3\right){}^{18} \left(\alpha_1+3 \alpha_3\right){}^8 \left(2 \alpha_1+3 \alpha_3\right){}^8$  \\
\hline
\centering $D_6$,\\  $\{2, 3, 4, 5, 6, 7\}$ &  $\alpha_1^{18} \alpha_8^{12} \left(\alpha_1+\alpha_8\right){}^{18} \left(2 \alpha_1+\alpha_8\right){}^{12}$
  \\
  \hline
\centering   $E_6$,\\  $\{1, 2,3,4, 5, 6\}$ & $\alpha_7^{18} \alpha_8^2 \left(\alpha_7+\alpha_8\right){}^{18} \left(2 \alpha_7+\alpha_8\right){}^{18} \left(3 \alpha_7+\alpha_8\right){}^2 \left(3 \alpha_7+2 \alpha_8\right){}^2$ \\
  \hline
 \centering  $A_5 \times A_1$, \\ $\{1, 3, 4, 5, 6, 8\}$ & $\alpha_2^{12} \alpha_7^8 \left(\alpha_2+\alpha_7\right){}^{18} \left(2 \alpha_2+\alpha_7\right){}^8 \left(\alpha_2+2 \alpha_7\right){}^7 \left(3 \alpha_2+2 \alpha_7\right){}^7$\\
 \hline
\centering $D_5 \times A_1$, \\ $\{2, 3, 4, 5, 6, 8\}$ & $\alpha_1^{12} \alpha_7^{12} \left(\alpha_1+\alpha_7\right){}^{18} \left(2 \alpha_1+\alpha_7\right){}^3 \left(\alpha_1+2 \alpha_7\right){}^{12} \left(2 \alpha_1+3 \alpha_7\right){}^3$\\
 \hline
 \centering  $A_4 \times A_2$, \\ $\{1, 3, 4, 5, 7, 8\}$ & $\alpha_2^8 \alpha_6^8 \left(\alpha_2+\alpha_6\right){}^{18} \left(2 \alpha_2+\alpha_6\right){}^4 \left(\alpha_2+2 \alpha_6\right){}^8 \left(\alpha_2+3 \alpha_6\right){}^2 \left(2 \alpha_2+3 \alpha_6\right){}^8 \left(3 \alpha_2+4 \alpha_6\right){}^4$\\
  \hline
 \centering  $D_4 \times A_2$,\\  $\{2, 3, 4, 5, 7, 8\}$ & $\alpha_1^8 \alpha_6^{12} \left(\alpha_1+\alpha_6\right){}^{12} \left(\alpha_1+2 \alpha_6\right){}^{12} \left(\alpha_1+3 \alpha_6\right){}^8 \left(2 \alpha_1+3 \alpha_6\right){}^8$\\
  \hline
  \centering  $A_4 \times A_1^2$,\\ $\{2, 3, 5, 6, 7, 8\}$ & $\alpha_1^3 \alpha_4^{12} \left(\alpha_1+\alpha_4\right){}^7 \left(\alpha_1+2 \alpha_4\right){}^{12} \left(\alpha_1+3 \alpha_4\right){}^{12} \left(\alpha_1+4 \alpha_4\right){}^7 \left(\alpha_1+5 \alpha_4\right){}^3 \left(2 \alpha_1+5 \alpha_4\right){}^4$\\
  \hline
 \centering  $A_3^2$,\\ $\{2, 3, 4, 6, 7, 8\}$ & $\alpha_1^5 \alpha_5^{12} \left(\alpha_1+\alpha_5\right){}^8 \left(\alpha_1+2 \alpha_5\right){}^{12} \left(\alpha_1+3 \alpha_5\right){}^8 \left(2 \alpha_1+3 \alpha_5\right){}^5 \left(\alpha_1+4 \alpha_5\right){}^5 \left(2 \alpha_1+5 \alpha_5\right){}^5$\\
  \hline
 \centering   $A_3 \times A_2 \times A_1$,\\ $\{1, 2, 4, 6, 7, 8\}$ & $\alpha_3^5 \alpha_5^7 \left(\alpha_3+\alpha_5\right){}^{18} \left(2 \alpha_3+\alpha_5\right){}^5 \left(\alpha_3+2 \alpha_5\right){}^8 \left(2 \alpha_3+3 \alpha_5\right){}^7 \left(3 \alpha_3+4 \alpha_5\right){}^5 \left(4 \alpha_3+5 \alpha_5\right){}^5$\\
  \hline
 \centering  $A_2^2\times  A_1^2$,\\ $\{1, 2, 3, 5, 6, 8\}$ & $\alpha_4^{12} \alpha_7^5 \left(\alpha_4+\alpha_7\right){}^8 \left(2 \alpha_4+\alpha_7\right){}^{12} \left(3 \alpha_4+\alpha_7\right){}^8 \left(4 \alpha_4+\alpha_7\right){}^5 \left(3 \alpha_4+2 \alpha_7\right){}^5   \left(5 \alpha_4+2 \alpha_7\right){}^5$\\
  \hline
\end{tabular}
\end{table}

Let us now consider the case $n=7$, $\mathcal{R}=E_7 \subset V=\mathbb{C}^8$. The root system $E_7$ contains the following vectors (see e.g. \cite{cox}):
\begin{align*}
\pm e_i \pm e_j, \quad 1 \leq i < j \leq 6,  \quad \pm(e_7 -e_8), \quad \pm\frac{1}{2} (e_7 - e_8  + \sum_{i=1}^6 \epsilon_i e_i),
\end{align*}
where $\epsilon_i =\pm 1$, $\prod_{i=1}^6 \epsilon_i = -1$.
%the number of minus signs in the sum is odd. 
Let us fix simple system $\Delta= \{\alpha_1, \dots, \alpha_7\}$, where $\alpha_i$, $i=1, \dots, 7$ are defined in \eqref{simplesystemE8}.
%and consider the corresponding Coxeter graph:
%
%$$\dynkin[labels={1,2,3, 4, 5, 6, 7},label macro/.code={\alpha_{#1}}]{E}{7}$$ 

We use Saito polynomials from \cite{flat} which are written in terms of coordinates $y_i$ defined by formulae \eqref{coordsyiE8} for any $ 1 \leq i \leq 4$ and defined by the following formulae for $i=5, 6, 7$:
\begin{align*}
y_i= \begin{dcases} \frac{1}{2}(x^i - x^{i+1}), & i=5,7, \\  \frac{1}{2}(x^{i-1} + x^{i}),& i=6.  \end{dcases}
\end{align*}
 Let us also consider coordinates $z_i = (\alpha_i, x)$, $1 \leq i \leq 7$. Note that $z_i= \sum_{j=1}^7 a^{(7)}_{ij} y_j$, where $A=A^{(7)}=(a^{(7)}_{ij})_{i,j=1}^7$ is the following matrix:
\begin{align*}
A^{(7)}=\begin{pmatrix}
0 & 1 &-1& 0&0 &-1&-1 \\
2 & 0 &0& 0&0 &0&0\\
0 & -2 &0& 0&0 &0&0\\
-1 & 1 &1& 1&0 &0&0\\
0 & 0 &0& -2&0 &0&0\\
0 & 0 &-1& 1&1 &1&0\\
0 &0 &0& 0&-2 &0&0\\
\end{pmatrix}.
\end{align*}
We use formulae \eqref{saito1forE7,8}, \eqref{saitoforE7,8} (with $n=7$) to find the determinant of the restricted Saito metric with the help of Mathematica. Table \ref{codimension 5 in E_7} gives $\operatorname{det}\eta_D$ up to a non-zero proportionality factor for any two-dimensional stratum $D$ in $E_7$. We list types of strata $\mathcal{R}_D= \mathcal{R} \cap \langle S \rangle$, where $S=\{\alpha_{i_1}, \dots, \alpha_{i_k}\} \subset \mathcal{R}$ in the first column of this table. We use the notation $\{i_1, \dots, i_k\}$ to denote the stratum $D$. Note that there are two non-equivalent strata of type $A_5$ \cite{OT}. The following statement is a direct consequence of this table. 

\begin{theorem}
Let $D$ be any two-dimensional stratum for $\mathcal{R}=E_7$. Then the statement of Theorem \ref{thma} is true. 
\end{theorem}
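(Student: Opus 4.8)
The plan is to reduce the statement to a finite explicit computation and then verify it by inspecting Table \ref{codimension 5 in E_7}, exactly as in the $E_8$ cases treated just above. By Proposition \ref{propwaction} together with Lemma \ref{relationdeterminants}, it suffices to treat strata $D=D_S$ with $S\subset\Delta$ a subset of simple roots: every two-dimensional stratum is $W$-equivalent to one of these, and such an equivalence preserves the factorisation-into-linear-forms property by Lemma \ref{relationdeterminants}. So the first step is to enumerate all subsets $S\subset\Delta$ of cardinality five, grouped by the isomorphism type of the root subsystem $\mathcal{R}_D=\mathcal{R}\cap\langle S\rangle$; this produces the list in the first column of the table. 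Here I would be careful about the fact, noted in the text and in \cite{OT}, that type $A_5$ arises for two $W$-inequivalent strata, so both must be carried along separately.

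The second step is the determinant computation itself. Starting from the explicit $E_7$ Saito flat coordinates $t^\alpha$ of \cite{flat}, written in the coordinates $y_i$ via \eqref{coordsyiE8} and its counterpart for $i=5,6,7$, I would pass to the root coordinates $z_i=(\alpha_i,x)$ using the matrix $A^{(7)}$ and formula \eqref{saito1forE7,8} for the metric components $\eta''_{ij}(z)$. For each stratum $D$, cut out by $z_i=0$ ($i\in S$), the restricted metric $\eta_D$ is by \eqref{saitoforE7,8} the $2\times2$ matrix $(\left.\eta''_{lk}(z)\right|_D)_{l,k\in J}$ with $J=\underline{7}\setminus S$; its determinant is a single polynomial in the two surviving coordinates, which Mathematica can evaluate and factor. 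The outcomes, up to an irrelevant non-zero scalar, are recorded in Table \ref{codimension 5 in E_7}.

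The final step is the inspection: each table entry is visibly a product of integer powers of linear forms in the $z_i$. To conclude Theorem \ref{thma} I would check that the linear forms occurring are precisely the restrictions $\left.\beta\right|_D$ of roots $\beta\in\mathcal{R}$, so that their common zero loci are exactly the hyperplanes of the restricted arrangement $\mathcal{A}_D$, and that no spurious irreducible factor of degree $\geq2$ survives. Since the displayed factors are exactly the $\left.\alpha\right|_D$ for the relevant positive roots expressed in the simple-root basis, this reading-off is immediate from the table.

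The hard part is not this last logical step but the reliability and transparency of the underlying computation: one must trust the explicit $E_7$ Saito polynomials of \cite{flat}, correctly implement the coordinate changes and the inverse of $A^{(7)}$, and confirm that the factorisation genuinely splits every determinant into linear forms rather than leaving an unfactored quadratic piece. A worthwhile consistency check, which I would carry out, is to compare the multiplicities appearing in the table with the prediction $k_H=h(\mathcal{R}_{D,\beta}^{(0)})$ of Theorem \ref{thma1}; agreement both validates the computation and, via Theorem \ref{theoremequiv}, would upgrade the present statement to the full Theorem \ref{MMtheorem} for these strata.
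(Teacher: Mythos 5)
Your proposal is correct and follows essentially the same route as the paper: reduce to strata $D_S$ with $S\subset\Delta$, compute $\eta_D$ in the coordinates $z_i=(\alpha_i,x)$ via the explicit Saito polynomials of \cite{flat} and formulae \eqref{saito1forE7,8}, \eqref{saitoforE7,8}, factor the determinant with Mathematica, and read off Theorem \ref{thma} from Table \ref{codimension 5 in E_7}. The consistency check against the multiplicities $k_H=h(\mathcal{R}_{D,\beta}^{(0)})$ that you mention is precisely what the paper carries out afterwards in Table \ref{tabled=2E7} to establish Theorem \ref{thma1} for these strata.
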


\begin{table}[h]
\caption{Determinant of the restricted Saito metric, $\operatorname{dim}D=2$, $\mathcal{R}=E_7$}
\label{codimension 5 in E_7}
\footnotesize
\renewcommand{\arraystretch}{1.5}
\begin{tabular}{ |c|c | }
 \hline
 $\mathcal{R}_D$, $S$&  $\operatorname{det}\eta_D$ \\
 \hline 
$ A_5$, $\{2, 4, 5, 6, 7\}$ & $\alpha_1^2 \alpha_3^{10} \left(\alpha_1+\alpha_3\right){}^{10} \left(\alpha_1+2 \alpha_3\right){}^{10} \left(\alpha_1+3 \alpha_3\right){}^2 \left(2 \alpha_1+3 \alpha_3\right){}^2$  \\
\hline
 $A^{'}_5$,  $\{3, 4, 5, 6, 7\}$ &  $\alpha_1^7 \alpha_2^{10} \left(\alpha_1+\alpha_2\right){}^{12} \left(\alpha_1+2 \alpha_2\right){}^7$
  \\
  \hline
  $D_5$,  $\{1, 2, 3, 4, 5\}$ & $\alpha_6^{12} \alpha_7^2 \left(\alpha_6+\alpha_7\right){}^{12} \left(2 \alpha_6+\alpha_7\right){}^{10}$ \\
  \hline
  $A_4 \times A_1$,  $\{1, 2, 3, 4, 7\}$ & $\alpha_5^8 \alpha_6^3 \left(\alpha_5+\alpha_6\right){}^{12} \left(2 \alpha_5+\alpha_6\right){}^7 \left(3 \alpha_5+2 \alpha_6\right){}^6$\\
 \hline
  $D_4 \times A_1$,  $\{2, 3, 4, 5, 7\}$ & $\alpha_1^8 \alpha_6^{10} \left(\alpha_1+\alpha_6\right){}^{10} \left(\alpha_1+2 \alpha_6\right){}^8$\\
 \hline
  $A_3 \times A_2$,  $\{1, 3, 5, 6, 7\}$ & $\alpha_2^2 \alpha_4^7 \left(\alpha_2+\alpha_4\right){}^7 \left(\alpha_2+2 \alpha_4\right){}^{10} \left(\alpha_2+3 \alpha_4\right){}^5 \left(2 \alpha_2+3 \alpha_4\right){}^5$\\
  \hline
  $A_3 \times A_1^2$,  $\{1, 2, 4, 5, 7\}$ & $\alpha_3^8 \alpha_6^6 \left(\alpha_3+\alpha_6\right){}^{10} \left(2 \alpha_3+\alpha_6\right){}^6 \left(\alpha_3+2 \alpha_6\right){}^3 \left(3 \alpha_3+2 \alpha_6\right){}^3$\\
  \hline
  $A_2^2 \times A_1$ ,  $\{1, 2, 4, 6, 7\}$& $\alpha_3^5 \alpha_5^6 \left(\alpha_3+\alpha_5\right){}^{12} \left(2 \alpha_3+\alpha_5\right){}^4 \left(\alpha_3+2 \alpha_5\right){}^5 \left(2 \alpha_3+3 \alpha_5\right){}^4$\\
  \hline
  $A_2 \times A_1^3$,  $\{1, 2, 3, 5, 7\}$ & $\alpha_4^8 \alpha_6^4 \left(\alpha_4+\alpha_6\right){}^8 \left(2 \alpha_4+\alpha_6\right){}^8 \left(3 \alpha_4+\alpha_6\right){}^4 \left(3 \alpha_4+2 \alpha_6\right){}^4$\\
  \hline
\end{tabular}
\end{table}

Now we are going to establish Theorem \ref{thma1} for these strata in $E_n$. Recall that for any stratum $D$ and $\beta \in \mathcal{R}\setminus \mathcal{R}_D$ we define the root system $\mathcal{R}_{D, \beta}=\langle \mathcal{R}_{D}, \beta \rangle \cap \mathcal{R}$ which has the decomposition \eqref{decomposition} and that we have $\beta \in \mathcal{R}_{D, \beta}^{(0)}$. The approach to finding $\mathcal{R}_{D, \beta}^{(0)}$ is as follows. 

One can assume by applying group action that a generic vector in the stratum $D\cap \Pi_\beta$ is in the fundamental domain of $E_n$ (cf. Proposition \ref{propwaction}), hence the root systems $\mathcal{R}_{D, \beta}$ can be identified with a subgraph inside the Coxeter graph of $E_n$. Then we consider further action of the Coxeter group $W_{D, \beta}$ generated by   orthogonal reflections about the mirrors $\Pi_\alpha$ with $\alpha \in \mathcal{R}_{D, \beta}$. This group fixes the stratum $D\cap \Pi_\beta$ and we can map a generic vector in the space 
%$D\cap\langle {\mathcal R}_{D, \beta}\rangle$ 
$D$ into the fundamental domain for the group $W_{D, \beta}$. Thus we can assume that the stratum $D$ is given by vanishing some of the simple roots of the root system $\mathcal{R}_{D, \beta}$ which is a subset of the set of simple roots $\Delta$.
%and hence stratum $D$ are both in the fundamental domain of $E_n$ (cf. Proposition \ref{propwaction}), hence 
Overall, the root systems $\mathcal{R}_{D} \subset \mathcal{R}_{D, \beta}$ can be identified with embedded one into another subgraphs inside the Coxeter graph for $E_n$.
 We compute the size $|\mathcal{R}_{D, \beta}|$ of the root system $\mathcal{R}_{D, \beta}$ using Mathematica. In most cases considerations of subgraphs of the Coxeter graph of $E_n$ allow to determine $\mathcal{R}_{D, \beta}$ such that 
$\mathcal{R}_{D, \beta}\supset \mathcal{R}_{D}$
from its size uniquely.  
%(see also \cite{oshima} for classification of all subsets of a root system which are irreducible root systems). 

After we find the type of the root system $\mathcal{R}_{D, \beta}$
%\supset \mathcal{R}_{D}$ 
%we consider embedding of the root systems $\mathcal{R}_D \subset \mathcal{R}_{D, \beta}$. 
we  identify its irreducible component $\mathcal{R}_{D, \beta}^{(0)}$ with the help of  Lemma \ref{irredulemma1} and relations \eqref{irred1}, \eqref{irred2}. 
Finally, we check that its Coxeter number is equal to the degree of the corresponding factor $\beta$ in the determinant of $\eta_D$.
We give these results in Tables \ref{tabled=3E8}, \ref{tabled=2E8} for the root system $\mathcal{R}=E_8$ and in Table \ref{tabled=2E7} for the root system $\mathcal{R}=E_7$.

  The cases when the knowledge of $|\mathcal{R}_{D, \beta}|$ does not immediately lead to the type of $\mathcal{R}_{D, \beta}$ are as follows:
\begin{enumerate}[(i)]
\item $\mathcal{R}=E_8$, $\operatorname{dim}D=3$, $|\mathcal{R}_{D, \beta}|=42$ in which case $\mathcal{R}_{D, \beta}= A_6$ or $\mathcal{R}_{D, \beta}= D_5 \times A_1$,
\item $\mathcal{R}=E_8$, $\operatorname{dim}D=3$, $|\mathcal{R}_{D, \beta}|=24$ in which case $\mathcal{R}_{D, \beta}=A_4 \times A_1^2$ or $\mathcal{R}_{D, \beta}=A_3^2$,
%\item $\mathcal{R}=E_8$, $\operatorname{dim}D=2$, $|\mathcal{R}_{D, \beta}|=44$ in which case $\mathcal{R}_{D, \beta}=A_6 \times A_1$ or $\mathcal{R}_{D, \beta}= D_5 \times A_1^2$,
\item $\mathcal{R}=E_7$, $\operatorname{dim}D=2$, $|\mathcal{R}_{D, \beta}|=42$ in which case $\mathcal{R}_{D, \beta}= A_6$ or $\mathcal{R}_{D, \beta}= D_5 \times A_1$.
\end{enumerate}

Let us consider these remaining cases in detail. 

\begin{enumerate}[(i)]
\item 
Considerations of possible embeddings of Coxeter graphs for ${\mathcal R}_D$ inside the Coxeter graphs for $D_5\times A_1$ and $A_6$ inside $E_8$ 
% Considerations of Coxeter graphs and their subgraphs for $D_5$ and $A_6$ 
allow to determine $\mathcal{R}_{D, \beta}$ in all the cases except for when $\mathcal{R}_D= A_4 \times A_1$. Let us consider this case.
 
  Let us consider firstly $\beta \in \mathcal{R}$ such that $\left. \beta \right|_D= \left.  \alpha_6 \right|_D$. Then it is immediate from the Coxeter graph of $E_8$ that $\mathcal{R}_{D, \beta}=A_6$. 

 Let us now consider $\left. \beta \right|_D= \left. \alpha_2 + 2 \alpha_6 + \alpha_8 \right|_D$. Suppose that $\mathcal{R}_{D, \beta}= D_5 \times A_1$. Note that $A_4 \times A_1$ is not a subsystem of $D_5$. Therefore it has to be that $\beta \in D_5$ and ${\mathcal R}_{D, \beta}^{(0)} = \langle A_4, \beta \rangle \cap \mathcal{R}= D_5$. One can choose $$\beta = \alpha_1+ \alpha_2+\alpha_8  + 2( \alpha_3 +\alpha_6 + \alpha_7)  + 3 (\alpha_4 + \alpha_5) \in \mathcal{R}$$ so that $\left. \beta \right|_D$ has the required form. Then one can check by Mathematica that 
$$
|\langle A_4, \beta \rangle \cap \mathcal{R}|= 30 \neq 40 = |D_5|. 
$$
This contradiction implies that $\mathcal{R}_{D, \beta}=A_6$.
 
  The case $\left. \beta \right|_D = \left. 2 \alpha_2 + 2 \alpha_6 + \alpha_8\right|_D$ is similar. One can choose $$\beta =  \alpha_1+ \alpha_7 + \alpha_8 +2( \alpha_2 +  \alpha_3 + \alpha_5 + \alpha_6)+ 3 \alpha_4 \in \mathcal{R}$$ so that $\left. \beta \right|_D$ has the required form. 
  
  Now let us consider the case when $\left. \beta \right|_D = \left. \alpha_2 \right|_D$. It is immediate from the Coxeter graph of $E_8$ that $\mathcal{R}_{D, \beta}= D_5 \times A_1$.
  
 Consider the case when $\left. \beta \right|_D=\left. 2 \alpha_2 + 3 \alpha_6 + \alpha_8\right|_D$. One can choose $$\beta= \alpha_1+\alpha_8+ 2 (\alpha_2 + \alpha_3 +\alpha_7) + 3(\alpha_4  + \alpha_5 + \alpha_6) \in \mathcal{R}$$ so that $\left. \beta \right|_D$ has the required form. One can check by Mathematica that 
$$
|\langle A_4, \beta \rangle \cap \mathcal{R}|= 40=|D_5|.
$$
 Note that $\pm \alpha_7 \in \mathcal{R}_{D, \beta}$ is orthogonal to the vector space $\langle A_4, \beta\rangle$. Since $|\mathcal{R}_{D, \beta}|=42$ it follows that the root system $\mathcal{R}_{D, \beta}$ is reducible which implies that $\mathcal{R}_{D, \beta}=D_5 \times A_1$. 
   
   The case $\left. \beta \right|_D =\left.   \alpha_2 +  \alpha_6 + \alpha_8 \right|_D$ is similar. One can choose $\beta = \sum_{i=1}^8 \alpha_i \in \mathcal{R}$ so that $\left. \beta \right|_D$ has the required form. 
 
\item 
Considerations of possible embeddings of Coxeter graphs for ${\mathcal R}_D$ inside the Coxeter graphs for $A_4\times A_1^2$ and $A_3^2$ inside $E_8$
%Considerations of Coxeter graphs and their subgraphs for $A_3$ and $A_4$ 
 allow to determine $\mathcal{R}_{D, \beta}$ in all the cases except for when $\mathcal{R}_D= A_3 \times A_1^2$. Let us consider this case.

 Consider firstly $\beta \in \mathcal{R}$ such that $\left. \beta \right|_D=\left.  \alpha_4 + \alpha_8\right|_D$. Suppose that $\mathcal{R}_{D, \beta}= A_4 \times A_1^2$. 
%Note that $A_3 \times A_1$ is not a subsystem of $A_4$. 
Then it has to be that $\beta \in A_4$ and $\langle A_3, \beta \rangle \cap \mathcal{R}= A_4$. One can choose $\beta= \sum_{\substack{i=2 \\ i \neq 3}}^8\alpha_i $ so that $\left. \beta \right|_D$ has the required form. Then one can check by Mathematica that $|\langle A_3, \beta \rangle \cap \mathcal{R}|=12 \neq 20 =|A_4|$. This contradiction implies that $\mathcal{R}_{D, \beta}=A_3^2$.  

The case $\left. \beta \right|_D= \left. 2 \alpha_1 + 5 \alpha_4 + \alpha_8\right|_D$ is similar. In this case one can choose $$\beta= \alpha_8 +  2( \alpha_1 +\alpha_7)+ 3 (\alpha_2 + \alpha_3 + \alpha_6) + 4 \alpha_5 +5\alpha_4\in \mathcal{R}$$ so that $\left. \beta \right|_D$ has the required form.

 Now let us consider the case when $\left. \beta \right|_D= \left.  \alpha_8 \right|_D$. Then it is immediate from the Coxeter graph of $E_8$ that $\mathcal{R}_{D, \beta}=A_4 \times A_1^2$.
 
  Consider now the case when $\left. \beta \right|_D=\left. 2 \alpha_1 +6 \alpha_4+ \alpha_8 \right|_D$. One can choose $$\beta = \alpha_8 + 2 (\alpha_1 + \alpha_7) + 3 \alpha_2 + 4 (\alpha_3 + \alpha_6) +5 \alpha_5 + 6 \alpha_4 \in \mathcal{R}$$ so that $\left. \beta \right|_D$ has the required form. Suppose that $\mathcal{R}_{D, \beta}=A_3^2$. Then it has to be that $\langle A_1^2, \beta \rangle \cap \mathcal{R}= A_3$. One can check by Mathematica that $|\langle A_1^2,\beta \rangle \cap \mathcal{R}|= 6\neq 12=|A_3|$. This contradiction implies that $\mathcal{R}_{D, \beta}=A_4 \times A_1^2$. 
  
  The cases 
$\left. \beta \right|_D=\left.  2 \alpha_1 + 4 \alpha_4 + \alpha_8 \right|_D$  and $\left. \beta \right|_D=2 \alpha_4 +\alpha_8$ are similar. One can choose $$\beta= \alpha_7 + \alpha_8 + 2(\alpha_1 + \alpha_2 + \alpha_6) + 3(\alpha_3 + \alpha_5) + 4 \alpha_4 \in \mathcal{R}$$ and $$ \beta= \alpha_7 + \alpha_8+\alpha_2 + \alpha_3 + 2(\alpha_4 + \alpha_5 + \alpha_6) \in \mathcal{R}$$ respectively, so that $\left. \beta \right|_D$ have the required forms. 

%\item Considerations of Coxeter graphs and their subgraphs for $D_5$ and $A_6$ allow to determine allows to determine $\mathcal{R}_{D, \beta}$ in all the cases except for when $\mathcal{R}_D= A_4 \times A_1^2$. Let us consider firstly the case when $\left. \beta \right|_D = \left. \alpha_1 + 4 \alpha_4 \right|_D$. Suppose that $\mathcal{R}_{D, \beta}=D_5 \times A_1^2$. Then it has to be that $\beta  \in D_5$ and $\langle A_4, \beta \rangle \cap \mathcal{R}=D_5$. One can choose $$\beta = \alpha_1 +2 (\alpha_2 + \alpha_3 + \alpha_6) + 4\alpha_4 + 3\alpha_5 + \alpha_7 \in \mathcal{R}$$ so that $\left. \beta \right|_D$ has the required form. Then one can check by Mathematica that $|\langle A_4, \beta \rangle \cap \mathcal{R}|=30 \neq 40= |D_5|$. This contradiction implies that $\mathcal{R}_{D, \beta}= A_6 \times A_1$.The case $\left. \beta \right|_D=\alpha_1 + \alpha_4$ is similar. One can choose $\beta = \sum_{ \substack{i=1 \\ i \neq 2, 8}}^8\alpha_i \in \mathcal{R}$ so that $\left. \beta \right|_D$ has the required form. 
 
\item
Considerations of possible embeddings of Coxeter graphs for ${\mathcal R}_D$ inside the Coxeter graphs for $D_5\times A_1$ and $A_6$  inside $E_7$
% Considerations of Coxeter graphs and their subgraphs for $D_5$ and $A_6$ 
allow to determine $\mathcal{R}_{D, \beta}$ in all the cases except for when $\mathcal{R}_D= A_4 \times A_1$. Let us consider this case.

 Consider firstly $\beta \in \mathcal{R}$ such that $\left. \beta \right|_D=\left. \alpha_5 \right|_D$. Then it is immediate from the Coxeter graph of $E_7$ that $\mathcal{R}_{D, \beta}=D_5 \times A_1$. Let us now consider the case when $\left. \beta \right|_D= \left. 2 \alpha_5 +\alpha_6\right|_D$. Suppose that $\mathcal{R}_{D, \beta}=D_5 \times A_1$. Then it has to be that $\beta \in D_5$ and $\langle A_4, \beta \rangle \cap \mathcal{R}=D_5$. One can choose $$\beta=\alpha_2+\alpha_3+\alpha_6 +2 (\alpha_4+ \alpha_5)\in \mathcal{R}$$ so that $\left. \beta \right|_D$ has the required form. One can check by Mathematica that $|\langle A_4, \beta \rangle \cap \mathcal{R}|=30 \neq 40=|D_5|$. This contradiction implies that $\mathcal{R}_{D, \beta}=A_6$.
\end{enumerate}

%and deduce the type of the root system $\mathcal{R}_{D, \beta}^{(0)}$ with the help of Lemma \ref{irredulemma1} and the classification of subsystems of root systems \cite{oshima}. We thus show that the statement of Theorem \ref{thma1} is true. We give these results in Tables \ref{tabled=3E8}, \ref{tabled=2E8} for the root system $\mathcal{R}=E_8$ and in Table \ref{tabled=2E7} for the root system $\mathcal{R}=E_7$.

%\vspace{0.2cm}

As a direct corollary of Tables \ref{tabled=3E8}, \ref{tabled=2E8} we get the following statement.
 
 \begin{theorem}
Let $D$ be any three-dimensional or two-dimensional stratum for $\mathcal{R}=E_8$. Then the statement of Theorem \ref{thma1} is true. 
\end{theorem}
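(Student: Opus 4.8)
The plan is to verify, factor by factor, that the multiplicity $k_H$ with which each linear form $l_H = \left.\beta\right|_D$ occurs in $\det\eta_D$ coincides with the Coxeter number $h(\mathcal{R}_{D,\beta}^{(0)})$. The multiplicities $k_H$ are already available: for every three-dimensional and two-dimensional stratum $D$ in $E_8$ the determinant $\det\eta_D$ has been computed explicitly in Tables \ref{codimension 5, E_8} and \ref{codimension 6 in E_8}, so each restricted root $\left.\beta\right|_D$ appears with an explicit exponent. What remains is to match each such exponent against a Coxeter number.

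First I would, for each stratum $D=D_S$, reduce to the situation where $D$ is cut out by vanishing of a subset of the simple roots. By Proposition \ref{propwaction} and Lemma \ref{relationdeterminants} this loses no generality, and it lets me identify $\mathcal{R}_D$ with a subgraph of the Coxeter graph of $E_8$, as recorded in the first column of the tables. Next, for each factor $\beta$ I would form the root system $\mathcal{R}_{D,\beta} = \langle \mathcal{R}_D, \beta\rangle\cap\mathcal{R}$. Applying the action of the Coxeter group $W_{D,\beta}$ generated by reflections in the mirrors $\Pi_\alpha$, $\alpha\in\mathcal{R}_{D,\beta}$, I can assume that $D$ is given by the vanishing of some of the simple roots of $\mathcal{R}_{D,\beta}$, so that $\mathcal{R}_D\subset\mathcal{R}_{D,\beta}$ is realised as nested subgraphs of the $E_8$ diagram. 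The cardinality $|\mathcal{R}_{D,\beta}|$ is then computed in Mathematica; in most cases it, together with the constraint $\mathcal{R}_{D,\beta}\supset\mathcal{R}_D$, pins down the isomorphism type of $\mathcal{R}_{D,\beta}$ uniquely. Once the type is known, I would read off its irreducible decomposition and single out the component $\mathcal{R}_{D,\beta}^{(0)}$ containing $\beta$ using Lemma \ref{irredulemma1} together with the inclusions \eqref{irred1} and \eqref{irred2}: the components equal to summands of $\mathcal{R}_D$ on which $\beta$ restricts to zero are the $\mathcal{R}_{D,\beta}^{(j)}$ with $j\ge 1$, and the remaining component is $\mathcal{R}_{D,\beta}^{(0)}$. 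Computing $h(\mathcal{R}_{D,\beta}^{(0)})$ and comparing with the tabulated exponent $k_H$ then completes the verification; these comparisons are assembled in Tables \ref{tabled=3E8} and \ref{tabled=2E8}.

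The main obstacle is the handful of cases in which the cardinality $|\mathcal{R}_{D,\beta}|$ alone does not determine the type of $\mathcal{R}_{D,\beta}$, namely $|\mathcal{R}_{D,\beta}|=42$ (where $\mathcal{R}_{D,\beta}$ could be $A_6$ or $D_5\times A_1$) and $|\mathcal{R}_{D,\beta}|=24$ (where it could be $A_4\times A_1^2$ or $A_3^2$). These arise only for the strata $\mathcal{R}_D=A_4\times A_1$ and $\mathcal{R}_D=A_3\times A_1^2$ respectively. Here I would resolve the ambiguity by the auxiliary computations carried out above: for a concrete choice of $\beta$ with the prescribed restriction $\left.\beta\right|_D$, one computes $|\langle \mathcal{R}_D,\beta\rangle\cap\mathcal{R}|$ and compares it with the size of the irreducible candidate containing $\beta$. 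For instance, when $\mathcal{R}_D=A_4\times A_1$ and $\left.\beta\right|_D=\left.(\alpha_2+2\alpha_6+\alpha_8)\right|_D$, a suitable $\beta$ gives $|\langle A_4,\beta\rangle\cap\mathcal{R}|=30\ne 40=|D_5|$, forcing $\mathcal{R}_{D,\beta}=A_6$ rather than $D_5\times A_1$; and when $\mathcal{R}_D=A_3\times A_1^2$ and $\left.\beta\right|_D=\left.(\alpha_4+\alpha_8)\right|_D$, one finds $|\langle A_3,\beta\rangle\cap\mathcal{R}|=12\ne 20=|A_4|$, forcing $\mathcal{R}_{D,\beta}=A_3^2$.

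Dealing with the reducible alternative $D_5\times A_1$ requires the complementary observation that an orthogonal root (such as $\pm\alpha_7$ in the relevant configuration) lies in $\mathcal{R}_{D,\beta}$, which together with $|\mathcal{R}_{D,\beta}|=42$ exhibits the reducibility directly and identifies the component $\mathcal{R}_{D,\beta}^{(0)}=D_5$. With the types of $\mathcal{R}_{D,\beta}$, and hence of $\mathcal{R}_{D,\beta}^{(0)}$, thus fixed in every case, the required equalities $k_H=h(\mathcal{R}_{D,\beta}^{(0)})$ follow by inspection of Tables \ref{tabled=3E8} and \ref{tabled=2E8}, completing the proof for all three-dimensional and two-dimensional strata in $E_8$.
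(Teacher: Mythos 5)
Your proposal is correct and follows essentially the same route as the paper: it reads the multiplicities off the tabulated determinants, identifies $\mathcal{R}_{D,\beta}$ via nested subgraphs of the $E_8$ diagram and the Mathematica count of $|\mathcal{R}_{D,\beta}|$, and resolves the ambiguous cardinalities $42$ and $24$ by exactly the same auxiliary computations for specific choices of $\beta$. No gaps; this matches the paper's argument.
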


We get the following statement as a direct corollary of Table \ref{tabled=2E7}.

\begin{theorem}
Let $D$ be any two-dimensional stratum for $\mathcal{R}=E_7$. Then the statement of Theorem \ref{thma1} is true. 
\end{theorem}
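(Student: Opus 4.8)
The plan is to verify the claim factor-by-factor using the explicit determinant formulas already recorded in Table \ref{codimension 5 in E_7}, together with the root-system bookkeeping described above. For each two-dimensional stratum $D$ listed there, Theorem \ref{thma} guarantees that $\det \eta_D$ factors as $\prod_{H \in \mathcal{A}_D} l_H^{k_H}$, and the exponents $k_H$ can be read directly off the table. Thus it suffices to match each such $k_H$ against the Coxeter number $h(\mathcal{R}_{D,\beta}^{(0)})$ predicted by Theorem \ref{thma1}.

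First, for each linear factor $l_H$ appearing in the table entry for $D$, I would select a root $\beta \in \mathcal{R}$ whose restriction $\beta|_D$ is a nonzero multiple of $l_H$; the coefficients of $\beta$ in the simple roots can be chosen explicitly, as in the ambiguous cases treated above. Next I would compute the root system $\mathcal{R}_{D,\beta} = \langle \mathcal{R}_D, \beta\rangle \cap \mathcal{R}$. Following the procedure outlined, after mapping a generic vector of $D \cap \Pi_\beta$ into the fundamental domain (cf. Proposition \ref{propwaction}) and then into the fundamental domain of the reflection group $W_{D,\beta}$, the pair $\mathcal{R}_D \subset \mathcal{R}_{D,\beta}$ is realised as nested subgraphs of the Coxeter graph of $E_7$. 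Computing the cardinality $|\mathcal{R}_{D,\beta}|$ in Mathematica then pins down the type of $\mathcal{R}_{D,\beta}$ in almost every case.

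The main obstacle is the single ambiguous configuration recorded as case (iii): when $\mathcal{R}_D = A_4 \times A_1$ one can have $|\mathcal{R}_{D,\beta}| = 42$, which is consistent with both $A_6$ (Coxeter number $7$) and $D_5 \times A_1$. Since these two possibilities assign different multiplicities to $l_H$, I would resolve the ambiguity exactly as above: compute $|\langle A_4, \beta\rangle \cap \mathcal{R}|$ in Mathematica and compare it with $|D_5| = 40$. When $\beta|_D = \alpha_5|_D$ the graph forces $\mathcal{R}_{D,\beta} = D_5 \times A_1$ directly, whereas when $\beta|_D = (2\alpha_5 + \alpha_6)|_D$ one finds $|\langle A_4, \beta\rangle \cap \mathcal{R}| = 30 \neq 40$, ruling out $D_5$ and forcing $\mathcal{R}_{D,\beta} = A_6$.

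Finally, having determined the type of $\mathcal{R}_{D,\beta}$, I would extract its irreducible component $\mathcal{R}_{D,\beta}^{(0)}$ containing $\beta$ using Lemma \ref{irredulemma1} together with the inclusions \eqref{irred1} and \eqref{irred2}, compute $h(\mathcal{R}_{D,\beta}^{(0)})$, and check that it coincides with the exponent $k_H$ from Table \ref{codimension 5 in E_7}. Carrying this out for every factor of every entry produces Table \ref{tabled=2E7}, and the theorem then follows by inspection of that table.
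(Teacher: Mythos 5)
Your proposal is correct and follows essentially the same route as the paper: read the exponents off Table \ref{codimension 5 in E_7}, identify $\mathcal{R}_{D,\beta}$ from $|\mathcal{R}_{D,\beta}|$ and Coxeter subgraph considerations, resolve the single ambiguous case $\mathcal{R}_D=A_4\times A_1$ with $|\mathcal{R}_{D,\beta}|=42$ by the computation $|\langle A_4,\beta\rangle\cap\mathcal{R}|=30\neq 40=|D_5|$, and compare $h(\mathcal{R}_{D,\beta}^{(0)})$ with each multiplicity, which is exactly how the paper assembles Table \ref{tabled=2E7} and deduces the theorem.
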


%\vspace{-5mm}

%\vspace{0.2cm}

\begin{table}[htp]
\centering
\caption{$\mathcal{R}_{D, \beta}$, $\operatorname{dim}D=3$, $\mathcal{R}=E_8$}
\label{tabled=3E8}
\hspace*{-0.7cm}
\footnotesize
\begin{tabular}{|c |c | c| c| c|c|}
 \hline
\rule{0pt}{3ex} $\mathcal{R}_D$, $S$ & $\left. \beta\right|_D$ &$|\mathcal{R}_{D, \beta}|$ & $\mathcal{R}_{D,\beta}$ &$\mathcal{R}_{D, \beta}^{(0)}$ &$h(\mathcal{R}_{D, \beta}^{(0)})$  \\
 \hline
 \multirow{4}*{\makecell{$A_5$, \\ $\{4, 5, 6, 7, 8\}$}} 
 & $\alpha_1+\alpha_2+2 \alpha_3$ & $72$& $E_6$ &$E_6$ &$12$\\ \cline{2-6} 
  &$\alpha_2+\alpha_3$, $\alpha_1+\alpha_2+\alpha_3$, $\alpha_1+2 \alpha_2+2 \alpha_3$ & $60$ & $D_6$ &$D_6$&$10$ \\ \cline{2-6}
   &\makecell{ $\alpha_2$, $\alpha_3$, $2 \alpha_1+3 \alpha_2+4 \alpha_3$,\\ $2 \alpha_1+2 \alpha_2+3 \alpha_3$, $\alpha_1+2 \alpha_2+3 \alpha_3$, $\alpha_1+\alpha_3$} &$42$ & $A_6$ & $A_6$&$7$ \\ \cline{2-6}
  & $\alpha_1$, $2 \alpha_1+3 \alpha_2+3 \alpha_3$, $\alpha_1+3 \alpha_2+3 \alpha_3$ & $32$ &$A_5 \times A_1$ &$A_1$&$2$\\ 
  \midrule
  \multirow{3}*{\makecell{$D_5$, \\$\{1, 2, 3, 4, 5\}$}} &$\alpha_6$, $\alpha_6+\alpha_7+\alpha_8$, $3 \alpha_6+2 \alpha_7+\alpha_8$, $\alpha_6+\alpha_7$  & $72$ & $E_6$ & $E_6$&$12$ \\ \cline{2-6}
  & $2 \alpha_6+\alpha_7+\alpha_8$, $2 \alpha_6+2 \alpha_7+\alpha_8$, $2 \alpha_6+\alpha_7$ & $60$ & $D_6$& $D_6$&$10$ \\  \cline{2-6}
  &\makecell{$\alpha_7$, $\alpha_8$, $4 \alpha_6+3 \alpha_7+2 \alpha_8$, $\alpha_7+\alpha_8$, \\$4 \alpha_6+3 \alpha_7+\alpha_8$, $4 \alpha_6+2 \alpha_7+\alpha_8$ } & $42$ & $D_5 \times A_1$ &$A_1$ &$2$\\
  \midrule
\multirow{3}*{\makecell{$D_4 \times A_1$,\\ $\{2, 3, 4, 5, 7\}$}}
 & $\alpha_6$, $\alpha_1+2 \alpha_6+\alpha_8$, $\alpha_1+\alpha_6$ & $60$ & $D_6$ & $D_6$&$10$ \\ \cline{2-6}
  &\makecell{$\alpha_1$, $\alpha_1+3 \alpha_6+\alpha_8$, $\alpha_1+\alpha_6+\alpha_8$,  $\alpha_6+\alpha_8$, \\ $2 \alpha_1+3 \alpha_6+\alpha_8$,  $\alpha_1+2 \alpha_6$ } & $42$ & $D_5 \times A_1$ &$D_5$ &$8$ \\ \cline{2-6}
   &\makecell{$\alpha_8$, $2 \alpha_1+4 \alpha_6+\alpha_8$,\\ $2 \alpha_1+2 \alpha_6+\alpha_8$, $2 \alpha_6+\alpha_8$} & $30$ & $D_4 \times A_2$ &$A_2$ &$3$ \\
 \midrule 
 \multirow{5}*{\makecell{$A_4 \times A_1$,\\$\{1, 3, 4, 5, 7\}$}}
  & $\alpha_2+\alpha_6$ &$72$ & $E_6$ & $E_6$ &$12$\\  \cline{2-6}
  & $\alpha_6$, $\alpha_2+2 \alpha_6+\alpha_8$, $2 \alpha_2+2 \alpha_6+\alpha_8$ & $42$ & $A_6$ & $A_6$ &$7$ \\ \cline{2-6}
 & $\alpha_2$, $2 \alpha_2+3 \alpha_6+\alpha_8$, $\alpha_2+\alpha_6+\alpha_8$ & $42$ & $D_5 \times A_1$ & $D_5$ &$8$ \\ \cline{2-6}
 & $\alpha_2+2 \alpha_6$, $\alpha_6+\alpha_8$, $3 \alpha_2+3 \alpha_6+\alpha_8$ & $32$ & $A_5 \times A_1$ & $A_5$  &$6$ \\ \cline{2-6}
 & $\alpha_8$, $2 \alpha_2+\alpha_6$, $3 \alpha_2+4 \alpha_6+\alpha_8$ & $26$ & $A_4 \times A_2$ & $A_2$  &$3$\\ \cline{2-6}
 &\makecell{ $3 \alpha_2+4 \alpha_6+2 \alpha_8$, $2 \alpha_2+\alpha_6+\alpha_8$, \\$\alpha_2+3 \alpha_6+\alpha_8$} & $24$ & $A_4 \times A_1^2$ & $A_1$ &$2$ \\
 \midrule
  \multirow{6}*{\makecell{$A_3 \times A_2$, \\$\{2, 3, 4, 6, 7\}$}}
 & $\alpha_5$ & $60$ & $D_6$ & $D_6$&$10$ \\ \cline{2-6}
 &\makecell{ $\alpha_1+2 \alpha_5+\alpha_8$, $\alpha_1+3 \alpha_5+\alpha_8$, \\$\alpha_1+\alpha_5$, $\alpha_1+2 \alpha_5$} & $42$ & $A_6$ & $A_6$&$7$  \\ \cline{2-6}
 & $\alpha_5+\alpha_8$, $2 \alpha_1+4 \alpha_5+\alpha_8$  & $30$ & $D_4 \times A_2$ & $D_4$ &$6$ \\ \cline{2-6}
 &$\alpha_1$, $\alpha_1+4 \alpha_5+\alpha_8$, $\alpha_1+\alpha_5+\alpha_8$, $\alpha_1+3 \alpha_5$ & $26$ & $A_4 \times A_2$ & $A_4$&$5$ \\ \cline{2-6}
 & $\alpha_8$, $2 \alpha_1+5 \alpha_5+\alpha_8$, $2 \alpha_1+3 \alpha_5+\alpha_8$, $2 \alpha_5+\alpha_8$ & $24$ & $A_3^2$ &$A_3$ &$4$  \\ \cline{2-6}
 & $2 \alpha_1+5 \alpha_5+2 \alpha_8$, $2 \alpha_1+3 \alpha_5$ & $20$ & $A_3 \times A_2 \times A_1$ & $A_1$ &$2$ \\
 \midrule
  \multirow{6}*{\makecell{$A_3 \times A_1^2$, \\$\{2, 3, 5, 6, 7\}$}}
  & $\alpha_4$ & $60$ & $D_6$ & $D_6$ &$10$   \\ \cline{2-6}
  & $\alpha_1+3 \alpha_4+\alpha_8$, $\alpha_1+2 \alpha_4$ & $42$ & $D_5 \times A_1$ & $D_5$ &$8$  \\ \cline{2-6}
  & \makecell{$\alpha_1+2 \alpha_4+\alpha_8$, $\alpha_1+4 \alpha_4+\alpha_8$,\\ $\alpha_1+3 \alpha_4$, $\alpha_1+\alpha_4$} & $32$ & $A_5 \times A_1$ & $A_5$   &$6$ \\ \cline{2-6}
  &$ \alpha_4+\alpha_8$, $2 \alpha_1+5 \alpha_4+\alpha_8$ & $24$ &$A_3^2$ & $A_3$ &$4$ \\ \cline{2-6}
   &$\alpha_8$, $2 \alpha_1+6 \alpha_4+\alpha_8$, $2 \alpha_1+4 \alpha_4+\alpha_8$, $2 \alpha_4+\alpha_8$  & $24$ & $A_4 \times A_1^2$ &$A_4$ &$5$ \\ \cline{2-6}
  &$\alpha_1$, $\alpha_1+5 \alpha_4+\alpha_8$, $\alpha_1+\alpha_4+\alpha_8$, $\alpha_1+4 \alpha_4$ & $20$ &$A_3 \times A_2 \times A_1$ & $A_2$ &$3$\\
 \midrule
  \multirow{5}*{\makecell{$A_2^2 \times A_1$, \\ $\{1, 2, 3, 5, 6\}$}}
  & $\alpha_4$ & $72$ & $E_6$ & $E_6$ &$12$  \\ \cline{2-6}
  &$2 \alpha_4+\alpha_7$, $2 \alpha_4+\alpha_7+\alpha_8$, $4 \alpha_4+2 \alpha_7+\alpha_8$  & $32$ & $A_5 \times A_1$ &$A_5$&$6$ \\ \cline{2-6} 
  &\makecell{$\alpha_4+\alpha_7+\alpha_8$,  $5 \alpha_4+2 \alpha_7+\alpha_8$, $3 \alpha_4+\alpha_7$,\\ $3 \alpha_4+\alpha_7+\alpha_8$, $3 \alpha_4+2 \alpha_7+\alpha_8$, $\alpha_4+\alpha_7$} & $26$ & $A_4 \times A_2$ & $A_4$ &$5$\\ \cline{2-6}
  & \makecell{$4 \alpha_4+\alpha_7+\alpha_8$, $6 \alpha_4+2 \alpha_7+\alpha_8$,\\$\alpha_7$, $4 \alpha_4+\alpha_7$, $\alpha_7+\alpha_8$, $2 \alpha_4+2 \alpha_7+\alpha_8$}& $20$ & $A_3 \times A_2 \times A_1$ & $A_3$ &$4$\\ \cline{2-6}
  & $\alpha_8$, $6 \alpha_4+3 \alpha_7+2 \alpha_8$, $6 \alpha_4+3 \alpha_7+\alpha_8$ &$16$ & $A_2^2 \times A_1^2$ & $A_1$ &$2$ \\ 
  \midrule
  \multirow{5}*{\makecell{$A_2 \times A_1^3$, \\ $\{2, 3, 5, 7, 8\}$}} 
  & $\alpha_4+\alpha_6$, $\alpha_1+2 \alpha_4+\alpha_6$, $\alpha_1+3 \alpha_4+2 \alpha_6$ & $42$ & $D_5 \times A_1$ &$D_5$ &$8$  \\ \cline{2-6} 
  & $\alpha_4$ & $30$ &$D_4 \times A_2$ &$D_4$  &$6$ \\ \cline{2-6} 
  & \makecell{$ \alpha_6 $, $2 \alpha_4+\alpha_6$, $\alpha_1+\alpha_4+\alpha_6 $, $\alpha_1+3 \alpha_4+\alpha_6$,\\ $\alpha_1+4 \alpha_4+2 \alpha_6$, $\alpha_1+2 \alpha_4+2 \alpha_6$} & $24$ & $A_4 \times A_1^2$ &$A_4$ &$5$  \\ \cline{2-6} 
  & $2 \alpha_1+5 \alpha_4+3 \alpha_6$, $\alpha_1+4 \alpha_4+3 \alpha_6$, $\alpha_1+\alpha_4$ & $20$ &$A_3 \times A_2 \times A_1$ &$A_3$ &$4$ \\ \cline{2-6} 
  &\makecell{ $\alpha_1$, $2 \alpha_1+4 \alpha_4+3 \alpha_6$, $2 \alpha_1+6 \alpha_4+3 \alpha_6$,\\ $\alpha_1+3 \alpha_4+3 \alpha_6$, $\alpha_1+5 \alpha_4+3 \alpha_6$, $\alpha_1+2 \alpha_4$} &$16$ &$A_2^2 \times A_1^2$ &$A_2$ &$3$\\
  \hline
\end{tabular}
\end{table}

\begin{table}[h]
\centering
\caption{$\mathcal{R}_{D, \beta}$, $\operatorname{dim}D=2$, $\mathcal{R}=E_8$}
\label{tabled=2E8}
\footnotesize
\begin{tabular}{|c |c | c| c| c|c|}
 \hline
\rule{0pt}{3ex} $\mathcal{R}_D$, $S$ & $\left. \beta\right|_D$ &$|\mathcal{R}_{D, \beta}|$ & $\mathcal{R}_{D,\beta}$ &$\mathcal{R}_{D, \beta}^{(0)}$ &$h(\mathcal{R}_{D, \beta}^{(0)})$  \\
 \hline
 \multirow{4}*{\makecell{$A_6$, \\  $\{2, 4, 5, 6, 7, 8\}$}} 
 & $\alpha_1+2 \alpha_3$ &$126$& $E_7$ &$E_7$ &$18$ \\ \cline{2-6} 
 & $\alpha_3$, $\alpha_1+\alpha_3$ & $84$& $D_7$ & $D_7$ &$12$\\ \cline{2-6}
 & $2 \alpha_1+3 \alpha_3$, $\alpha_1+3 \alpha_3$ & $56$&$A_7$ & $A_7$ &$8$ \\ \cline{2-6}
 &$\alpha_1$ & $44$&$A_6 \times A_1$ &$A_1$ &$2$ \\
 \midrule
  \multirow{2}*{\makecell{$D_6$, \\ $\{2, 3, 4, 5, 6, 7\}$}} 
  & $\alpha_1$, $\alpha_1+\alpha_8$ & $126$& $E_7$ & $E_7$ &$18$  \\ \cline{2-6}
  & $\alpha_8$, $2 \alpha_1+\alpha_8$   &$84$&$D_7$ & $D_7$ &$12$\\
  \midrule
  \multirow{2}*{\makecell{$E_6$, \\ $\{1, 2, 3, 4, 5, 6\}$}} 
  & $\alpha_7$, $\alpha_7+\alpha_8$, $2 \alpha_7+\alpha_8$ & $126$&$E_7$ &$E_7$ &$18$ \\ \cline{2-6}
  &$\alpha_8$, $3 \alpha_7+\alpha_8$, $3 \alpha_7+2 \alpha_8$ &$74$& $E_6 \times A_1$ & $A_1$ &$2$\\
  \midrule
   \multirow{4}*{\makecell{$A_5 \times A_1$, \\ $\{1, 3, 4, 5, 6, 8\}$}} 
   & $\alpha_2+\alpha_7$ &$126$&$E_7$ & $E_7$ &$18$ \\ \cline{2-6}
   & $\alpha_7$, $2 \alpha_2+\alpha_7$ &$56$& $A_7$ &$A_7$ &$8$ \\ \cline{2-6}
   & $\alpha_2$ &$74$& $E_6 \times A_1$& $E_6$ &$12$ \\ \cline{2-6}
   & $3 \alpha_2+2 \alpha_7$, $\alpha_2+2 \alpha_7$ &$44$& $A_6 \times A_1$ &$A_6$ &$7$ \\
   \midrule
     \multirow{4}*{\makecell{$D_5 \times A_1$, \\ $\{2, 3, 4, 5, 6, 8\}$}} 
     & $\alpha_1+\alpha_7$ & $126$& $E_7$ & $E_7$ &$18$  \\ \cline{2-6}
     & $\alpha_7$ & $84$& $D_7$ & $D_7$ &$12$ \\ \cline{2-6}
     &$\alpha_1$, $\alpha_1+2 \alpha_7$ &$74$& $E_6 \times A_1$ & $E_6$ &$12$ \\ \cline{2-6}
     & $2 \alpha_1+3 \alpha_7$, $2 \alpha_1+\alpha_7$ & $46$& $D_5 \times A_2$ & $A_2$ &$3$ \\
     \midrule
   \multirow{4}*{\makecell{$A_4 \times A_2$, \\ $\{1, 3, 4, 5, 7, 8\}$}} 
   & $\alpha_2+\alpha_6$ & $126$& $E_7$ & $E_7$ &$18$\\ \cline{2-6}
   &$\alpha_6$, $\alpha_2+2 \alpha_6$ & $56$& $A_7$ & $A_7$ &$8$\\ \cline{2-6}
   & $\alpha_2 $, $2 \alpha_2+3 \alpha_6$  &$46$& $D_5 \times A_2$ & $D_5$ &$8$\\ \cline{2-6}
   & $2 \alpha_2+\alpha_6$, $3 \alpha_2+4 \alpha_6$ &$32$&$ A_4 \times A_3$ & $A_3$ &$4$\\ \cline{2-6}
   & $\alpha_2+3 \alpha_6$ &$28$& $A_4 \times A_2 \times A_1$ & $A_1$&$2$ \\
   \midrule
   \multirow{2}*{\makecell{$D_4 \times A_2$, \\ $\{2, 3, 4, 5, 7, 8\}$}} 
 &$\alpha_6$, $\alpha_1+2 \alpha_6$, $\alpha_1+\alpha_6$ &$84$& $D_7$ & $D_7$ &$12$ \\ \cline{2-6}
 &$\alpha_1$, $2 \alpha_1+3 \alpha_6$, $\alpha_1+3 \alpha_6$ &$46$& $D_5 \times A_2$ &$D_5$ &$8$ \\
 \midrule
  \multirow{5}*{\makecell{$A_4 \times A_1^2$, \\ $\{2, 3, 5, 6, 7, 8\}$}} 
 & $\alpha_4$ &$84$& $D_7$ & $D_7$ &$12$ \\ \cline{2-6}
 & $\alpha_1+3 \alpha_4$, $\alpha_1+2 \alpha_4$ & $74$& $E_6 \times A_1$ & $E_6$ &$12$\\ \cline{2-6}
 & $ \alpha_1+4 \alpha_4$, $\alpha_1+\alpha_4$ & $44$&$A_6 \times A_1$ & $A_6$&$7$ \\ \cline{2-6}
 & $2 \alpha_1+5 \alpha_4$ &$32$& $A_4 \times A_3$ & $A_3$ &$4$\\ \cline{2-6}
 &$\alpha_1$, $\alpha_1+5 \alpha_4$ & $28$&$A_4 \times A_2 \times A_1$ &$ A_2$ &$3$ \\
 \midrule
  \multirow{3}*{\makecell{$A_3^2$, \\ $\{2, 3, 4, 6, 7, 8\}$}} 
 & $\alpha_5$, $\alpha_1+2 \alpha_5$ & $84$&$D_7$ & $D_7$ &$12$ \\ \cline{2-6}
 & $\alpha_1+3 \alpha_5$, $\alpha_1+\alpha_5$ &$56$& $A_7$ & $A_7$ &$8$ \\ \cline{2-6}
 &$\alpha_1$, $2 \alpha_1+5 \alpha_5$, $\alpha_1+4 \alpha_5 $, $2 \alpha_1+3 \alpha_5$ &$32$&$A_4 \times A_3$ &$A_4$ &$5$ \\
 \midrule
 \multirow{5}*{\makecell{$A_3 \times A_2 \times A_1$, \\ $\{1, 2, 4, 6, 7, 8\}$}} 
 & $\alpha_3+\alpha_5$ & $126$& $E_7$ & $E_7$ &$18$ \\ \cline{2-6}
 &$\alpha_5$, $2 \alpha_3+3 \alpha_5$ &$44$& $A_6 \times A_1$ &$ A_6$ &$7$  \\ \cline{2-6}
 & $\alpha_3+2 \alpha_5$ &$46$& $D_5 \times A_2$ & $D_5$ &$8$  \\ \cline{2-6}
 &$\alpha_3$, $3 \alpha_3+4 \alpha_5$ &$32$& $A_4 \times A_3$ &$A_4$ &$5$\\ \cline{2-6}
  & $2 \alpha_3+\alpha_5$, $4 \alpha_3+5 \alpha_5$ &$28$& $A_4 \times A_2 \times A_1$ &$A_4$ &$5$ \\
 \midrule
 \multirow{3}*{\makecell{$A_2^2 \times A_1^2 $, \\ $\{1, 2, 3, 5, 6, 8\}$}} 
 &$\alpha_4$, $2 \alpha_4+\alpha_7$ &$74$& $E_6 \times A_1$ & $E_6$ &$12$ \\ \cline{2-6}
 & $3 \alpha_4+\alpha_7$, $\alpha_4+\alpha_7$ &$46$& $D_5 \times A_2$ & $D_5$ &$8$ \\ \cline{2-6}
 &$\alpha_7$, $4 \alpha_4+\alpha_7$, $5 \alpha_4+2 \alpha_7$, $3 \alpha_4+2 \alpha_7$ & $28$& $A_4 \times A_2 \times A_1$ & $A_4$ &$5$ \\
 \hline
\end{tabular}
\end{table}

\begin{table}[h]
\centering
\caption{$\mathcal{R}_{D, \beta}$, $\operatorname{dim}D=2$, $\mathcal{R}=E_7$}
\label{tabled=2E7}
\footnotesize
\begin{tabular}{|c |c | c| c| c|c|}
 \hline
\rule{0pt}{3ex} $\mathcal{R}_D$, $S$ & $\left. \beta\right|_D$ &$|\mathcal{R}_{D, \beta}|$& $\mathcal{R}_{D, \beta}$ &$\mathcal{R}_{D, \beta}^{(0)}$ &$h(\mathcal{R}_{D, \beta}^{(0)})$ \\
 \hline
   \multirow{2}*{\makecell{$A_5$, \\ $\{2, 4, 5, 6, 7\}$}} 
   & $\alpha_3$, $\alpha_1+\alpha_3$, $\alpha_1+2 \alpha_3$ &$60$& $ D_6$ & $D_6$ &$10$ \\ \cline{2-6}
   &$\alpha_1$, $2 \alpha_1+3 \alpha_3$, $\alpha_1+3 \alpha_3$ &$32$& $A_5 \times A_1$ & $A_1$ &$2$\\
   \midrule
   \multirow{3}*{\makecell{$A^{'}_5$, \\ $\{3, 4, 5, 6, 7\}$}} 
   & $\alpha_1+\alpha_2$ &$72$& $E_6$ &$E_6$ &$12$ \\ \cline{2-6}
   & $\alpha_2 $ & $60$& $D_6$ & $D_6$ &$10$ \\ \cline{2-6}
   & $\alpha_1$, $\alpha_1+2 \alpha_2$ &$42$& $A_6$ & $A_6$ &$7$ \\
   \midrule
  \multirow{3}*{\makecell{$D_5$, \\ $\{1, 2, 3, 4, 5\}$}} 
 & $\alpha_6$, $\alpha_6+\alpha_7$ &$72$& $E_6$ & $E_6$ &$12$\\ \cline{2-6}
 & $2 \alpha_6+\alpha_7$ &$60$& $D_6$ & $D_6$ &$10$ \\ \cline{2-6}
 & $\alpha_7$ &$42$& $D_5 \times A_1$ & $A_1$ &$2$ \\
 \midrule
  \multirow{5}*{\makecell{$A_4 \times A_1$, \\ $\{1, 2, 3, 4, 7\}$}} 
  & $\alpha_5+\alpha_6$ &$72$& $E_6$ & $E_6$ &$12$ \\ \cline{2-6}
  & $2 \alpha_5+\alpha_6$ &$42$& $A_6$ & $A_6$ &$7$ \\ \cline{2-6}
  & $\alpha_5$ &$42$& $D_5 \times A_1$ & $D_5$ &$8$ \\ \cline{2-6}
  & $3 \alpha_5+2 \alpha_6$ &$32$& $A_5 \times A_1$ & $A_5$ &$6$ \\ \cline{2-6}
  & $\alpha_6$ & $26$&$A_4 \times A_2$ & $A_2$ &$3$ \\
  \midrule
  \multirow{2}*{\makecell{$D_4 \times A_1$, \\ $\{2, 3, 4, 5, 7\}$}} 
  & $\alpha_6$, $\alpha_1+\alpha_6$ &$60$& $D_6$ & $D_6$ &$10$ \\ \cline{2-6}
  & $\alpha_1$, $\alpha_1+2 \alpha_6$ & $42$&$D_5 \times A_1$ & $D_5$ &$8$ \\
  \midrule
  \multirow{4}*{\makecell{$A_3 \times A_2$, \\ $\{1, 3, 5, 6, 7\}$}} 
  & $\alpha_2+2 \alpha_4$ &$60$& $D_6$ & $D_6$ &$10$ \\ \cline{2-6}
  & $\alpha_4$, $\alpha_2+\alpha_4$ & $42$&$A_6$ &$A_6$ &$7$ \\ \cline{2-6}
  & $2 \alpha_2+3 \alpha_4$, $\alpha_2+3 \alpha_4$ &$26$& $A_4 \times A_2$ & $A_4$ &$5$ \\ \cline{2-6}
  & $\alpha_2$ & $20$&$A_3 \times A_2 \times A_1$ & $A_1$ &$2$ \\
  \midrule
  \multirow{4}*{\makecell{$A_3 \times A_1^2$, \\ $\{1, 2, 4, 5, 7\}$}} 
  & $\alpha_3+\alpha_6$ &$60$& $D_6$ & $D_6$ &$10$ \\ \cline{2-6}
  & $\alpha_3$ &$42$& $D_5 \times A_1$ & $D_5$ &$8$  \\ \cline{2-6}
  & $\alpha_6$, $2 \alpha_3+\alpha_6$  &$32$& $A_5 \times A_1$ & $A_5$ &$6$ \\ \cline{2-6}
  & $\alpha_3+2 \alpha_6$, $3 \alpha_3+2 \alpha_6$ &$20$& $A_3 \times A_2 \times A_1$ & $A_2$ &$3$ \\
  \midrule
   \multirow{4}*{\makecell{$A_2^2 \times A_1$, \\ $\{1, 2, 4, 6, 7\}$}}
   & $\alpha_3+\alpha_5$ &$72$& $E_6$ & $E_6$ &$12$ \\ \cline{2-6}
   & $\alpha_5$ &$32$& $A_5 \times A_1$ & $A_5$ &$6$ \\ \cline{2-6}
   & $\alpha_3$, $\alpha_3+2 \alpha_5$ &$26$& $A_4 \times A_2$ & $A_4$ &$5$ \\ \cline{2-6}
   & $2 \alpha_3+\alpha_5$, $2 \alpha_3+3 \alpha_5$ &$20$& $A_3 \times A_2 \times A_1$ & $A_3$ &$4$ \\
   \midrule
   \multirow{2}*{\makecell{$A_2 \times A_1^3$, \\ $\{1, 2, 3, 5, 7\}$}}
   & $\alpha_4$, $\alpha_4+\alpha_6$, $2 \alpha_4+\alpha_6$ &$42$& $D_5 \times A_1$ & $D_5$ &$8$  \\ \cline{2-6}
   &  $\alpha_6$, $3 \alpha_4+\alpha_6$, $3 \alpha_4+2 \alpha_6$ &$20$& $A_3 \times A_2 \times A_1$ &$ A_3$ &$4$ \\
   \hline
\end{tabular}
\end{table}

\clearpage

\section{Concluding remarks}

The Saito metric is a remarkable flat metric on the space of orbits of a finite Coxeter group. As we showed its restriction to the discriminant strata also has the interesting property of the factorisation of the determinant. The proof we gave depends on the Coxeter group: while Landau--Ginzburg superpotential approach was applied to the classical series different arguments had to be made for the exceptional groups including computer calculations in a few cases. It would be desirable to find a uniform proof of the main result, Theorem \ref{MMtheorem}. 
Studying further differential-geometric properties of the restricted Saito metric $\eta_D$ may be of interest too.

It would also be interesting to investigate generalisations to the discriminant strata of the extended affine Weyl groups orbit spaces, the corresponding Frobenius manifolds were considered in \cite{DubZh, DZZ, DZZS}. Furthermore, Frobenius structures for the orbit spaces of complex reflection groups were discovered in \cite{KaMaSe} (see also \cite{KaMaSe2}, \cite{DubA}). In this case one does not have the full structure of Frobenius manifold in general but rather the weaker Saito structure without metric \cite{Sab} (see also \cite{KMS}, \cite{AL} for further studies of the complex reflection groups case). Note that by Corollary \ref{cordet} our result on the determinant of the metric $\eta_D$ is reformulated in terms of the determinant of the operator of multiplication by the Euler vector field $E_D$ on the stratum $D$, which is a well defined operator as we show that $D$ is a natural submanifold. Therefore one may try to weaken the Coxeter groups settings where metrics $g$ and $\eta$ both exist and consider Saito structures without metric on the orbit spaces of complex reflection groups. 

Finally, T. Douvropoulos  informed us about his recent conjecture  on the freeness of a class of multiarrangements related to considerations in this paper \cite{dou}. These multiarrangements are restricted Coxeter arrangements ${\mathcal A}_D$ with the multiplicities coming from the multiplicities of factors in Theorem \ref{MMtheorem} (see \cite{OT} and \cite{ziegler} for the theory of free  (multi)arrangements). In the special cases conjecture reduces to the freeness of Coxeter multiarrangements \cite{Ter}, \cite{yoshinaga} and the freeness of restricted Coxeter arrangements with multiplicities one \cite{OTart}. It would be interesting to study this conjecture by making use of analysis of this paper of the discriminant strata. We hope to do this elsewhere.

%ness, and \cite{Ter}, \cite{Yosh}, \cite{AW} for the treatment of the Coxeter case with multiplicities).

\begin{bibdiv}
\begin{biblist}

\bib{DAB}{article}{author={D., Abriani}, title={Frobenius manifolds associated to Coxeter groups of type E7 and E8}, date={2010}, journal={arXiv:0910.5453}}

\bib{GA}{book}{title={Frobenius structures, Coxeter discriminants, and supersymmetric mechanics}, author={G.,Antoniou}, date={2020}, series={PhD thesis}, publisher={University of Glasgow}}

\bib{Ar}{article}{author={V.I., Arnold}, title={Wave front evolution and equivariant Morse lemma},  journal={Comm. Pure Appl.
Math.}, volume= {29}, date = {1976}, pages= {557-582}}

\bib{AL}{article}{author = {A., Arsie}, author={P., Lorenzoni}, title={ Complex reflection groups, logarithmic connections and bi-flat F-manifolds}, journal= {Lett. Math. Phys.}, volume= {107 (10)}, date={2017}, pages={1919-1961}}

\bib{MB}{book}{title={ Jacobi groups, Jacobi forms and their applications}, author={M.,Bertola}, date={1999}, publisher={PhD diss., SISSA}}

\bib{bourbaki}{book}{title={Lie groups and Lie algebras. Chapters 4-6, Elements of mathematics}, author={N., Bourbaki}, date={2002}, publisher={Springer-Verlag}}

\bib{Coxeter}{article}{author = {H.S.M., Coxeter}, title = {The product of the generators of a finite group generated by reflections}, journal = {Duke Math. J.}, volume = {18}, date= {1951}, pages={765-782}} 

\bib{dou}{article}{title = {Private communication}, author={T., Douvropoulos}, date={2019}} 

\bib{dub}{book}{title={Geometry of 2D topological field theories, in: Integrable Systems and Quantum Groups}, author={B.,Dubrovin}, series={Lecture Notes Maths.}, date={1993}, publisher={Montecatini, Terme},volume={1620},pages={120-348}}

\bib{DubA}{article}{author={B., Dubrovin} title={On almost duality for Frobenius manifolds}, journal={Amer. Math. Soc. Transl.}, volume={212}, date={2004}, pages={75-132}}

\bib{DubZh}{article}{author={B., Dubrovin}, author={Y.,Zhang}, title={Extended affine Weyl groups and Frobenius manifolds}, journal={Compos. Math.}, date={1998}, volume={111}, pages={167-219}}

\bib{DZZ}{article}{author={B., Dubrovin}, author={Y.,Zhang}, author ={D., Zuo}, title={Extended affine Weyl groups and Frobenius manifolds -- II}, journal = {arXiv:math/0502365}}

\bib{DZZS}{article}{author={B., Dubrovin}, author={I.A.B., Strachan}, author={Y.,Zhang}, author ={D., Zuo}, title={
Extended affine Weyl groups of BCD type, Frobenius manifolds and their Landau--Ginzburg superpotentials}, journal={Adv. Math.},
 volume={351}, date={2019}, pages={897-946} }

\bib{FV}{article}{author={M.V, Feigin}, author={A.P, Veselov}, title={Logarithmic Frobenius structures and Coxeter discriminants}, date={2007}, journal={Adv. Math.}, volume={212}, number={1}, pages={143-162}}

\bib{CH}{book}{title={Frobenius Manifolds and Moduli Spaces for Singularities}, author={C., Hertling}, series={Cambridge Tracts in Mathematics}, date={2002}, publisher={CUP}}

\bib{cox}{book}{title={Reflection Groups and Coxeter Groups}, author={J.E., Humphreys}, date={1990}, publisher={CUP}}

\bib{cox1}{book}{title={Introduction to Lie Algebras and Representation Theory}, author={J.E., Humphreys}, date={1972}, publisher={Springer-Verlag New York}}

\bib{KaMaSe}{article}{title = {Flat structure on the space of isomonodromic deformations},  author = {M., Kato}, author={T., Mano}, author= {J., Sekiguchi}, date={2015}, journal={arXiv:1511.01608} }

\bib{KaMaSe2}{article}{title = {Flat structures without potentials}, author = {M., Kato}, author={T., Mano}, author= {J., Sekiguchi},date={2015}, journal={ Rev. Roumaine Math. Pures Appl.}, 
volume={60 (4)}, pages={481-505}}
 
\bib{KMS}{article}{title= {Almost duality for Saito structure and complex reflection groups}, author ={Y., Konishi}, author ={S.,  Minabe}, author={Y., Shiraishi}, journal={J. Integrable Syst.}, date={2018}, volume={3}, pages={1-48}}

\bib{KM}{article}{title= {Almost duality for Saito structure and complex reflection groups II: the case of Coxeter and Shephard groups}, author ={Y., Konishi}, author ={S., Minabe},  journal={arXiv:1904.12410}}
 
%\bib{MMM2}{article} {author={A., Marshakov}, author={A., Mironov}, author={A., Morozov}, title={More evidence for the WDVV equations in $N=2$ SUSY Yang-Mills theories}, journal={Int. J. Mod. Phys.}, date={2000}, volume={A15}, pages={1157-1206, hep-th/9701014}}

%\bib{MG}{article}{author ={R., Martini}, author={P. K. H., Gragert}, title = {Solutions of WDVV equations in Seiberg-Witten theory from root systems}, journal={Journal of Non-linear Mathematical Physics}, date={1999}, volume={6 (1)}, pages={1}}

\bib{orlik}{article}{author={P.,Orlik}, author={L.,Solomon}, author={H., Terao}, title={On Coxeter Arrangements and the Coxeter number}, date={1986}, journal={Advanced studies in pure mathematics}, volume={8}, pages={461-477}}

%\bib{orlik1}{article}{author={P.,Orlik}, author={L.,Solomon}, title={Coxeter Arrangements}, date={1983}, journal={Proc. Sympos. Pure Math.}, volume={40}, pages={269–291}}

\bib{OT}{book}{title={Arrangements of Hyperplanes}, author={P., Orlik}, author={H., Terao}, date={1992}, publisher={Springer-Verlag Berlin Heidelberg}}

\bib{OTart}{article}{title={Coxeter arrangements are hereditarily free}, author={P., Orlik}, author={H., Terao}, date={1993}, journal={Tohoku Math. J. (2)}, volume={45(3)}, pages={369-383}}
 
%\bib{oshima}{article}{author={T., Oshima}, title={A classification of subsystems of a root system}, date={2007}, journal={arXiv:math/0611904v4}}

\bib{prasolov}{book}{title={Problems and Theorems in Linear algebra}, author={V., Prasolov}, date={1994}, publisher={A.M.S},address={US}}

\bib{Sab}{book} {author={C., Sabbah}, title={Isomonodromic deformations and Frobenius manifolds}, publisher={Universitext. Springer, Berlin}, date=
{2008}}

%\bib{Saitofree}{article} {author={K., Saito}, title ={ On the uniformization of complements of 
 %discriminant loci}, journal={Conference Notes. Amer. Math. Soc. Summer 
 %Institute, Williamstown}, date={1975}}

\bib{saito1}{article}{author={K.,Saito}, title={On a Linear Structure of the Quotient
Variety by a Finite Reflexion Group}, date={1993}, journal={Publ. RIMS., Kyoto Univ. }, volume={29}, pages={535-579}}

\bib{KS}{book}{title={Uniformization of the orbifold of a finite reflection group. In: Hertling K., Marcolli M. (eds) Frobenius Manifolds}, author={K.,Saito}, date={2004}, series={Aspects of Mathematics}, volume={36}, publisher={Vieweg+Teubner Verlag}}

\bib{saito}{article}{author={K.,Saito}, author={T.,Yano}, author={J., Sekiguchi}, title={On a certain generator system of the ring of invariants of a finite reflection group}, date={1980}, journal={Commun. Algebra}, volume={8:4}, pages={373-408}}

%\bib{strata}{article}{author={ O. P., Shcherbak}, title={Wavefronts and reflection groups}, date={1988}, journal={Russ. Math. Sur},volume={43}, pages={149-194}}

\bib{ian}{article}{author={ I.A.B., Strachan}, title={Frobenius manifolds: natural submanifolds and induced bi-Hamiltonian structures}, date={2004}, journal={Differ. Geom. Appl.}, volume={20}, pages={67-99}}

\bib{flat}{article}{author={ V., Talamini}, title={Flat bases of invariant polynomials and $\hat{P}$-matrices of $E_7$ and $E_8$}, date={2010}, journal={J.Math.Phys.}, volume={51}, pages={023520}}

\bib{Ter}{article}{author={H., Terao}, title ={Multiderivations of Coxeter arrangements}, date={2002}, journal={Invent. Math}, volume={148}, pages={659-674}}

%\bib{Veselov}{article}{author={A.P., Veselov} title={Deformations of the root systems and new solutions to generalised WDVV equations}, date={1999}, journal={Physics Letters A}, volume={261}, pages={297-302}}

\bib{yoshinaga}{article}{author={M ., Yoshinaga}, title={The primitive derivation and freeness of multi-Coxeter arrangements}, date={2002}, journal={Proc. Japan Acad. Ser. A Math. Sci.},volume={78(7)}, pages={116–119}}

%title={Freeness of hyperplane arrangements and related topics}, date={2014}, journal={Ann. Fac. Sci. Toulouse Math. (6)},volume={23}, pages={483-512}}

%series={Singularities (Iowa City, IA, 1986)}
 
\bib{ziegler}{article}{author={G., Ziegler}, title={Multiarrangements of hyperplanes and their freeness}, journal={Contemp. Math. Singularities (Iowa City, IA, 1986)}, date={1989}}

\bib{DZ}{article}{author={ D., Zuo}, title={Frobenius Manifolds Associated to $B_l$ and $D_l$, Revisited}, date={2007}, journal={Int. Math. Res. Notices}, volume={8}, pages={rnm020}}

\end{biblist}
\end{bibdiv}

\end{document}